\pgfplotsset{compat=1.14}
\title[$W^{s,\frac{n}{s}}$-harmonic maps in homotopy classes]{Minimal $W^{s,\frac{n}{s}}$-harmonic maps in homotopy classes}
\date{\today}
\author{Katarzyna Mazowiecka}
\address[Katarzyna Mazowiecka]{
Universit\'e catholique de Louvain, Institut de Recherche en Math\'ematique et Physique, Chemin du Cyclotron 2 bte L7.01.02, 1348 Louvain-la-Neuve, Belgium}
\email{katarzyna.mazowiecka@uclouvain.be}
\author{Armin Schikorra}
\address[Armin Schikorra]{Department of Mathematics,
University of Pittsburgh,
301 Thackeray Hall,
Pittsburgh, PA 15260, USA}
\email{armin@pitt.edu}
\definecolor{indigo}{rgb}{0.29, 0.0, 0.51}
\definecolor{p1}{gray}{0.4}
\definecolor{p2}{gray}{0.6}
\definecolor{p3}{gray}{0.98}
\definecolor{p4}{gray}{0.8}
\definecolor{p5}{gray}{0.9}
\def\eps{\varepsilon}
\def\vp{\varphi}
\def\B{{B}}
\def\n{{\mathcal M}}
\newcommand{\dif}{\,\mathrm{d}}
\def\N{{\mathbb N}}
\def\n{{\mathcal{M}}}
\def\n{{\mathcal N}}
\def\S{{\mathbb S}}
\newtheorem{theorem}{Theorem}
\newtheorem{lemma}[theorem]{Lemma}
\newtheorem{corollary}[theorem]{Corollary}
\newtheorem{proposition}[theorem]{Proposition}
\newtheorem{remark}[theorem]{Remark}
\newtheorem{definition}[theorem]{Definition}
\def\diam{{\rm diam\,}}
\def\dist{{\rm dist\,}}
\def\lip{{\rm Lip\,}}
\def\supp{{\rm supp\,}}
\newcommand{\dx}{\dif x}
\newcommand{\dy}{\dif y}
\newcommand{\dz}{\dif z}
\newcommand{\R}{\mathbb{R}}
\newcommand{\Z}{\mathbb{Z}}
\newcommand{\brac}[1]{\left (#1 \right )}
\newcommand{\abs}[1]{\left |#1 \right |}
\newcommand{\norm}[1]{\left\|{#1}\right\|}
\newcommand{\barint}{
\rule[.036in]{.12in}{.009in}\kern-.16in \displaystyle\int }
\newcommand{\barcal}{\mbox{$ \rule[.036in]{.11in}{.007in}\kern-.128in\int $}}
\def\mvint_#1{\mathchoice
          {\mathop{\vrule width 6pt height 3 pt depth -2.5pt
                  \kern -8pt \intop}\nolimits_{\kern -3pt #1}}%
%%%% P.S., 01/03/2001
% old definition had ...\nolimits_{#1}}
% \kern -3pt makes nicer distances between the integral sign
% and the domain of integration
%%%%
          {\mathop{\vrule width 5pt height 3 pt depth -2.6pt
                  \kern -6pt \intop}\nolimits_{#1}}%
          {\mathop{\vrule width 5pt height 3 pt depth -2.6pt
                  \kern -6pt \intop}\nolimits_{#1}}%
          {\mathop{\vrule width 5pt height 3 pt depth -2.6pt
                  \kern -6pt \intop}\nolimits_{#1}}}
\numberwithin{theorem}{section} \numberwithin{equation}{section}
\newcommand{\lap}{\Delta }
\newcommand{\aleq}{\precsim}
\newcommand{\ageq}{\succsim}
\newcommand{\aeq}{\approx}
\newcommand{\laps}[1]{(-\lap)^{\frac{#1}{2}}}
\let\latexchi\chi
\renewcommand\chi{\@ifnextchar_\sub@chi\latexchi}
\newcommand{\sub@chi}[2]{% #1 is _, #2 is the subscript
  \@ifnextchar^{\subsup@chi{#2}}{\latexchi^{}_{#2}}%
}
\newcommand{\subsup@chi}[3]{% #1 is the subscript, #2 is ^, #3 is the superscript
  \latexchi_{#1}^{#3}%
}
\def\tikz@arc@opt[#1]{% over-write!
  {%
    \tikzset{every arc/.try,#1}%
    \pgfkeysgetvalue{/tikz/start angle}\tikz@s
    \pgfkeysgetvalue{/tikz/end angle}\tikz@e
    \pgfkeysgetvalue{/tikz/delta angle}\tikz@d
    \ifx\tikz@s\pgfutil@empty%
      \pgfmathsetmacro\tikz@s{\tikz@e-\tikz@d}
    \else
      \ifx\tikz@e\pgfutil@empty%
        \pgfmathsetmacro\tikz@e{\tikz@s+\tikz@d}
      \fi%
    \fi
    \tikz@arc@moveto
    \xdef\pgf@marshal{\noexpand%
    \tikz@do@arc{\tikz@s}{\tikz@e}
      {\pgfkeysvalueof{/tikz/x radius}}
      {\pgfkeysvalueof{/tikz/y radius}}}%
  }%
  \pgf@marshal%
  \tikz@arcfinal%
}
\let\tikz@arc@moveto\relax
\def\tikz@arc@movetolineto#1{%
  \def\tikz@arc@moveto{\tikz@@@parse@polar{\tikz@arc@@movetolineto#1}(\tikz@s:\pgfkeysvalueof{/tikz/x radius} and \pgfkeysvalueof{/tikz/y radius})}}
\def\tikz@arc@@movetolineto#1#2{#1{\pgfpointadd{#2}{\tikz@last@position@saved}}}
\tikzset{%
  move to start/.code=\tikz@arc@movetolineto\pgfpathmoveto,%
  line to start/.code=\tikz@arc@movetolineto\pgfpathlineto}
\begin{document}

\begin{abstract}
Let $\Sigma$ a closed $n$-dimensional manifold, $\mathcal{N} \subset \mathbb{R}^M$ be a closed manifold, and $u \in W^{s,\frac ns}(\Sigma,\mathcal{N})$ for $s\in(0,1)$. We extend the monumental work of Sacks and Uhlenbeck by proving that if $\pi_n(\mathcal{N})=\{0\}$ then there exists a minimizing $W^{s,\frac ns}$-harmonic map homotopic to $u$. If $\pi_n(\mathcal{N})\neq \{0\}$, then we prove that there exists a $W^{s,\frac{n}{s}}$-harmonic map from $\mathbb{S}^n$ to $\mathcal{N}$ in a generating set of $\pi_{n}(\mathcal{N})$. 

Since several techniques, especially Pohozaev-type arguments, are unknown in the fractional framework (in particular when $\frac{n}{s} \neq 2$ one cannot argue via an extension method), we develop crucial new tools that are interesting on their own: such as a removability result for point-singularities and a balanced energy estimate for non-scaling invariant energies. Moreover, we prove the regularity theory for minimizing $W^{s,\frac{n}{s}}$-maps into manifolds.
\end{abstract}

\sloppy

% \subjclass[2010]{58E20, 35B65, 35J60, 35S05}
\maketitle
\tableofcontents
\sloppy

\section{Introduction}
In the geometric calculus of variations, it is of utmost importance to find and classify not only absolute minimizers, but one would like to understand the more subtle structure of critical points (local minimizers, saddle points, etc.) within topological classes --- with questions ranging from the Willmore conjecture recently solved by Marques and Neves \cite{MN14} to open questions on existence of critical points for knot energies by Freedman--He--Wang \cite{FHW94} and Kusner--Sullivan \cite{KS97}. In this paper we study the existence theory of minimal $W^{s,\frac{n}{s}}$-harmonic maps in homotopy between two manifolds $\Sigma$ and $\n$. 

Throughout the paper we assume that $\Sigma$ is a smooth compact $n$-dimensional Riemannian manifold without boundary, and $\n \subset \R^M$ is a connected smooth compact Riemannian manifold isometrically embedded into $\R^M$. 

The most fundamental result in existence theory for harmonic maps in homotopy classes is due to Sacks and Uhlenbeck, \cite{Sucks1,Sucks2}. Harmonic maps are critical points of the Dirichlet energy 
\begin{equation}\label{eq:dirichletenergy}
\int_{\Sigma} |\nabla u|^2\quad \text{such that}\quad u\in W^{1,2}(\Sigma,\n).
\end{equation}
We summarize the results of Sacks and Uhlenbeck, \cite[Theorem~5.1]{Sucks1} and \cite[Theorem~5.5]{Sucks1}, as follows.
\begin{theorem}[Sacks and Uhlenbeck]\label{th:sucks}
Let $\Sigma$ be a two-dimensional manifold.
\begin{enumerate}
 \item If $\pi_2(\n) = \{0\}$ then there exists a minimizing harmonic map in every homotopy class $C^0(\Sigma,\n)$.
 \item If $\Sigma = \S^2$ and $\pi_1(\n)=\{0\}$ then there exists a generating set of homotopy classes in $C^0(\S^2,\n)$ in which minimizing harmonic maps exist.
\end{enumerate}
\end{theorem}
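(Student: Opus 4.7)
The plan is to follow the Sacks--Uhlenbeck perturbation scheme: replace the conformally invariant energy \eqref{eq:dirichletenergy} by the family of subcritical energies
$E_\alpha(u) := \int_\Sigma (1+|\nabla u|^2)^\alpha$ with $\alpha > 1$, minimize at the subcritical level in each homotopy class, and then pass to the limit $\alpha \searrow 1$, tracking energy concentration (``bubbles'').

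First, for each fixed $\alpha > 1$, the exponent $2\alpha$ exceeds $\dim \Sigma = 2$, so $W^{1,2\alpha}(\Sigma,\n) \hookrightarrow C^0$, and minimizing sequences for $E_\alpha$ in a given homotopy class $[u_0]$ admit weakly $W^{1,2\alpha}$-convergent subsequences with continuous limits still in $[u_0]$. The direct method then yields a $W^{1,2\alpha}$-minimizer $u_\alpha$; the Euler--Lagrange equation is subcritical and standard elliptic bootstrap gives $u_\alpha \in C^\infty(\Sigma,\n)$.

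Next I would let $\alpha \searrow 1$. Comparison with a fixed smooth representative of $[u_0]$ gives $\int_\Sigma |\nabla u_\alpha|^2 \le C$ uniformly. The main technical tool is an $\varepsilon$-regularity estimate: there exists $\varepsilon_0 > 0$ such that whenever $\int_{B_r(x)}|\nabla u_\alpha|^2 < \varepsilon_0$ on a small ball, uniform $C^k$ bounds hold on $B_{r/2}(x)$. Consequently, up to a subsequence, $u_\alpha \to u_\infty$ strongly off a finite concentration set $\{x_1,\ldots,x_K\}$, and $u_\infty$ is a smooth harmonic map on $\Sigma \setminus \{x_1,\ldots,x_K\}$ with bounded Dirichlet energy. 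The heart of the argument---and the main obstacle---is the bubble analysis at each $x_i$: rescaling $u_\alpha$ near $x_i$ so as to normalize the concentrated energy produces, in the limit, a non-constant harmonic map $\phi_i : \R^2 \to \n$ of finite Dirichlet energy. The Sacks--Uhlenbeck removable singularity theorem promotes this to a harmonic sphere $\phi_i : \S^2 \to \n$ carrying at least $\varepsilon_0$ of energy, so only finitely many bubbles can occur; the same removability extends $u_\infty$ smoothly across each $x_i$. A careful neck analysis (showing that the annular regions between $u_\infty$ and the bubbles contribute neither energy nor topology) then yields that $[u_0]$ decomposes as the connected sum of $[u_\infty]$ with the classes $[\phi_i]$.

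For part (1), $\pi_2(\n) = \{0\}$ forces each bubble $\phi_i$ to be null-homotopic, whence $u_\infty \in [u_0]$; lower semicontinuity of the Dirichlet energy together with $\inf_{[u_0]} E_\alpha \to \inf_{[u_0]} \int_\Sigma |\nabla u|^2$ shows that $u_\infty$ is in fact a minimizer in $[u_0]$. For part (2), when $\Sigma = \S^2$ and $\pi_1(\n) = \{0\}$ the identification $[\S^2,\n] \cong \pi_2(\n)$ holds; applying the scheme to minimizing sequences in suitably chosen homotopy classes of low Dirichlet energy, the harvested bubbles are harmonic spheres whose classes span $\pi_2(\n)$---otherwise the connected-sum decomposition would allow one to iterate the procedure with strictly decreasing energy and produce a contradiction with the $\varepsilon_0$-energy quantization of bubbles.
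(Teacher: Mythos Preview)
First, note that the paper does not itself prove \Cref{th:sucks}; it is quoted from \cite{Sucks1} as background, and the paper instead establishes the fractional analogue \Cref{th:wesuck} via \Cref{th:main1} and \Cref{th:main2}. The comparison below is therefore against the original Sacks--Uhlenbeck argument and the paper's parallel strategy in the fractional setting.

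For part (1) your outline routes the conclusion $u_\infty \in [u_0]$ through a full bubble/neck decomposition of $[u_0]$. This is an unnecessary detour and not how \cite{Sucks1} proceeds: the neck analysis you invoke (no energy, no topology in the annuli) is a later refinement due to Parker, Ding--Tian et al., and the energy identity for the $\alpha$-approximation is in fact known to fail in general. Both \cite{Sucks1} and the paper's \Cref{th:main1} argue directly and never extract bubbles: once removable singularities yields a smooth $u_\infty$, one builds a competitor $v_\alpha$ equal to $u_\infty$ on small disks around each concentration point and equal to $u_\alpha$ outside. Because $\pi_2(\n)=\{0\}$, any two maps from a disk into $\n$ agreeing on the boundary circle are homotopic rel boundary, so $v_\alpha \sim u_\alpha$; minimality of $u_\alpha$ then forces the energy of $u_\alpha$ on those disks to be as small as that of the smooth $u_\infty$, contradicting concentration. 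This is exactly the mechanism in the proof of \Cref{th:main1}.

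For part (2) your sketch has a genuine gap. When a class $\Gamma_0$ fails to admit a minimizer and the limit splits into a body $u_\infty$ and a bubble, the induction on energy requires that \emph{both} resulting classes $\Gamma_1$, $\Gamma_2$ be nontrivial with infimal energy bounded above by $\#\Gamma_0 - \theta/2$ for a fixed $\theta>0$. Nontriviality of the bubble class is easy; nontriviality of the \emph{body} class is the hard step---nothing in your outline prevents all the energy from concentrating at a single point, leaving $u_\infty$ constant and $\Gamma_1$ trivial, in which case the iteration stalls. Sacks--Uhlenbeck rule this out with a specific estimate \cite[Lemma~5.3]{Sucks1} exploiting the Euler--Lagrange structure of $E_\alpha$ on $\S^2$; the present paper's replacement is the balanced energy estimate \Cref{th:fractionalestimatesmalldiskbyremaining}, which shows that a $W^{t,n/s}$-minimizer on $\S^n$ cannot have its energy confined to one small cap. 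Your ``otherwise iterate with strictly decreasing energy'' hides exactly this missing ingredient; see \Cref{la:5.4}, Step~4, for how the argument is actually closed.
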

\Cref{th:sucks} (1) was originally obtained independently by Lemaire \cite{Lemaire1978} and Schoen and Yau \cite{Schoen-Yau1979}. Also let us remark, that the condition $\pi_1(\n)=\{0\}$ in \Cref{th:sucks}(2) is for pure commodity of this introduction, for $\pi_1(\n) \neq \{0\}$ the same result holds up to the action of $\pi_1(\n)$ on $\pi_2(\n)$, see \Cref{th:main2}.

\Cref{th:sucks} (2) is sharp in the case $\pi_2(\n)\neq \{0\}$ in the following sense: harmonic maps may not exist in every homotopy class of $\pi_{2}(\n)$, a counterexample was provided by Futaki~\cite{Futaki80}. 

The motivation to study harmonic maps under topological assumptions is at least twofold. On the one hand, there is the geometric interest as the image of a harmonic map from $\S^2$ to $\n$ is a conformal branched immersion (which seems to have been the main motivation for Sacks and Uhlenbeck). On the other hand, there is an interest from the applications point of view, as the harmonic map energy can be interpreted as a model case for the Oseen--Frank theory of nematic liquid crystals, see, e.g., \cite{BZ11,LW14}.

In this work we develop an existence theory for $W^{s,\frac{n}{s}}$-harmonic maps in homotopy classes. For $s\in (0,1)$ such maps are defined to be minimizers or critical points of the energy
\begin{equation}\label{eq:Wsnsharmonicmaps}
 E_{s,\frac{n}{s}}(u) \coloneqq \int_{\Sigma}\int_{\Sigma} \frac{|u(x)-u(y)|^{\frac{n}{s}}}{|x-y|^{2n}}\dx\dy\quad \text{such that}\quad u\in W^{s,\frac ns}(\Sigma,\n).
\end{equation}
Our main result is the counterpart to \Cref{th:sucks} for the energy $E_{s,\frac{n}{s}}$.
\begin{theorem}\label{th:wesuck}
Let $\Sigma$ be a closed $n$-dimensional manifold, $n\geq 1$, $s \in (0,1)$, $\frac{n}{s} \geq 2$.\footnote{The condition $n/s \geq 2$ is trivially satisfied if $n \geq 2$. For $n=1$ it is mostly a technical assumption which plays only a role in the regularity theory, \Cref{s:regularity}. It should not be too much work to extend this theorem to the full case $s \in (0,1)$ for $n=1$ as well but we will not develop this point here.}
\begin{enumerate}
 \item If $\pi_n(\n) = \{0\}$ then there exists a minimizing harmonic map in every connected component of $C^0(\Sigma,\n)$.
 \item If $\Sigma = \S^n$, $n\geq 2$, and $\pi_1(\n)=\{0\}$ then there exists a generating set of homotopy classes in $\pi_n(\n)$ in which minimizing harmonic maps exist.
 \item If $\Sigma = \S^1$ then there exists a generating set of homotopy classes in $C^0(\S^n,\n)$ in which minimizing harmonic maps exist.
\end{enumerate}
\end{theorem}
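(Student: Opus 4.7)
The plan is to mimic the Sacks--Uhlenbeck perturbation-and-bubbling scheme, with the conformally invariant Dirichlet energy replaced by the scale-invariant non-local energy $E_{s,\frac{n}{s}}$ and the super-quadratic perturbation replaced by a super-critical $W^{s,p}$-energy.

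\textbf{Step 1: Super-critical perturbation.}
Fix $p > \frac{n}{s}$ slightly above the critical exponent. Since $sp>n$, one has the embedding $W^{s,p}(\Sigma,\n)\hookrightarrow C^{0,\alpha}(\Sigma,\n)$, so $E_{s,p}$ is coercive on each connected component of $C^0(\Sigma,\n)$ and a direct-method argument produces a minimizer $u_p$ in the prescribed class. Using the regularity theory for minimizing $W^{s,p}$-maps into manifolds (developed later in the paper) together with the fact that two $W^{s,p}$-close continuous maps are homotopic, $u_p$ remains in the original homotopy class for every $p$ sufficiently close to $\frac{n}{s}$.

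\textbf{Step 2: Limit and concentration--compactness.}
Let $p\searrow \frac{n}{s}$. Comparison with a smooth representative of the class furnishes a uniform bound on $E_{s,\frac{n}{s}}(u_p)$, so up to a subsequence $u_p\rightharpoonup u_\infty$ weakly in $W^{s,\frac{n}{s}}$ and a.e.\ in $\Sigma$. A concentration--compactness alternative then holds: outside a finite set $\{x_1,\dots,x_K\}$ of points carrying a quantum of energy the convergence is strong and, by lower semi-continuity combined with the regularity theory, $u_\infty$ is a minimizing $W^{s,\frac{n}{s}}$-harmonic map. At each $x_i$ one rescales $u_p(x_i+r_p\,\cdot)$ at a scale $r_p\to 0$ chosen, via the balanced energy estimate (one of the new technical tools of the paper), so as to capture a definite positive amount of energy in a fixed ball. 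The rescaled maps converge to a non-constant $W^{s,\frac{n}{s}}$-harmonic map $\omega\colon \R^n\setminus\{0\}\to\n$, and the removability of point singularities (the second key new tool) extends $\omega$ to $\tilde\omega\colon \S^n\to\n$, producing a non-trivial element of $\pi_n(\n)$.

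\textbf{Step 3: Conclusions.}
In case (1), the extracted bubble represents a class in $\pi_n(\n)=\{0\}$, hence is trivial; this precludes concentration altogether, so $u_p\to u_\infty$ strongly and $u_\infty$ is the sought minimizer in the prescribed component of $C^0(\Sigma,\n)$. In cases (2) and (3) (with $\Sigma=\S^n$), applying the above procedure to a representative of any class in $\pi_n(\n)$ produces, via a neck-analysis identity, a decomposition of that class as the sum of the class of $u_\infty$ and the classes of finitely many bubbles; iterating this splitting, one obtains a finite family of minimizing $W^{s,\frac{n}{s}}$-harmonic maps $\S^n\to\n$ whose homotopy classes generate $\pi_n(\n)$. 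The $\pi_1(\n)=\{0\}$ hypothesis in (2) ensures that these classes are unambiguously defined rather than only up to the action of $\pi_1$.

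\textbf{Main obstacle.}
The principal difficulty is the bubble and neck analysis in the absence of both Pohozaev-type identities and the extension-to-one-higher-dimension trick (which fails whenever $n/s\neq 2$). Establishing that the rescaled limit is genuinely non-constant and carries a quantised amount of energy rests on the balanced energy estimate for non-scaling-invariant truncations of $E_{s,\frac{n}{s}}$; passing from a punctured Euclidean limit to a map on $\S^n$ rests on the removability of point singularities. These two ingredients, each of independent interest, together take the place of the conformal and Pohozaev arguments used in the classical proof.
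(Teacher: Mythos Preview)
Your overall strategy is in the right spirit, but there is a genuine gap at the very first step that the paper explicitly circumvents.

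\textbf{The perturbation.} You perturb by raising the integrability exponent, minimizing $E_{s,p}$ for $p>\frac{n}{s}$ and letting $p\searrow\frac{n}{s}$. The paper instead raises the \emph{differentiability} exponent, minimizing $E_{t,\frac{n}{s}}$ for $t>s$ and letting $t\searrow s$. This is not a stylistic choice. In a footnote the authors note that the $p$-perturbation ``would have the technical drawback that $W^{s,\frac{n}{s}\alpha}\not\hookrightarrow W^{s,\frac{n}{s}}_{\loc}$ for $\alpha>1$ and $s\in(0,1)$'' (this is the Mironescu--Sickel non-embedding), whereas one does have $W^{t,\frac{n}{s}}\hookrightarrow W^{s,\frac{n}{s}}_{\loc}$ for $t>s$. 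Concretely: from $E_{s,p}(u_p)\le E_{s,p}(v)$ you cannot conclude a uniform bound on $E_{s,\frac{n}{s}}(u_p)$, because on fractional scales $[\,\cdot\,]_{W^{s,p}}$ does not control $[\,\cdot\,]_{W^{s,q}}$ for $q<p$ even on compact domains. By contrast, on a compact $\Sigma$ one has the pointwise kernel inequality $|x-y|^{-2n}\aleq |x-y|^{-n-t\frac{n}{s}}$, so $E_{s,\frac{n}{s}}(u_t)\aleq E_{t,\frac{n}{s}}(u_t)\le E_{t,\frac{n}{s}}(v)$, which is exactly how the paper obtains the uniform energy bound in the proof of \Cref{th:strongconvoutsideofpoints}. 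Without this bound your Step~2 (weak $W^{s,\frac{n}{s}}$-compactness and concentration--compactness in that space) does not get off the ground.

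\textbf{The bubbling step.} Even with the correct perturbation, the paper does \emph{not} extract bubbles by rescaling $u_t(x_i+r_t\,\cdot)$ and passing to a limit on $\R^n$. For part~(1) it rules out concentration by building a competitor $v_t$ equal to $u_t$ outside a small ball and to the regular limit $u_s$ inside (\Cref{th:main1}); minimality of $u_t$ then forces the energy on the ball to be small. For parts~(2)--(3) it glues $u_t$ and $u_s$ along an annulus to split the homotopy class into two pieces $\Gamma_1,\Gamma_2$ (\Cref{la:5.4}); the balanced energy estimate (\Cref{th:fractionalestimatesmalldiskbyremaining}) is used \emph{not} to select a rescaling radius but to show that $\Gamma_1$ cannot be trivial, because triviality would force the energy on the complement of the concentration ball to control the energy on the ball, contradicting blow-up. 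Likewise, removability (\Cref{th:removabilityminimizing}) is used to show the weak limit $u_s$ is a \emph{minimizer} in its own class (hence regular by \Cref{th:reghomo}), not to extend a rescaled bubble over a puncture. Your ``neck-analysis identity'' and iterated rescaling would require substantially more: energy quantisation and no-neck statements that are not available here in the absence of Pohozaev-type tools.
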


In particular we obtain the existence of nontrivial $W^{s,\frac{n}{s}}(\S^n,\n)$-harmonic maps whenever $\pi_{n}(\n) \neq \{0\}$, see \Cref{co:existencenontrivial}. \Cref{th:wesuck} sheds some light on a question raised by Mironescu \cite{M07} on the existence of minimizing $W^{s,\frac{n}{s}}(\S^n,\S^n)$-maps in homotopy groups.

Here, we also remark that in \Cref{th:wesuck} (2) the condition $\pi_1(\n)=\{0\}$ is not necessary: the same result for $\pi_1(\n) \neq \{0\}$ holds up to the action of $\pi_1(\n)$ on $\pi_n(\n)$, see \Cref{th:main2}.

\Cref{th:wesuck}~(1) is proven in Section~\ref{s:sucks1}, see \Cref{th:main1}, and \Cref{th:wesuck}~(2) is proven in Section~\ref{s:sucks2}, see \Cref{th:main2}. 

Similarly as in the case of harmonic maps, there are at least two motivations for studying $W^{s,\frac{n}{s}}$-maps, one coming from geometry and the other one from applications.

Firstly, as an example of a geometric motivation, the $W^{\frac{1}{2},2}$-energy appears as trace energy and one can model the \emph{free boundary} of minimal surfaces with such energies, cf. Moser~\cite{M11}, Roberts \cite{R18}, Millot--Sire \cite{MS15}, and Pigati--Da~Lio \cite{DLP17}\footnote{As a curious sidenote let us mention that to a certain extent this was actually used in Douglas' proof of the Plateau problem in 1932, \cite{D31}.}.

Secondly, since the 1990's nonlocal energies have been used by applied topologists to define self-repulsive curvature energies for curves and surfaces. 
Self-repulsiveness is a property that is desirable for models of cells, DNA, etc., and one natural way to include this feature is a nonlocal energy. For example O'Hara's knot energies \cite{OH91,OH92}, one of which is the famous M\"obius energy \cite{FHW94}; or the tangent points energies proposed by Banavar et al. \cite{BGMM03}; or the Menger curvature suggested by Gonzalez and Maddocks \cite{GM99}. We refer to \cite{AMN16,StrzvdM-2013,svdm:somemenger,svdm:imc} for further details.

There is a close connection of these nonlocal repulsive curvature energies to $W^{s,\frac{n}{s}}$-harmonic maps, as was discovered in \cite{BRS16,BRS19} for the O'Hara energies: one can construct an energy $\tilde{E}_{s,1/s}$ reminiscent of $E_{s,1/s}$ such that critical knots $\gamma$ (with respect to their knot energy) induce via their derivative $\gamma'$ an $\tilde{E}_{s,1/s}$-critical map (as a map into $\S^2$). 
This link between self-repulsive curvature energies and fractional harmonic map energies (at least formally) seems to extend to existence theory; a famous theorem by Freedman--He--Wang \cite{FHW94} states that minimizers for the M\"obius energy exist in prime knot classes (which are generators of the ambient isotopy classes), whereas Kusner--Sullivan \cite{KS97} conjectured that minimizers may not exist in composite knot classes. This is a very similar statement to \Cref{th:sucks}(2) and \Cref{th:wesuck}(2) --- minimizers exist in a generating subset of the homotopy class. Moreover, as mentioned above, for harmonic maps it is known that minimizers may not exist in some elements of the homotopy group: there is an example due to Futaki \cite{Futaki80}.

The techniques by Freedman--He--Wang \cite{FHW94} are very geometric in nature, and it is completely unclear how to extend them to other topological curvature energies (especially for scale-invariant self-repulsive surfaces energies. There are very few techniques available, see~\cite{SvdM13,Sconformal}). One of the underlying motivations of the present work is to to develop an analytic foundation for techniques that hopefully will be applicable for the wide range of scale-invariant self-repulsive critical knot and surface energies proposed by the applied topology community.

\subsection*{Outline, strategy of the proof, and main results}
If we take a generic minimizing sequence in some homotopy class of the Dirichlet energy or the $E_{s,\frac{n}{s}}$ energy, then there is no reason that it converges to a minimizer. Indeed, 
e.g., if $\Sigma = \S^n$ and $u$ is a minimizer in some nontrivial homotopy group (suppose it exists), then we can conformally rescale without changing the energy. Namely for any $\lambda > 0$, $u_\lambda(x) \coloneqq u(\tau(\lambda \tau^{-1}(x)))$, where $\tau: \R^n \to \S^n\setminus \{N\}$ is the inverse stereographic projection, satisfies $E_{s,\frac{n}{s}}(u_\lambda) = E_{s,\frac{n}{s}}(u)$, see \Cref{s:balancing}. Then $(u_\lambda)_{\lambda > 0}$ is a minimizing sequence, but $u_{\lambda}$ weakly converges to a constant map as $\lambda \to 0$. In other words, the energy $E_{s,\frac{n}{s}}$ is not coercive in the set of $W^{s,\frac{n}{s}}(\Sigma,\n)$-maps belonging to one (nontrivial) homotopy class.

Sacks and Uhlenbeck mitigated this non-coercivity by introducing a special minimizing sequence. Namely they defined the minimizing sequence $(u_\alpha)_{\alpha > 1}$ as the minimizers of the approximate energy
\[
 E_\alpha(u) \coloneqq \int_{\Sigma} (1+|\nabla u|^2)^{\alpha}.
\]
As $\alpha \to 1^+$ one hopes that the sequence $(u_\alpha)_{\alpha > 1}$ converges to a minimizer of the Dirichlet energy $E_1$. In the case, when $\Sigma = \S^2$, since the energy $E_\alpha$ is not conformally invariant, Sacks and Uhlenbeck were able to obtain some control over the energy concentration that is likely to happen. Crucially they showed that in this case energy concentration cannot happen at only one point, but either happens nowhere or at least at two points. 

We follow a similar philosophy, but we have to develop several novel arguments to overcome the problem of nonlocality of the energies $E_{s,n/s}$. Specifically, there are only few available Pohozaev-type arguments and they seem not to be working in our case (in contrast to the case of local equations be it harmonic or $n$-harmonic maps). Indeed, the only case where such arguments (and consequently monotonicity estimates etc.) are known is the case $n/s = 2$, cf. Millot--Sire~\cite{MS15}.

Following the Sacks--Uhlenbeck approach, we will first construct the minimizing sequence for $E_{s,n/s}$ via minimizers $u_t$, $t>s$, of the energy
\[
 E_{t,\frac{n}{s}}(u,\Sigma) \coloneqq \int_{\Sigma}\int_{\Sigma} \frac{|u(x)-u(y)|^{\frac{n}{s}}}{|x-y|^{n+t\frac{n}{s}}}\dx\dy.
\]
That is, we try to approximate $W^{s,\frac{n}{s}}$-minimizing maps by $W^{t,\frac{n}{s}}$-minimizing maps and let $t \to s^+$.\footnote{It would be more in line with the original approach of Sacks-Uhlenbeck if we chose $W^{s,\frac{n}{s}\alpha}$-minimizers, $\alpha \to 1^+$. However, that would have the technical drawback that $W^{s,\frac{n}{s}\alpha} \not \hookrightarrow W^{s,\frac{n}{s}}_{loc}$ for $\alpha > 1$ and $s \in (0,1)$, see \cite{MiSi15}. But we do have the embedding $W^{t,\frac{n}{s}} \hookrightarrow W^{s,\frac{n}{s}}_{loc}$ for $t > s$, \cite{RunstSickel}.}

In Section~\ref{s:regularity} we develop a regularity theory for minimizers of $E_{t,\frac{n}{s}}$. More precisely, we show that on balls where the $E_{s,\frac{n}{s}}$-energy of $u_t$ is not concentrating we have regularity estimates in $W^{s_0,\frac{n}{s}}$ for some $s_0 > s$ which is independent of $t\to s$, see \Cref{th:reghomo}.

Then, from a standard covering argument, one obtains that the minimizing sequence $u_t$ converges strongly to $u_s$ outside of a singular set consisting of finitely many points. The crucial next result we need is that $u_s$ is regular. Sacks and Uhlenbeck remove the point singularities by using a Pohozaev-type argument. In the nonlocal situation Pohozaev-type arguments are still under development, see \cite{RS14,DLPoho,G20}. Another option is to show that $u_s$ is a critical point of the $W^{s,\frac{n}{s}}$-harmonic map equation (which is easy, see \Cref{pr:removepde}) and use regularity theory for critical points. Also this approach is not feasible for us, because the regularity theory for critical points into general target manifolds for the $E_{s,\frac{n}{s}}$-energy (and also for the local analogue $n$-harmonic maps) is a major open problem since the 1990s, cf. \cite{SS17}. Our approach is to show in Section~\ref{s:removability} that the limit $u_s$ is actually still a minimizer (but in its own homotopy class, which might be different from the homotopy class of $u_t$), see \Cref{th:removabilityminimizing}. With the regularity theory from Section~\ref{s:regularity} for \emph{minimizers} we then get the desired regularity for the limit map $u_s$, see \Cref{th:corollaryremovability}.

The probably most crucial novelty of our work is contained in \Cref{s:balancing}: we essentially show that if $\Sigma = \S^n$ the minimizers of the non-scaling invariant energy $E_{t,\frac{n}{s}}$, $t>s$, will not have energy concentration in only one point as $t \to s$ (it has to be either no point or at least two points of energy concentration). We establish this statement by showing in \Cref{th:fractionalestimatesmalldiskbyremaining} that the energy of a $E_{t,\frac{n}{s}}$-minimizer on a small ball is controlled by the energy of the complement of that ball. We are not aware of such a statement in the literature even in the local case of $p$-harmonic maps, see \Cref{th:fractionalestimatesmalldiskbyremainingW1p}. However, see \cite{LMM19}, where the authors seem to use a similar effect to show that not all harmonic maps can be obtained from a Sacks--Uhlenbeck approximation. In our case, we use \Cref{th:fractionalestimatesmalldiskbyremaining} to replace the role of Sacks and Uhlenbeck's \cite[Lemma~5.3]{Sucks1} which is based on a rather explicit computation of the Euler--Lagrange equation which we could not reproduce in our nonlocal setting.

The remaining outline is as follows: in \Cref{s:homotopy} we recall the basic notion of homotopy for Sobolev maps. In \Cref{s:sucks1} we prove the analogue of \Cref{th:sucks}(1), and in \Cref{s:sucks2} we prove the analogue of \Cref{th:sucks}(2).

As corollaries we obtain in \Cref{co:existencenontrivial} the existence of nontrivial $W^{s,\frac{n}{s}}(\S^n,\n)$-harmonic maps whenever $\pi_{n}(\n) \neq 0$ and in \Cref{co:existencenontrivialsmallenergy} existence of minimizers in any nontrivial homotopy class $\Gamma$ which has small minimal energy $\inf_{\Gamma} E_{s,\frac{n}{s}}$.

\subsection*{Remarks on earlier extensions of Sacks--Uhlenbeck theory}

The work by Sacks and Uhlenbeck has been extended to finding $n$-harmonic maps in $\pi_n(\n)$; a version of \cite[Theorem~5.1]{Sucks1} follows from White's \cite{White-homotopy}, for a version of \cite[Theorem~5.5]{Sucks1} see Kawai--Nakauchi--Takeuchi \cite{Kawai-Nakauchi-Takeuchi}. \cite[Theorem~5.5]{Sucks1} uses 
a removability theorem for $n$-harmonic maps, see for example \cite{Mou-Yang-1996} or \cite{Duzaar-Fuchs}. 
Recently, some of those results were also generalized to the polyharmonic case, see \cite{polyharmonic}. See also \cite{R17} for viscosity methods for minimal surfaces.

There also has been a tremendous amount of work dedicated to the analysis of bubbles forming in the process of the minimization procedure -- for harmonic maps \cite{LR02,LR14,Parker,Lamm2010}, for $H$-surfaces see \cite{BrezisCoronH}, for Willmore surfaces \cite{BR14}, for $n$-harmonic maps \cite{DK98}, for biharmonic maps \cite{LaurainRivierebiharmonic}, for Dirac-harmonic maps \cite{JLZ19,Branding20}, and for fractional harmonic maps \cite{DL15bubble,LaurainPetrides}. Let us also mention the flow-technique  developed for harmonic maps by Struwe \cite{S85} which he used to show the existence of nontrivial minimal harmonic maps (cf.  \Cref{co:existencenontrivial}). For results concerning 1-harmonic maps we refer to \cite{Lasic}. We also refer to \cite{RandomRemy}.

\subsection*{A brief history of fractional harmonic maps}
% \armin{add some shameless self-promotion here}
The theory of fractional harmonic maps, i.e., critical points and minimizers of the energy in \eqref{eq:Wsnsharmonicmaps}, can be traced back to the 1930s when Douglas \cite{D31} used them (implicitly) to solve the Plateau problem and win the Fields price. 
Analytically they were introduced in the pioneering work by Da~Lio and Rivi\`ere \cite{DLR11,DaLio-Riviere-1Dmfd} who coined the notion of fractional harmonic maps and developed the regularity theory for critical (i.e., not necessarily minimizing) $W^{\frac{1}{2},2}$-harmonic maps on lines into manifolds. This regularity theory for critical points was extended to various variations of the energy functional \cite{S12,DL13,DLS14,SLp15,Seps15,DLS17,MRS17} --- in particular the notion of (critical) $W^{s,p}$-harmonic maps and their regularity theory into spheres was introduced in \cite{S15} (see also \cite{MS18}). The question of existence of minimizing $W^{s,\frac{n}{s}}(\S^n,\S^n)$-maps of degree one was raised earlier, see Mironescu \cite{M07}.

Moser~\cite{M11} and Roberts \cite{R18} developed a theory of \emph{intrinsic} fractional harmonic maps and their regularity theory. Moser \cite{M11}, Roberts \cite{R18}, and Millot--Sire \cite{MS15} used the technique of harmonic extension to the upper half-plane to characterize $W^{\frac{1}{2},2}$-harmonic maps as a partial free boundary problem of a classical harmonic map and obtain regularity theory from arguments for free boundary harmonic maps due to Scheven \cite{S06} --- see also \cite{DLP17}. Millot--Sire \cite{MS15} obtained from this approach a monotonicity formula for fractional harmonic maps which lead to the partial regularity theory of stationary harmonic maps. Sadly, the harmonic extension technique is as of now only available for $L^2$-type functionals, i.e., $W^{s,2}$-harmonic maps, thus not applicable in our case.

The singular set of stationary and minimizing $W^{s,2}$-harmonic maps (in the supercritical dimension) was analyzed in \cite{MSY18,MP20,MPS19}.

One challenge that keeps appearing when analyzing fractional harmonic maps (e.g., with respect to monotonicity formulas) is the lack of understanding of the fine estimates known for local equations --- such as Pohozaev identities. There has been some important progress in this direction, \cite{RS14,DLPoho,G20}, but in many cases the techniques available are bound to some form of the harmonic extension technique introduced for fractional harmonic maps by \cite{M11,MS15} --- which is not available for $W^{s,\frac{n}{s}}$-harmonic maps we consider here (unless $n/s = 2$). This is very different to the situation of $n$-harmonic maps, where Pohozaev-type estimates are readily available.

\subsection*{Notation} 
Throughout the paper we assume that $\Sigma$ is a closed Riemannian $n$-manifold embedded into $\R^L$ and $\n$ is a closed Riemannian manifold embedded into $\R^M$.

For the fractional Gagliardo semi-norm we use the standard notation
\[
 [u]_{W^{s,p}(\Omega)} \coloneqq \brac{\int_{\Omega}\int_{\Omega} \frac{|u(x)-u(y)|^{p}}{|x-y|^{n+sp}}\dx\dy}^{\frac{1}{p}},
\]
for any open $\Omega\subseteq\Sigma$.

We will also write
\[
 E_{s,p}(u,\Omega) \coloneqq \int_{\Omega}\int_{\Omega} \frac{|u(x)-u(y)|^{p}}{|x-y|^{n+sp}}\dx\dy.
\]
We will write $\N_0 = \N\cup \{0\}$. We denote by $B(x,r)$ the geodesic ball about $x$ of radius $r$ in $\Sigma$. When the center of the ball will not play any role we will simply write  $B(r)$. 

For simplicity of notation, we write $\aleq$ if there exists a constant $C$ (not depending on any crucial quantity) such that $A \leq C\, B$. We use $\ageq$ in a similar way. Finally, $A \aeq B$ means that $A \aleq B$ and $B \aleq A$.

\subsection*{Acknowledgment}

We would like to thank Maciej Zdanowicz for consultations in Algebraic Geometry and Jean Van Schaftingen for helpful discussions on homotopy classes and Sobolev spaces.

\begin{itemize}
 \item K.M. was supported by FSR Incoming Post-doctoral Fellowship.
 \item A.S. was supported by Simons foundation grant no 579261.
\end{itemize}

\section{Preliminaries on homotopy theory for Sobolev maps}\label{s:homotopy}
The purpose of this section is to recall the definition for homotopy classes of maps $u \in W^{s,\frac{n}{s}}(\Sigma,\n)$. Here and henceforth we always assume that 
$\Sigma$ is a smooth $n$-dimensional compact manifold without boundary, and $\n \subset \R^M$ is a smooth embedded manifold, also without boundary. 

Let us stress that all of the definitions and statements in this section are well-known and we claim no originality whatsoever.

We make no effort to give the most general notion (e.g., considering $\Sigma$ with boundary), but concentrate on what is needed for our purposes. For more detailed exposition we refer, e.g., to \cite[Section~4]{HL03}. Maps in $u \in W^{s,\frac{n}{s}}(\Sigma,\n)$ may not be continuous, so one needs to define homotopy classes $W^{s,\frac{n}{s}}(\Sigma,\n)$ via approximation.

We first recall the usual notion of a homotopy for continuous maps. Two maps, $u,v \in C^0(\Sigma,\n)$ are \emph{homotopic}, in symbols $u \sim v$, if there exists a \emph{homotopy} $H \in C^0([0,1] \times \Sigma, \n)$, namely an $H$ which satisfies
\[
 H(0) = u, \quad H(1) = v.
\]
Since $\Sigma$ and $\n$ are smooth Riemannian manifolds, there is no difference between continuous homotopy and smooth homotopy --- and one does not distinguish between them. This is the content of the following lemma. 
\begin{lemma}\label{la:homotopygroupsequal}
Let $u,v \in C^\infty(\Sigma,\n)$. The following relations are equivalent:
\begin{itemize}
 \item $u \sim v$ in $C^0$, i.e., there exists $H \in C^0([0,1],C^0(\Sigma,\n))$ such that $H(0) = u$ and $H(1) = v$;
\item $u \sim v$ in $C^\infty$, i.e., there exists $H \in C^\infty([0,1], C^\infty(\Sigma,\n))$ such that $H(0) = u$ and $H(1) = v$.
% \item $u \sim v$ in $C^{k,\alpha}$, i.e., there exists $H \in C^0([0,1],C^{k,\alpha}(\Sigma,\n))$ such that $H(0) = u$ and $H(1) = v$, where $k \in \N_0$ and $\alpha \geq 0$, .
\end{itemize}
\end{lemma}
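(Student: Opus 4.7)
The implication "$C^\infty$ implies $C^0$" is trivial since smooth maps are continuous. The substance lies in the reverse direction, where the strategy is standard: approximate a continuous homotopy by a smooth one using mollification, then use the nearest-point projection of a tubular neighborhood of $\n \subset \R^M$ to push back into $\n$. Let $\pi : U \to \n$ denote the smooth nearest-point retraction from a uniform tubular neighborhood $U$ of $\n$, and fix $\delta > 0$ such that any two points of $\n$ within distance $\delta$ have their straight line segment contained in $U$.

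For the construction, let $H \in C^0([0,1] \times \Sigma, \n)$ with $H(0)=u$, $H(1)=v$. First extend $H$ to a continuous map on $\R \times \Sigma$ by setting $H(t,\cdot) = u$ for $t \le 0$ and $H(t,\cdot) = v$ for $t \ge 1$. Using a finite atlas with subordinate partition of unity on $\Sigma$ (or, equivalently, embedding $\Sigma \hookrightarrow \R^L$ with smooth retraction $r : W \to \Sigma$ from a tubular neighborhood $W$), build convolutional mollifications $H_\varepsilon : [0,1]\times \Sigma \to \R^M$. By uniform continuity on a compact neighborhood of $[0,1] \times \Sigma$, $H_\varepsilon \to H$ uniformly. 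For $\varepsilon$ small enough the image lies in $U$, so
\[
\tilde H_\varepsilon \coloneqq \pi \circ H_\varepsilon : [0,1] \times \Sigma \to \n
\]
is smooth and $C^0$-close to $H$; in particular $\tilde H_\varepsilon(0,\cdot)$ and $\tilde H_\varepsilon(1,\cdot)$ are uniformly $\delta$-close to $u$ and $v$ respectively.

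The mollified homotopy $\tilde H_\varepsilon$ does not quite match the correct endpoints, so we patch. Since $\tilde H_\varepsilon(0)$ and $u$ are smooth $\n$-valued maps with pointwise distance $< \delta$, the formula
\[
K_0(s,x) \coloneqq \pi\bigl((1-s)u(x) + s\, \tilde H_\varepsilon(0,x)\bigr), \qquad s \in [0,1],
\]
defines a smooth homotopy from $u$ to $\tilde H_\varepsilon(0,\cdot)$ with values in $\n$. An analogous smooth homotopy $K_1$ connects $\tilde H_\varepsilon(1,\cdot)$ to $v$. Concatenating $K_0$, $\tilde H_\varepsilon$, $K_1$ in $s$ and reparametrizing by a fixed smooth cutoff $\psi:[0,1]\to[0,1]$ with $\psi \equiv 0$ near $0$ and $\psi \equiv 1$ near $1$ renders each piece constant near its endpoints; the concatenation is therefore jointly smooth in $(s,x)$ and joins $u$ to $v$, yielding the desired $H \in C^\infty([0,1], C^\infty(\Sigma,\n))$.

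The main technical obstacle is ensuring the spatial mollification is well-defined on the manifold $\Sigma$ and the image of the mollification stays inside the tubular neighborhood $U$ of $\n$. Both issues dissolve once one uses a partition of unity (or an ambient embedding with smooth retract) and invokes uniform continuity of $H$ on the compact cylinder; all other steps, including the endpoint correction and the reparametrized concatenation, are essentially formal once the tubular-neighborhood retract $\pi$ is in hand.
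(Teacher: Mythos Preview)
Your proof is correct and follows exactly the approach the paper indicates: the paper only gives a one-line sketch (``by approximation, using a standard mollification argument in $[0,1]\times\Sigma$ by constant extension to $(-1,2)\times\Sigma$''), and you have filled in the standard details, including the tubular-neighborhood projection and the endpoint correction via straight-line homotopies and smooth reparametrization.
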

The proof of \Cref{la:homotopygroupsequal} is by approximation (using a standard mollification argument in $[0,1] \times \Sigma$ by constant extension to $(-1,2) \times \Sigma$).
\begin{remark}
As a sidenote let us remark, that \Cref{la:homotopygroupsequal} may not be true on non-Riemannian manifolds, e.g., sub-Riemannian manifolds, Carnot groups, or more general metric spaces. See, e.g., \cite{WY14,HST14,HS14,So17,H18}.
\end{remark} 

The reason that we can make sense of the notion of homotopy for $W^{s,\frac{n}{s}}(\Sigma,\n)$-maps (recall: such maps may be even not continuous) is that they can be approximated by smooth maps in $C^\infty(\Sigma,\n)$.

Indeed, the following is going to be the definition of homotopy that we are going to use from now on.
\begin{definition}\label{def:homotopy}
Let $u,v \in W^{s,\frac{n}{s}}(\Sigma,\n)$. 
\begin{enumerate}
 \item We say $u \sim v$ ($u$ is homotopic to $v$) if the following holds: for any smooth approximation $u_\eps$ of $u$ and $v_\eps$ of $v$ in $W^{s,\frac{n}{s}}$ we find an $\eps_0 > 0$ such that for every $\eps \in (0,\eps_0)$ we have $u_\eps \sim v_\eps$ in $C^\infty$.
\item We define the homotopy class $[u]$ as
\[
 [u] \coloneqq \left \{ v \in W^{s,\frac{n}{s}}(\Sigma,\n)\colon\quad v \sim u \right \}.
\]
\end{enumerate}
\end{definition}
\begin{remark}
Let us remark that one can define equivalently the relation 
\[
 u\sim v \quad \text{ in } W^{s,\frac ns}(\Sigma,\n)
\]
for $u,\, v \in W^{s,\frac ns}(\Sigma,\n)$ as: there exists a path $H(t)\in C^0([0,1],W^{s,\frac ns}(\Sigma,\n))$ such that $H(0)=u$ and $H(1) = v$, cf. \cite[Section 4]{Brezis-theinterplay}.
\end{remark}

The justification for \Cref{def:homotopy} is contained in the following proposition. In particular it implies that we do not need to distinguish between $W^{s,\frac{n}{s}}(\Sigma,\n)$-homotopies and $C^0(\Sigma,\n)$-homotopy classes.
\begin{proposition}\label{pr:homotopiesallsame}
\Cref{def:homotopy} is well-defined in the following sense:
\begin{enumerate}
 \item Any map $u \in W^{s,\frac{n}{s}}(\Sigma,\n)$ can be approximated by maps $u_k \in C^\infty(\Sigma,\n)$ with respect to the $W^{s,\frac{n}{s}}$-norm.
 \item For any map $u \in W^{s,\frac{n}{s}}(\Sigma,\n)$ there exists a small number $\eps = \eps(u) > 0$ such that any map $v \in W^{s,\frac{n}{s}}\cap C^0(\Sigma,\n)$ with $\|u-v\|_{W^{s,\frac{n}{s}}(\Sigma)} < \eps$ is of the same $C^0$-homotopy type.
 \item If $u,v \in C^0\cap W^{s,\frac{n}{s}} (\Sigma,\n)$ then $u\sim v$ in the continuous sense, if and only if $u\sim v$ in the $W^{s,\frac{n}{s}}(\Sigma,\n)$-sense.
\end{enumerate}
\end{proposition}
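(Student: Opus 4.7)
The plan is to handle the three statements in order, with each building on the previous. The common toolkit consists of the critical embeddings $W^{s,n/s}(\Sigma) \hookrightarrow \mathrm{VMO}(\Sigma) \cap L^{n/s}(\Sigma)$ and the smooth nearest-point projection $\Pi \colon \n_\delta \to \n$ onto a thin tubular neighborhood $\n_\delta \coloneqq \{y \in \R^M : \mathrm{dist}(y,\n) < \delta\}$.

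For (1), the approach is the classical Schoen--Uhlenbeck/Brezis--Mironescu/Bousquet--Ponce--Van Schaftingen density theorem at the critical scaling $sp = n$. Working locally in charts, I would mollify to obtain $u_\eps \in C^\infty(\Sigma, \R^M)$; the VMO embedding forces $\sup_{x\in\Sigma} \mathrm{dist}(u_\eps(x), \n) \to 0$, so that $\Pi \circ u_\eps$ lies in $C^\infty(\Sigma, \n)$ and converges to $u$ in $W^{s,n/s}$.

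For (2), I first invoke (1) to fix a smooth representative $u_{\ast} \in C^\infty(\Sigma,\n)$ with $\|u - u_\ast\|_{W^{s,n/s}} < \eta$, where $\eta$ will be chosen later. For any $v \in W^{s,n/s}\cap C^0$ with $\|u-v\|_{W^{s,n/s}} < \eps$, I shall produce a $C^0$-homotopy from $v$ to $u_\ast$; since by \Cref{def:homotopy} the homotopy type of $u$ is represented by $u_\ast$ for small $\eta$, this suffices. For a fixed small $\rho > 0$, the VMO bound places both $(u_\ast)_\rho$ and $v_\rho$ inside $\n_\delta$, while Young's inequality combined with $W^{s,n/s} \hookrightarrow L^{n/s}$ yields $\|(u_\ast)_\rho - v_\rho\|_\infty \leq C(\rho)(\eta+\eps)$. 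Choosing $\eta, \eps$ small enough relative to $\delta$ and $C(\rho)$, the convex segment $(1-t)(u_\ast)_\rho + t v_\rho$ remains in $\n_\delta$, and projecting via $\Pi$ gives a $C^0$-homotopy from $\Pi((u_\ast)_\rho)$ to $\Pi(v_\rho)$. The analogous straight-segment-then-project construction applied to the pairs $(u_\ast, (u_\ast)_\rho)$ and $(v, v_\rho)$ --- where continuity of $u_\ast$ and $v$ ensures uniform smallness of $u_\ast - (u_\ast)_\rho$ and $v - v_\rho$ --- yields the two further $C^0$-homotopies needed, so that concatenation connects $v$ to $u_\ast$.

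Part (3) reduces to (1), (2), and \Cref{la:homotopygroupsequal}. For the $C^0 \Rightarrow W^{s,n/s}$ direction: given arbitrary smooth approximations $u_\eps, v_\eps$ of $u,v$, part (2) produces $C^0$-homotopies $u \sim u_\eps$ and $v \sim v_\eps$ for all small $\eps$; chaining with the assumed $u \sim v$ gives $u_\eps \sim v_\eps$ in $C^0$, which \Cref{la:homotopygroupsequal} upgrades to $C^\infty$, fulfilling \Cref{def:homotopy}. The converse is symmetric. The main obstacle I anticipate is the parameter juggling in (2): the mollification scale $\rho$ must be small enough for the VMO effect to place the mollified maps inside $\n_\delta$, but the Young-type constant $C(\rho)$ degenerates as $\rho \to 0$; one must therefore fix $\rho$ first using the VMO bounds for $u_\ast$ and $v$ independently, and only then select $\eta, \eps$ small relative to $C(\rho)^{-1}\delta$.
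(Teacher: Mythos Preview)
Your overall strategy matches the paper's: mollify, use the critical Sobolev estimate to land in the tubular neighborhood and project, and compare mollified maps in $L^\infty$ via a convolution bound. For part~(2) the paper compares two continuous approximants $g_1, g_2$ directly (rather than fixing a smooth reference $u_\ast$) and uses the $L^1$-norm rather than $L^{n/s}$ for the $L^\infty$ comparison, but these differences are cosmetic.

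There is one technical point in your part~(2) that does not go through as written. For the homotopy from $v$ to $\Pi(v_\rho)$ you invoke the straight segment $(1-t)v + tv_\rho$ followed by projection, relying on ``continuity of $v$'' to make $\|v - v_\rho\|_{L^\infty}$ small. But that smallness is governed by the modulus of continuity of the individual $v$, which is not controlled by $W^{s,n/s}$-closeness to $u$ (we are at the critical scaling, with no embedding into any H\"older class). Since $\rho$ must be fixed \emph{before} $v$ is given --- so that $\eps = \eps(u)$ is independent of $v$ --- you cannot tune $\rho$ to the unknown modulus of continuity. The fix, which is exactly what the paper does, is to replace the straight segment by the mollification-parameter homotopy $\tau \in [0,\rho] \mapsto \Pi(v_\tau)$. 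This only requires $\dist(v_\tau(x), \n) < \delta$ for every $\tau \in (0,\rho]$ and every $x$, and that follows uniformly in $v$ from
\[
\dist(v_\tau(x), \n) \leq C\, \sup_a [v]_{W^{s,n/s}(B(a,2\tau))} \leq C\Big( \sup_a [u]_{W^{s,n/s}(B(a,2\rho))} + \eps \Big),
\]
with the supremum over $a$ made small by absolute continuity of the integral once $\rho$ is chosen depending on $u$ alone. Continuity of $\tau \mapsto \Pi(v_\tau)$ in $C^0(\Sigma,\n)$ then uses only the qualitative continuity of $v$, with no quantitative input.
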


For the convenience of the reader, we give the proof of \Cref{pr:homotopiesallsame} below. 

We begin by recalling the fact that the manifolds we work with have a tubular neighborhood on which there exists a smooth nearest point projection. For a proof we refer to \cite[Section 2.12.3]{S96}.
\begin{lemma}\label{la:tubular}
Let $\n \subset \R^M$ be a smooth, compact manifold without boundary. There exists $\delta =\delta(\n) > 0$ such that on the tubular neighborhood
\[
 B_{\delta}(\n) \coloneqq \left \{p \in \R^M:\, \dist(p,\n) < \delta\right \}
\]
there exists the nearest point projection $\pi_\n \in C^\infty(B_{\delta}(\n),\n)$ such that 
\[
 |\pi_{\n}(p)-p| = \dist(p,\n) \quad \forall p \in B_{\delta}(\n).
\]
Moreover, for $p \in \n$, $\Pi(p) \coloneqq D\pi_\n(p) \in \R^{M \times M}$ is the tangential projection which maps a vector $v \in \R^M$ orthogonally into the tangent plane $T_p \n$.
\end{lemma}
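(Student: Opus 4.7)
The statement is the classical tubular neighborhood theorem together with the identification of the differential of the nearest point projection along $\n$. The plan is to deduce existence and smoothness of $\pi_{\n}$ from the inverse function theorem applied to the endpoint map on the normal bundle, and then compute $D\pi_{\n}(p)$ by differentiating a defining identity.

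\textbf{Step 1: the endpoint map of the normal bundle.} First I would consider the normal bundle
\[
 N\n \coloneqq \{(p,v) \in \n \times \R^M :\ v \perp T_p \n\}.
\]
Since $\n$ is a smooth embedded submanifold of $\R^M$, $N\n$ is a smooth manifold of dimension $M$, and I would define the endpoint map $F: N\n \to \R^M$ by $F(p,v) \coloneqq p + v$. At any zero-section point $(p,0)$ the differential $dF_{(p,0)}: T_p \n \oplus N_p \n \to \R^M$ is the identity, hence an isomorphism. By the inverse function theorem, for every $p \in \n$ there exist a neighborhood $U_p \subset N\n$ of $(p,0)$ on which $F$ is a local diffeomorphism.

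\textbf{Step 2: a uniform radius by compactness.} Since $\n$ is compact, a standard covering argument produces $\delta_0 > 0$ such that $F$ is a diffeomorphism from $V_{\delta_0} \coloneqq \{(p,v) \in N\n: |v| < \delta_0\}$ onto its image $W_{\delta_0} \subset \R^M$, which is an open neighborhood of $\n$. Shrinking further, I may assume $\{q\in \R^M: \dist(q,\n) < \delta\} \subset W_{\delta}$ for some $\delta \in (0,\delta_0)$; this follows because injectivity and openness force the $\delta$-tube of $\n$ to be contained in $W_{\delta_0}$ for $\delta$ small enough. Define
\[
 \pi_\n(q) \coloneqq p \quad \text{if}\quad F^{-1}(q) = (p,v) \in V_{\delta}.
\]
Then $\pi_\n \in C^\infty(B_\delta(\n),\n)$ because $F^{-1}$ and the projection $N\n \to \n$ are smooth.

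\textbf{Step 3: identification with nearest point.} For $q \in B_\delta(\n)$, write $q = p + v$ with $(p,v) = F^{-1}(q)$ and $|v| < \delta$. I want to show that $p$ realizes the distance from $q$ to $\n$. For any other $p' \in \n$ with $|q - p'| \leq |q - p| = |v|$, one has $|q - p'| < \delta$, hence $q - p' \in \R^M$ decomposes via $F^{-1}$ uniquely. Using the first variation of $t \mapsto |q - \gamma(t)|^2$ along any smooth curve $\gamma$ in $\n$ with $\gamma(0) = p'$, a minimizer must satisfy $q - p' \perp T_{p'} \n$; hence $F(p', q-p') = q$, and by injectivity of $F$ on $V_\delta$ we get $p' = p$. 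This gives the nearest point property $|\pi_\n(q)-q| = \dist(q,\n)$.

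\textbf{Step 4: the differential along $\n$.} For $p \in \n$, note $\pi_\n \equiv \mathrm{id}_{\n}$ along $\n$, so for any tangent vector $\tau \in T_p \n$, $D\pi_\n(p)\tau = \tau$. On the other hand, for any normal vector $\nu \in N_p \n$, the curve $t \mapsto p + t\nu$ lies in $B_\delta(\n)$ for small $t$ and satisfies $\pi_\n(p + t\nu) = p$, so differentiating at $t = 0$ gives $D\pi_\n(p)\nu = 0$. Hence $\Pi(p) = D\pi_\n(p)$ is the orthogonal projection onto $T_p \n$, as claimed.

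\textbf{Main obstacle.} The only slightly delicate point is Step~3, namely justifying that the point $\pi_\n(q)$ obtained via the inverse of $F$ is actually the \emph{unique} nearest point on $\n$, since a priori $\dist(q, \n)$ could be attained elsewhere. The argument is the standard first-variation observation combined with injectivity of $F$ on $V_\delta$; all other steps are routine applications of the inverse function theorem and compactness.
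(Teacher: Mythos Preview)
Your proof is correct and follows the standard textbook route to the tubular neighborhood theorem (endpoint map on the normal bundle, inverse function theorem, compactness for a uniform radius, first variation to identify the nearest point, and direct computation of $D\pi_{\n}$ along $\n$). The paper itself does not give a proof of this lemma; it simply cites \cite[Section~2.12.3]{S96}, and the argument you wrote is essentially what one finds in such a reference. One minor point: in Step~2 your phrase ``standard covering argument'' hides the fact that local diffeomorphism plus compactness does not immediately yield global injectivity of $F$ on $V_{\delta_0}$; one usually argues by contradiction, taking sequences $(p_k,v_k)\neq(p_k',v_k')$ with $F(p_k,v_k)=F(p_k',v_k')$ and $|v_k|,|v_k'|\to 0$, and extracting convergent subsequences to contradict local injectivity at a limit point. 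This is routine, but worth being explicit if you intend the argument to be self-contained.
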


\Cref{pr:homotopiesallsame} (1) is a consequence of the following lemma, which was observed by Schoen and Uhlenbeck in their celebrated paper \cite[Section 3]{SU1}. We remark that as showed by Schoen and Uhlenbeck \cite[Section 4]{SU2} an approximation as in \Cref{la:smoothapprox} may not be possible if $u \in W^{s,p}$ if $sp<n$. We refer the interested reader to \cite{Bethuel-Zheng, Bethuel-approximation, HL03,BPVS15,Brezis-mironescu-fractionaldensity} for the theory of the approximation of manifold valued Sobolev maps by smooth maps. 
\begin{lemma}\label{la:smoothapprox}
Let $u \in W^{s,\frac{n}{s}}(\Sigma,\n)$ then $u$ can be approximated by smooth maps $u_\eps \in C^\infty(\Sigma,\n)$ in the $W^{s,\frac{n}{s}}(\Sigma,\R^M)$-norm.
\end{lemma}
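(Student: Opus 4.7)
The plan is the Schoen--Uhlenbeck two-step scheme: first mollify $u$ inside $\R^M$ to obtain a smooth $\R^M$-valued approximation, and then compose with the smooth nearest point projection $\pi_{\n}$ from \Cref{la:tubular} to land back in $\n$. The key analytic input that makes this work in the critical fractional scale $sp=n$ is the embedding
\[
 W^{s,\frac{n}{s}}(\Sigma,\R^M) \hookrightarrow \mathrm{VMO}(\Sigma,\R^M),
\]
which is the direct fractional analogue of the borderline Sobolev embedding and can be verified from the Poincar\'e-type estimate $\fint_{B(x,r)} \fint_{B(x,r)} |u(z)-u(y)|\,\dif z\,\dif y \aleq r^{s-\frac{n\cdot s}{n}} [u]_{W^{s,n/s}(B(x,2r))}$ together with absolute continuity of the Gagliardo integral.

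First I would cover $\Sigma$ by finitely many coordinate charts and use a partition of unity to reduce mollification to the Euclidean case, setting $u_\eps \coloneqq \eta_\eps \ast u$ locally for a standard smooth mollifier $\eta_\eps$. A standard computation, which can be carried out on each chart using Minkowski's inequality applied to the Gagliardo semi-norm, gives $u_\eps \in C^\infty(\Sigma,\R^M)$ and $u_\eps \to u$ in $W^{s,\frac{n}{s}}(\Sigma,\R^M)$. The second step is to verify that for all sufficiently small $\eps$ the image of $u_\eps$ lies in the tubular neighborhood $B_\delta(\n)$ from \Cref{la:tubular}. Here the VMO property is decisive: since $u(y) \in \n$ a.e.,
\[
 \dist(u_\eps(x),\n) \leq \left| u_\eps(x) - \fint_{B(x,\eps)} u \right| + \fint_{B(x,\eps)} \dist(u(y),\n)\,\dy = \left| u_\eps(x) - \fint_{B(x,\eps)} u \right|,
\]
and this is controlled uniformly in $x$ by the mean oscillation of $u$ on balls of radius $\eps$, which tends to zero by the VMO property. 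Thus for small $\eps$ we can define
\[
 \tilde u_\eps \coloneqq \pi_\n \circ u_\eps \in C^\infty(\Sigma,\n).
\]

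It remains to show $\tilde u_\eps \to u$ in $W^{s,\frac{n}{s}}(\Sigma,\R^M)$. For the $L^{n/s}$-part this is immediate since $\pi_\n$ is Lipschitz on $B_{\delta}(\n)$, $\pi_\n(u)=u$ a.e., and $u_\eps \to u$ in $L^{n/s}$. For the Gagliardo part I would combine two ingredients: $\pi_\n$ is globally Lipschitz on the tubular neighborhood, which yields the pointwise bound
\[
 \frac{|\tilde u_\eps(x)-\tilde u_\eps(y)|^{n/s}}{|x-y|^{2n}} \aleq \frac{|u_\eps(x)-u_\eps(y)|^{n/s}}{|x-y|^{2n}},
\]
so that $[\tilde u_\eps]_{W^{s,n/s}(\Sigma)}$ stays bounded; then dominated convergence, using $u_\eps(x)\to u(x)$ a.e.\ and the $L^{n/s}(\Sigma\times\Sigma)$-convergence of the difference quotient implied by the $W^{s,n/s}$-convergence $u_\eps\to u$, upgrades this to strong convergence $\tilde u_\eps \to u$ in $W^{s,\frac{n}{s}}$. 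The main obstacle I anticipate is this last composition/convergence step, because unlike the local case there is no pointwise chain rule for the Gagliardo semi-norm and one has to argue via the Lipschitz bound on $\pi_\n$ together with the VMO-based uniform control of $\dist(u_\eps,\n)$ to keep the composition well-defined; once this is in place, closing the argument by dominated convergence on $\Sigma\times\Sigma$ is routine.
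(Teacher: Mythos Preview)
Your proposal is correct and follows essentially the same Schoen--Uhlenbeck two-step scheme as the paper: mollify, then project via $\pi_{\n}$. The only cosmetic difference is that you package the ``image stays near $\n$'' step as the embedding $W^{s,n/s}\hookrightarrow \mathrm{VMO}$, whereas the paper carries out that same mean-oscillation estimate directly via H\"older on the Gagliardo integrand; your final dominated/Vitali convergence argument for $\pi_{\n}\circ u_\eps \to u$ is in fact more explicit than the paper's one-line appeal to smoothness of $\pi_{\n}$.
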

\begin{proof}
%LOL
For clarity of the proof we assume that $\Sigma=\R^n$. Let $\eps>0$ and let us first approximate $u$ by unconstrained smooth maps. To do so we mollify the function $u\in W^{s,\frac{n}{s}}(\R^n,\n)$ by considering
\[
 \tilde u_\eps(x) \coloneqq \int_{\R^n} \eta_\eps(x-y)u(y) \dy= \int_{\R^n} \eta(y)u(x-\eps y) \dy,
\]
where $\eta\in C^\infty_c(\R^n,[0,1])$, $\supp \eta \subset B(0,1)$, $\eta_\eps(x)\coloneqq \eps^{-n}\eta(\frac{x}{\eps})$. and $\int_{\R^n} \eta =1$. Then, $\tilde u_\eps \in C^\infty(\R^n,\R^M)$ and 
\[
 \tilde u_\eps \xrightarrow{\eps \to 0} u \quad \text{ strongly in } W^{s,\frac ns}(\R^n,\R^M).
\]
The smooth map $\tilde u_\eps$ may not map into $\n$, but we can ensure that the image of $\tilde u_\eps$ is close to the manifold $\n$.

Let $B_{\delta}(\n)$ be the tubular neighborhood of $\n$ from \Cref{la:tubular}, on which the nearest point projection $\pi_\n\colon B_{\delta}(\n)\to \n$ is well defined. 

Let $z\in\Sigma$ be an arbitrary point, then we estimate for every $x\in\Sigma$,
\[
 \begin{split}
  \dist(\tilde u_\eps(x),\n) 
  &\le \abs{ \tilde u_\eps(x) -  u(x-\eps z)}= \abs{\int_{\R^n}\eta(y) u(x-\eps y) \dy -  u(x-\eps z)}\\
  &= \abs{\int_{\R^n}\eta(y)( u(x-\eps y) - u(x-\eps z)) \dy}\\
  &\le \int_{\Sigma}\eta(y)\abs{ u(x-\eps y) -  u(x-\eps z)}\dy.
 \end{split}
\]
Thus, multiplying both sides by $\eta(z)$ and integrating over $\R^n$ with respect to the variable $z$, we obtain
\[
 \dist(\tilde u_\eps(x),\n) \le \int_{\Sigma} \int_{\Sigma}\eta(y)\eta(z)\abs{u(x-\eps y) - u(x-\eps z)}\dy \dz,
\]
which combined with the support of $\eta$ leads to the estimate
\[
 \dist(\tilde u_\eps(x),\n) \aleq \sup_{a\in\R^n} \barint_{B(a,\eps)}\barint_{B(a,\eps)} |u(y) - u(z)|\dy \dz.
\]
Applying H\"{o}lder's inequality we get
\[
\begin{split}
 \dist(\tilde u_\eps(x),\n) 
 &\aleq \sup_{a\in\R^n} \barint_{B(a,\eps)}\barint_{B(a,\eps)} |u(y) - u(z)|\dy \dz\\
 &\aleq \sup_{a\in\R^n}\brac{\barint_{B(a,\eps)}\barint_{B(a,\eps)} |y-z|^{\frac{2n}{\frac ns-1}}\dy \dz}^{1-\frac{s}{n}}\brac{\barint_{B(a,\eps)}\barint_{B(a,\eps)} \frac{|u(y)-u(z)|^\frac ns}{|y-z|^{2n}}\dy \dz}^\frac sn\\
 &\aleq \sup_{a\in\R^n} \brac{\int_{B(a,\eps)}\int_{B(a,\eps)} \frac{|u(y)-u(z)|^\frac ns}{|y-z|^{2n}}\dy\dz}^\frac sn \xrightarrow{\eps\to0}0,
\end{split}
 \]
where the last convergence is a consequence of the absolute continuity of the integral, and holds for a.e. $x \in \Sigma$.

Thus, for $\eps$ sufficiently close to 0, we know that 
$\tilde u_\eps \in B_{\delta}(\n)$. This implies that that the maps $u_\eps \coloneqq \pi_\n \circ \tilde u_\eps \in C^{\infty}(\R^n,\n)$ are well defined, \Cref{la:tubular}. Moreover, since $\pi_\n$ is smooth we also have 
\[
 u_\eps =\pi_\n \circ \tilde u_\eps \to \pi_n\circ u = u \quad \text{ strongly in } W^{s,\frac ns}(\Sigma,\R^M) \text{ as } \eps\to 0.
\]

\end{proof}
Next we state a helpful lemma that says that if two maps are uniformly close then they are homotopic.
\begin{lemma}\label{la:samehomotopycont}
Let $\n$ be a smooth manifold without boundary embedded into $\R^M$. Then there exists an $\eps = \eps(\n)> 0$ such that if $f,g \in C^0(\Sigma,\n)$ and $\|f-g\|_{L^\infty} < \eps$ then $f$ is homotopic to $g$. 
\end{lemma}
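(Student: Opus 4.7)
The plan is to use the tubular neighborhood structure provided by \Cref{la:tubular} together with a straight-line homotopy in the ambient space $\R^M$ followed by projection back onto $\n$. Let $\delta=\delta(\n)>0$ be the radius of the tubular neighborhood $B_\delta(\n)$ on which the nearest point projection $\pi_\n\in C^\infty(B_\delta(\n),\n)$ is defined. I will set $\eps\coloneqq \delta$ (or any smaller positive number).

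Suppose $f,g\in C^0(\Sigma,\n)$ with $\|f-g\|_{L^\infty}<\eps$. For any $t\in[0,1]$ and $x\in\Sigma$, consider the convex combination
\[
\gamma(t,x)\coloneqq (1-t)f(x)+t\,g(x)\in\R^M.
\]
Since $f(x)\in\n$, the triangle inequality gives $\dist(\gamma(t,x),\n)\le |\gamma(t,x)-f(x)|=t|g(x)-f(x)|<\eps\le \delta$, so $\gamma(t,x)\in B_\delta(\n)$ for every $(t,x)\in[0,1]\times\Sigma$. This is the key geometric observation that allows us to project back to the manifold.

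Now define
\[
H(t,x)\coloneqq \pi_\n(\gamma(t,x)).
\]
Since $\gamma\in C^0([0,1]\times\Sigma,\R^M)$ takes values in $B_\delta(\n)$ and $\pi_\n$ is smooth on $B_\delta(\n)$, the composition $H$ is continuous on $[0,1]\times\Sigma$ with values in $\n$. Moreover, because $\pi_\n$ is the identity on $\n$, we have $H(0,x)=\pi_\n(f(x))=f(x)$ and $H(1,x)=\pi_\n(g(x))=g(x)$. Thus $H$ is a continuous homotopy from $f$ to $g$, proving $f\sim g$ in $C^0(\Sigma,\n)$.

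The only step that required any genuine input is the existence of the tubular neighborhood and the smooth nearest point projection, which is exactly the content of \Cref{la:tubular}; everything else is a one-line verification. I do not anticipate any obstacle here — the proof is essentially a direct application of the tubular neighborhood theorem and it is the reason the threshold $\eps$ can be chosen depending only on $\n$.
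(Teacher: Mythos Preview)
Your proof is correct and is essentially identical to the paper's own proof: both use the tubular neighborhood from \Cref{la:tubular}, form the straight-line interpolation $(1-t)f(x)+tg(x)$ in $\R^M$, observe it stays within $B_\delta(\n)$, and project back via $\pi_\n$. Your write-up is slightly more explicit (noting $\pi_\n$ restricts to the identity on $\n$), but the approach is the same.
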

\begin{proof}
Let $\pi_{n}: B_\eps(\n) \to \n$ be the nearest point projection into the manifold which must exist for some $\eps > 0$, \Cref{la:tubular}. If $\|f-g\|_{L^\infty} < \eps$, then 
\[
\dist((1-t)f(x) + tg(x),\n) \leq \|f-g\|_{L^\infty} < \eps \quad \forall t \in [0,1], x \in \Sigma
\]
that is $H(t,x)= \pi_{\n}((1-t)f(x) + tg(x))$ is well-defined for all $t\in[0,1]$. It is easy to check that $H$ is a homotopy between $f$ and $g$.
\end{proof}

\Cref{pr:homotopiesallsame}(2) is a consequence of the following
\begin{lemma}\label{la:samehomotopy}
Let $u \in W^{s,\frac{n}{s}}(\Sigma,\n)$ then there exists $\eps=\eps(u) > 0$ such that whenever $g_1,g_2 \in C^0 \cap  W^{s,\frac{n}{s}}(\Sigma,\n)$ with
\begin{equation}\label{eq:sh:eps}
 \|u-g_i\|_{L^1(\Sigma)} + [u-g_i]_{W^{s,\frac{n}{s}}(\Sigma)} \leq \eps \quad \text{ for } i=1,2,
\end{equation}
then $g_1$ and $g_2$ are homotopic.
\end{lemma}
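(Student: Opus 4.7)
The plan is to build a single smooth reference map $U_\ast \in C^\infty(\Sigma,\n)$ from $u$ alone and show that any continuous $g$ satisfying the smallness hypothesis is $C^0$-homotopic to $U_\ast$; transitivity then gives $g_1 \sim U_\ast \sim g_2$. I take as reference $U_\ast := \pi_\n \circ u_{r_\ast}$, where $\pi_\n : B_\delta(\n) \to \n$ is the nearest point projection from \Cref{la:tubular}, $u_r := \eta_r \ast u$ is a standard mollification at scale $r>0$ (performed in local charts, or via the ambient embedding $\Sigma \subset \R^L$), and $r_\ast = r_\ast(u) > 0$ will be fixed below.

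The key quantitative ingredient is the localized Schoen--Uhlenbeck-type distance estimate used in the proof of \Cref{la:smoothapprox}: for any $f \in W^{s,\frac{n}{s}}(\Sigma,\n)$,
\[
\dist(f_r(x),\n) \aleq \brac{\sup_{a\in\Sigma} \int_{B(a,r)}\int_{B(a,r)} \frac{|f(y)-f(z)|^{n/s}}{|y-z|^{2n}}\dy\dz}^{s/n}.
\]
Write $\alpha_u(r)$ for the right-hand supremum with $f=u$; it is monotone in $r$ and tends to $0$ as $r\to 0^+$ by absolute continuity of the integral. The triangle inequality in $\ell^{n/s}$ propagates this bound to every admissible $g$: $\sup_a\int_{B(a,r)^2}|g(y)-g(z)|^{n/s}|y-z|^{-2n}\dy\dz \leq C(\alpha_u(r) + \eps^{n/s})$, \emph{uniformly} over $g$ with $[u-g]_{W^{s,n/s}(\Sigma)} \leq \eps$. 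I then fix $r_\ast = r_\ast(u)$ so that $\alpha_u(r_\ast)$ is small, and pick $\eps=\eps(u)>0$ small enough that both $(\alpha_u(r_\ast)+\eps^{n/s})^{s/n} < \delta/4$ (forcing $u_r(x), g_r(x) \in B_{\delta/4}(\n)$ for all $r\leq r_\ast$, all $x\in\Sigma$ and all admissible $g$) and $\|\eta\|_{L^\infty} r_\ast^{-n}\eps < \delta/4$ (forcing $\|u_{r_\ast}-g_{r_\ast}\|_{L^\infty} < \delta/4$ via the crude $L^1\to L^\infty$ bound on the mollifier).

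The homotopy $H:[0,1]\times\Sigma \to B_\delta(\n)$ from $g$ to $u_{r_\ast}$ is then built in two stages and post-composed with $\pi_\n$. For $t\in[0,\tfrac12]$ I set $H(t,x) := g_{2tr_\ast}(x)$, extended at $t=0$ by $H(0,x) := g(x)$; continuity at $t=0$ uses the uniform continuity of $g$ on the compact $\Sigma$, and the distance estimate above keeps $H$ inside $B_{\delta/4}(\n)$. For $t\in[\tfrac12,1]$ I set $H(t,x) := (2-2t)g_{r_\ast}(x) + (2t-1)u_{r_\ast}(x)$, which stays inside $B_{\delta/2}(\n)$ by the two smallness bounds combined with the triangle inequality. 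Then $\pi_\n \circ H \in C^0([0,1]\times\Sigma,\n)$ is a continuous homotopy from $\pi_\n\circ g = g$ to $U_\ast$. Applying this construction to $g_1$ and $g_2$ separately and concatenating yields $g_1\sim g_2$.

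The main obstacle, and the reason one cannot simply invoke \Cref{la:samehomotopycont} directly, is that $W^{s,n/s}$-closeness of $g_1,g_2$ to $u$ does not imply $L^\infty$-closeness between the $g_i$, and the $L^\infty$-modulus of continuity of a generic $g_i\in C^0$ is not uniformly controllable from the hypothesis. The Schoen--Uhlenbeck-type distance estimate is what circumvents this difficulty: it converts $W^{s,n/s}$-closeness into \emph{uniform} control on the distance of the mollifications to $\n$ at a scale $r_\ast$ chosen purely from $u$, so the continuity of each $g_i$ only needs to ensure that the first stage of $H$ is continuous at $t=0$, with no quantitative modulus requirement.
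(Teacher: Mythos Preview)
Your proof is correct and follows essentially the same approach as the paper's: both arguments mollify the $g_i$ at a fixed scale $r_\ast$ (the paper's $\delta_0$) chosen from the absolute continuity of $u$'s energy, use the Schoen--Uhlenbeck-type distance estimate to keep the mollifications in the tubular neighborhood along the way, and then exploit the $L^1\to L^\infty$ bound of the mollifier at scale $r_\ast$ to obtain $L^\infty$-closeness. The only cosmetic difference is that you route both $g_i$ through a common reference $U_\ast=\pi_\n\circ u_{r_\ast}$ via an explicit linear interpolation, whereas the paper compares $\pi_\n\circ g_{1,\delta_0}$ and $\pi_\n\circ g_{2,\delta_0}$ directly by invoking \Cref{la:samehomotopycont}; these are the same step unpacked.
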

\begin{proof}
Again, for simplicity of notation we assume that $\Sigma = \R^n$.

By \Cref{la:tubular}, there exists $\gamma = \gamma(\n)$ and a smooth nearest point projection from a $\gamma$-neighborhood of $\n$  into $\n$ which we denote by $\pi_\n: B_\gamma(\n) \to \n$.

By the absolute continuity of the integral, for any $\theta > 0$ there exists a $\delta_0 = \delta_0(u,\theta)$ such that 
\begin{equation}\label{eq:laappr:theta}
 \sup_{B(2\delta_0)\subset \Sigma} [u]_{W^{s,\frac{n}{s}}(B(2\delta_0))} \leq \theta.
\end{equation}
For $i=1,2$ let $g_{i,\delta} \coloneqq \eta_\delta \ast g_i$, $\delta < \delta_0$ where $\eta \in C_c^\infty(B(0,1))$, $\int_{\R^n} \eta = 1$ is the usual mollifier.

As in the proof of \Cref{la:smoothapprox}, 
\[
\begin{split}
 \dist(g_{i,\delta}(x),\n) 
 &\le C(n)\, \mvint_{B(x,\delta)}\mvint_{B(x,\delta)} |g_i(y)-g_i(z)|\dif y \dif z\\
 &\leq C(n,s)\, [g_{i}]_{W^{s,\frac{n}{s}}(B(2\delta))}\\
 &\leq C(n,s)\brac{[u-g_{i}]_{W^{s,\frac{n}{s}}(B(2\delta))}+[u]_{W^{s,\frac{n}{s}}(B(2\delta))}}\\
 &\leq C(n,s)\brac{\eps + \theta},
 \end{split}
\]
where in the last inequality we used \eqref{eq:laappr:theta} and \eqref{eq:sh:eps}.

So if $\eps$ and $\theta$ are small enough so that $C(n,s)(\eps + \theta) < \gamma$, we have  $\tilde{g}_{i,\delta} \coloneqq \pi_{\n} \circ g_{i,\delta}$ is well defined for any $i=1,2$ and any $\delta < \delta_0$.

Observe that $\delta \ni [0,\delta_0] \mapsto \tilde{g}_{i,\delta}$ is a homotopy, so $g_i$ and $\tilde{g}_{i,\delta_0}$ are homotopic for each $i=1,2$.

Moreover,
\[
 \|g_{1,\delta_0}-g_{2,\delta_0}\|_{L^\infty} \leq C(n) \frac{1}{\delta_0^n} \|g_1-g_2\|_{L^1} \leq C(n) \frac{\eps}{\delta_0^n}.
\]
So if we choose $\eps=\eps(u) > 0$ possibly even smaller so that $C(n) \frac{\eps}{\delta_0^n} < \eps(\n)$, where $\eps(\n)$ is from \Cref{la:samehomotopycont}, then we know that $g_{1,\delta_0}$ is homotopic to $g_{2,\delta_0}$. That is, we have shown
\[
 g_{1} \sim g_{1,\delta_0} \sim g_{2,\delta_0} \sim g_2.
\]
This concludes the proof.
\end{proof}

\begin{proof}[Proof of \Cref{pr:homotopiesallsame}(3)]
Let $u,v \in C^0\cap W^{s,\frac{n}{s}}(\Sigma,\n)$ and assume $u \sim v$ with respect to continuous homotopy. Denote the usual convolution of $u$ and $v$ with the standard mollifier respectively by $u_\delta$ and $v_\delta$. Then $u_\delta$ converges uniformly to $u$. In particular, for all small $\delta$, we have that $\pi_{\n} \circ u_\delta$ is $C^0$-homotopic to $u$, by \Cref{la:samehomotopycont}. Similarly $\pi_{\n} \circ v_\delta$ is $C^0$-homotopic to $v$. Since $v$ and $u$ are $C^0$-homotopic, this implies that $\pi_{\n} \circ u_\delta$ and $\pi_{\n} \circ v_\delta$ are $C^0$-homotopic to each other for all small $\delta$. But $\pi_{\n} \circ u_\delta$ is a smooth approximation of $u$ with respect to the $W^{s,\frac{n}{s}}$-norm, and similarly $\pi_{\n} \circ v_\delta$ of $v$. By \Cref{la:samehomotopy} this means that any other smooth approximation of $v$ and $u$, respectively, are also eventually $C^0$-homotopic to each other. By \Cref{def:homotopy} this means that $u$ and $v$ are $W^{s,\frac{n}{s}}$-homotopic.

For the converse we argue similarly. If $u$ and $v$ are $W^{s,\frac{n}{s}}$-homotopic as defined in \Cref{def:homotopy}, $\pi_{\n} \circ u_\delta$ and $\pi_{\n} \circ v_\delta$ must be homotopic for all small $\delta$. For small $\delta$ we have $\pi_{\n} \circ u_\delta$ is $C^0$-homotopic to $u$ (by uniform convergence and \Cref{la:samehomotopycont}) and likewise $\pi_{\n} \circ v_\delta$ is $C^0$-homotopic to $v$. This implies that $u$ is $C^0$-homotopic to $v$.
\end{proof}
Similar to \Cref{la:samehomotopy} we also obtain
\begin{lemma}\label{la:smallenergytrivial}
For any manifold $\Sigma,\n$ as above and $s \in (0,1)$ there exist $\eps = \eps(\n,\Sigma)$ such that the following holds.

If $u \in W^{s,\frac{n}{s}}(\Sigma,\n)$ and
\begin{equation}\label{eq:set:1}
 [u]_{W^{s,\frac{n}{s}}(\Sigma)} < \eps,
\end{equation}
then $u$ is homotopic to a constant map in the sense of \Cref{def:homotopy}.
\end{lemma}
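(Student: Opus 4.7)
The plan is to adapt the mollification argument of \Cref{la:samehomotopy}, but fix the mollification scale using only $\Sigma$ and $\n$, and then exploit the fractional Poincar\'e inequality on the closed manifold $\Sigma$ to turn smallness of the Gagliardo semi-norm into $L^\infty$-closeness of the mollified map to a single point of $\n$.

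First, fix a scale $\delta = \delta(\Sigma,\n) > 0$ small enough that mollification by $\eta_\delta$ in local charts (with a partition of unity on $\Sigma$) produces a smooth $u_\delta \in C^\infty(\Sigma,\R^M)$. The same computation used in the proof of \Cref{la:smoothapprox} gives, uniformly in $x \in \Sigma$,
\[
 \dist(u_\delta(x),\n) \aleq [u]_{W^{s,\frac{n}{s}}(B(x,2\delta))} \leq [u]_{W^{s,\frac{n}{s}}(\Sigma)} < \eps.
\]
For $\eps$ below the tubular neighborhood width from \Cref{la:tubular}, the composition $\tilde u_\delta \coloneqq \pi_\n \circ u_\delta \in C^\infty(\Sigma,\n)$ is well defined.

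Next, set $\bar u \coloneqq \frac{1}{|\Sigma|}\int_{\Sigma} u$. The fractional Poincar\'e inequality on $\Sigma$ yields $\|u-\bar u\|_{L^{n/s}(\Sigma)} \aleq [u]_{W^{s,\frac{n}{s}}(\Sigma)} < C\eps$, and hence $\|u-\bar u\|_{L^1(\Sigma)} \aleq \eps$ by H\"older. Combined with $\|\eta_\delta\|_\infty \aleq \delta^{-n}$ and $\int\eta_\delta = 1$, the pointwise identity
\[
 |u_\delta(x)-\bar u| = \Big| \int \eta_\delta(x-z)(u(z)-\bar u)\dz\Big| \leq C(\delta)\eps
\]
gives $\|u_\delta-\bar u\|_{L^\infty(\Sigma)} \leq C(\delta)\eps$. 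Since $u(x)\in\n$ almost everywhere, $\dist(\bar u,\n) \leq \frac{1}{|\Sigma|}\int_{\Sigma} |u-\bar u|\aleq \eps$, so one may select $p \in \n$ with $|\bar u - p| \aleq \eps$, leading to $\|\tilde u_\delta - p\|_{L^\infty(\Sigma)} \leq C(\delta,\Sigma,\n)\,\eps$.

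For $\eps$ small enough --- and, crucially, only in terms of $\Sigma, \n, n, s$ --- this $L^\infty$-norm falls below the threshold of \Cref{la:samehomotopycont}, so $\tilde u_\delta$ is $C^0$-homotopic to the constant map $p$, and thus $W^{s,\frac{n}{s}}$-homotopic to $p$ by \Cref{pr:homotopiesallsame}(3). Since $\tilde u_\delta$ is at the same time a smooth $W^{s,\frac{n}{s}}$-approximation of $u$ (exactly as constructed in \Cref{la:smoothapprox}), \Cref{la:samehomotopy} applied to $g_1 = \tilde u_\delta$ and $g_2$ any other smooth $W^{s,\frac{n}{s}}$-approximation of $u$ shows that $u \sim \tilde u_\delta$ in the sense of \Cref{def:homotopy}, and transitivity of $\sim$ closes the argument.

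The main obstacle is extracting an $\eps$ that is uniform in $u$. \Cref{la:samehomotopy} only produces a scale through the absolute continuity of $[u]_{W^{s,\frac{n}{s}}}$, which depends on $u$, so it cannot be invoked directly. The resolution is to fix the mollification scale $\delta$ using only $\Sigma$ and $\n$ beforehand, and then exploit the global smallness $[u]_{W^{s,\frac{n}{s}}(\Sigma)} < \eps$ through the fractional Poincar\'e inequality to manufacture a \emph{single} target point $p \in \n$ near which all of $u_\delta$ lives uniformly.
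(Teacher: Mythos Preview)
Your argument is essentially the paper's own: mollify at a fixed scale, project to $\n$, and compare the result in $L^\infty$ to the single point $\pi_\n\big((u)_\Sigma\big)$. The paper writes this more tersely (it takes $\delta=1$ and simply states ``$w_1$ is homotopic to $u$''), but the skeleton is identical.

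There is one imprecision in your final step. You invoke \Cref{la:samehomotopy} with $g_1=\tilde u_\delta$ to conclude $u\sim\tilde u_\delta$, but that lemma requires $\|u-g_1\|_{L^1}+[u-g_1]_{W^{s,n/s}}<\eps(u)$ with an $\eps(u)$ depending on $u$; for a $\delta$ fixed in advance there is no reason $\tilde u_\delta$ meets this threshold. The clean fix --- which is what the paper tacitly uses --- is to observe that the bound $\dist(u_{\delta'},\n)\aleq\eps$ holds for \emph{every} $\delta'\in(0,\delta]$, so the entire path $\delta'\mapsto\pi_\n(u_{\delta'})$ is a continuous curve in $C^0\cap W^{s,n/s}(\Sigma,\n)$ connecting $\tilde u_\delta$ to approximations $\tilde u_{\delta'}$ that do converge to $u$ in $W^{s,n/s}$ as $\delta'\to 0$. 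By the path characterization of homotopy (the Remark after \Cref{def:homotopy}) or by combining this path with \Cref{la:samehomotopy} applied at small $\delta'$, one gets $u\sim\tilde u_\delta$ without any $u$-dependent constant entering.
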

\begin{proof}
Let $(u)_{\Sigma} \coloneqq |\Sigma|^{-1} \int_{\Sigma} u$. From \eqref{eq:set:1} we obtain as in the proof of \Cref{la:smoothapprox} 
\[
 \dist((u)_{\Sigma},\n) \aleq \eps.
\]
If $\eps$ is small enough, this implies that $v \coloneqq \pi_{\n}((u)_\Sigma)$ is well-defined, by \Cref{la:tubular}.

Also, denoting by $u_\delta \coloneqq \eta_\delta \ast u$ the usual mollification, we have 
\[
 \dist(u_\delta,\n) \aleq \eps \quad \forall \delta \leq 1.
\]
Setting $w_\delta \coloneqq \pi_{\n}(u_\delta)$ we have that $w_1$ is homotopic to $u$ in the sense of \Cref{def:homotopy}. Moreover we have 
\[
 \|w_1-v\|_{L^\infty} \leq C(\Sigma,\n) \|u-(u)_{\Sigma}\|_{L^1(\Sigma)} \aleq [u]_{W^{s,\frac{n}{s}}(\Sigma)} < \eps.
\]
So choosing $\eps$ small enough, we have from \Cref{la:samehomotopycont} that $w_1$ and $v$ are homotopic. This implies that $u$ and $v$ are homotopic, and $v$ is a constant map.
\end{proof}

\section{Regularity theory for minimizers in homotopy}\label{s:regularity}
The main result of this section is the following regularity theory for minimizers.
\begin{theorem}\label{th:reghomo}
Let $\Sigma,\, \n$ be as above. If $n=1$ then assume that $s\le \frac12$, if $n\ge 2$, then assume that $s\in(0,1)$. There exists $\eps>0$ and $s_0 > s$ such that the following holds for any $t \in [s,s_0]$.

Assume that $u \in W^{t,\frac{n}{s}}(\Sigma,\n)$ and that or a geodesic ball $B(R) \subset \Sigma$ the following holds:
\begin{itemize}
 \item $u$ is a minimizing $W^{t,\frac{n}{s}}$-harmonic map in $B(R)$, that is
 \[
  E_{t,\frac ns}(u,\Sigma) \leq E_{t,\frac ns}(v,\Sigma)
 \]
holds for all $v \in W^{t,\frac{n}{s}}(\Sigma,\n)$ such that 
\begin{itemize}
 \item $u \equiv v$ in $\Sigma \setminus  B(R)$, and
 \item $u \sim v$ in homotopy (as defined in \Cref{def:homotopy}).
\end{itemize}
 \item $[u]_{W^{s,\frac{n}{s}}(B(R))} < \eps$.
\end{itemize}
Then $u \in W^{s_0,\frac{n}{s}}(B(R/2)) \cap C^{s_0-s}(B(R/2))$ and we have the estimate
\begin{equation}\label{eq:threg:estimate}
 [u]_{C^{s_0-s}(B(R/2))} + [u]_{W^{s_0,\frac{n}{s}}(B(R/2))} \leq C\, R^{s-s_0} [u]_{W^{s,\frac{n}{s}}(B(R))} \brac{[u]_{W^{s,\frac{n}{s}}(\Sigma)}^{1-\frac{s}{n}}+[u]_{W^{s,\frac{n}{s}}(\Sigma)}}.
\end{equation}
\end{theorem}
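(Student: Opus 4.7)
The plan is to prove an $\varepsilon$-regularity (excess-decay) statement on concentric balls and then run a Campanato iteration. Fix $B(x_0,\rho) \subset B(R)$. Let $\eta\in C_c^\infty(B(x_0,\rho))$ be a smooth cutoff with $\eta\equiv 1$ on $B(x_0,\rho/2)$, and let $a\in\n$ be the projection $\pi_{\n}((u)_{B(x_0,\rho)})$ of the average onto $\n$, which is well-defined in the tubular neighborhood from \Cref{la:tubular} once $[u]_{W^{s,n/s}(B(R))} < \varepsilon$ is small enough (by fractional Poincaré–Wirtinger). Define the competitor
\[
 v(x) \coloneqq \begin{cases}\pi_{\n}\bigl(\eta(x)\, a + (1-\eta(x))\, u(x)\bigr), & x\in B(x_0,\rho),\\ u(x), & x\in\Sigma\setminus B(x_0,\rho).\end{cases}
\]
Since $v\equiv u$ outside $B(x_0,\rho)$ and the linear interpolation $h_\tau(x) = \pi_\n(\tau v(x)+(1-\tau)u(x))$ is well-defined on $[0,1]\times\Sigma$ (its argument stays in $B_\delta(\n)$ by the smallness of the local energy), $v$ is homotopic to $u$ in the sense of \Cref{def:homotopy}, hence admissible.

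\textbf{Hole-filling estimate.} Minimality $E_{t,n/s}(u,\Sigma)\leq E_{t,n/s}(v,\Sigma)$ gives, after splitting the integral into the interior part on $B(x_0,\rho)^2$ and the cross-part on $B(x_0,\rho)\times(\Sigma\setminus B(x_0,\rho))$,
\[
 E_{t,n/s}(u,B(x_0,\rho/2)) \leq E_{t,n/s}(v,B(x_0,\rho)) + 2\bigl(\mathcal I(v)-\mathcal I(u)\bigr),
\]
with the interaction $\mathcal I(w)\coloneqq \int_{B(x_0,\rho)}\int_{\Sigma\setminus B(x_0,\rho)}\frac{|w(x)-w(y)|^{n/s}}{|x-y|^{n+tn/s}}\dx\dy$. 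The interior piece is controlled via fractional Poincaré–Wirtinger applied to $u-a$, the Lipschitz bound $|\nabla\eta|\leq C/\rho$, and the Lipschitz property of $\pi_{\n}$. The cross-part is estimated by a standard tail bound combined with Hölder's inequality, producing the tail factor $[u]_{W^{s,n/s}(\Sigma)}^{1-s/n}+[u]_{W^{s,n/s}(\Sigma)}$ seen in \eqref{eq:threg:estimate}. The upshot is
\[
 E_{t,n/s}(u,B(x_0,\rho/2))\leq \theta\, E_{t,n/s}(u,B(x_0,\rho)) + C\,\rho^{(t-s)n/s}\bigl([u]_{W^{s,n/s}(\Sigma)}^{1-s/n}+[u]_{W^{s,n/s}(\Sigma)}\bigr)\,[u]_{W^{s,n/s}(B(R))},
\]
with some $\theta<1$ uniform in $t\in[s,s_0]$ for $s_0-s$ small enough.

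\textbf{Campanato iteration and conclusion.} Iterating the hole-filling on the dyadic scales $\rho_k=2^{-k}R$ produces Morrey-type decay
\[
 [u]_{W^{s,n/s}(B(x_0,\rho))}^{n/s}\leq C\,(\rho/R)^{\beta}\,[u]_{W^{s,n/s}(B(R))}\bigl([u]_{W^{s,n/s}(\Sigma)}^{1-s/n}+[u]_{W^{s,n/s}(\Sigma)}\bigr)
\]
for every $B(x_0,\rho)\subset B(R/2)$, with some $\beta>0$ uniform in $t\in[s,s_0]$. Choosing $s_0 = s+s\beta/n$ (or any smaller value), the Campanato/Besov characterization of fractional Sobolev spaces yields $u\in W^{s_0,n/s}(B(R/2))$ together with \eqref{eq:threg:estimate}, and Morrey's embedding upgrades this to $u\in C^{s_0-s}(B(R/2))$ with the same scaling.

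\textbf{Main obstacle.} The delicate point is keeping the decay constant $\theta<1$ (equivalently the exponent $\beta$) uniform as $t\to s^+$: in the scale-invariant limit $t=s$ no naive averaging gain is available, so one must isolate the genuine $W^{s,n/s}$ gain coming from minimality (present already at $t=s$) from the higher-order $W^{t,n/s}$ correction carrying the small factor $\rho^{(t-s)n/s}$. The second subtle point is admissibility of the competitor $v$, which forces the cutoff-plus-tubular-projection construction together with the smallness of local energy; this is the only place where the hypothesis $[u]_{W^{s,n/s}(B(R))}<\varepsilon$ is genuinely needed.
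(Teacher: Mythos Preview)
Your hole-filling strategy is essentially the paper's Step~1 (\Cref{th:ininitreg}): a competitor as in \Cref{la:competitor} plus minimality yields geometric decay of $E_{t,n/s}(u,B(\rho))$ on dyadic balls with a rate $\tau<1$ uniform in $t\in[s,s_0]$, and Campanato's embedding then gives $u\in C^\alpha_{\loc}(B(R))$ for some $\alpha>0$ independent of $t$. That part of your outline is correct in spirit.

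The genuine gap is the sentence ``the Campanato/Besov characterization of fractional Sobolev spaces yields $u\in W^{s_0,n/s}(B(R/2))$''. Morrey--type decay of the $W^{s,n/s}$-seminorm on balls, i.e.\ $[u]_{W^{s,n/s}(B(\rho))}^{n/s}\aleq (\rho/R)^\beta$, does \emph{not} by itself imply membership in $W^{s_0,n/s}$ for any $s_0>s$; it only gives H\"older continuity via Campanato. This is precisely why the paper needs two further steps that use the Euler--Lagrange equation and not only minimality. In Step~2 (\Cref{th:BLdiff}) the $C^\alpha$-regularity is fed into the PDE via the Brasco--Lindgren difference-quotient scheme, testing the equation with $\varphi=\eta\,\delta_h u$ and using the monotonicity inequality for $J_p$, to obtain $u\in W^{s+\gamma,n/s}_{\loc}$ qualitatively. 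In Step~3 (\Cref{th:apriorireg}) the precise estimate \eqref{eq:threg:estimate} is derived from the PDE using the stability of the fractional $p$-Laplacian (\cite{S16}) together with an iteration lemma; the multiplicative structure $[u]_{W^{s,n/s}(B(R))}^{s/n}\bigl([u]_{W^{s,n/s}(\Sigma)}^{1-s/n}+[u]_{W^{s,n/s}(\Sigma)}\bigr)$ comes out of that analysis, not from hole-filling. Your attempted shortcut---an error term $C\rho^{(t-s)n/s}(\cdots)$ in the hole-filling inequality and then iteration---does not produce either the higher Sobolev regularity or the quantitative bound; at $t=s$ your error term vanishes and you are left exactly with the Campanato decay that only gives $C^\alpha$.
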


The important feature of \Cref{th:reghomo} is that the regularity estimate is uniform as $t\to s^+$. By Morrey--Sobolev embedding, any map $u \in W^{t,\frac{n}{s}}(\Sigma,\n)$ is $C^{t-s}$-continuous if $t>s$, but it may not be $C^{s_0-s}$-continuous.

Clearly, global minimizers (without any assumptions on homotopy type) also fall under the realm of \Cref{th:reghomo}, and we record the following.
\begin{corollary}
Let $u \in W^{s,\frac{n}{s}}(\Sigma,\n)$ be a minimizing harmonic map (without restriction to any homotopy class) in an open set $\Omega \subset \Sigma$, i.e., assume that
\[
E_{s,\frac ns}(u,\Sigma) \leq E_{s,\frac ns}(v,\Sigma)
\]
for all $v \in W^{s,\frac{n}{s}}(\Sigma,\n)$ with $u \equiv v$ on $\Omega^c$. Then $u$ is H\"older continuous in $\Omega$.
\end{corollary}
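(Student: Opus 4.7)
The plan is to obtain the corollary as a straightforward localization of \Cref{th:reghomo} taken at $t=s$. Two hypotheses of that theorem must be verified at a given point $x_0 \in \Omega$: that $u$ is minimizing in the homotopy-restricted sense of \Cref{th:reghomo}, and that the Gagliardo smallness $[u]_{W^{s,n/s}(B(R))} < \eps$ holds on an appropriate ball around $x_0$.

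The first point is essentially tautological. A global minimizer in the sense of the corollary competes against every $v \in W^{s,n/s}(\Sigma,\n)$ with $v \equiv u$ on $\Omega^c$. In particular, fixing a geodesic ball $B(R) \subset \Omega$, it minimizes against the \emph{smaller} class of competitors $v$ that additionally satisfy $v \equiv u$ on $\Sigma \setminus B(R)$ \emph{and} $v \sim u$ in homotopy; restricting the set of admissible competitors can only increase the infimum, never lower it. Thus the first bullet in the hypothesis of \Cref{th:reghomo} is automatically satisfied on every ball $B(R) \subset \Omega$.

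The second point follows from absolute continuity of the integral. Since $u \in W^{s,n/s}(\Sigma,\n)$, the function $(x,y) \mapsto |u(x)-u(y)|^{n/s}/|x-y|^{2n}$ is integrable on $\Sigma \times \Sigma$, so for any prescribed $\eps>0$ there exists a radius $R = R(x_0,u,\eps) > 0$ such that $B(x_0,R) \subset \Omega$ and $[u]_{W^{s,n/s}(B(x_0,R))} < \eps$. Choosing $\eps$ equal to the threshold supplied by \Cref{th:reghomo}, both hypotheses of that theorem are simultaneously satisfied on $B(x_0,R)$.

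Applying \Cref{th:reghomo} with $t=s$ then yields $u \in C^{s_0-s}(B(x_0,R/2))$, together with the quantitative estimate \eqref{eq:threg:estimate}. Since $x_0 \in \Omega$ was arbitrary, $u$ is locally H\"older continuous in $\Omega$. There is no substantive obstacle: the corollary is a direct dressing of the main regularity theorem, and the only conceptual observation is that a global minimizer automatically minimizes within each homotopy sub-class. (Implicit in the statement are the standing assumptions on $n$ and $s$ from \Cref{th:reghomo}, namely $s \le 1/2$ when $n=1$ and $s \in (0,1)$ when $n \geq 2$.)
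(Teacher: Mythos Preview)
Your proposal is correct and matches the paper's approach exactly: the paper simply notes that ``global minimizers (without any assumptions on homotopy type) also fall under the realm of \Cref{th:reghomo}'' and records the corollary without further argument. Your write-up just makes explicit the two observations the paper leaves implicit---that a global minimizer is a fortiori a homotopy-restricted minimizer on every ball $B(R)\subset\Omega$, and that the smallness hypothesis is met on sufficiently small balls by absolute continuity of the Gagliardo integral.
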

\begin{remark}
While the initial step in the proof of \Cref{th:reghomo}, namely \Cref{th:ininitreg}, relies on the minimizing property, this is probably only really necessary in the case $t=s$. Most likely,  for $t>s$ one could test the Euler--Lagrange equations to obtain a similar result (but due to the necessity for uniform H\"older exponents we did not attempt to do this). 

That is,
most likely \Cref{th:reghomo} holds for \emph{critical} $W^{t,\frac{n}{s}}$-harmonic maps as long as $t > s$. In particular it seems that a similar statement as in \Cref{th:reghomo} could be made, e.g., for maps $u \in W^{s,p}(\Sigma,\R^M)$, $s-\frac{n}{p} > 0$, satisfying
\[
 \left |\int_{\Sigma}\int_{\Sigma} \frac{|u(x)-u(y)|^{p-2} (u(x)-u(y)) (\varphi(x)-\varphi(y))}{|x-y|^{n+sp}}\dx\dy\right | \aleq \int_{\Sigma}\int_{\Sigma} |\varphi(x)| \frac{|u(x)-u(y)|^{p}}{|x-y|^{n+sp}}\dx\dy.
\]
This is not necessary for our purposes, so we do not follow this direction.
\end{remark}

\begin{remark}
For $t=s$ and ``round'' target spaces $\n = \S^{n-1}$ or $\n$ a compact Lie group \Cref{th:reghomo} holds also for (possibly non-minimizing, but only) \emph{critical} $W^{s,\frac{n}{s}}$-harmonic maps \cite{S15,MS18}. For non-round targets this is a major open question even for the $p$-harmonic map case $s=1$, $p\neq 2$, and only partial results are known under non-geometric assumptions \cite{DM10,K10,S13}, see also the survey \cite{SS17}.
\end{remark}

The proof of \Cref{th:reghomo} consists of three steps: 
\begin{itemize}
\item[Step 1.] We first prove in \Cref{th:ininitreg} local $C^\alpha$-regularity of the solution. We do not require a precise estimate of $[u]_{C^\alpha}$, but we crucially get that $\alpha$ is independent of $t$.
\item[Step 2.] In \Cref{th:BLdiff} we show that the $C^\alpha$-regularity from above translates into a $W^{s+\beta,\frac{n}{s}}$-regularity for $\beta = \beta(\alpha)$ using a technique developed by Brasco and Lindgren, \cite{BL17,BLS18}. We then choose $s_0 \coloneqq s+\beta$.
 \item[Step 3.] The \emph{estimate} in \Cref{th:reghomo} is a consequence of a priori estimates of the Euler--Lagrange equations, i.e., of the respective harmonic map equation, under the assumption that the solution already belongs to $W^{s_0,\frac{n}{s}}$. This will be done in  \Cref{th:apriorireg}, and is based on a stability estimate for the fractional $p$-Laplacian, see \cite{S16}.
\end{itemize}

\subsection{Step 1: Uniform H\"older continuity}
We begin the proof of \Cref{th:reghomo} by first proving H\"older continuity of minimizers --- with a uniform H\"older exponent $\alpha > 0$ which does not change as $t \to s^+$. Namely we obtain the following theorem.

\begin{theorem}\label{th:ininitreg}
Let $s \in (0,1)$ and $\Sigma,\, \n$ be as above. There exists $\alpha > 0$ and $s_1 > s$ such that the following holds for any $t \in [s,s_1)$.

Assume that $u \in W^{t,\frac{n}{s}}(\Sigma,\n)$ and for a geodesic ball $B(R) \subset \Sigma$  $u$ is a minimizing $W^{t,\frac{n}{s}}$-harmonic map in $B(R)$, that is
 \[
  E_{t,\frac ns}(u,\Sigma) \leq E_{t,\frac ns}(v,\Sigma)
 \]
holds for all $v \in W^{t,\frac{n}{s}}(\Sigma,\n)$ such that 
\begin{itemize}
 \item $u \equiv v$ in $\Sigma \setminus  B(R)$, and
 \item $u \sim v$ in homotopy (as defined in \Cref{s:homotopy}).
\end{itemize}

Then $u \in C^\alpha_{loc}(B(R))$.
\end{theorem}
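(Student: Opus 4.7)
The plan is to establish H\"older continuity through a Morrey-type energy decay estimate, obtained by comparing $u$ with a carefully constructed homotopy-preserving competitor on small balls. The key constraint is that the decay exponent must be uniform as $t\to s^+$: this forces us to avoid any argument that degenerates at $t=s$ (Pohozaev identities, harmonic extension, etc.) — which is the entire motivation for a direct approach.

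\textbf{Step 1 (competitor construction).} Fix $x_0 \in B(R/2)$ and $0<r\le R/2$. By the fractional Poincar\'e inequality together with the smallness hypothesis $[u]_{W^{s,n/s}(B(R))}<\eps$, the map $u$ is $L^{n/s}$-close on $B(x_0,r)$ to its mean $\bar u$, and for $\eps$ small enough $\bar u$ lies in the tubular neighborhood of \Cref{la:tubular}, so $c\coloneqq\pi_\n(\bar u)\in\n$ is well-defined. By pigeonholing on dyadic annuli inside $B(x_0,r)$, we select a scale $\rho\in(r/2,r)$ on which the $W^{t,n/s}$-energy of $u$ on the annulus $A=B(x_0,\rho)\setminus B(x_0,\rho/2)$ is controlled by the average of the energies across the dyadic levels. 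On $A$ we smoothly interpolate between $u$ and $c$ using $\pi_\n$ (as in the proof of \Cref{la:samehomotopy}), producing a competitor $v\in W^{t,n/s}(\Sigma,\n)$ that equals $u$ outside $B(x_0,\rho)$ and equals the constant $c$ on $B(x_0,\rho/2)$. A variant of \Cref{la:smallenergytrivial} applied on $B(x_0,r)$ guarantees that the modification performed inside $B(x_0,\rho)$ is null-homotopic relative to its boundary values, so $v\sim u$ in the sense of \Cref{def:homotopy}.

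\textbf{Step 2 (hole-filling and Morrey decay).} Inserting $v$ in the minimality inequality $E_{t,n/s}(u,\Sigma)\le E_{t,n/s}(v,\Sigma)$ and splitting the double integral according to the location of the two variables, the diagonal block $B(x_0,\rho/2)\times B(x_0,\rho/2)$ vanishes for $v$ (since $v$ is constant there), while the annular and tail pieces are controlled by $[u]_{W^{t,n/s}(A)}^{n/s}$ plus interaction terms dominated by the global energy. This yields a hole-filling inequality
\[
 [u]_{W^{t,n/s}(B(x_0,\rho/2))}^{n/s} \le C\,[u]_{W^{t,n/s}(A)}^{n/s} + C\,[u]_{W^{s,n/s}(\Sigma)}^{n/s}\,(\rho/R)^{\kappa},
\]
with constants $C,\kappa$ depending on $s$ and $\n$ but not on $t\in[s,s_1)$. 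Adding a multiple of the left-hand side to both sides and iterating the resulting contraction over dyadic scales produces the Morrey estimate $[u]_{W^{t,n/s}(B(x_0,r))}^{n/s}\aleq (r/R)^{\gamma n/s}$ for some $\gamma=\gamma(s,n,\n)>0$ uniform in $t$.

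\textbf{Step 3 and the main obstacle.} At the borderline exponent $s\cdot(n/s)=n$, Morrey decay of the Gagliardo seminorm is equivalent via a Campanato-type characterization to decay of the mean oscillation of $u$, giving $u\in C^\alpha_{\loc}(B(R))$ with $\alpha=\alpha(\gamma,s,n)$ independent of $t\in[s,s_1)$. The principal obstacle is Step 1: constructing the competitor so that (i) it lies in the same homotopy class as $u$ and (ii) the nonlocal tail interactions do not swamp the gain from making the integrand vanish on the inner ball. Both difficulties are resolved by the ``good annulus'' selection, which exploits the freedom to slide $\rho$ within $(r/2,r)$ in order to simultaneously keep the projection $\pi_\n$ well-defined on the interpolation layer and keep the cross-energy $A\times(\Sigma\setminus B(x_0,\rho))$ small. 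The fact that none of the constants depend on $t$ — only on $s$ and the geometry of $\n$ — is what yields the sought-after uniformity.
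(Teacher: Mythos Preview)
Your overall architecture---competitor, hole-filling, Campanato---matches the paper's proof. But two of your steps contain genuine gaps that the paper has to work harder to close.

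\textbf{The quantity you iterate is wrong, and the tail term is unjustified.} You propose to iterate $[u]_{W^{t,n/s}(B(x_0,\rho))}^{n/s}$ and write a hole-filling inequality with an additive remainder $C[u]_{W^{s,n/s}(\Sigma)}^{n/s}(\rho/R)^{\kappa}$. Neither the exponent $\kappa$ nor its uniformity in $t$ is explained, and in fact no such inequality drops out of the minimality comparison. The nonlocal cross term $\int_{B(\rho/2)}\int_{\Sigma\setminus B(\rho)}\frac{|v(x)-v(y)|^{n/s}}{|x-y|^{n+tn/s}}$ (with $v\equiv c$ on the inner ball and $v\equiv u$ far away) is \emph{not} small of order $(\rho/R)^{\kappa}$; it is comparable to a genuine piece of the energy of $u$. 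The paper resolves this by iterating the ``half-nonlocal'' quantity
\[
\Phi(r)\;=\;\int_{B(x_0,r)}\int_{\Sigma}\frac{|u(x)-u(y)|^{n/s}}{|x-y|^{n+tn/s}}\,\mathrm{d}x\,\mathrm{d}y
\]
instead of the localized Gagliardo seminorm. For this $\Phi$ the competitor lemma (\Cref{la:competitor}) gives $\Phi(r)\le C\bigl(\Phi(2r)-\Phi(r/2)\bigr)$ \emph{with no additive tail}, and hole-filling then yields clean geometric decay $\Phi(4^{-k}r_0)\le \tau^k\Phi(r_0)$ with $\tau=\frac{C}{C+1}$ independent of $t$. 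Your Step~2 as written does not produce a convergent iteration.

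\textbf{The competitor at $t=s$.} Your Step~1 says ``on $A$ we smoothly interpolate between $u$ and $c$ using $\pi_{\n}$''. For $t>s$ this is fine because $u$ is continuous by Sobolev embedding, so the convex interpolation stays in the tubular neighborhood pointwise. At the endpoint $t=s$ the map $u$ is a priori not continuous, and you have no pointwise control ensuring $\pi_{\n}$ is defined on the interpolation. The paper handles this case separately via a Luckhaus-type construction (\Cref{la:frluckhaus}): a Fubini argument selects a \emph{sphere} $\partial B(\rho)$ on which the trace energy is controlled, Morrey--Sobolev on that $(n-1)$-dimensional slice gives the required $L^\infty$ smallness, and the interpolation is built radially from that good slice. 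Your ``good annulus'' pigeonhole controls energy but not oscillation, so it does not close this gap.

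Two smaller points: the theorem you are proving has no global smallness hypothesis $[u]_{W^{s,n/s}(B(R))}<\eps$; smallness is obtained \emph{locally} by absolute continuity (for $t=s$) or by uniform continuity (for $t>s$), which only gives you a radius $r_0=r_0(u)$ below which the competitor construction applies---this is enough since no quantitative estimate is claimed here. And the homotopy preservation is not via a relative version of \Cref{la:smallenergytrivial} (which is a statement about maps on all of $\Sigma$); the paper uses \Cref{la:samehomotopycont} directly for $t>s$ and \Cref{la:samehomotopy} for $t=s$, both applied to $u$ and $v$ globally.
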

Let us remark that Millot--Sire--Yu \cite{MSY18} already obtained partial regularity for $E_{s,2}$-minimizers for $n=1$, $s< \frac{1}{2}$.

The proof of \Cref{th:ininitreg} follows from a Cacciopoli type estimate (and the technique probably can be traced back to Morrey \cite{M48}).

The first step is to construct a suitable competitor map. For $t > s$ this is simply the interpolation between $u$ and the mean value $(u)_{B(r)\setminus B(r/2)}$. For $t=s$ we have to be more careful, and apply an argument similar to Luckhaus' lemma.
Namely we have
\begin{lemma}\label{la:competitor}
Let $\Sigma,\, \n$ be as above, $s \in (0,1)$ and $s_1 \in (s,1)$. There exists a constant $C > 0$ such that the following holds for any $t \in [s,s_1]$.

Let $u \in W^{t,\frac{n}{s}}(\Sigma,\n)$. There exists an $\eps > 0$ (possibly depending on $u$) such that the following holds.

Assume that for some $r \in (0,1)$ and some ball $B(10r) \subset \Sigma$,
\begin{itemize} 
 \item[If]  $t = s$ \begin{equation}\label{eq:comp:uabscont}
 \int_{B(10r)} \int_{\Sigma} \frac{|u(x)-u(y)|^{\frac{n}{s}}}{|x-y|^{2n}} \dx \dy < \eps^\frac ns.
\end{equation}
\item[If] $t>s$
\begin{equation}\label{la:comp:ucont}
 |u(x)-u(y)| < \eps \quad \forall x,y \in B(10r).
\end{equation}
\end{itemize}
Then, there exists a $v \in W^{t,\frac{n}{s}}(\Sigma,\n)$ such that 
\begin{enumerate}
 \item $v \equiv u$ in $\Sigma \setminus B(r)$,
 \item $v$ is homotopic to $u$, and   
 \item we have \begin{equation}\label{la:comp:west}
 \begin{split}
        \int_{B(r)} \int_{\Sigma} \frac{|v(x)-v(y)|^{\frac{n}{s}}}{|x-y|^{n+t\frac{n}{s}}} \dif x \dif y
%         
%        + \int_{B(r)} \int_{\Sigma \setminus  B(r)} \frac{|v(x)-v(y)|^{\frac{n}{s}}}{|x-y|^{n+t\frac{n}{s}}} \dif x \dif y \\
        \leq C \int_{B(2r)\setminus B(r/2) } \int_{\Sigma} \frac{|u(x)-u(y)|^{\frac{n}{s}}}{|x-y|^{n+t\frac{n}{s}}} \dx \dy.
        \end{split}
       \end{equation}
\end{enumerate}
\end{lemma}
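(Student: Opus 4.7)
The plan is to build a competitor $v$ by smoothly interpolating $u$ with a fixed constant on the annulus $B(r)\setminus B(r/2)$ and projecting back to $\n$ via the nearest-point projection of \Cref{la:tubular}, in the spirit of a Luckhaus-type construction. Set
\[
 c \coloneqq \barint_{B(2r)\setminus B(r/2)} u(z)\, \dz \in \R^M,
\]
pick a cutoff $\eta \in C^\infty_c(\Sigma,[0,1])$ with $\eta \equiv 1$ on $\Sigma \setminus B(r)$, $\eta \equiv 0$ on $B(r/2)$ and $|\nabla \eta| \aleq 1/r$, and define the unconstrained competitor
\[
 \tilde v(x) \coloneqq \eta(x)\, u(x) + (1-\eta(x))\, c.
\]
First I would verify that $\tilde v$ takes values in the tubular neighborhood $B_\delta(\n)$. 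Under the pointwise smallness \eqref{la:comp:ucont} this is immediate from $|\tilde v(x) - u(x)| \leq |c - u(x)| \leq \barint_{B(2r)\setminus B(r/2)} |u(z)-u(x)|\, \dz < \eps$ for every $x \in B(r)\subset B(10r)$. Under the averaged smallness \eqref{eq:comp:uabscont} pointwise continuity is unavailable, and I would instead pick a reference point $x_0 \in B(2r)\setminus B(r/2)$ with small local Gagliardo energy (via a Chebyshev/Fubini argument), replace $c$ by $u(x_0)$, and refine the interpolation by extending radially from a good slice $\partial B(\rho)$ with $\rho \in (r/2,r)$ chosen so that the trace of $u$ on $\partial B(\rho)$ is close to $c$ --- this is the Luckhaus-type ingredient. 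Once $\tilde v \in B_\delta(\n)$ is secured, set $v \coloneqq \pi_\n \circ \tilde v \in W^{t,\frac{n}{s}}(\Sigma,\n)$.

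Conclusion (1) is immediate from $\eta \equiv 1$ outside $B(r)$. For (2), the straight-line family $H(\tau, \cdot) \coloneqq \pi_\n((1-\tau) u + \tau \tilde v)$ stays inside $B_\delta(\n)$ for every $\tau \in [0,1]$ by the same smallness computation, hence defines a continuous path in $W^{t,\frac{n}{s}}(\Sigma,\n)$ from $u$ to $v$, which combined with \Cref{la:smoothapprox} and \Cref{la:samehomotopy} gives the required homotopy in the sense of \Cref{def:homotopy}.

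The heart of the argument is the energy estimate \eqref{la:comp:west}. Since $\pi_\n$ is Lipschitz on $B_\delta(\n)$, it suffices to bound $\int_{B(r)}\int_{\Sigma} |x-y|^{-n-t\frac{n}{s}} |\tilde v(x)-\tilde v(y)|^{\frac{n}{s}}\, \dx\, \dy$. Starting from the elementary identity
\[
 \tilde v(x)-\tilde v(y) = \eta(x)\bigl(u(x)-u(y)\bigr) + \bigl(u(y)-c\bigr)\bigl(\eta(x)-\eta(y)\bigr),
\]
I would split the integration in $y$ into three regions: $B(r/2)$, $B(2r)\setminus B(r/2)$, and $\Sigma \setminus B(2r)$. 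The ``near'' contributions with $y \in B(2r)$ are bounded via the standard Lipschitz estimate for the cutoff together with a fractional Poincar\'e inequality applied to the term $\int_{B(2r)\setminus B(r/2)} |u(y)-c|^{\frac{n}{s}}\,\dy$, producing a quantity controlled by the right-hand side of \eqref{la:comp:west}.

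The main obstacle is the tail contribution from $y \in \Sigma \setminus B(2r)$, where $\tilde v(x) = c$ for $x \in B(r/2)$ is being compared against the far-away value $u(y)$. To handle it, I would use the averaging representation
\[
 |c - u(y)|^{\frac{n}{s}} \aleq \barint_{B(2r)\setminus B(r/2)} |u(z) - u(y)|^{\frac{n}{s}}\, \dz,
\]
combined with the geometric fact that for $x \in B(r)$, $z \in B(2r)\setminus B(r/2)$ and $y \notin B(2r)$ one has $|x-y| \approx |z-y|$, which allows trading the kernel $|x-y|^{-n-t\frac{n}{s}}$ for $|z-y|^{-n-t\frac{n}{s}}$ up to a dimensional constant. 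Integrating first in $x$ over $B(r)$ gains a factor $r^n$ which cancels the annulus volume $\approx r^n$ implicit in the $z$-average, and delivers the bound by $\int_{B(2r)\setminus B(r/2)} \int_{\Sigma \setminus B(2r)} |u(z)-u(y)|^{\frac{n}{s}} |z-y|^{-n-t\frac{n}{s}}\, \dz\, \dy$, which is part of the right-hand side of \eqref{la:comp:west}. The most delicate technical point in the case $t=s$ is that Step 1 (placing $\tilde v$ into $B_\delta(\n)$) cannot rely on pointwise continuity and forces the Luckhaus-type good-slice construction; everything afterwards is a careful but routine kernel splitting.
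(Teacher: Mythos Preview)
Your construction for $t>s$ is essentially the paper's: same cutoff-and-average competitor (the paper averages over $B(r)\setminus B(r/2)$ and uses a cutoff supported in $B(r)$, complementary to your convention, but this is cosmetic), same Lipschitz/Poincar\'e treatment of the near terms, and the same one-sided kernel swap $|z-y|\aleq |x-y|$ for the tail. One small point: your claim $|x-y|\approx |z-y|$ for $z\in B(2r)\setminus B(r/2)$ and $y\notin B(2r)$ is false as stated (take $z$ near $\partial B(2r)$), but only the inequality $|z-y|\aleq |x-y|$ is needed and that one holds.

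For $t=s$ you correctly locate the obstruction (placing $\tilde v$ into $B_\delta(\n)$) and the remedy (a Luckhaus-type good-slice argument), but your sketch is missing the mechanism that makes it work. Replacing $c$ by $u(x_0)$ does not help: you still need $|u(x)-c|$ small on the whole transition annulus, which the averaged smallness \eqref{eq:comp:uabscont} does not give. What the paper does is pick $\rho\in(\tfrac34 r,\tfrac56 r)$ by a Fubini/trace argument so that both $r\,[u]_{W^{s,n/s}(\partial B(\rho))}^{n/s}$ and $r\int_{\partial B(\rho)}\int_{B(2r)\setminus B(r/2)}\frac{|u(\theta)-u(y)|^{n/s}}{|\theta-y|^{2n}}$ are controlled by the annular energy, then invoke the Morrey--Sobolev embedding $W^{s,n/s}(\partial B(\rho))\hookrightarrow C^{s/n}(\partial B(\rho))$ on the $(n-1)$-dimensional slice to obtain $\sup_{\partial B(\rho)}|u-(u)_{\partial B(\rho)}|\aleq\eps$. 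The competitor is then $u$ outside $B(\rho)$, a radial interpolation between $u(\rho x/|x|)$ and the constant $(u)_{\partial B(\rho)}$ on a thin shell, and that constant inside --- crucially the interpolation is between values of $u$ \emph{on the slice} (where one has pointwise control) and a constant, not between $u(x)$ and a constant. Your phrase ``extend radially from a good slice'' is the right move; the Morrey--Sobolev step on the slice is the missing ingredient that supplies the pointwise smallness. Finally, for $t=s$ the homotopy is not obtained via the straight-line path (which need not land in $B_\delta(\n)$) but by deducing $[u-v]_{W^{s,n/s}}+\|u-v\|_{L^1}\aleq\eps$ from \eqref{la:comp:west} and \eqref{eq:comp:uabscont} and invoking \Cref{la:samehomotopy}.
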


\begin{proof}[Proof of \Cref{la:competitor} for $t>s$]
 For simplicity of the presentation we assume that $\Sigma=\R^n$, moreover without loss of generality we can assume $B(r) = B(0,r)$.

Since $\n$ is a smooth compact manifold without boundary, there exists a tubular neighborhood $B_\eps(\n) \subset \R^M$ and a smooth projection $\pi_{\n}: B_\eps(\n) \to \n$, \Cref{la:tubular}. We choose $\eps$ possibly even smaller so that $\eps\|D\pi_\n\|_{L^\infty}  < \tilde \eps(\n)$, where $\tilde \eps(\n)$ is the small quantity from \Cref{la:samehomotopycont}.

Set $\eta \in C_c^\infty(B(\frac{9}{10} r),[0,1])$ and $\eta \equiv 1$ in $B(\frac{8}{10}r)$, $|\nabla \eta|\aleq r^{-1}$. Set
\[
 w\coloneqq(1-\eta)u + \eta (u)_{A(r)}.
\]
Here $A(r) = B(r) \setminus B(r/2)$ and $(u)_{A(r)}$ denotes the mean value on that set.
Observe that
\[
\begin{split}
 \dist((1-\eta(x))u(x) + \eta(x) (u)_{A(r)},\n) 
 &\leq |(1-\eta)u(x) + \eta (u)_{A(r)} - u(x)|\\
  &\leq \eta(x) |u(x) - (u)_{A(r)} |\\
  &< \eps,
 \end{split}
\]
we used \eqref{la:comp:ucont} in the last inequality.

Thus, we can compose $w$ with $\pi_\n$ and set 
\[
 v \coloneqq \pi_{\n}\circ w.
\]
Observe,
\[
 |u(x)-v(x)|\leq \|D\pi_\n\|_{L^\infty} |u(x) - (u)_{A(r)} | \chi_{B(r)}(x) \leq \|D\pi_\n\|_{L^\infty}\frac{\eps}{100}  < \tilde \eps(\n).
\]
From \Cref{la:samehomotopycont} we obtain that $u$ and $v$ are homotopic. 

It remains to prove the estimate \eqref{la:comp:west}. 
By Lipschitz continuity of $\pi_\n$ we have
\[
        \int_{B(r)} \int_{\R^n} \frac{|v(x)-v(y)|^{\frac{n}{s}}}{|x-y|^{n+t\frac{n}{s}}} \dif x \dif y
        \leq C(\pi_\n)\int_{B(r)} \int_{\R^n} \frac{|w(x)-w(y)|^{\frac{n}{s}}}{|x-y|^{n+t\frac{n}{s}}}.
\]        
Now we have 
\[
\begin{split}
w(x)-w(y)=&(1-\eta(x))u(x) + \eta(x) (u)_{A(r)}-\brac{(1-\eta(y))u(y) + \eta(y) (u)_{A(r)}}\\
=&(1-\eta(x)) (u(x)-u(y))- (\eta(x)-\eta(y)) \brac{u(y)-(u)_{A(r)}}.
\end{split}
\]
So we have, decomposing $\R^n=B(r)\cup \R^n\setminus B(r)$ (it is important to observe that we can choose all of the constants to be independent of $t$ as long as $t \in [s,s_1]$)
\[
\begin{split}
 \int_{B(r)} \int_{\R^n} \frac{|v(x)-v(y)|^{\frac{n}{s}}}{|x-y|^{n+t\frac{n}{s}}}    \dx \dy  
 &\aleq \int_{B(r)}\int_{B(r)} (1-\eta(x))^{\frac{n}{s}} \frac{|u(x)-u(y)|^{\frac{n}{s}}}{|x-y|^{n+t\frac{n}{s}}} \dx \dy\\
 &\quad +\int_{B(r)}\int_{B(r)} \frac{|\eta(x)-\eta(y)|^{\frac{n}{s}} |u(y)-(u)_{A(r)}|^{\frac{n}{s}}}{|x-y|^{n+t\frac{n}{s}}} \dx \dy
  \\
  &\quad +\int_{B(r)}\int_{\R^n \setminus B(r)} (1-\eta(x))^{\frac{n}{s}} \frac{|u(x)-u(y)|^{\frac{n}{s}}}{|x-y|^{n+t\frac{n}{s}}} \dx \dy\\
  &\quad+\int_{B(r)}\int_{\R^n \setminus B(r)} \frac{|\eta(x)-\eta(y)|^{\frac{n}{s}} |u(y)-(u)_{A(r)}|^{\frac{n}{s}}}{|x-y|^{n+t\frac{n}{s}}} \dx \dy.
 \end{split}
\]
In view of the support of $\eta$ and $1-\eta$ we find 
\[
\begin{split}
 \int_{B(r)} \int_{\R^n} \frac{|v(x)-v(y)|^{\frac{n}{s}}}{|x-y|^{n+t\frac{n}{s}}}    \dx \dy &\aleq \int_{B(r)}\int_{B(r)\setminus B(r/2)} \frac{|u(x)-u(y)|^{\frac{n}{s}}}{|x-y|^{n+t\frac{n}{s}}} \dx \dy\\
 &\quad +r^{-\frac{n}{s}}\int_{B(r)}\int_{B(r)} \frac{|u(y)-(u)_{A(r)}|^{\frac{n}{s}}}{|x-y|^{n+(t-1)\frac{n}{s}}} \dx \dy\\
 &\quad +\int_{B(r)}\int_{\R^n \setminus B(r)} \frac{|u(x)-u(y)|^{\frac{n}{s}}}{|x-y|^{n+t\frac{n}{s}}} \dx \dy\\
 &\quad +\int_{B(r)}\int_{\R^n \setminus B(r)} \frac{|\eta(x)-\eta(y)|^{\frac{n}{s}} |u(y)-(u)_{A(r)}|^{\frac{n}{s}}}{|x-y|^{n+t\frac{n}{s}}} \dx \dy.
 \end{split}
\]
Integrating in $x$ and then using Jensen's inequality we have 
\[
\begin{split}
 r^{-\frac{n}{s}}\int_{B(r)}\int_{B(r)} \frac{|u(y)-(u)_{A(r)}|^{\frac{n}{s}}}{|x-y|^{n+(t-1)\frac{n}{s}}} \dx \dy
 &\aleq r^{-t\frac{n}{s}} \int_{B(r)} |u(y)-(u)_{A(r)}|^{\frac{n}{s}} \dy\\
 &\aleq \int_{B(r)}\int_{B(r)/B(r/2)} \frac{|u(y)-u(z)|^{\frac{n}{s}}}{|y-z|^{n+t\frac{n}{s}}} \dz \dy.
 \end{split}
\]
Moreover,
\[
\begin{split}
 &\int_{B(r)}\int_{\R^n \setminus B(r)} \frac{|u(x)-u(y)|^{\frac{n}{s}}}{|x-y|^{n+t\frac{n}{s}}} \dx \dy\\
 &=\int_{B(r)}\int_{B(2r)\setminus B(r)} \frac{|u(x)-u(y)|^{\frac{n}{s}}}{|x-y|^{n+t\frac{n}{s}}} \dx \dy +\int_{B(r)}\int_{\R^n \setminus B(2r)} \frac{|u(x)-u(y)|^{\frac{n}{s}}}{|x-y|^{n+t\frac{n}{s}}} \dx \dy\\
 &\aleq\int_{B(2r)\setminus B(r)} \int_{\R^n}\frac{|u(x)-u(y)|^{\frac{n}{s}}}{|x-y|^{n+t\frac{n}{s}}} \dx \dy\\
 &\quad +\int_{B(r)}\int_{\R^n \setminus B(2r)} \frac{|u(y)-(u)_{A(r)}|^{\frac{n}{s}}}{|x-y|^{n+t\frac{n}{s}}} \dx \dy+\int_{B(r)}\int_{\R^n \setminus B(2r)} \frac{|u(x)-(u)_{A(r)}|^{\frac{n}{s}}}{|x-y|^{n+t\frac{n}{s}}} \dx \dy\\
 &\aleq\int_{B(2r)\setminus B(r)} \int_{\R^n}\frac{|u(x)-u(y)|^{\frac{n}{s}}}{|x-y|^{n+t\frac{n}{s}}} \dx \dy\\
 &\quad+r^{-t\frac{n}{s}} \int_{B(r)}|u(y)-(u)_{A(r)}|^{\frac{n}{s}}\dy+r^{-n}\int_{B(r)}\int_{B(r)/B(r/2)}\int_{\R^n \setminus B(2r)} \frac{|u(x)-u(z)|^{\frac{n}{s}}}{|x-y|^{n+t\frac{n}{s}}} \dx \dz \dy\\
 &\aleq\int_{B(2r)\setminus B(r/2)} \int_{\R^n}\frac{|u(x)-u(y)|^{\frac{n}{s}}}{|x-y|^{n+t\frac{n}{s}}} \dx \dy.
 \end{split}
\]
In the last step we used that if $x \in \R^n \setminus B(2r)$, $y \in B(r)$, and $z \in B(r) \setminus B(r/2)$ then $|x-y|\ge |r-|x||$ and
\[
 |x-z| \le \dist(x,B(r)) + \dist(z,B(r)) \le 2 \dist(x,B(r)) \le 2 |r-|x||.
\]
Similarly,
\[
 \begin{split}
  &\int_{B(r)}\int_{\R^n \setminus B(r)} \frac{|\eta(x)-\eta(y)|^{\frac{n}{s}} |u(y)-(u)_{A(r)}|^{\frac{n}{s}}}{|x-y|^{n+t\frac{n}{s}}} \dx \dy\\
  &\aleq \int_{B(r)}\int_{B(2r)\setminus B(r)} \frac{|\eta(x)-\eta(y)|^{\frac{n}{s}} |u(y)-(u)_{A(r)}|^{\frac{n}{s}}}{|x-y|^{n+t\frac{n}{s}}} \dx \dy \\
  &\quad+ \int_{B(r)}\int_{\R^n \setminus B(2r)} \frac{|\eta(x)-\eta(y)|^{\frac{n}{s}} |u(y)-(u)_{A(r)}|^{\frac{n}{s}}}{|x-y|^{n+t\frac{n}{s}}} \dx \dy\\
  &\aleq r^{-\frac{n}{s}} \int_{B(r)}\int_{B(2r)\setminus B(r)} \frac{|u(y)-(u)_{A(r)}|^{\frac{n}{s}}}{|x-y|^{n+(t-1)\frac{n}{s}}} \dx \dy 
  + \int_{B(r)}\int_{\R^n \setminus B(2r)} \frac{|u(y)-(u)_{A(r)}|^{\frac{n}{s}}}{|x-y|^{n+t\frac{n}{s}}} \dx \dy\\
  &\aleq \int_{B(2r)\setminus B(r/2)} \int_{\R^n}\frac{|u(x)-u(y)|^{\frac{n}{s}}}{|x-y|^{n+t\frac{n}{s}}} \dx \dy.
 \end{split}
\]

This establishes \eqref{la:comp:west} and the proof is complete (in the case $t  >s$).
\end{proof}

\begin{proof}[Proof of \Cref{la:competitor} for $t=s$]
In the following we restrict for simplicity to the case $n \geq 2$, but the statement remains true for $n=1$ with easy modifications. Again, for the simplicity of the presentation, we assume that $\Sigma=\R^n$.

% By Fubini's theorem there is a $\rho \in (\frac{3}{4} r,\frac{5}{6} r)$ such that 
% \begin{equation}\label{eq:comp:goodslice1}
%  r\int_{\partial B(\rho)} \int_{B(2r)\setminus B(r/2)} \frac{|u(\theta)-u(y)|^{\frac{n}{s}}}{|\theta-y|^{2n}}\, \dy\, \dif\theta \aleq \int_{B(r)\setminus B(r/2)}\int_{B(2r)\setminus B(r/2)} \frac{|u(x)-u(y)|^{\frac{n}{s}}}{|x-y|^{2n}}\, \dy\, \dx.
% \end{equation}
We claim that there exists an radius $\rho\in(\frac{3}{4} r,\frac{5}{6} r)$ such that 
\begin{equation}\label{eq:comp:goodslice1}
\begin{split}
&r \int_{\partial B(\rho)}\int_{B(2r)\backslash B(r/2)} \frac{|u(\theta) - u(\omega)|^\frac ns}{|\theta-\omega|^{2n}} \dif \theta \dif \omega +
 r\int_{\partial B(\rho)}\int_{\partial B(\rho)} \frac{|u(\theta) - u(\omega)|^\frac ns}{|\theta-\omega|^{2n-1}} \dif \theta \dif \omega \\
 &\aleq \int_{B(r)\setminus B(r/2)}\int_{B(2r)\setminus B(r/2)} \frac{|u(x)-u(y)|^{\frac{n}{s}}}{|x-y|^{2n}} \dx \dy.
 \end{split}
\end{equation}
Indeed, by Fubini's theorem for any $\kappa \in (0,\frac{1}{2})$ there exists a set 
$A_{\kappa} \subset (\frac 34 r,\frac56 r)$, with $\mathcal{L}^1((\frac 34 r,\frac56 r)\setminus A_\kappa) \leq \kappa r$ such that for any $\tau \in A_\kappa$,
\begin{equation}\label{eq:Fubiniononeintegral}
 r \int_{\partial B(\tau)} \int_{B(2r)\setminus B(r/2)} \frac{|u(x) - u(\omega)|^\frac ns}{|x-\omega|^{2n}} \dif x \dif \omega  \leq \kappa^{-1} \int_{B(r)\setminus B(r/2)} \int_{B(2r)\setminus B(r/2)} \frac{|u(x) - u(y)|^\frac ns}{|x-y|^{2n}} \dif x \dif y.
\end{equation}
Indeed, if \eqref{eq:Fubiniononeintegral} was not true on a set $\bar{A} \subset (\frac{3}{4}r,\frac{5}{6}r)$ of $\mathcal{L}^1$-measure $> \kappa r$ we integrate both sides over that set and get
\[
 \int_{B(\frac{5}{6}r)\backslash B(\frac{3}{4}r)} \int_{B(2r)\setminus B(r/2)} \frac{|u(x) - u(\omega)|^\frac ns}{|x-\omega|^{2n}} \dif x \dif \omega  > \int_{B(r)\setminus B(r/2)} \int_{B(2r)\setminus B(r/2)} \frac{|u(x) - u(y)|^\frac ns}{|x-y|^{2n}} \dif x \dif y,
\]
a contradiction to the monotonicity of the integral with respect to its integration domain.

Also by Fubini's theorem for any $\sigma \in (0,\frac{1}{2})$ there exists a set 
$B_{\sigma} \subset (\frac 34 r,\frac56 r)$, with $\mathcal{L}^1((\frac 34 r,\frac56 r)\setminus B_\sigma) \aleq \sigma r$ such that for any $\tau \in A_\sigma$,
\begin{equation}\label{eq:Fubiniononeintegral2}
\begin{split}
 r \int_{\partial B(\tau)} \int_{\partial B(\tau)} & \frac{|u(\omega) - u(\theta)|^\frac ns}{|\omega-\theta|^{2n-1}} \dif \omega \dif \theta\\ 
 &\leq \sigma^{-1} \int_{B(r)\setminus B(r/2)} \int_{B(r)\setminus B(r/2)} \frac{|u(y) - u(x)|^\frac ns}{|y-x|^{2n}} \dif y \dif x
 \end{split}
\end{equation}
for every $\tau \in B_\sigma$. The arguments to obtain \eqref{eq:Fubiniononeintegral2} are a bit more complicated, although well-known to experts. 
For simplicity let $r=1$, the right power for the factor involving $r$ follows from scaling arguments. One argument for \eqref{eq:Fubiniononeintegral2} goes via the Gagliardo-extension\footnote{This was popularized in the PDE community by \cite{CS07}, see also the harmonic analysis side in \cite{S93,BC17} or, for an collection of identifications, \cite[Proposition 10.2]{LS20}.}, we have
\[
 \int_{B(1)\setminus B(1/2)} \int_{B(1)\setminus B(1/2)} \frac{|u(y) - u(x)|^\frac ns}{|y-x|^{2n}} \dif y \dif x \aeq \inf_{U} \int_{B(1)\backslash B(1/2)\times[0,\infty)} t^{\frac{n}{s}-1-n} |\nabla U|^{\frac{n}{s}},
\]
where the infimum is taken over all smooth maps $U: B(1)\backslash B(1/2) \times [0,\infty) \to \R^M$ with $U = u$ in the trace sense on $B(1)\backslash B(1/2) \times [0,\infty)$. Now we can apply the argument via Fubini's theorem in $B(1) \backslash B(1/2) \times [0,\infty)$ and find a large set $B_\sigma$ so that for each slice $\rho \in B_\sigma$
\[
 \int_{\partial B(\rho) \times[0,\infty)} t^{\frac{n}{s}-1-n} |\nabla U|^{\frac{n}{s}}  \aleq \sigma^{-1} \int_{B(1)\backslash B(1/2)\times[0,\infty)} t^{\frac{n}{s}-1-n} |\nabla U|^{\frac{n}{s}}.
\]
By the trace theorem we have 
\[
 [u]_{W^{s,\frac{n}{s}}(\partial B(\rho))}^{\frac{n}{s}} \aleq \int_{\partial B(\rho) \times[0,\infty)} t^{\frac{n}{s}-1-n} |\nabla U|^{\frac{n}{s}}.
\]
Suitably scaling this argument we obtain \eqref{eq:Fubiniononeintegral2} for any $r \in (0,1)$.

Combining \eqref{eq:Fubiniononeintegral} and \eqref{eq:Fubiniononeintegral2}, taking $\sigma$ and $\kappa$ small enough we can ensure that $A_\kappa \cap B_\sigma \neq \emptyset$ and for $\rho \in A_{\kappa}\cap B_\sigma$ \eqref{eq:comp:goodslice1} holds.

From now on we fix such ``good slice'', i.e., a $\rho \in (\frac{3}{4}r,\frac{5}{6}r)$ such that \eqref{eq:comp:goodslice1} holds.

We know from Morrey--Sobolev embedding that $W^{s,\frac ns}(\partial B(\rho)) \subset C^\frac sn(\partial B(\rho))$ with
\begin{equation}\label{eq:comp:goodslice2}
\begin{split}
|u(\theta) - u(\omega)| 
&\aleq |\theta-\omega|^\frac sn [u]_{W^{s,\frac ns}(\partial B(\rho))} \\
&\aleq |\theta-\omega|^\frac sn r^{-\frac sn} \brac{\int_{B(r)\setminus B(r/2)}\int_{B(2r)\setminus B(r/2)} \frac{|u(x)-u(y)|^{\frac{n}{s}}}{|x-y|^{2n}} \dx \dy}^{\frac sn}\\
&\aleq \brac{\int_{B(r)\setminus B(r/2)}\int_{B(2r)\setminus B(r/2)} \frac{|u(x)-u(y)|^{\frac{n}{s}}}{|x-y|^{2n}} \dx \dy}^{\frac sn} \quad \text{ for all } \theta,\, \omega\in \partial B(\rho).
\end{split}
\end{equation}
% 
% We can moreover assume that for this $\rho$, $u \in W^{s,\frac{n}{s}}(\partial B(\rho))$ and thus $u$ is continuous with the estimate
% \begin{equation}\label{eq:comp:goodslice2}
%  |u(\theta)-u(\omega)| \aleq \int_{B(r)\setminus B(r/2)}\int_{B(2r)\setminus B(r/2)} \frac{|u(x)-u(y)|^{\frac{n}{s}}}{|x-y|^{2n}}\, \dy\, \dx \quad \forall \theta,\,\omega| \in \partial B(\rho).
% \end{equation}

Set for a $\delta\in(0,\frac14)$ 
\[
w(x) \coloneqq
\begin{cases}
u(x) \quad &|x| > \rho\\
(1-\eta(|x|)) u(\theta) + \eta(|x|) (u)_{\partial B(\rho)} \quad &\theta = \rho\frac{x}{|x|},\ |x| \in ((1-\delta)\rho,\rho)\\
(u)_{\partial B(\rho)} \quad & |x|<(1-\delta)\rho,
 \end{cases}
\]
where $\eta: \R_+ \to [0,1]$ is smooth with $\eta(t) = 0$ for $t \geq (1-\frac{\delta}{2})\rho$ and $\eta \equiv 1$ on $(0,(1-\frac{3}{4} \delta)\rho]$, $|\eta'| \leq \frac{100}{\delta \rho}$.

We apply a Lemma reminiscent of Luckhaus' Lemma, namely \Cref{la:frluckhaus}. Observe from \eqref{eq:comp:goodslice2} and \eqref{eq:comp:uabscont} we obtain
\[
 \dist((u)_{\partial B(r)},\n) \aleq \eps.
\]
From \Cref{la:frluckhaus} and again \eqref{eq:comp:uabscont} we obtain 
\[
 \dist(w,\n) \aleq \eps.
\]
We choose the $\eps$ in the assumptions of \Cref{la:competitor} small enough so that the map $w$ lies in the tubular neighborhood of the manifold $\n$, cf. \Cref{la:tubular}.

We set \[v \coloneqq \pi_\n \circ w.\]

\underline{We need to show \eqref{la:comp:west}.}

Let $A(\rho)=B(\frac{3}{2}\rho) \setminus B(\rho)$ and denote by $(u)_{A(\rho)}=(v)_{A(\rho)}$ the mean value of $u=v$ on $A(\rho)$.
\begin{equation}\label{eq:competitor:ts:1}
\begin{split}
\int_{B(r)} \int_{\R^n} \frac{|v(x)-v(y)|^{\frac{n}{s}}}{|x-y|^{2n}} \dx \dy 
&\aleq \int_{B(2\rho)} \int_{B(2\rho)} \frac{|v(x)-v(y)|^{\frac{n}{s}}}{|x-y|^{2n}} \dx \dy\\
&\quad +\int_{B(\frac 43\rho)} \int_{\R^n \setminus B(2\rho)} \frac{|u(x)-(u)_{A(\rho)}|^{\frac{n}{s}}}{|x-y|^{2n}} \dx\, \dy\\
&\quad + \int_{B(\frac 43\rho)} \int_{\R^n \setminus B(2\rho)} \frac{|(v)_{A(\rho)} -v(y)|^{\frac{n}{s}}}{|x-y|^{2n}} \dx \dy.
\end{split}
\end{equation}
Now observe that if $z \in A(\rho)$, $x \in \R^n \setminus B(2\rho)$ and $y \in B(\frac43\rho)$ then
\[
 |x-z|\le \dist(x,B(4/3 \rho)) + \dist(z, B(4/3 \rho)) \le 2 \dist(x,B(4/3 \rho)) \le 2\abs{\frac43 -|x|} \le |x-y|.
\]
Consequently,
\begin{equation}\label{eq:competitor:ts:2}
\begin{split}
 \int_{B(\frac43\rho)} \int_{\R^n \setminus B(2\rho)} \frac{|u(x)-(u)_{A(\rho)}|^{\frac{n}{s}}}{|x-y|^{2n}} \dx \dy
 &\aleq\rho^{-n} \int_{B(\frac43\rho)} \int_{A(\rho)} \int_{\R^n \setminus B(2\rho)} \frac{|u(x)-u(z)|^{\frac{n}{s}}}{|x-y|^{2n}} \dx \dz \dy\\
 &\aeq\rho^{-n} \int_{B(\frac43\rho)} \int_{A(\rho)} \int_{\R^n \setminus B(2\rho)} \frac{|u(x)-u(z)|^{\frac{n}{s}}}{|x-z|^{2n}} \dx \dz \dy\\
 &\aeq\int_{A(\rho)} \int_{\R^n \setminus B(2\rho)} \frac{|u(x)-u(z)|^{\frac{n}{s}}}{|x-z|^{2n}} \dx \dz\\
 &\leq\int_{B(2r)\setminus B(r/2)} \int_{\R^n} \frac{|u(x)-u(z)|^{\frac{n}{s}}}{|x-z|^{2n}} \dx \dz.
 \end{split}
\end{equation}
Also, integrating in $x$ we have
\begin{equation}\label{eq:competitor:ts:3}
\begin{split}
\int_{B(\frac43\rho)} \int_{\R^n \setminus B(2\rho)} \frac{|(v)_{A(\rho)} -v(y)|^{\frac{n}{s}}}{|x-y|^{2n}} \dx \dy
&\aeq \rho^{-n}\int_{B(\frac 43\rho)}|(v)_{A(\rho)} -v(y)|^{\frac{n}{s}} \dy\\
&\aleq  \int_{B(2\rho)} \int_{B(2\rho)} \frac{|v(x)-v(y)|^{\frac{n}{s}}}{|x-y|^{2n}} \dx \dy. 
\end{split}
\end{equation}

Lastly, observe that since $v = \pi_{\n} \circ w$ and $\pi_{\n}$ is Lipschitz
\begin{equation}\label{eq:competitor:ts:323}
\int_{B(2\rho)} \int_{B(2\rho)} \frac{|v(x)-v(y)|^{\frac{n}{s}}}{|x-y|^{2n}} \dx \dy \leq C(\pi_{\n})
 \int_{B(2\rho)} \int_{B(2\rho)} \frac{|w(x)-w(y)|^{\frac{n}{s}}}{|x-y|^{2n}} \dx \dy.
\end{equation}

Plugging \eqref{eq:competitor:ts:3}, \eqref{eq:competitor:ts:2}, and \eqref{eq:competitor:ts:323} into \eqref{eq:competitor:ts:1} we arrive at 
\begin{equation}\label{eq:competitor:ts:4}
\begin{split}
\int_{B(r)} \int_{\R^n} \frac{|v(x)-v(y)|^{\frac{n}{s}}}{|x-y|^{2n}} \dx \dy
&\aleq \int_{B(2r)\setminus B(r/2) } \int_{\R^n} \frac{|u(x)-u(y)|^{\frac{n}{s}}}{|x-y|^{2n}} \dx \dy\\
&\quad +\int_{B(2\rho)} \int_{B(2\rho)} \frac{|w(x)-w(y)|^{\frac{n}{s}}}{|x-y|^{2n}} \dx \dy.
\end{split}
\end{equation}
From the estimate of \Cref{la:frluckhaus}, namely from \eqref{eq:frl:west}, combined with \eqref{eq:comp:goodslice1} and \eqref{eq:comp:goodslice2} we have
\begin{equation}\label{eq:comp:frlwest}
 \begin{split}
  [w]_{W^{s,\frac ns}(B(2\rho))}^\frac ns 
%   &\leq [u]_{W^{s,\frac ns}(B(2\rho)\setminus B(\rho))}^{\frac{n}{s}} +\rho \int_{\partial B(\rho)} \int_{B(2\rho)\setminus B(\rho)} \frac{|u(\theta) - u(y)|^{p}}{|\theta-y|^{2n}} \dif\theta \dy\\ 
%   &\quad + \|u-(u)_{\partial B(\rho)}\|_{L^\infty(\partial B(\rho))}^\frac ns\\ 
  &\aleq[u]_{W^{s,\frac ns}(B(2\rho)\setminus B(\rho))}^{\frac{n}{s}} + \int_{B(r)\setminus B(r/2)}\int_{B(2r)\setminus B(r/2)} \frac{|u(x)-u(y)|^{\frac{n}{s}}}{|x-y|^{2n}} \dx \dy.
\end{split}
 \end{equation}
Plugging \eqref{eq:comp:frlwest} into \eqref{eq:competitor:ts:4} we obtain 
\begin{equation}\label{eq:competitor:ts:10}
\int_{B(r)} \int_{\R^n} \frac{|v(x)-v(y)|^{\frac{n}{s}}}{|x-y|^{2n}} \dx \dy \aleq \int_{B(2r)\setminus B(r/2) } \int_{\R^n} \frac{|u(x)-u(y)|^{\frac{n}{s}}}{|x-y|^{2n}} \dx \dy.
\end{equation}
In particular \eqref{la:comp:west} is established.

\underline{It remains to show that $u$ and $v$ are homotopic}.

Since $u=v$ outside of $B(r)$ we have in particular from \eqref{eq:competitor:ts:10} and \eqref{eq:comp:uabscont}
\[
 [u-v]_{W^{s,\frac{n}{s}}(\R^n)} \aleq \eps.
\]
From Poincar\'e inequality we obtain 
\[
 \|u-v\|_{L^1(\R^n)} +[u-v]_{W^{s,\frac{n}{s}}(\R^n)} \aleq \eps.
\]
Choosing $\eps$ small enough we can conclude that for $\eps(u)$ from \Cref{la:samehomotopy} we have 
\[
 \|u-v\|_{L^1(\R^n)} +[u-v]_{W^{s,\frac{n}{s}}(\R^n)} < \frac{\eps(u)}{4}.
\]
In view of \Cref{la:samehomotopy}, we have that $u$ and $v$ are homotopic in the sense of \Cref{def:homotopy}. This finishes the proof of \Cref{la:competitor}.
\end{proof}

\begin{proof}[Proof of \Cref{th:ininitreg}]
H\"older continuity is a local property, and since we are not interested in any sort of estimate at this point, it suffices to prove H\"older continuity around any point $x_0 \in B(R)$. Without loss of generality we may assume that $x_0=0$.

Let $\eps$ be from \Cref{la:competitor}. 

If $t > s$, by Sobolev embedding any map $u \in W^{t,\frac{n}{s}}$ is uniformly continuous in $B(R)$, so there exists $r_0 > 0$ such $B(10 r_0) \subset B(R)$ and $|u(x)-u(y)|<\eps$ for all $x,y \in B(10r_0)$, that is \eqref{la:comp:ucont} is satisfied for any $r < r_0$.

If $t=s$, by absolute continuity of the integral, there exists $r_0 > 0$ such that \eqref{eq:comp:uabscont} is satisfied for any $r < r_0$.

For any $r < r_0$ and any ball $B(10r) \subset B(R)$ we apply \Cref{la:competitor} and obtain a competitor $v$ for the minimizer $u$, that is
\[
 E_{t,\frac{n}{s}}(u) \leq E_{t,\frac{n}{s}}(v).
\]
Since $u \equiv v$ in $\R^n \setminus B(r)$ this implies
\[
\int\int_{(\R^n)^2 \setminus (B(r)^c)^2} \frac{|u(x)-u(y)|^{\frac{n}{s}}}{|x-y|^{n+t\frac{n}{s}}} \dx \dy \\
 \leq \int\int_{(\R^n)^2 \setminus (B(r)^c)^2} \frac{|v(x)-v(y)|^{\frac{n}{s}}}{|x-y|^{n+t\frac{n}{s}}} \dx \dy. 
\]
Here we denote 
%\iarmin{Oh, I'm such a set theory genius, that I write 20r for r and everything commutes for me}
% \ikasia{This is too complicated. I don't understand this notation. This is difficult. Mi Mi Mi}
\begin{equation}\label{eq:forK}
\begin{split}
 (\R^n)^2 \setminus (B(r)^c)^2 =& \brac{\R^n \times \R^n} \setminus \brac{ \R^n \setminus B(r) \times \R^n \setminus B(r)}\\
 =& \brac{B(r) \times B(r)} \cup \brac{\R^n \setminus B(r) \times B(r)} \cup \brac{B(r)\times \R^n \setminus B(r)}\\
 =& \brac{\R^n \times B(r)} \cup \brac{B(r)\times \R^n \setminus B(r)}.
 \end{split}
\end{equation}
In particular we have 
\[
\begin{split}
 \int_{B(r)}\int_{\R^n} \frac{|u(x)-u(y)|^{\frac{n}{s}}}{|x-y|^{n+t\frac{n}{s}}} \dx \dy 
 &\leq \int\int_{(\R^n)^2 \setminus (B(r)^c)^2} \frac{|v(x)-v(y)|^{\frac{n}{s}}}{|x-y|^{n+t\frac{n}{s}}} \dx \dy \\
 &\aleq\int_{B(r)}\int_{\R^n} \frac{|v(x)-v(y)|^{\frac{n}{s}}}{|x-y|^{n+t\frac{n}{s}}} \dx \dy.
 \end{split}
\]
Applying \eqref{la:comp:west} we find
\begin{equation}\label{eq:initreg:prehole}
 \int_{B(r)}\int_{\R^n} \frac{|u(x)-u(y)|^{\frac{n}{s}}}{|x-y|^{n+t\frac{n}{s}}} \dx \dy 
 \leq C\int_{B(2r)\setminus B(r/2)} \int_{\R^n}\frac{|u(x)-u(y)|^{\frac{n}{s}}}{|x-y|^{n+t\frac{n}{s}}} \dx \dy,
 \end{equation}
 where $C$ is a constant depending only on $s$ and $s_1$, not on $t \in [s,s_1]$.
 
 We now perform the hole filling trick by adding 
 \[C\int_{B(r/2)} \int_{\R^n}\frac{|u(x)-u(y)|^{\frac{n}{s}}}{|x-y|^{n+t\frac{n}{s}}} \dx \dy\]
 to both sides of \eqref{eq:initreg:prehole}, and find for $\tau \coloneqq \frac{C}{C+1} \leq 1$
\[
 \int_{B(r/2)}\int_{\R^n} \frac{|u(x)-u(y)|^{\frac{n}{s}}}{|x-y|^{n+t\frac{n}{s}}} \dx \dy 
 \leq \tau \int_{B(2r)} \int_{\R^n}\frac{|u(x)-u(y)|^{\frac{n}{s}}}{|x-y|^{n+t\frac{n}{s}}} \dx \dy.
 \] 
This inequality holds for any $r < r_0$. Applying it to $r = 4^{-k} r_0$, we have 
\[
 \int_{B(4^{-k-1}r_0)}\int_{\R^n} \frac{|u(x)-u(y)|^{\frac{n}{s}}}{|x-y|^{n+t\frac{n}{s}}} \dx \dy 
 \leq \tau^k \int_{B(r_0)} \int_{\R^n}\frac{|u(x)-u(y)|^{\frac{n}{s}}}{|x-y|^{n+t\frac{n}{s}}} \dx \dy.
 \] 
Setting $\beta \coloneqq \log_4 \tau$, this implies
\[
 \int_{B(4^{-k-1}r_0)}\int_{\R^n} \frac{|u(x)-u(y)|^{\frac{n}{s}}}{|x-y|^{n+t\frac{n}{s}}} \dx \dy 
 \aleq 4^{-k \beta} \int_{B(r_0)} \int_{\R^n}\frac{|u(x)-u(y)|^{\frac{n}{s}}}{|x-y|^{n+t\frac{n}{s}}} \dx \dy.
 \] 
Since for any $r \in (0,r_0)$ we can find $k \in \N_0$ such that $\frac{r}{r_0} \aeq 4^{-k}$, we conclude for any $r \in (0,r_0)$
\[
 \int_{B(r)}\int_{\R^n} \frac{|u(x)-u(y)|^{\frac{n}{s}}}{|x-y|^{n+t\frac{n}{s}}} \dx \dy 
 \aleq \brac{\frac{r}{r_0}}^\beta \int_{B(r_0)} \int_{\R^n}\frac{|u(x)-u(y)|^{\frac{n}{s}}}{|x-y|^{n+t\frac{n}{s}}} \dx \dy.
 \] 
Observe that $\beta$ only depends on $\tau$ (and thus on $s$ and $s_1$, but not on $t \in [s,s_1]$).

Since $s \le t$ we have in particular for any $r < r_0$,
\[
 \int_{B(r)}\int_{B(r)} \frac{|u(x)-u(y)|^{\frac{n}{s}}}{|x-y|^{2n}} \dx \dy  \aleq r^{(t-s)\frac{n}{s}}\brac{\frac{r}{r_0}}^\beta\, \int_{B(r_0)}\int_{\R^n} \frac{|u(x)-u(y)|^{\frac{n}{s}}}{|x-y|^{n+t\frac{n}{s}}} \dx \dy. 
\]
This in turn readily implies
\[
 \mvint_{B(r)} |u-(u)_{B(r)}| \leq C(r_0,u)\, r^{(t-s)+\beta\frac{s}{n}}.
\]
If $t \in [s,s_1]$ then
\[
 \mvint_{B(r)} |u-(u)_{B(r)}| \leq C(r_0,u)\, r^{\beta\frac{s}{n}}.
\]
So $u$ belongs to a Campanato space in $B(r)$ and this implies that $u \in C^{\beta\frac{s}{n}}_{loc}(B(r))$, see \cite[III, Theorem 1.2]{Giaquinta}. Setting $\alpha \coloneqq \beta\frac{s}{n} > 0$ we conclude.
\end{proof}

\subsection{Step 2: Higher differentiability}
We show that the Euler--Lagrange equation for harmonic maps combined with the H\"older continuity from \Cref{th:ininitreg} implies higher differentiability. The following theorem is strongly inspired by the techniques for the fractional $p$-Laplacian due to Brasco and Lindgren \cite{BL17,BLS18}. 
\begin{theorem}\label{th:BLdiff}
Let $p \geq 2$. Assume that $u \in W^{s,p}(\R^n)$, $f \in L^1(\R^n)$ solve
\begin{equation}\label{eq:BLdiff:pde}
 \int_{\R^n}\int_{\R^n} \frac{|u(x)-u(y)|^{p-2}(u(x)-u(y))(\varphi(x)-\varphi(y))}{|x-y|^{n+sp}} \dx \dy = \int_{\R^n} f \varphi \quad \forall \varphi \in C_c^\infty(B(R)).
\end{equation}
If $u \in L^\infty \cap C^\alpha(B(R))$ for some $\alpha > 0$ then $u \in W^{s+\gamma,p}(B(R/2))$ for any $\gamma < \min\{\frac{\alpha}{p},1\}$.
\end{theorem}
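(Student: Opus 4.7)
The plan is to implement the discrete difference quotient technique of Brasco--Lindgren for the fractional $p$-Laplacian: shift the equation, subtract, test against a cutoffed translate of $\tau_h u$, and exploit the monotonicity of $J_p(t):=|t|^{p-2}t$ together with the a priori pointwise control $|\tau_h u|\aleq |h|^\alpha$ coming from $u\in C^\alpha(B(R))$ to produce a Nikolskii-type estimate that, once fed into a standard difference-quotient characterisation of fractional Sobolev spaces, upgrades to $W^{s+\gamma,p}$ for every $\gamma<\min\{\alpha/p,1\}$.

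\textbf{Setup and difference equation.} I would fix $\eta\in C_c^\infty(B(3R/4))$ with $\eta\equiv 1$ on $B(R/2)$ and $\|\nabla\eta\|_\infty\aleq R^{-1}$. For $h\in \R^n$ with $|h|\ll R$, set $\tau_h v(x):=v(x+h)-v(x)$, so that $u_h=u(\cdot+h)$ solves \eqref{eq:BLdiff:pde} with datum $f_h=f(\cdot+h)$. Subtracting the Euler--Lagrange equations for $u$ and $u_h$, and testing with $\varphi$ supported in $B(R/2)$, yields
\begin{equation*}
\int_{\R^n}\!\!\int_{\R^n}\frac{\bigl(J_p(u_h(x)-u_h(y))-J_p(u(x)-u(y))\bigr)\bigl(\varphi(x)-\varphi(y)\bigr)}{|x-y|^{n+sp}}\dx\dy=\int_{\R^n}(f_h-f)\varphi.
\end{equation*}

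\textbf{Test function and monotonicity.} I would then insert $\varphi=\eta^{q}\,\tau_h u$ for an exponent $q\ge p$ chosen large enough to absorb the cross terms. Writing $\varphi(x)-\varphi(y)=\eta(x)^q\tau_hu(x)-\eta(y)^q\tau_hu(y)$ and symmetrising in $x\leftrightarrow y$, the standard algebraic inequality $(J_p(a)-J_p(b))(a-b)\ageq |a-b|^p$, valid for $p\ge 2$, produces the main positive term
\begin{equation*}
\int_{\R^n}\!\!\int_{\R^n}\bigl(\min\{\eta(x),\eta(y)\}\bigr)^{q}\,\frac{|\tau_h u(x)-\tau_h u(y)|^p}{|x-y|^{n+sp}}\dx\dy
\end{equation*}
plus error terms containing factors of the form $\bigl(\eta(x)^q-\eta(y)^q\bigr)\tau_hu(x)$ or $\tau_hu(y)$, whose $x\leftrightarrow y$-antisymmetric parts are controlled via Young's inequality by a small fraction of the main term plus lower-order pieces weighted by $\|u\|_{L^\infty(B(R))}^{p-1}$ and a fractional seminorm of $\eta$.

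\textbf{Using $C^\alpha$ to quantify the $|h|$-gain.} The pointwise bound $|\tau_h u|\aleq |h|^\alpha$ on $B(R)$ (valid for $|h|$ small) gives $\|\varphi\|_\infty\aleq |h|^\alpha$, which controls the right-hand side via $\int|f_h-f||\varphi|\aleq |h|^\alpha\|f\|_{L^1}$. The tail contributions, where one of the variables lies in $\R^n\setminus B(R)$, are handled by the global $W^{s,p}(\R^n)$ bound on $u$ together with the compact support of $\varphi$ in $B(R/2)$ (which gives a uniform lower bound $|x-y|\ageq R$). After absorbing the mixed pieces on the left, I expect the Nikolskii-type estimate
\begin{equation*}
\int_{B(R/2)}\!\!\int_{B(R/2)}\frac{|\tau_h u(x)-\tau_h u(y)|^p}{|x-y|^{n+sp}}\dx\dy\aleq |h|^{\alpha}
\end{equation*}
uniformly in all sufficiently small $h$.

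\textbf{Besov-to-Sobolev upgrade.} Reading the last display as $[\tau_h u]_{W^{s,p}(B(R/2))}^p\aleq |h|^\alpha$ and invoking the characterisation of fractional Sobolev spaces by finite differences in $W^{s,p}$ (together with the obvious trivial bound $\gamma<1$), I conclude $u\in W^{s+\gamma,p}(B(R/2))$ for every $\gamma<\alpha/p$. The main technical obstacle I foresee is the bookkeeping of the cross and tail terms: the test function is localised but the kernel is global, so the mixed products between $\eta(x)^q\tau_hu(x)$ and $\eta(y)^q\tau_hu(y)$, as well as the interactions with the tail $\R^n\setminus B(R)$, demand careful Hölder/Young splits in which the \emph{exact} power $|h|^\alpha$ must be preserved; any looser bookkeeping would yield a smaller exponent than $\alpha/p$ and defeat the point of the theorem.
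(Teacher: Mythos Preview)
Your proposal is correct and follows essentially the same approach as the paper: shift the equation, subtract, test with a cutoffed $\tau_h u$, use the monotonicity inequality $(J_p(a)-J_p(b))(a-b)\ageq |a-b|^p$ to extract the main term, and bound all remaining pieces by $|h|^\alpha$ times quantities independent of $h$, yielding a Nikolskii bound that upgrades to $W^{s+\gamma,p}$ via the Besov characterisation. Two minor simplifications relative to your sketch: the paper uses $\varphi=\eta\,\tau_h u$ (no power $q$ is needed), and no absorption via Young's inequality is required for the cross terms---since every error term (the $f$-term, the tail where $|x-y|\ageq R$, and the commutator with $\eta$) already carries an undifferentiated factor of $\tau_h u$, each is directly bounded by $[u]_{C^\alpha}|h|^\alpha$ times a fixed constant depending on $R$, $[u]_{W^{s,p}(\R^n)}$, and $\|f\|_{L^1}$.
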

\begin{proof}
Without loss of generality we can assume that $\frac{\alpha}{p} < 1$ because if $u \in C^\alpha(B)$ then $u \in C^\beta(B)$ for any $\beta < \alpha$, and we could simply work with $\beta$ instead of $\alpha$.

While this is not their statement, the proof of \Cref{th:BLdiff} is strongly motivated by the argument in \cite{BL17}, in particular we take inspiration in \cite[Proposition 3.1]{BL17}. This is why we also follow the notation in \cite{BL17}. 

For $h\in \R^n$ and $f\colon \R^n \to \R^M$ we set $f_h(x) \coloneqq f(x+h)$, $\delta_h f(x) \coloneqq f(x+h)-f(x)$. Also for $v \in \R^M$ set
\[
 J_p(v) \coloneqq |v|^{p-2} v.
\]
Set $\delta \coloneqq \frac{1}{100} R$.
Let $|h| < \delta$, and let $\varphi \in C_c^\infty(B(R-2\delta))$. Then $\varphi_h \in C_c^\infty(B(R))$ and thus we have by substitution and with the help of \eqref{eq:BLdiff:pde}
\[
\begin{split}
\int_{\R^n}\int_{\R^n} &\frac{J_{p}(u_h(x)-u_h(y)) (\varphi(x)-\varphi(y))}{|x-y|^{n+sp}} \dx \dy\\
 &=\int_{\R^n}\int_{\R^n} \frac{J_{p}(u(x)-u(y)) (\varphi_h(x)-\varphi_h(y))}{|x-y|^{n+sp}} \dx \dy \\
&= \int_{\R^n} f \varphi_h.
 \end{split}
\]
Subtracting \eqref{eq:BLdiff:pde} from the above equality, we find for any $\varphi \in C_c^\infty(B(R-2\delta))$
\[
\int_{\R^n}\int_{\R^n} \frac{\brac{J_{p}(u_h(x)-u_h(y))-J_{p}(u(x)-u(y))} (\varphi(x)-\varphi(y))}{|x-y|^{n+sp}} \dx \dy 
= \int_{\R^n} f (\varphi_h-\varphi).
\]
Let $\eta \in C_c^\infty(B(R-20\delta),[0,1])$ with $ \eta \equiv 1$ in $B(R-30\delta)$ and $|\nabla \eta|\aleq \frac 1 \delta$. 
By a density argument we may choose
\[
 \varphi \coloneqq \eta \delta_h u
\]
and obtain
\[
\begin{split}
&\int_{\R^n}\int_{\R^n} \frac{\brac{J_{p}(u_h(x)-u_h(y))-J_{p}(u(x)-u(y))} (\eta(x) \delta_h u(x)-\eta(y) \delta_h u(y))}{|x-y|^{n+sp}} \dx \dy \\
 &= \int_{\R^n} f \delta_h (\eta \delta_h u).
 \end{split}
\]
By assumption $u \in C^\alpha(B(R))$, so
\[
\begin{split}
&\int_{\R^n}\int_{\R^n} \frac{\brac{J_{p}(u_h(x)-u_h(y))-J_{p}(u(x)-u(y))} (\eta(x) \delta_h u(x)-\eta(y) \delta_h u(y))}{|x-y|^{n+sp}} \dx \dy\\
 &\aleq [u]_{C^\alpha(B(R))} \|f\|_{L^1(\R^n)}\, |h|^\alpha.
 \end{split}
\]
Similarly as in \cite[p.320]{BL17}, by analyzing the support of $\eta$ and using the symmetry of the integral, we split the integral on the left-hand side into two pieces:  
\[
 \int_{\R^n}\int_{\R^n} \frac{\brac{J_{p}(u_h(x)-u_h(y))-J_{p}(u(x)-u(y))} (\eta(x) \delta_h u(x)-\eta(y) \delta_h u(y))}{|x-y|^{n+sp}} \dx \dy \geq \mathcal{I}_1 - 2 \mathcal{I}_2,
\]
where
\[
 \mathcal{I}_1 = \int_{B(R)}\int_{B(R)} \frac{\brac{J_{p}(u_h(x)-u_h(y))-J_{p}(u(x)-u(y))} (\eta(x) \delta_h u(x)-\eta(y) \delta_h u(y))}{|x-y|^{n+sp}} \dx \dy
\]
and
\[
 \mathcal{I}_2 = \left |\int_{\R^n\setminus B(R)}\int_{B(R-20\delta)} \frac{\brac{J_{p}(u_h(x)-u_h(y))-J_{p}(u(x)-u(y))} \eta(x) \delta_h u(x)}{|x-y|^{n+sp}} \dx \dy \right |.
\]
We first estimate $\mathcal{I}_2$. Observe that in the integrand $|x-y| \ageq \delta$, so there is no singularity.
\[
\begin{split}
 \mathcal{I}_2 &\le |h|^\alpha [u]_{C^\alpha} \int_{\R^n\setminus B(R)}\int_{B(R-20\delta)} \frac{|u_h(x)-u_h(y)|^{p-1}+|u(x)-u(y)|^{p-1}}{|x-y|^{n+sp}} \dx \dy \\
  &\leq 2|h|^\alpha [u]_{C^\alpha} \int_{\R^n\setminus B(R-5\delta)}\int_{B(R-15\delta)} \frac{|u(x)-u(y)|^{p-1}}{|x-y|^{n+sp}} \dx \dy \\
  &\leq2|h|^\alpha [u]_{C^\alpha} [u]_{W^{s,p}(\R^n)}^{p-1}\, \brac{\int_{\R^n\setminus B(R-5\delta)}\int_{B(R-15\delta)} \frac{1}{|x-y|^{n+sp}} \dx \dy }^{\frac{1}{p}}\\
  &\aeq 2|h|^\alpha [u]_{C^\alpha} [u]_{W^{s,p}(\R^n)}^{p-1}\, \delta^{-s}\, R^{\frac{n}{p}}.
 \end{split}
\]
We now estimate $\mathcal{I}_1$.
\[
\begin{split}
 \mathcal{I}_1 &\geq \int_{B(R)}\int_{B(R)} \eta(x) \frac{\brac{J_{p}(u_h(x)-u_h(y))-J_{p}(u(x)-u(y))} (\delta_h u(x)- \delta_h u(y))}{|x-y|^{n+sp}} \dx \dy\\
 &\quad -\int_{B(R)}\int_{B(R)} |\delta_h u(y)| \frac{\abs{J_{p}(u_h(x)-u_h(y))-J_{p}(u(x)-u(y))} |\eta(x)-\eta(y)|}{|x-y|^{n+sp}} \dx \dy\\
&= \int_{B(R)}\int_{B(R)} \eta(x) \frac{\brac{J_{p}(u_h(x)-u_h(y))-J_{p}(u(x)-u(y))} (u_h(x)-u_h(y) - (u(x)- u(y)))}{|x-y|^{n+sp}} \dx \dy\\
 &\quad -\int_{B(R)}\int_{B(R)} |\delta_h u(y)| \frac{\abs{J_{p}(u_h(x)-u_h(y))-J_{p}(u(x)-u(y))} |\eta(x)-\eta(y)|}{|x-y|^{n+sp}} \dx \dy.
 \end{split}
\]
For the first term we use the famous p-Laplace inequality
  which holds for $p \geq 2$ and $v,w \in \R^M$
\[
 (J_p(v)-J_p(w))(v-w) \ageq |v-w|^p,
\]
see \cite[Section 12 (I)]{L06} or \cite[Lemma B.3, (B.4)]{BL17}.
For the second term we use Lipschitz continuity of $\eta$. Then we have (recall $\eta \geq 0$ everywhere)
\[
\begin{split}
 \mathcal{I}_1 &\geq 
\int_{B(R-30\delta)}\int_{B(R-30\delta)} \frac{\abs{u_h(x)-u_h(y)-(u(x)-u(y))}^p}{|x-y|^{n+sp}} \dx \dy\\
 &\quad -C|h|^\alpha [u]_{C^\alpha} \delta^{-1}\int_{B(R)}\int_{B(R)}  \frac{|u_h(x)-u_h(y)|^{p-1}+|u(x)-u(y)|^{p-1}}{|x-y|^{n+sp-1}} \dx \dy\\
 &\ge[\delta_h u]_{W^{s,p}(B(R-30\delta))}^p - 2|h|^\alpha [u]_{C^\alpha} \delta^{-1} \int_{B(R+\delta)}\int_{B(R+\delta)}  \frac{|u(x)-u(y)|^{p-1}}{|x-y|^{n+s(p-1)-(1-s)}} \dx \dy\\
 &\geq [\delta_h u]_{W^{s,p}(B(R-30\delta))}^p - 2|h|^\alpha [u]_{C^\alpha} \delta^{-1} [u]_{W^{s,p}(B(R+\delta))}^{p-1} 
 \brac{\int_{B(R+\delta)}\int_{B(R+\delta)}  \frac{1}{|x-y|^{n-(1-s)p}} \dx \dy}^{\frac{1}{p}}\\
 &\aeq [\delta_h u]_{W^{s,p}(B(R-30\delta))}^p - 2|h|^\alpha [u]_{C^\alpha} \delta^{-1} [u]_{W^{s,p}(B(R+\delta))}^{p-1}  R^{\frac{n}{p}+(1-s)}.
 \end{split}
\]
That is, we have shown that for any $|h| \leq \delta$,
\[
 \left[|h|^{-\frac{\alpha}{p}} \delta_h u\right]_{W^{s,p}(B(R-30\delta))}^p \leq C(u,\delta,R,f). 
\]
From \Cref{la:besovest} we obtain that $u \in W^{s+\gamma,p}$ for any $\gamma < \frac{\alpha}{p}$.
\end{proof}

Above we used the following difference quotient estimate for fractional Sobolev spaces. For $W^{1,p}$ a statement like the one below is well-known, see \cite[\textsection 5.8.2, Theorem 3]{E10}, for Sobolev spaces it can be argued via the characterization of Besov--Nikol'skii spaces $B^{\gamma}_{p,\infty}$.
\begin{lemma}\label{la:besovest}
Let $s \in (0,1)$, $p \in (1,\infty)$, $\alpha > 0$ and $\gamma < \min\{s+\alpha,1\}$. Assume that $u \in W^{s,p}(\Omega)$ and that for some $\Omega' \subset \Omega$ we have 
\[
 \sup_{|h| < \dist(\Omega',\partial \Omega)} [|h|^{-\alpha}\delta_h u]_{W^{s,p}(\Omega')} < \infty,
\]
then $u \in W^{\gamma,p}_{loc}(\Omega')$.
\end{lemma}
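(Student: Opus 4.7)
The plan is to recast the hypothesis as a Besov--Nikol'skii regularity statement and then bootstrap on the order of differences. The key tool is the classical identification $W^{s,p}=B^{s}_{p,p}$ for $s\in(0,1)$ together with the Besov embedding $B^{s}_{p,p}\hookrightarrow B^{s}_{p,\infty}$ (valid because $p<\infty$). Combined with the equivalent difference-quotient characterization of $B^{s}_{p,\infty}$ for $s\in(0,1)$, this yields, for every $v\in W^{s,p}(\Omega_2)$ and every interior subdomain $\Omega_1\Subset\Omega_2$, the pointwise-in-$k$ bound
\[
\|\delta_k v\|_{L^{p}(\Omega_1)}\aleq |k|^{s}\,\|v\|_{W^{s,p}(\Omega_2)}\qquad\text{for all sufficiently small }|k|.
\]

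Applying this to $v=\delta_h u$, which by hypothesis belongs to $W^{s,p}(\Omega')$ with seminorm bounded by $C|h|^{\alpha}$, and using the triangle inequality for the $W^{s,p}$-norm, one obtains
\[
\|\delta_k\delta_h u\|_{L^{p}}\aleq |k|^{s}\bigl(\|\delta_h u\|_{L^{p}}+|h|^{\alpha}\bigr)
\]
on suitable interior subdomains. In particular, choosing $k=h$ controls the second-order difference $\Delta^{(2)}_h u=\delta_h(\delta_h u)$, and the baseline estimate $\|\delta_h u\|_{L^{p}}\aleq|h|^{s}$ (the embedding applied to $u$ itself) produces $\|\Delta^{(2)}_h u\|_{L^{p}}\aleq|h|^{s+\min(s,\alpha)}$. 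For any exponent $\sigma<1$, the first-order and second-order difference characterizations of $B^{\sigma}_{p,\infty}$ coincide, so this yields an improved first-order bound $\|\delta_h u\|_{L^{p}}\aleq |h|^{t_1}$ with $t_1=\min(2s,s+\alpha)$. Iterating the scheme $t_{k+1}=\min(s+t_k,\,s+\alpha)$ starting from $t_0=s$, we reach in finitely many rounds any exponent $\sigma<\min\{s+\alpha,1\}$, thereby showing $u\in B^{\sigma}_{p,\infty,\mathrm{loc}}(\Omega')$.

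The lemma then follows from the standard Besov embedding $B^{\sigma}_{p,\infty}\hookrightarrow B^{\gamma}_{p,p}=W^{\gamma,p}$ for $\gamma<\sigma$, which gives $u\in W^{\gamma,p}_{\mathrm{loc}}(\Omega')$ for every $\gamma<\min\{s+\alpha,1\}$. The only delicate point is justifying the \emph{local} form of the Besov embedding in the first display: we must multiply $u$ (or $\delta_h u$) by a smooth cutoff supported in $\Omega_2$ and control the error terms produced by the cutoff, so that the difference-quotient bound transfers to the interior subdomain $\Omega_1$. This is routine and can be found in the monographs of Triebel and Runst--Sickel. Beyond this localization issue, the argument is a finite, transparent iteration and no further harmonic analysis is needed.
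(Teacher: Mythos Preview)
Your argument is correct and follows the same route as the paper: pass through the Besov--Nikol'skii space $B^{\sigma}_{p,\infty}$ and then embed back into $W^{\gamma,p}$ for $\gamma<\sigma$. The paper simply invokes \cite[Lemma~3.3]{BL17} for the first step and \cite[(3.20), Proposition~2.7]{BL17} for the second. The one substantive difference is that your iteration is unnecessary: the local difference-quotient estimate $\|\delta_k v\|_{L^{p}(\Omega_1)}\aleq |k|^{s}[v]_{W^{s,p}(\Omega_2)}$ holds with the \emph{seminorm} alone on the right-hand side (insert an average over $z\in B(x,|k|)$ and use $|x-z|^{n+sp}\le |k|^{n+sp}$), so applying it once to $v=\delta_h u$ already yields $\|\delta_h^{2} u\|_{L^{p}}\aleq |h|^{s+\alpha}$, i.e., $u\in B^{s+\alpha}_{p,\infty}$ in a single step via the second-order difference characterization. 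Your bootstrap loop appears only because you carried along the lower-order term $\|\delta_h u\|_{L^{p}}$ from the full norm; dropping it recovers exactly the paper's direct argument.
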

\begin{proof}
In view of \cite[Lemma 3.3]{BL17} we find that $u \in B^{s+\alpha}_{p,\infty}(\Omega')$, where $B^{s}_{p,\infty}$ denotes the Besov--Nikol'skii space.

Combining \cite[(3.20)]{BL17} with \cite[Proposition~2.7]{BL17} we obtain that $u \in W^{s+\gamma,p}_{loc}(\Omega')$.
\end{proof}

\begin{remark}
Let us conclude this subsection by remarking that in the proof of \Cref{th:reghomo} it seems likely that one could construct a different argument for higher differentiability which is based on the fractional Gehring's lemma, developed in \cite{KMS14}.
\end{remark}

\subsection{Step 3: A priori estimates}
\begin{theorem}[A priori estimates]\label{th:apriorireg}
Let $s \in (0,1)$ (if $n=1$ let $s \leq \frac{1}{2}$). There exists an $\bar{s} \in (s,1)$ such that for any $s_0 \in (s,\bar{s})$ there exists an $s_1 \in (s,s_0)$ such that the following holds\footnote{The relation between those numbers is {$0<s<s_1<s_0<\bar s<1$}.}.

Let $\Sigma$ be an $n$-dimensional compact manifold without boundary, and $\n \subset \R^M$ a compact manifold without boundary.

There exists an $\eps=\eps(\n,\Sigma,s,s_0,\bar{s}) > 0$ depending on the choices above such that the following holds for any $t \in [s,s_1]$.

Assume that $u \in W^{s,\frac{n}{s}}(\Sigma,\n)$ and for a geodesic ball $B(R) \subset \Sigma$ 
\begin{itemize}
\item $u \in W^{s_0,\frac{n}{s}}(B(R))$;
 \item $u$ is a critical $W^{t,\frac{n}{s}}$-harmonic map in $B(R)$ for some $t \in [s,s_1]$;
 \item $[u]_{W^{s,\frac{n}{s}}(B(R))} < \eps$.
\end{itemize}
Then we have the estimate
\begin{equation}\label{eq:th:apr:claim}
 [u]_{W^{s_0,\frac{n}{s}}(B(R/2))} \leq C\, R^{-(s_0-s)} [u]_{W^{s,\frac{n}{s}}(B(R))}^{\frac{s}{n}} \brac{[u]_{W^{s,\frac{n}{s}}(\Sigma)}^{1-\frac{s}{n}}+[u]_{W^{s,\frac{n}{s}}(\Sigma)}}.
\end{equation}
\end{theorem}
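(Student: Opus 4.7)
\emph{Proof proposal.} The plan is to derive the Euler--Lagrange system for critical $W^{t,\frac{n}{s}}$-harmonic maps, rewrite its right-hand side using fractional commutator estimates so as to exploit the small-energy hypothesis, and then invoke the stability result for the fractional $\frac{n}{s}$-Laplacian in \cite{S16} to close the higher-order estimate.

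First, testing the criticality of $u$ against tangential variations $\varphi = \Pi(u)\psi$, where $\Pi(u) = D\pi_{\n}(u)$ is the orthogonal projection onto $T_u \n$ and $\psi \in C_c^\infty(B(R),\R^M)$, I would obtain a nonlocal Euler--Lagrange identity of the form
\[
 \int_\Sigma\int_\Sigma \frac{|u(x)-u(y)|^{\frac{n}{s}-2}(u(x)-u(y))\cdot(\psi(x)-\psi(y))}{|x-y|^{n+t\frac{n}{s}}} \dx \dy = \mathcal{R}_t[u,\psi],
\]
where the residue $\mathcal{R}_t[u,\psi]$ collects all contributions from the increments $\Pi(u(x))-\Pi(u(y))$ and is $\frac{n}{s}$-homogeneous in the nonlocal differences of $u$.

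Second, I would rewrite $\mathcal{R}_t$ via a Hodge/gauge decomposition as in the regularity theory of fractional and $p$-harmonic maps (cf.~\cite{DLR11,S15,MS18}), splitting the normal contribution into an antisymmetric piece that can be gauged away by an $SO(M)$-valued transformation and a divergence-form remainder. The three-commutator estimates of \cite{S16}, combined with H\"older's inequality and the embedding $W^{s_0,\frac{n}{s}} \hookrightarrow C^{s_0-s}$, produce a bound of the schematic shape
\[
 |\mathcal{R}_t[u,\psi]| \aleq [u]_{W^{s,\frac{n}{s}}(B(R))}^{\frac{n}{s}-1}\, [\psi]_{W^{s,\frac{n}{s}}(B(R))} + \text{(tail terms in } [u]_{W^{s,\frac{n}{s}}(\Sigma)}\text{)},
\]
with constants that are uniform in $t \in [s,s_1]$ as long as $s_1-s$ is small enough. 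The smallness $[u]_{W^{s,\frac{n}{s}}(B(R))} < \eps$ ensures that the leading coefficient on the right is a small factor, which will eventually be absorbed.

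Third, I apply the stability estimate for the fractional $\frac{n}{s}$-Laplacian in \cite{S16}: once the equation above is established and $u \in W^{s_0,\frac{n}{s}}(B(R))$ is known a priori, the estimate self-improves into a quantitative bound on $[u]_{W^{s_0,\frac{n}{s}}(B(R/2))}$ in terms of the right-hand side and a tail of $u$. Combining with the control on $\mathcal{R}_t$, using the smallness $\eps$ to absorb any $[u]_{W^{s_0,\frac{n}{s}}}$-factor appearing on the right back into the left-hand side, and rescaling $B(R) \to B(1)$ to read off the scaling $R^{-(s_0-s)}$ in \eqref{eq:th:apr:claim}, would finish the argument. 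The two factors $[u]_{W^{s,\frac{n}{s}}(\Sigma)}^{1-\frac{s}{n}}$ and $[u]_{W^{s,\frac{n}{s}}(\Sigma)}$ in \eqref{eq:th:apr:claim} reflect respectively the nonlocal tail contribution estimated in $L^{\frac{n}{s}}$-average and the pure Gagliardo tail.

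The hardest point is to keep every constant uniform in $t \in [s,s_1]$ as $t \to s^+$: the commutator estimates, the Hodge-type decomposition of $\mathcal{R}_t$, and the stability inequality of \cite{S16} are each formulated for a fixed fractional exponent, and their constants must be tracked carefully along the range $[s,s_1]$. This forces the choice of $\bar s$ close to $s$ and of $s_1 < s_0 < \bar s$ so that all the interpolation and Sobolev embeddings invoked in the commutator estimates retain their validity uniformly. A secondary technical point is treating the nonlocal tails outside $B(R)$ cleanly, since they interact with the localized test field $\psi$ and must be bounded globally through $[u]_{W^{s,\frac{n}{s}}(\Sigma)}$.
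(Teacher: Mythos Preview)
Your outline diverges from the paper's argument in two significant ways, and the first of these is a genuine gap.

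\textbf{The gauge/Hodge decomposition is not available and not needed.} You propose to rewrite $\mathcal{R}_t$ via an antisymmetric gauge as in \cite{DLR11,S15,MS18}. For general targets $\n$ and $p=\frac{n}{s}\neq 2$ such a decomposition is not known; the paper itself remarks that regularity of critical $W^{s,\frac{n}{s}}$-maps into general manifolds is an open problem precisely because these gauge techniques break down. The paper sidesteps this entirely: the only structural input from the equation is the elementary inequality (cf.\ \cite[Lemma~5.1]{MS18})
\[
 \Bigl|\int\!\!\int \frac{|u(x)-u(y)|^{\frac{n}{s}-2}(u(x)-u(y))\cdot(\varphi(x)-\varphi(y))}{|x-y|^{n+t\frac{n}{s}}}\dx\dy\Bigr|
 \aleq \int\!\!\int |\varphi(x)|\,\frac{|u(x)-u(y)|^{\frac{n}{s}}}{|x-y|^{n+t\frac{n}{s}}}\dx\dy,
\]
which follows from $\Pi^{\perp}(u(x))(u(x)-u(y))=O(|u(x)-u(y)|^2)$ and requires no gauge at all.

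\textbf{The mechanism producing the $W^{s_0}$-bound is not the one you describe.} Your phrase ``the stability estimate self-improves'' hides the actual work. The paper tests with $\varphi=\eta^2(u-(u)_{B(R)})$ and writes
\[
 \frac{\varphi(x)-\varphi(y)}{|x-y|^{(s_0-t)\frac{n}{s}}} = c\bigl(\laps{(s_0-t)\frac{n}{s}}\varphi(x)-\laps{(s_0-t)\frac{n}{s}}\varphi(y)\bigr) + \int_{\R^n} k(x,y,z)\,\laps{\gamma+(s_0-t)\frac{n}{s}}\varphi(z)\dz
\]
via a Riesz-potential identity. The kernel term is controlled by $|s_0-t|\,[u]_{W^{s_0,\frac{n}{s}}(B(\rho))}^{\frac{n}{s}-1}[\varphi]_{W^{s_0,\frac{n}{s}}}$ using \cite[Theorem~1.1]{S16} --- this is where the ``stability'' enters, and it is absorbed by taking $s_0-s$ small, not $\eps$ small. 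The first piece is then fed into the PDE inequality above; its right-hand side is estimated via Sobolev embeddings for the mixed Triebel--Lizorkin/Besov norms, and \emph{this} is where the small factor $[u]_{W^{s,\frac{n}{s}}(B(R))}\leq\eps$ appears. The outcome is an inequality of the form
\[
 [u]_{W^{s_0,\frac{n}{s}}(B(r))}^{\frac{n}{s}} \leq \tfrac12 [u]_{W^{s_0,\frac{n}{s}}(B(\rho))}^{\frac{n}{s}} + C(\rho-r)^{-\sigma}\,(\text{lower-order data})
\]
for all $\frac{R}{4}<r<\rho<\frac{3R}{4}$, which is then closed by a standard iteration lemma. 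Your proposal does not identify any of these three ingredients (the kernel identity shifting differentiability, the two separate absorption mechanisms, the hole-filling iteration), and the route you sketch through commutators would not produce them.
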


The proof of \Cref{th:apriorireg} is based on estimates of the Euler--Lagrange equations, and the stability estimates for the fractional $p$-Laplacian in \cite{S16}.

We will also need the following iteration lemma, see \cite[Chapter V, Lemma 3.1]{Giaquinta}.
\begin{lemma}[Iteration lemma]\label{la:iteration}
 Let $0 < a < b < \infty$ and $f \colon [a,b] \to [0,\infty)$ be a bounded function. Suppose that there are constants $\theta \in [0,1)$, $K_1,\,K_2,\,\alpha > 0$ such that 
\begin{equation}\label{eq:iterationinequality}
 f(r) \le \theta f(\rho) + \frac{K_1}{(\rho-r)^\alpha}+ K_2 
 \quad \text{for all } a \le r < \rho \le b.
\end{equation}
Then we obtain the bound 
\[
 f(r)\le C \brac{\frac{K_1}{(b-r)^\alpha} + K_2} 
 \quad \text{for all } a \le r \le b 
\]
for a constant $C=C(\theta,\alpha) > 0$. 
\end{lemma}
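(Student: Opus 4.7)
The plan is to iterate the assumed inequality along a geometric sequence of radii that converges to $b$ from the left, chosen so that the resulting infinite series converges. The boundedness hypothesis on $f$ is used only to kill a remainder term in the limit.

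More concretely, fix $r \in [a,b)$ and a parameter $\tau \in (0,1)$ (to be chosen). Define a sequence $(r_i)_{i\geq 0}$ in $[r,b]$ by $r_0 = r$ and $r_{i+1}-r_i = (1-\tau)\tau^i(b-r)$, so that $r_i = r + (1-\tau^i)(b-r)$ increases to $b$ as $i\to\infty$. Applying the hypothesis with the pair $(r_i,r_{i+1})$ gives
\[
 f(r_i) \le \theta\, f(r_{i+1}) + \frac{K_1}{(1-\tau)^\alpha \tau^{i\alpha}(b-r)^\alpha} + K_2.
\]
Iterating this inequality $k$ times yields
\[
 f(r) \le \theta^{k} f(r_k) + \sum_{i=0}^{k-1} \theta^i \brac{\frac{K_1}{(1-\tau)^\alpha\tau^{i\alpha}(b-r)^\alpha} + K_2}.
\]

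Now I would choose $\tau \in (0,1)$ such that $\theta\,\tau^{-\alpha} < 1$, which is possible because $\theta \in [0,1)$ (any $\tau$ with $\theta^{1/\alpha} < \tau < 1$ works; concretely one can take $\tau \coloneqq \theta^{1/(2\alpha)}$ when $\theta >0$, and $\tau=1/2$ when $\theta=0$). With this choice, the series $\sum_{i\geq 0}(\theta\tau^{-\alpha})^i$ converges, and so does $\sum_{i\geq 0}\theta^i$. Since $f$ is assumed bounded on $[a,b]$, the remainder $\theta^k f(r_k) \to 0$ as $k\to\infty$. Passing to the limit gives
\[
 f(r) \le \frac{1}{1-\theta\tau^{-\alpha}}\cdot \frac{K_1}{(1-\tau)^\alpha(b-r)^\alpha} + \frac{K_2}{1-\theta},
\]
which is the desired estimate with a constant $C=C(\theta,\alpha)$ depending only on $\theta$ and $\alpha$ through the choice of $\tau$.

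There is no real obstacle here; the only delicate point is the selection of $\tau$, which must be strictly greater than $\theta^{1/\alpha}$ in order to make the geometric series in $i$ summable, but strictly less than $1$ so that $(1-\tau)^{-\alpha}$ stays finite. The boundedness of $f$ is essential (and cheap): without it one could not discard the leftover term $\theta^k f(r_k)$ in the limit.
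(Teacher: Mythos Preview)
Your proof is correct and is exactly the standard argument: the paper does not give its own proof but simply refers to \cite[Chapter V, Lemma 3.1]{Giaquinta}, and what you wrote is precisely that proof. The only cosmetic point is that your argument handles $r\in[a,b)$, while the statement claims the bound for all $r\in[a,b]$; at $r=b$ the right-hand side is infinite, so the inequality is trivial there.
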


\begin{proof}[Proof of \Cref{th:apriorireg}]
For simplicity of notation %lol 
we assume\footnote{Since $\Sigma$ is compact, the manifold has a bounded geometry %lol
so it is locally comparable to $\R^n$ and by an extension theorem this assumption changes mainly the notation.} that $\Sigma = \R^n$.

Below we will establish the following estimate for any $r,\rho$ with $\frac{R}{4} < r < \rho < \frac{3}{4} R$, 
\begin{equation}\label{eq:apr:goalineq}
\begin{split}
 [u]_{W^{s_0,\frac{n}{s}}(B(r))}^{\frac{n}{s}} &\leq \frac{1}{2}\, [u]_{W^{s_0,\frac{n}{s}}(B(\rho))}^{\frac{n}{s}} \\
 &\quad +C\, (\rho-r)^{-(s_0-s)\frac{n}{s}} \brac{\frac{R}{\rho-r}}^{\frac{n}{s}}\, 
\brac{[u]_{W^{s,\frac{n}{s}}(\R^n)}^{\frac{n}{s}-1}+[u]_{W^{s,\frac{n}{s}}(\R^n)}^{\frac{n}{s}}} [u]_{W^{s,\frac{n}{s}}(B(R))}.
\end{split}
\end{equation}

Once \eqref{eq:apr:goalineq} is established, we apply \Cref{la:iteration} to $f(r) \coloneqq [u]_{W^{s_0,\frac{n}{s}}(B(r))}^{\frac{n}{s}}$, which gives
\[
 [u]_{W^{s_0,\frac{n}{s}}(B(R/2))}^{\frac{n}{s}} \le C\, R^{-(s_0-s)\frac{n}{s}} \, 
\brac{[u]_{W^{s,\frac{n}{s}}(\R^n)}^{\frac{n}{s}-1}+[u]_{W^{s,\frac{n}{s}}(\R^n)}^{\frac{n}{s}}} [u]_{W^{s,\frac{n}{s}}(B(R))}.
\]
This readily implies \eqref{eq:th:apr:claim}.

We now need to establish \eqref{eq:apr:goalineq}. From now on we fix some $r,\rho$ such that $\frac{R}{4} < r < \rho < \frac{3}{4} R$. We denote $\delta \coloneqq \frac{\rho-r}{100} \in (0,R)$. 

As a $W^{t,\frac ns}$-harmonic map $u$ solves the following inequality for any $\varphi \in C_c^\infty(B(R))$, cf. \cite[Lemma 5.1]{MS18}.
\begin{equation}\label{eq:ap:pde}
\begin{split}
& \abs{\int_{\R^n}\int_{\R^n} \frac{|u(x)-u(y)|^{\frac{n}{s}-2} (u(x)-u(y)) (\varphi(x)-\varphi(y))}{|x-y|^{n+t\frac{n}{s}}} \dif y \dif x}\\
&\aleq 
\int_{\R^n}\int_{\R^n} |\varphi(x)| \frac{|u(x)-u(y)|^{\frac{n}{s}}}{|x-y|^{n+t\frac{n}{s}}}  \dy \dx + \int_{\R^n}\int_{\R^n} \frac{|u(x)-u(y)|^{\frac{n}{s}}\, |\varphi(x)-\varphi(y)|}{|x-y|^{n+t\frac{n}{s}}}\dif y \dif x \\
&\leq 3\int_{\R^n}\int_{\R^n} |\varphi(x)| \frac{|u(x)-u(y)|^{\frac{n}{s}}}{|x-y|^{n+t\frac{n}{s}}}  \dy \dx.
\end{split}
\end{equation}

Pick $\bar{\eta} \in C_c^\infty(B(\rho-10\delta))$ such that $\bar{\eta} \equiv 1$ in $B(r+10\delta)$, and $|\nabla \bar{\eta}| \aleq \frac{1}{\delta}$. Set $\eta \coloneqq \bar{\eta}^2$. Then $|\nabla \eta|+|\nabla \sqrt{\eta}| \aleq \frac{1}{\delta}$. We define the test function \[\varphi \coloneqq \eta (u-(u)_{B(R)}).\]

We collect the main \underline{estimates of $\varphi$:}

Firstly,
\begin{equation}\label{eq:apr:varphiest}
 [\varphi]_{W^{s_0,\frac{n}{s}}(\R^n)} \aleq [u]_{W^{s_0,\frac{n}{s}}(B(\rho))} + \frac{R^{1-s_0+s}}{\delta} [u]_{W^{s,\frac{n}{s}}(B(R))}.
\end{equation}
Indeed,
\[
\begin{split}
 [\varphi]_{W^{s_0,\frac{n}{s}}(\R^n)}^{\frac{n}{s}}
 &\aleq \int_{B(\rho-5\delta)}\int_{B(\rho-5\delta)} \frac{|\varphi(x)-\varphi(y)|^{\frac{n}{s}}}{|x-y|^{n+s_0 \frac{n}{s}}}\dx \dy +\int_{B(\rho-10\delta)}\int_{B(\rho-5\delta)^c} \frac{|\varphi(y)|^{\frac{n}{s}}}{|x-y|^{n+s_0 \frac{n}{s}}} \dx \dy\\
 &\aleq \int_{B(\rho-5\delta)}\int_{B(\rho-5\delta)} \frac{|u(x)-u(y)|^{\frac{n}{s}}}{|x-y|^{n+s_0 \frac{n}{s}}}\dx \dy\\
 &\quad +\int_{B(\rho-5\delta)}\int_{B(\rho-5\delta)} \frac{|\eta(x)-\eta(y)|^{\frac{n}{s}}\, |u(x)-(u)_{B(R)}|^{\frac{n}{s}}}{|x-y|^{n+s_0 \frac{n}{s}}}\dx \dy\\
 &\quad +\int_{B(\rho-10\delta)}\int_{B(\rho-5\delta)^c} \frac{|u(y)-(u)_{B(R)}|^{\frac{n}{s}}}{|x-y|^{n+s_0 \frac{n}{s}}} \dx \dy.
 \end{split}
\]
That is
\[
\begin{split}
 [\varphi]_{W^{s_0,\frac{n}{s}}(\R^n)}^{\frac{n}{s}}
 &\aleq \int_{B(\rho)}\int_{B(\rho)} \frac{|u(x)-u(y)|^{\frac{n}{s}}}{|x-y|^{n+s_0 \frac{n}{s}}}\dx \dy +\frac{\rho^{(1-s_0)\frac{n}{s}}}{\delta^{\frac{n}{s}}}
 \int_{B(R)} |u(x)-(u)_{B(R)}|^{\frac{n}{s}} \dx\\
 &\quad +\frac{1}{\delta^{s_0\frac{n}{s}}}\int_{B(R)} |u(y)-(u)_{B(R)}|^{\frac{n}{s}}\dy\\
 &\aleq [u]_{W^{s_0,\frac{n}{s}}(B(\rho))}^{\frac{n}{s}} + \frac{R^{(1-s_0+s)\frac{n}{s}}}{\delta^{\frac{n}{s}}} [u]_{W^{s,\frac{n}{s}}(B(R))}^{\frac{n}{s}}.
 \end{split}
\]
This establishes \eqref{eq:apr:varphiest}.

Recall, that the fractional $\tilde s$-Laplacian for $\tilde{s} \in (0,1)$ is defined as 
\[
 \laps{\tilde{s}} \varphi (x) \coloneqq c \int_{\R^n} \frac{\varphi(x)-\varphi(y)}{|x-y|^{n+\tilde{s}}} \dif y,
\]
or --- equivalently --- via the Fourier transform $\mathcal{F}(\laps{\tilde{s}} \varphi)(\xi) = c |\xi|^s \mathcal{F}(\varphi)(\xi)$, cf. \cite{DNPV12,G19}.

With this notation, for any $0\leq\tilde{s} < s$
\begin{equation}\label{eq:apr:varphies:2}
 [\laps{\tilde{s}} \varphi]_{W^{s-\tilde{s},\frac{n}{s}}(\R^n)} \aleq \frac{R}{\delta}[u]_{W^{s,\frac{n}{s}}(B(R))}.
\end{equation}
Indeed, from the theory of Triebel--Lizorkin spaces $F^{s}_{p,p} \aeq W^{s,p}$, \cite{RunstSickel}, we have 
\[
 [\laps{\tilde{s}} \varphi]_{W^{s-\tilde{s},\frac{n}{s}}(\R^n)} \aeq [\varphi]_{W^{s,\frac{n}{s}}(\R^n)}.
\]
Now the estimate \eqref{eq:apr:varphies:2} follows as \eqref{eq:apr:varphiest}.

% \ikasia{Seems that \eqref{eq:apr:varphies:3} is never used but only the next one \eqref{eq:apr:varphies:4}}
Also, for any $0 \le \tilde{s} \leq s$ and any $\gamma \in (0,1)$
\begin{equation}\label{eq:apr:varphies:3}
 \brac{\int_{\R^n \setminus B(R)} \int_{\R^n} \frac{|\laps{\tilde{s}} \varphi(x)-\laps{\tilde{s}} \varphi(y)|^{\frac{n}{s}}}{|x-y|^{n+\gamma \frac{n}{s}}} \dx \dy}^{\frac{s}{n}}
 \aleq R^{-\gamma-\tilde{s}+s} [u]_{W^{s,\frac{n}{s}}(B(R))}.
\end{equation}
Indeed,
\[
\begin{split}
&\int_{\R^n \setminus B(R)} \int_{\R^n} \frac{|\laps{\tilde{s}} \varphi(x)-\laps{\tilde{s}} \varphi(y)|^{\frac{n}{s}}}{|x-y|^{n+\gamma \frac{n}{s}}} \dx \dy\\
&\aleq\int_{\R^n \setminus B(R)} \int_{B(y,\frac{1}{2}R)} \frac{|\laps{\tilde{s}} \varphi(x)-\laps{\tilde{s}} \varphi(y)|^{\frac{n}{s}}}{|x-y|^{n+\gamma \frac{n}{s}}} \dx \dy\\
&\quad +\int_{\R^n \setminus B(R)} \int_{\R^n \setminus B(y,\frac{1}{2}R)} \frac{|\laps{\tilde{s}} \varphi(x)-\laps{\tilde{s}} \varphi(y)|^{\frac{n}{s}}}{|x-y|^{n+\gamma \frac{n}{s}}} \dx \dy\\
\end{split}
\]
Now we employ the estimate 
\[
 |f(x)-f(y)| \aleq |x-y|\, \brac{\mathcal{M}\nabla f(x)+\mathcal{M}\nabla f(y)},
\]
where $\mathcal{M}$ is the Hardy-Littlewood maximal function, cf. \cite{BH93,H96}. Then,
\[
\begin{split}
&\int_{\R^n \setminus B(R)} \int_{\R^n} \frac{|\laps{\tilde{s}} \varphi(x)-\laps{\tilde{s}} \varphi(y)|^{\frac{n}{s}}}{|x-y|^{n+\gamma \frac{n}{s}}} \dx \dy\\
&\aleq\int_{\R^n \setminus B(R)} |\mathcal{M} \nabla \laps{\tilde{s}} \varphi(x)|^{\frac{n}{s}} \int_{|x-y| \aleq R} \frac{1}{|x-y|^{n+(\gamma -1)\frac{n}{s}}} \dy \dx\\
&\quad +\int_{\R^n \setminus B(R)} |\laps{\tilde{s}} \varphi(x)|^{\frac{n}{s}} \int_{|x-y| \ageq R} \frac{1}{|x-y|^{n+\gamma \frac{n}{s}}} \dx \dy\\
&\aleq R^{(1-\gamma)\frac{n}{s}}\, \|\varphi\|_{L^1(\R^n)}^{\frac{n}{s}}\, \int_{\R^n \setminus B(R)} |x|^{\brac{-n-1-\tilde{s}} \frac{n}{s}} \dx
+R^{-\gamma \frac{n}{s}}\, \|\varphi\|_{L^1(\R^n)}^{\frac{n}{s}}\,  \int_{\R^n \setminus B(R)} |x|^{(-n-\tilde{s})\frac{n}{s}}  \dx\\
&\aleq R^{(-\gamma-n-\tilde{s}+s)\frac{n}{s}}\, \|\varphi\|_{L^1(\R^n)}^{\frac{n}{s}}\\
&\aleq R^{(-\gamma-n-\tilde{s}+s)\frac{n}{s}}\, R^{n \frac{n}{s}}[u]_{W^{s,\frac{n}{s}}(B(R))}^\frac ns\\  
&= R^{(-\gamma-\tilde{s}+s)\frac{n}{s}} [u]_{W^{s,\frac{n}{s}}(B(R))}^{\frac{n}{s}}.  
\end{split} 
\]
Similarly, for any $0 \le \tilde{s} \leq s$ and any $\gamma \in (0,1)$
\begin{equation}\label{eq:apr:varphies:4}
 \brac{\int_{\R^n \setminus B(\rho-4\delta)} \int_{\R^n} \frac{|\laps{\tilde{s}} \varphi(x)-\laps{\tilde{s}} \varphi(y)|^{\frac{n}{s}}}{|x-y|^{n+\gamma \frac{n}{s}}} \dx \dy}^{\frac{s}{n}}
 \aleq \delta^{-(\gamma+\tilde{s})} R^s [u]_{W^{s,\frac{n}{s}}(B(R))}.
\end{equation}
Also for any $\tilde{\eta} \in C_c^\infty(B(\rho),[0,1])$ with $\tilde{\eta} \equiv 1$ in a $\delta$-neighborhood of $\supp \varphi$ and with $|\nabla \tilde{\eta}| \aleq \delta^{-1}$ we have for any $\gamma \in (0,1)$ and $\tilde{s} \in [0,1)$ such that $\gamma+\tilde{s} <1$,
\begin{equation}\label{eq:apr:varphies:5}
 [(1-\tilde{\eta})\laps{\tilde{s}} \varphi]_{W^{\gamma,\frac{n}{s}}(\R^n)} \aleq \delta^{-(\gamma+\tilde{s})} R^s [u]_{W^{s,\frac{n}{s}}(B(R))}.
\end{equation}
Indeed, observe that by the disjoint support of $1-\tilde \eta$ and $\varphi$ we have 
\[
 (1-\tilde{\eta}) \laps{\tilde{s}} \varphi(x) \approx \int_{|x-z| \ageq \delta} |x-z|^{-n-\tilde{s}}\, \varphi(z)  \dif z.
\]
In particular we have from Young's convolution inequality
\[
 \|(1-\tilde{\eta}) \laps{\tilde{s}} \varphi\|_{L^p(\R^n)} \aleq \delta^{-\tilde s} \|\varphi\|_{L^p(\R^n)}.
\]
In a similar way,
\[
 \abs{\nabla \brac{(1-\tilde{\eta}) \laps{\tilde{s}} \varphi}(x)} \aleq \abs{\nabla \tilde{\eta} \laps{\tilde{s}} \varphi(x)} +  \int_{|x-z| \ageq \delta} |x-z|^{-n-\tilde{s}-1}\, |\varphi(z)|  \dif z,
\]
so that 
\[
 \norm{\nabla \brac{(1-\tilde{\eta}) \laps{\tilde{s}} \varphi}}_{L^p(\R^n)} \aleq \delta^{-\tilde s-1} \|\varphi\|_{L^p(\R^n)}.
\]
From interpolation, \cite{Tartar}, we then have
\[
 [(1-\tilde{\eta}) \laps{\tilde{s}} \varphi]_{W^{\gamma,p}(\R^n)} \aleq \delta^{-\tilde s -\gamma} \|\varphi\|_{L^p(\R^n)}.
\]
Applying Poincar\'e inequality for $p=\frac ns$, this leads to
\[
 [(1-\tilde{\eta}) \laps{\tilde{s}} \varphi]_{W^{\gamma,\frac{n}{s}}(\R^n)} \aleq \delta^{-\tilde s -\gamma} R^s [u]_{W^{s,\frac{n}{s}}(B(R))}.
\]
\eqref{eq:apr:varphies:5} is established.

Now we begin to \underline{estimate $u$}: 

For $\tilde{\varphi} \coloneqq \sqrt{\eta} (u-(u)_{B(R)})$ 
\[
\begin{split}
 [u]_{W^{s_0,\frac{n}{s}}(B(r))}^{\frac{n}{s}} \aleq& \int_{B(\rho-3\delta)}\int_{B(\rho-3\delta)} \frac{|u(x)-u(y)|^{\frac{n}{s}-2} |\tilde{\varphi}(x)-\tilde{\varphi}(y)|^2}{|x-y|^{n+s_0\frac{n}{s}}} \dx \dy.
 \end{split}
\]
Now 
\[
 \tilde{\varphi}(x)-\tilde{\varphi}(y) = (\sqrt{\eta}(x)-\sqrt{\eta}(y)) (u(x)-(u)_{B(R)}) + \sqrt{\eta}(y) (u(x)-u(y)).
\]
So that  (recall $\varphi = \sqrt{\eta} \tilde{\varphi}$),
\[
\begin{split}
 |\tilde{\varphi}(x)-\tilde{\varphi}(y)|^2 
 &= (\sqrt{\eta}(x)-\sqrt{\eta}(y)) (u(x)-(u)_{B(R)}) \cdot(\tilde{\varphi}(x)-\tilde{\varphi}(y))\\
 &\quad +  (u(x)-u(y))(\sqrt{\eta}(y)-\sqrt{\eta}(x))\tilde{\varphi}(x)\\
 &\quad +  (u(x)-u(y))(\varphi(x)-\varphi(y)).
  \end{split}
\]
That is, 
\[
\begin{split}
 |\tilde{\varphi}(x)-\tilde{\varphi}(y)|^2 
 &\aleq \frac{|x-y|}{\delta}|u(x)-(u)_{B(R)}| \brac{|\tilde{\varphi}(x)-\tilde{\varphi}(y)|+|u(x)-u(y)|}\\
 &\quad +  (u(x)-u(y))(\varphi(x)-\varphi(y)).
  \end{split}
\]
Then 
\begin{equation}\label{eq:eqeqeq}
\begin{split}
 [u]_{W^{s_0,\frac{n}{s}}(B(r))}^{\frac{n}{s}} &\aleq 
 \int_{B(\rho-3\delta)}\int_{B(\rho-3\delta)} \frac{|u(x)-u(y)|^{\frac{n}{s}-2} (u(x)-u(y))\, (\varphi(x)-\varphi(y))}{|x-y|^{n+s_0\frac{n}{s}}} \dx \dy\\
&\quad + \delta^{-1} \int_{B(\rho-3\delta)}\int_{B(\rho-3\delta)} \frac{|u(x)-u(y)|^{\frac{n}{s}-1} |u(x)-(u)_{B(R)}|}{|x-y|^{n+s_0\frac{n}{s}-1}} \dx \dy\\
&\quad + \delta^{-1} \int_{B(\rho-3\delta)} \int_{B(\rho-3\delta)} \frac{|u(x)-u(y)|^{\frac{n}{s}-2} |\tilde{\varphi}(x)-\tilde{\varphi}(y)| |u(x)-(u)_{B(R)}| }{|x-y|^{n+s_0\frac{n}{s}-1}} \dx \dy. 
 \end{split}
\end{equation}
Now observe that for any $w: \R^n \to \R^M$ we have
\begin{equation}\label{eq:duhast-duhast-duhast}
\begin{split}
 &\delta^{-1} \int_{B(\rho-3\delta)}\int_{B(\rho-3\delta)} \frac{|u(x)-u(y)|^{\frac{n}{s}-2} |w(x)-w(y)| |u(x)-(u)_{B(R)}|}{|x-y|^{n+(s_0-\frac{s}{n})\frac{n}{s}}} \dx \dy\\
 &=\delta^{-1} \int_{B(\rho-3\delta)}\int_{B(\rho-3\delta)} \frac{|u(x)-u(y)|^{\frac{n}{s}-2}} {|x-y|^{s(\frac{n}{s}-2)}} \frac{ |w(x)-w(y)|}{|x-y|^s} \frac{ |u(x)-(u)_{B(R)}|}{|x-y|^{s
+(s_0-s)\frac{n}{s} -1 }} \, \frac{\dx \dy}{|x-y|^n}\\
&\aleq \delta^{-1} [u]_{W^{s,\frac{n}{s}}(B(R))}^{\frac{n}{s}-2} [w]_{W^{s,\frac{n}{s}}(B(R))} \brac{\int_{B(R)}\int_{B(R)} \frac{ |u(x)-(u)_{B(R)}|^{\frac{n}{s}}} {|x-y|^{n+(s+(s_0-s)\frac{n}{s} -1)\frac{n}{s} }}  \dx \dy}^{\frac{s}{n}}\\
&\aleq\delta^{-1} [u]_{W^{s,\frac{n}{s}}(B(R))}^{\frac{n}{s}-2} [w]_{W^{s,\frac{n}{s}}(B(R))} \brac{\int_{B(R)} \frac{ |u(x)-(u)_{B(R)}|^{\frac{n}{s}}} {R^{(s+(s_0-s)\frac{n}{s} -1)\frac{n}{s} }} \dx  }^{\frac{s}{n}}\\
&\aleq\delta^{-1} R^{1-(s_0-s)\frac{n}{s}} [u]_{W^{s,\frac{n}{s}}(B(R))}^{\frac{n}{s}-1} [w]_{W^{s,\frac{n}{s}}(B(R))}.
  \end{split}
\end{equation}

Here we ensure that $s_0$ is close enough to $s$ so that $s+(s_0-s)\frac{n}{s} -1 <0$.

Applying \eqref{eq:duhast-duhast-duhast} to the last two terms in \eqref{eq:eqeqeq} first for $w= u$ and then for $w= \tilde \varphi$ we obtain in view of \eqref{eq:apr:varphies:2} with $\tilde s = 0$
\begin{equation}\label{eq:oneoftheuestimatesthatwillneedalabel}
\begin{split}
 [u]_{W^{s_0,\frac{n}{s}}(B(r))}^{\frac{n}{s}} &\aleq 
 \int_{B(\rho-3\delta)}\int_{B(\rho-3\delta)} \frac{|u(x)-u(y)|^{\frac{n}{s}-2} (u(x)-u(y))\, (\varphi(x)-\varphi(y))}{|x-y|^{n+s_0\frac{n}{s}}} \dx \dy\\
&\quad +\delta^{-1} R^{1-(s_0-s)\frac{n}{s}} \brac{1+\frac{R}{\delta}} [u]_{W^{s,\frac{n}{s}}(B(R))}^{\frac{n}{s}}.
 \end{split}
\end{equation}
For the remaining term we observe
\[
\begin{split}
 & \int_{B(\rho-3\delta)}\int_{B(\rho-3\delta)} \frac{|u(x)-u(y)|^{\frac{n}{s}-2} (u(x)-u(y))\, (\varphi(x)-\varphi(y))}{|x-y|^{n+s_0\frac{n}{s}}} \dx \dy\\
  &=\int_{B(\rho-3\delta)}\int_{B(\rho-3\delta)} \frac{|u(x)-u(y)|^{\frac{n}{s}-2} (u(x)-u(y))\, \frac{\varphi(x)-\varphi(y)}{|x-y|^{(s_0-t) \frac{n}{s}}}}{|x-y|^{n+t\frac{n}{s}}} \dx \dy.
 \end{split}
\]
We now follow the ideas in \cite{S16}. We use the following identity which holds for any $n >\beta >  \alpha \geq 0$,
\begin{equation}\label{eq:standardtrick}
 \laps{\alpha} \varphi(x) = c\int_{\R^n} |x-z|^{\beta-\alpha-n}\laps{\beta} \varphi(z)\, \dif z,
\end{equation}
for a constant $c>0$ which from now on changes from line to line.

Let $\gamma \geq 0$, $0 \leq t \leq s_1 $ such that $\gamma+(s_0-t)\frac{n}{s} \in (0,n)$, and set
\[
 k(x,y,z) = \brac{\frac{|x-z|^{\gamma+(s_0-t)\frac{n}{s}-n} - |y-z|^{\gamma+(s_0-t)\frac{n}{s}-n}}{|x-y|^{(s_0-t)\frac{n}{s}}} } - \brac{|x-z|^{\gamma-n}-|y-z|^{\gamma-n}}.
\]
Then \eqref{eq:standardtrick} implies
\[
\begin{split}
 \int_{\R^n} k(x,y,z) \laps{\gamma+(s_0-t)\frac{n}{s}} \varphi (z)\, dz
 =c\frac{\varphi(x)-\varphi(y)}{|x-y|^{(s_0-t)\frac{n}{s}}} - c\brac{\laps{(s_0-t)\frac{n}{s}} \varphi(x)-\laps{(s_0-t)\frac{n}{s}} \varphi(y)}.
 \end{split}
\]
\begin{equation}\label{eq:someinequalitythatdidnthaveanumber}
\begin{split}
&\int_{B(\rho-3\delta)}\int_{B(\rho-3\delta)} \frac{|u(x)-u(y)|^{\frac{n}{s}-2} (u(x)-u(y))\, (\varphi(x)-\varphi(y))}{|x-y|^{n+s_0\frac{n}{s}}} \dx \dy\\
  &=c\int_{B(\rho-3\delta)}\int_{B(\rho-3\delta)} \frac{|u(x)-u(y)|^{\frac{n}{s}-2} (u(x)-u(y))\, \brac{\laps{(s_0-t)\frac{n}{s}} \varphi(x)-\laps{(s_0-t)\frac{n}{s}} \varphi(y)}}{|x-y|^{n+t\frac{n}{s}}} \dx \dy\\
  &\quad +cR(u,\varphi),
 \end{split}
\end{equation}
where
\[
 R(u,\varphi) = \int_{B(\rho-3\delta)}\int_{B(\rho-3\delta)} \int_{\R^n} \frac{|u(x)-u(y)|^{\frac{n}{s}-2} (u(x)-u(y))\,k(x,y,z) \laps{\gamma+(s_0-t)\frac{n}{s}} \varphi(z)} {|x-y|^{n+t\frac{n}{s}}} \dz\dx  \dy.
\]
% and
% \[
%  k(x,y,z) = \brac{\frac{|x-z|^{\gamma+(s_0-t)\frac{n}{s}-n} - |y-z|^{\gamma+(s_0-t)\frac{n}{s}-n}}{|x-y|^{(s_0-t)\frac{n}{s}}} } - \brac{|x-z|^{(s_0-t)\frac{n}{s}-n}-|y-z|^{(s_0-t)\frac{n}{s}-n}}.
% \]
The main observation is that for $s_0-t = 0$ we have $k \equiv 0$ and thus $R \equiv 0$. In \cite[Theorem~1.1]{S16} an estimate of $R$ for small $|s_0-t|$ was obtained. Namely\footnote{while this follows from the statement of \cite[Theorem 1.1]{S16}, it might be more instructive at first to look into \cite[Proof of Theorem~1.1]{S16}, where the same notation is used as we use here.},
\begin{equation}\label{eq:Ruvarphiest}
\begin{split}
  R(u,\varphi) 
  &\aleq \brac{s_0-t} [u]_{W^{s_0,\frac{n}{s}}(B(\rho))}^{\frac{n}{s}-1}\, [\varphi]_{W^{s_0,\frac{n}{s}}(\R^n)}\\
  &\aleq |s_0-s|\, [u]_{W^{s_0,\frac{n}{s}}(B(\rho))}^{\frac{n}{s}-1}\, [\varphi]_{W^{s_0,\frac{n}{s}}(\R^n)}\\
  &\aleq |s_0-s|\brac{\, [u]_{W^{s_0,\frac{n}{s}}(B(\rho))}^{\frac{n}{s}} +  \frac{R^{1-s_0+s}}{\delta} [u]_{W^{s,\frac{n}{s}}(B(R))}^{\frac{n}{s}}},
  \end{split}
\end{equation}
in the last inequality we used \eqref{eq:apr:varphiest}.

Next, we begin the estimate of the first term of the right-hand side of \eqref{eq:someinequalitythatdidnthaveanumber}
\begin{equation}\label{eq:Armerikaistwunderbar}
\begin{split}
 &\int_{B(\rho-3\delta)}\int_{B(\rho-3\delta)} \frac{|u(x)-u(y)|^{\frac{n}{s}-2} (u(x)-u(y))\, \brac{\laps{(s_0-t)\frac{n}{s}} \varphi(x)-\laps{(s_0-t)\frac{n}{s}} \varphi(y)}}{|x-y|^{n+t\frac{n}{s}}} \dx \dy\\
 &=\int_{\R^n}\int_{\R^n} \frac{|u(x)-u(y)|^{\frac{n}{s}-2} (u(x)-u(y))\, \brac{\laps{(s_0-t)\frac{n}{s}} \varphi(x)-\laps{(s_0-t)\frac{n}{s}} \varphi(y)}}{|x-y|^{n+t\frac{n}{s}}} \dx \dy\\
 &\quad -\iint_{(\R^n)^2 \setminus (B(\rho-3\delta))^c)^2} \frac{|u(x)-u(y)|^{\frac{n}{s}-2} (u(x)-u(y))\, \brac{\laps{(s_0-t)\frac{n}{s}} \varphi(x)-\laps{(s_0-t)\frac{n}{s}} \varphi(y)}}{|x-y|^{n+t\frac{n}{s}}} \dx \dy.
\end{split}
 \end{equation}
We will estimate now the last term in the above inequality. We observe
\begin{equation*}
 \begin{split}
&\iint_{(\R^n)^2 \setminus (B(\rho-3\delta)^c)^2} \frac{|u(x)-u(y)|^{\frac{n}{s}-2} (u(x)-u(y))\, \brac{\laps{(s_0-t)\frac{n}{s}} \varphi(x)-\laps{(s_0-t)\frac{n}{s}} \varphi(y)}}{|x-y|^{n+t\frac{n}{s}}} \dx \dy  \\
&\le \int_{\R^n}\int_{\R^n\setminus B(\rho-3\delta)} \frac{|u(x)-u(y)|^{\frac{n}{s}-2} (u(x)-u(y))\, \brac{\laps{(s_0-t)\frac{n}{s}} \varphi(x)-\laps{(s_0-t)\frac{n}{s}} \varphi(y)}}{|x-y|^{n+t\frac{n}{s}}} \dx \dy
\end{split}
\end{equation*}
and
\[
\begin{split}
&\R^n\times \R^n\setminus \brac{B(\rho-3\delta)^c  \times B(\rho-3\delta)^c }\\
&\subseteq  \brac{B(\rho-4\delta)\times \R^n\setminus B(\rho-3\delta)}\, \cup\,  \brac{\R^n\setminus B(\rho-4\delta)\times \R^n\setminus B(\rho-4\delta)}.
\end{split}
\]
Thus, choosing $s_0$ so close to $s$ so that $(s_0-t)\frac{n}{s} < s$ for all $t \in [s,s_0]$, we have
\begin{equation}\label{eq:I}
\begin{split}
&\iint_{(\R^n)^2 \setminus (B(\rho-3\delta)^c)^2} \frac{|u(x)-u(y)|^{\frac{n}{s}-2} (u(x)-u(y))\, \brac{\laps{(s_0-t)\frac{n}{s}} \varphi(x)-\laps{(s_0-t)\frac{n}{s}} \varphi(y)}}{|x-y|^{n+t\frac{n}{s}}} \dx \dy  \\
&\aleq\iint\limits_{|x-y| > \delta} \frac{1}{|x-y|^{(s_0-s)\frac{n}{s}}}\, \frac{|u(x)-u(y)|^{\frac{n}{s}-2} (u(x)-u(y))\, \brac{\laps{(s_0-t)\frac{n}{s}} \varphi(x)-\laps{(s_0-t)\frac{n}{s}} \varphi(y)}}{|x-y|^{n+
s(\frac{n}{s}-1) 
+\brac{s-(s_0-t)\frac{n}{s}}}} \dx \dy\\ 
&\quad +
\int_{\R^n \setminus B(\rho-4\delta)} \int_{\R^n \setminus B(\rho-4\delta)}\frac{|u(x)-u(y)|^{\frac{n}{s}-2} (u(x)-u(y))\, \brac{\laps{(s_0-t)\frac{n}{s}} \varphi(x)-\laps{(s_0-t)\frac{n}{s}} \varphi(y)}}{|x-y|^{n+t\frac{n}{s}}} \dx \dy.
\end{split}
\end{equation}

As for the first term on the right-hand side of \eqref{eq:I} we first use that $|x-y|>\delta$ and then apply H\"{o}lder's inequality to get
\begin{equation}\label{eq:II}
 \begin{split}
  &\iint\limits_{|x-y| > \delta} \frac{1}{|x-y|^{(s_0-s)\frac{n}{s}}}\, \frac{|u(x)-u(y)|^{\frac{n}{s}-2} (u(x)-u(y))\, \brac{\laps{(s_0-t)\frac{n}{s}} \varphi(x)-\laps{(s_0-t)\frac{n}{s}} \varphi(y)}}{|x-y|^{n+s(\frac{n}{s}-1) +\brac{s-(s_0-t)\frac{n}{s}}}} \dx \dy\\
&\aleq \delta^{-(s_0-s)\frac ns} \int_{\R^n}\int_{\R^n} \frac{|u(x)-u(y)|^{\frac ns-1}}{|x-y|^{s(\frac{n}{s}-1)}} \frac{\abs{\laps{(s_0-t)\frac{n}{s}} \varphi(x)-\laps{(s_0-t)\frac{n}{s}} \varphi(y)}}{|x-y|^{s-(s_0-t)\frac{n}{s}}} \frac{\dif x \dif y}{|x-y|^n}\\
&\aleq \delta^{-(s_0-s)\frac ns}[u]^{\frac ns-1}_{W^{s,\frac ns}(\R^n)}[\laps{(s_0-t)\frac{n}{s}} \varphi]_{W^{s-(s_0-t)\frac{n}{s},\frac{n}{s}}(\R^n)}\\
&\aleq\delta^{-(s_0-s)\frac{n}{s}}\, \frac{R}{\delta}\, [u]_{W^{s,\frac{n}{s}}(B( R))}^{\frac{n}{s}-1} 
[u]_{W^{s,\frac{n}{s}}(B(R))},
 \end{split}
\end{equation}
where in the last estimate we used \eqref{eq:apr:varphies:2}.

To estimate the second term on the right-hand side of \eqref{eq:I}, we again apply H\"{o}lder's inequality
\begin{equation}\label{eq:III}
 \begin{split}
  &\int_{\R^n \setminus B(\rho-4\delta)} \int_{\R^n \setminus B(\rho-4\delta)}\frac{|u(x)-u(y)|^{\frac{n}{s}-2} (u(x)-u(y))\, \brac{\laps{(s_0-t)\frac{n}{s}} \varphi(x)-\laps{(s_0-t)\frac{n}{s}} \varphi(y)}}{|x-y|^{n+t\frac{n}{s}}} \dx \dy\\
  &\le \int_{\R^n \setminus B(\rho-4\delta)} \int_{\R^n}\frac{|u(x)-u(y)|^{\frac{n}{s}-1}}{|x-y|^{n-s}} \frac{\abs{\laps{(s_0-t)\frac{n}{s}} \varphi(x)-\laps{(s_0-t)\frac{n}{s}} \varphi(y)}}{|x-y|^{(t-s)\frac ns +s}} \frac{\dx \dy}{|x-y|^{n}}\\
  &\le [u]^{\frac ns-1}_{W^{s,\frac ns}(\R^n)}\brac{\int_{\R^n \setminus B(\rho-4\delta)} \int_{\R^n} \frac{|\laps{(s_0-t)\frac{n}{s}} \varphi(x)-\laps{(s_0-t)\frac{n}{s}} \varphi(y)|^{\frac{n}{s}}}{|x-y|^{n+\brac{(t-s)\frac{n}{s}+s}\frac{n}{s} }} \dx \dy}^{\frac{s}{n}}\\
  &\aleq \delta^{-s-(s_0-s)\frac{n}{s}} R^s\, 
[u]_{W^{s,\frac{n}{s}}(\R^n)}^{\frac{n}{s}-1} [u]_{W^{s,\frac{n}{s}}(B(R))} ,
 \end{split}
\end{equation}
in the last estimate we applied \eqref{eq:apr:varphies:4} with $\gamma=(t-s)\frac{n}{s}+s<1$ (the latter condition can be satisfied by a good choice of $s_1$).

We observe that
\begin{equation}\label{eq:IV}
\begin{split}
&\brac{\delta^{-(s_0-s)\frac{n}{s}}\, \frac{R}{\delta} + \delta^{-s-(s_0-s)\frac{n}{s}} R^s} [u]_{W^{s,\frac{n}{s}}(\R^n)}^{\frac{n}{s}-1} 
[u]_{W^{s,\frac{n}{s}}(B(R))}\aleq \delta^{-1-(s_0-s)\frac{n}{s}} R\, 
[u]_{W^{s,\frac{n}{s}}(\R^n)}^{\frac{n}{s}-1} [u]_{W^{s,\frac{n}{s}}(B(R))}.
\end{split}
\end{equation}
Thus, combining \eqref{eq:I} with \eqref{eq:II}, \eqref{eq:III}, and \eqref{eq:IV} we obtain
\begin{equation}\label{eq:VI}
 \begin{split}
 &\iint_{(\R^n)^2 \setminus (B(\rho-3\delta)^c)^2} \frac{|u(x)-u(y)|^{\frac{n}{s}-2} (u(x)-u(y))\, \brac{\laps{(s_0-t)\frac{n}{s}} \varphi(x)-\laps{(s_0-t)\frac{n}{s}} \varphi(y)}}{|x-y|^{n+t\frac{n}{s}}} \dx \dy\\
&\aleq  \delta^{-1-(s_0-s)\frac{n}{s}} R\, 
[u]_{W^{s,\frac{n}{s}}(\R^n)}^{\frac{n}{s}-1} [u]_{W^{s,\frac{n}{s}}(B(R))}.
 \end{split}
\end{equation}

Bringing together the estimates
\eqref{eq:oneoftheuestimatesthatwillneedalabel}, \eqref{eq:someinequalitythatdidnthaveanumber}, \eqref{eq:Ruvarphiest}, \eqref{eq:Armerikaistwunderbar}, and \eqref{eq:VI} we have shown
\begin{equation}\label{eq:almostlastestimateofus0onsmallball}
\begin{split}
 &[u]_{W^{s_0,\frac{n}{s}}(B(r))}^{\frac{n}{s}}\\
 &\aleq |s_0-s|\, [u]_{W^{s_0,\frac{n}{s}}(B(\rho))}^{\frac{n}{s}} +  \delta^{-1-(s_0-s)\frac{n}{s}} R\, 
[u]_{W^{s,\frac{n}{s}}(\R^n)}^{\frac{n}{s}-1} [u]_{W^{s,\frac{n}{s}}(B(R))}\\
&\quad  +\delta^{-1} R^{1-(s_0-s)\frac{n}{s}} \brac{1+\frac{R}{\delta}} [u]_{W^{s,\frac{n}{s}}(B(R))}^{\frac{n}{s}}\\
&\quad +\int_{\R^n}\int_{\R^n} \frac{|u(x)-u(y)|^{\frac{n}{s}-2} (u(x)-u(y))\, \brac{\laps{(s_0-t)\frac{n}{s}} \varphi(x)-\laps{(s_0-t)\frac{n}{s}} \varphi(y)}}{|x-y|^{n+t\frac{n}{s}}} \dx \dy.
\end{split}
\end{equation}
Let us now estimate the last term of the inequality above. Take $\tilde{\eta} \in C_c^\infty(B(\rho-3\delta))$ and $\tilde{\eta} \equiv 1$ in $B(\rho-4\delta)$, with $|\nabla \tilde{\eta}| \aleq \delta^{-1}$, then
\begin{equation}\label{eq:DeutschalandDeutschland}
 \begin{split}
&\int_{\R^n}\int_{\R^n} \frac{|u(x)-u(y)|^{\frac{n}{s}-2} (u(x)-u(y))\, \brac{\laps{(s_0-t)\frac{n}{s}} \varphi(x)-\laps{(s_0-t)\frac{n}{s}} \varphi(y)}}{|x-y|^{n+t\frac{n}{s}}} \dx \dy\\
&=\int_{\R^n}\int_{\R^n} \frac{|u(x)-u(y)|^{\frac{n}{s}-2} (u(x)-u(y))\, \brac{\tilde{\eta}(x)\laps{(s_0-t)\frac{n}{s}} \varphi(x)-\tilde{\eta}(y)\laps{(s_0-t)\frac{n}{s}} \varphi(y)}}{|x-y|^{n+t\frac{n}{s}}} \dx \dy\\  
&+\int\limits_{\R^n}\int\limits_{\R^n} \frac{|u(x)-u(y)|^{\frac{n}{s}-2} (u(x)-u(y))\, \brac{(1-\tilde{\eta}(x))\laps{(s_0-t)\frac{n}{s}} \varphi(x)-(1-\tilde{\eta}(y))\laps{(s_0-t)\frac{n}{s}} \varphi(y)}}{|x-y|^{n+t\frac{n}{s}}} \dx \dy.  
 \end{split}
\end{equation}
As for the second term of \eqref{eq:DeutschalandDeutschland}, we use, similarly as in \eqref{eq:III}, H\"{o}lder's inequality 
\begin{equation}\label{eq:DeutschalandDeutschland1}
\begin{split}
 &\int\limits_{\R^n}\int\limits_{\R^n} \frac{|u(x)-u(y)|^{\frac{n}{s}-2} (u(x)-u(y))\, \brac{(1-\tilde{\eta}(x))\laps{(s_0-t)\frac{n}{s}} \varphi(x)-(1-\tilde{\eta}(y))\laps{(s_0-t)\frac{n}{s}} \varphi(y)}}{|x-y|^{n+t\frac{n}{s}}} \dx \dy\\ 
 & \leq [u]_{W^{s,\frac{n}{s}}(\R^n)}^{\frac{n}{s}-1}\, \left[(1-\tilde{\eta})\laps{(s_0-t)\frac{n}{s})} \varphi\right]_{W^{(t-s)\frac ns +s,\frac ns}(\R^n)}\\
 &\aleq\delta^{-(s_0-s)\frac{n}{s}-s} R^s \, [u]_{W^{s,\frac{n}{s}}(\R^n)}^{\frac{n}{s}-1} [u]_{W^{s,\frac{n}{s}}(B(R))},
 \end{split}
\end{equation}
in the last inequality we used \eqref{eq:apr:varphies:5} with $\gamma = (t-s)\frac ns +s$ and $\tilde s = (s_0-t)\frac ns$.

As for the first term of \eqref{eq:DeutschalandDeutschland}, we use the PDE \eqref{eq:ap:pde} with test function $\eta \laps{(s_0-t)\frac ns \varphi}$ and arrive at
\begin{equation}\label{eq:onlysecondtermtocheckandimdonewiththisproposition}
\begin{split}
 &\abs{\int_{\R^n}\int_{\R^n} \frac{|u(x)-u(y)|^{\frac{n}{s}-2} (u(x)-u(y))\, \brac{\tilde{\eta}(x)\laps{(s_0-t)\frac{n}{s}} \varphi(x)-\tilde{\eta}(y)\laps{(s_0-t)\frac{n}{s}} \varphi(y)}}{|x-y|^{n+t\frac{n}{s}}} \dx \dy}\\
 &\aleq  \int_{\R^n}\int_{\R^n} \abs{\tilde{\eta}(x)\laps{(s_0-t)\frac{n}{s}} \varphi(x)} \frac{|u(x)-u(y)|^\frac ns}{|x-y|^{n+t\frac ns}} \dif y \dif x \\
 &\leq \int_{B(\rho)}\int_{B(\rho)} \abs{\tilde{\eta}(x)\laps{(s_0-t)\frac{n}{s}} \varphi(x)} \frac{|u(x)-u(y)|^{\frac{n}{s}}}{|x-y|^{n+t\frac{n}{s}}}  \dy \dx\\
&\quad +\int_{B(\rho-\delta)}\int_{\R^n \setminus B(\rho)} \abs{\tilde{\eta}(x)\laps{(s_0-t)\frac{n}{s}} \varphi(x)} \frac{|u(x)-u(y)|^{\frac{n}{s}}}{|x-y|^{n+t\frac{n}{s}}}  \dy \dx.
 \end{split}
\end{equation}
We estimate the first term of the last inequality. We will use $t_0 < s_1 < s_0$ (here we choose $s_1$ so that $s+(s_1-s)\frac{s}{n-s} < s_0$ and $s_1 < s+(s_0-s)(1-\frac{s}{n}$). Using H\"{o}lder's inequality twice we get
\begin{equation}\label{eq:Imnotevensurewhatmusiclistento}
 \begin{split}
&\int_{B(\rho)}\int_{B(\rho)} \abs{\tilde{\eta}(x)\laps{(s_0-t)\frac{n}{s}} \varphi(x)} \frac{|u(x)-u(y)|^{\frac{n}{s}}}{|x-y|^{n+t\frac{n}{s}}}  \dy \dx\\
&=\int_{B(\rho)} \abs{\tilde{\eta}(x)\laps{(s_0-t)\frac{n}{s}} \varphi(x)} \int_{B(\rho)}\frac{|u(x)-u(y)|^{\frac{n}{s}-1}}{|x-y|^{t\frac{n}{s}-s}} \, \frac{|u(x)-u(y)|}{|x-y|^s} \frac{\dy\dx }{|x-y|^{n}} \\
&\leq \int_{B(\rho)} \abs{\tilde{\eta}(x)\laps{(s_0-t)\frac{n}{s}} \varphi(x)} \brac{\int_{B(\rho)}\frac{|u(x)-u(y)|^{\frac{n}{s}}}{|x-y|^{n+\brac{\brac{t\frac{n}{s}-s}\frac{s}{n-s}} \frac{n}{s}}} \dy}^{1-\frac{s}{n}} \, \brac{\int_{B(\rho)}\frac{|u(x)-u(y)|^{\frac{n}{s}} }{|x-y|^{2n}} \dy}^{\frac{s}{n}} \dx \\
&\leq \norm{\laps{(s_0-t)\frac{n}{s}} \varphi}_{L^{\frac{n}{(s_0-s)\brac{\frac{n}{s}-1} -(t-s)\frac{n}{s}}}(\R^n)}\, [u]_{W^{s,\frac{n}{s}}(B(\rho)}\\
&\quad
\brac{\int_{B(\rho)} \brac{\int_{B(\rho)}\frac{|u(x)-u(y)|^{\frac{n}{s}}}{|x-y|^{n+(
t\frac{n}{n-s}-s\frac{s}{n-s} 
) \frac{n}{s}}} \dy}^{\frac{s}{n}
\frac{n}{2s-s_0 +(t-s)\frac{n}{(n-s)}}
} 
\dx}^{\frac{2s-s_0 +(t-s)\frac{n}{(n-s)}}{n} \frac{n-s}{s}}.
\end{split}
\end{equation}
In the last inequality we applied (generalized) H\"{o}lder's inequality with exponents $\frac{n}{(s_0-s)\brac{\frac ns-1}-(t-s)\frac ns}$, $\frac{n}{2s-s_0+(t-s)\frac{n}{n-s}}\frac{s}{n-s}$, $\frac ns$.

By Sobolev embedding, \Cref{th:sobolev}, \eqref{eq:sob4} (applied for  $s\coloneqq t\frac{n}{n-s} - s\frac{s}{n-s}$, $p\coloneqq \frac ns$, $p^* \coloneqq \frac{n}{2s-s_0+(t-s)\frac{n}{n-s}}$, $t\coloneqq s_0$) we obtain 
\begin{equation}\label{eq:sameproblem}
 \brac{\int_{B(\rho)} \brac{\int_{B(\rho)}\frac{|u(x)-u(y)|^{\frac{n}{s}}}{|x-y|^{n+(
t\frac{n}{n-s}-s\frac{s}{n-s} 
) \frac{n}{s}}} \dy}^{\frac{s}{n}
\frac{n}{2s-s_0 +(t-s)\frac{n}{(n-s)}}
} \dx}^{\frac{2s-s_0 +(t-s)\frac{n}{(n-s)}}{n}} \aleq [u]_{W^{s_0,\frac{n}{s}}(B(\rho))}.
\end{equation}
Moreover, also by Sobolev embedding, \Cref{th:sobolev}, \eqref{eq:sob2b} (applied for $s\coloneqq (s_0-t)\frac ns$, $p^*\coloneqq \frac{n}{(s_0-s)\brac{\frac ns-1}-(t-s)\frac ns}$, $p=\frac ns$, $t\coloneqq s_0$) we have
\begin{equation}\label{eq:asbeforenochanges}
\begin{split}
 \|\laps{(s_0-t)\frac{n}{s}} \varphi\|_{L^{\frac{n}{(s_0-s)(\frac{n}{s}-1) -(t-s)\frac{n}{s}}}(\R^n)} 
 &\aleq [\varphi]_{W^{s_0,\frac{n}{s}}(\R^n)}\\
 &\aleq [u]_{W^{s_0,\frac{n}{s}}(B(\rho))} + \frac{R^{1-s_0+s}}{\delta} [u]_{W^{s,\frac{n}{s}}(B(R))},
\end{split}
 \end{equation}
 we used \eqref{eq:apr:varphiest} in the last estimate.
% \[
% \begin{split}
%  &1-\frac{(s_0-s)\frac{n}{s} -(s_0-s) -(t-s)\frac{n}{s}}{n} - \frac{s}{n}\\
% =&\frac{n-s}{s}\frac{2s-s_0 +(t-s)\frac{n}{(n-s)}}{n}\\
%  \end{split}
% \]
% 

Thus, collecting the estimates \eqref{eq:Imnotevensurewhatmusiclistento}, \eqref{eq:sameproblem}, and \eqref{eq:asbeforenochanges} and recalling that by assumption $[u]_{W^{s,\frac ns}(B(\rho))}\le \eps$ we have
\[
 \begin{split}
&\int_{B(\rho)}\int_{B(\rho)} \abs{\tilde{\eta}(x)\laps{(s_0-t)\frac{n}{s}} \varphi(x)} \frac{|u(x)-u(y)|^{\frac{n}{s}}}{|x-y|^{n+t\frac{n}{s}}}  \dy \dx\\
& \aleq [u]_{W^{s,\frac{n}{s}}(B(\rho))} [u]_{W^{s_0,\frac{n}{s}}(B(\rho))}^{\frac{n}{s}-1}  \brac{[u]_{W^{s_0,\frac{n}{s}}(B(\rho))} + \frac{R^{1-s_0+s}}{\delta} [u]_{W^{s,\frac{n}{s}}(B(R))}}\\
&\aleq \eps [u]_{W^{s_0,\frac{n}{s}}(B(\rho))}^{\frac{n}{s}}+\eps [u]_{W^{s_0,\frac{n}{s}}(B(\rho))}^{\frac{n}{s}-1}\, \frac{R^{1-s_0+s}}{\delta} [u]_{W^{s,\frac{n}{s}}(B(R))}\\
&\aleq \eps [u]_{W^{s_0,\frac{n}{s}}(B(\rho))}^{\frac{n}{s}} + \eps^{\frac{n}{n-s}} [u]_{W^{s_0,\frac{n}{s}}(B(\rho))}^{\frac{n}{s}} + \brac{\frac{R^{1-s_0+s}}{\delta} [u]_{W^{s,\frac{n}{s}}(B(R))}}^{\frac{n}{s}},
\end{split}
\]
we used Young's inequality in the last estimate.

It remains to treat the second term of the right-hand side of \eqref{eq:onlysecondtermtocheckandimdonewiththisproposition}. 
\begin{equation}\label{eq:apr:234}
\begin{split}
&\int_{B(\rho-\delta)}\int_{\R^n \setminus B(\rho)} \abs{\tilde{\eta}(x)\laps{(s_0-t)\frac{n}{s}} \varphi(x)} \frac{|u(x)-u(y)|^{\frac{n}{s}}}{|x-y|^{n+t\frac{n}{s}}}  \dy \dx\\
&\aleq \iint_{|x-y|\ge \delta} \frac{1}{|x-y|^{(s_0-s)\frac ns}} \abs{\laps{(s_0-t)\frac{n}{s}} \varphi(x)}\frac{|u(x)-u(y)|^{\frac{n}{s}}}{|x-y|^{n+(s+t-s_0)\frac ns}} \dif y \dif x\\
&\aleq \delta^{-(s_0-s)\frac{n}{s}}\, \norm{\laps{(s_0-t)\frac{n}{s}} \varphi}_{L^{\frac{s}{s_0-t}}(\R^n)}\, 
\brac{\int_{\R^n}\brac{\int_{\R^n}
\frac{|u(x)-u(y)|^{\frac{n}{s}}}{|x-y|^{n+(s+t-s_0))\frac{n}{s}}} \, \dy}^{\frac{s}{n}\frac{n}{s+t-s_0}} \dx}^{\frac{s+t-s_0}{n} \,\frac{n}{s}},
\end{split}
\end{equation}
in the last estimate we have used H\"{o}lder's inequality with exponents $\frac{s}{s_0-t}$ and $\frac{s}{s+t-s_0}$.

Again applying the Sobolev embedding, \Cref{th:sobolev} we obtain
\begin{equation*}
\brac{\int_{\R^n}\brac{\int_{\R^n}
\frac{|u(x)-u(y)|^{\frac{n}{s}}}{|x-y|^{n+(s+t-s_0))\frac{n}{s}}} \, \dy}^{\frac{s}{n}\frac{n}{s+t-s_0}} \dx}^{\frac{s+t-s_0}{n}} \le [u]_{W^{s,\frac ns}(\R^n)}.
% \begin{split}
% \aleq &\delta^{-(s_0-s)\frac{n}{s}}[\varphi]_{W^{s,\frac{n}{s}}(\R^n)} [u]_{W^{s,\frac{n}{s}}(\R^n)}^{\frac{n}{s}}\\
% \overset{\eqref{eq:apr:varphies:2}}{\aleq}&\delta^{-(s_0-s)\frac{n}{s}} \frac{R}{\delta}[u]_{W^{s,\frac{n}{s}}(B(R))}\, [u]_{W^{s,\frac{n}{s}}(\R^n)}^{\frac{n}{s}}
% \end{split}
\end{equation*}
and
\begin{equation*}
\norm{\laps{(s_0-t)\frac{n}{s}} \varphi}_{L^{\frac{s}{s_0-t}}(\R^n)} \aleq [\varphi]_{W^{s,\frac ns}(\R^n)} \aleq \frac R \delta [u]_{W^{s,\frac ns}(B(R))},
\end{equation*}
were in the second inequality we used \eqref{eq:apr:varphies:2}.

Thus,
\begin{equation}\label{eq:BrhominusdeltaRnsetminusblahblah}
 \begin{split}
 &\int_{B(\rho-\delta)}\int_{\R^n \setminus B(\rho)} \abs{\tilde{\eta}(x)\laps{(s_0-t)\frac{n}{s}} \varphi(x)} \frac{|u(x)-u(y)|^{\frac{n}{s}}}{|x-y|^{n+t\frac{n}{s}}}  \dy \dx \\
 &\aleq \delta^{-(s_0-s)\frac{n}{s}} \frac{R}{\delta}[u]_{W^{s,\frac{n}{s}}(B(R))}\, [u]_{W^{s,\frac{n}{s}}(\R^n)}^{\frac{n}{s}}\\
 &\aleq \delta^{-(s_0-s)\frac{n}{s}} \brac{\frac{R}{\delta}}^{\frac ns}[u]_{W^{s,\frac{n}{s}}(B(R))}\, [u]_{W^{s,\frac{n}{s}}(\R^n)}^{\frac{n}{s}}.
 \end{split}
\end{equation}

Finally, combining \eqref{eq:almostlastestimateofus0onsmallball} with \eqref{eq:DeutschalandDeutschland}, \eqref{eq:DeutschalandDeutschland1}, \eqref{eq:onlysecondtermtocheckandimdonewiththisproposition}, \eqref{eq:Imnotevensurewhatmusiclistento}, and \eqref{eq:BrhominusdeltaRnsetminusblahblah} we obtain
\[
\begin{split}
[u]_{W^{s_0,\frac{n}{s}}(B(r))}^{\frac{n}{s}}
 &\aleq |s_0-s|\, [u]_{W^{s_0,\frac{n}{s}}(B(\rho))}^{\frac{n}{s}} +  \delta^{-1-(s_0-s)\frac{n}{s}} R\, 
[u]_{W^{s,\frac{n}{s}}(\R^n)}^{\frac{n}{s}-1} [u]_{W^{s,\frac{n}{s}}(B(R))}\\
&\quad  +\delta^{-1} R^{1-(s_0-s)\frac{n}{s}} \brac{1+\frac{R}{\delta}} [u]_{W^{s,\frac{n}{s}}(B(R))}^{\frac{n}{s}}\\
&\quad + \delta^{-(s_0-s)\frac{n}{s}-s} R^s \, [u]_{W^{s,\frac{n}{s}}(\R^n)}^{\frac{n}{s}-1} [u]_{W^{s,\frac{n}{s}}(B(R))}\\
&\quad + \eps [u]_{W^{s_0,\frac{n}{s}}(B(\rho))}^{\frac{n}{s}} + \eps^{\frac{n}{n-s}} [u]_{W^{s_0,\frac{n}{s}}(B(\rho))}^{\frac{n}{s}} + \brac{\frac{R^{1-s_0+s}}{\delta} [u]_{W^{s,\frac{n}{s}}(B(R))}}^{\frac{n}{s}}\\
&\quad + \delta^{-(s_0-s)\frac{n}{s}} \brac{\frac{R}{\delta}}^\frac ns [u]_{W^{s,\frac{n}{s}}(B(R))}\, [u]_{W^{s,\frac{n}{s}}(\R^n)}^{\frac{n}{s}}\\
&\aleq \brac{|s_0 - s| + \eps + \eps^{\frac {n}{n-s}}}[u]^{\frac ns}_{W^{s_0,\frac ns}(B(\rho))}\\
&\quad + \delta^{-(s_0-s)\frac ns}\brac{\frac{R}{\delta}}^{\frac ns} \brac{[u]^{\frac ns-1}_{W^{s,\frac ns}(\R^n)}+[u]_{W^{s,\frac ns}(\R^n)}^\frac ns}[u]_{W^{s,\frac ns}(B(R))}.
\end{split}
\]
For $|s_0-s|$ and $\eps$ small enough we thus have shown \eqref{eq:apr:goalineq}.

This concludes the proof of \Cref{th:apriorireg}.
\end{proof}

\subsection{Proof of Theorem~\ref{th:reghomo}}
\begin{proof}[Proof of \Cref{th:reghomo}]
Let $s_0 \coloneqq \min\left\{\frac{s+\alpha/p}{2},\frac{s+\bar{s}}{2} \right\}$, where $\alpha$ is taken from \Cref{th:ininitreg} (without loss of generality we may assume that $\alpha/p < 1$) and $\bar{s}$ is taken from \Cref{th:apriorireg}. Take $s_1$ and $\eps$ from \Cref{th:apriorireg}.

Assuming $u$ is a $W^{t,\frac{n}{s}}$-minimizer in $B(R)$, we get from \Cref{th:ininitreg} that $u \in C^\alpha_{loc}(B(R))$. Since $u$ is a minimizer, it satisfies the Euler--Lagrange equations, cf. \eqref{eq:ap:pde}. So we can apply \Cref{th:BLdiff} and obtain that $u \in W^{s+\beta,\frac{n}{s}}_{loc}(B(R),\R^M)$ for any $\beta < s+\frac{\alpha}{p}$. In particular, $u \in W^{s_0,\frac{n}{s}}_{loc}(B(R),\R^M)$. From \Cref{th:apriorireg} we obtain for all $t \in [s,s_1]$,
\[
  [u]_{C^{s_0-s}(B(R/2))} + [u]_{W^{s_0,\frac{n}{s}}(B(R/2))} \leq C\, R^{s-s_0} [u]_{W^{s,\frac{n}{s}}(B(R))} \brac{[u]_{W^{s,\frac{n}{s}}(\Sigma)}^{1-\frac{s}{n}}+[u]_{W^{s,\frac{n}{s}}(\Sigma)}}.
\]
In particular we have
\[
  [u]_{C^{s_1-s}(B(R/2))} + [u]_{W^{s_1,\frac{n}{s}}(B(R/2))} \leq C\, R^{s-s_1} [u]_{W^{s,\frac{n}{s}}(B(R))} \brac{[u]_{W^{s,\frac{n}{s}}(\Sigma)}^{1-\frac{s}{n}}+[u]_{W^{s,\frac{n}{s}}(\Sigma)}}.
\]
So \Cref{th:reghomo} is proven taking $s_0$ in the statement of the theorem to be $s_1$.
\end{proof}

\subsection{Consequences}
We will need the following generalization of \cite[Lemma 4.3]{Sucks1}. 
\begin{corollary}\label{co:strongconvergencesmallenergy}
Let $\Sigma$ and $\n$ be as above. There exists $\eps > 0$ and $s_0 \in (s,1)$ such that the following holds.

Let $\{u_t\}_{t >s}$ be a sequence of $W^{t,\frac ns}(\Sigma,\n)$-harmonic maps minimizing in a fixed homotopy class. Let us assume that $u_t\rightharpoonup u_s$ converges weakly in $W^{s,\frac ns}(\Sigma, \n)$.
 
 There exists an $\eps>0$ such that if $E_{s,\frac ns}(u_t, B(x_0,\rho))<\eps$ for some ball $B(x_0,\rho)$ then $u_t \to u_s$ strongly in $W^{s_0,\frac ns}(B(x_0,\rho/2),\n))$. The number $s_0>s$ is taken from Proposition \ref{th:reghomo}.
\end{corollary}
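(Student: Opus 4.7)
The plan is to promote the assumed $W^{s,n/s}$-smallness, via \Cref{th:reghomo}, to a uniform $W^{s_0^\ast,n/s}$-bound on a concentric ball of half the radius, where $s_0^\ast > s_0$, and then to obtain strong $W^{s_0,n/s}$-convergence from the compact Sobolev embedding $W^{s_0^\ast,n/s}\hookrightarrow\hookrightarrow W^{s_0,n/s}$ combined with a Urysohn-type subsequence argument.

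\textbf{Step 1 (uniform higher regularity).} Pick $s < s_0 < s_0^\ast < \bar{s}$, where $\bar{s}$ is the threshold exponent from \Cref{th:apriorireg}, so that $s_0^\ast$ is a valid regularity exponent for \Cref{th:reghomo}. Let $\eps_0 > 0$ and $s_1 \in (s, s_0^\ast)$ be the constants supplied by that theorem applied with exponent $s_0^\ast$, and choose the corollary's $\eps$ so small that $\eps^{s/n} \le \eps_0$; then $E_{s,n/s}(u_t, B(x_0,\rho)) < \eps$ gives the required $[u_t]_{W^{s,n/s}(B(x_0,\rho))} < \eps_0$. Since each $u_t$ minimizes in its global homotopy class, any competitor coinciding with $u_t$ outside $B(x_0,\rho)$ and homotopic to it is an admissible global competitor, so the local homotopy-minimality hypothesis of \Cref{th:reghomo} is satisfied. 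For every $t \in (s, s_1]$, estimate \eqref{eq:threg:estimate} bounds $[u_t]_{W^{s_0^\ast, n/s}(B(x_0, \rho/2))}$ in terms of $[u_t]_{W^{s,n/s}(B(x_0,\rho))}$ and $[u_t]_{W^{s,n/s}(\Sigma)}$; the latter is uniformly bounded in $t$ because the weak convergence $u_t \rightharpoonup u_s$ in $W^{s,n/s}(\Sigma)$ implies norm-boundedness.

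\textbf{Step 2 (compactness and limit identification).} The compact embedding $W^{s_0^\ast,n/s}(B(x_0, \rho/2)) \hookrightarrow\hookrightarrow W^{s_0, n/s}(B(x_0, \rho/2))$, valid since $s_0^\ast > s_0$ on the (locally Lipschitz) ball, combined with Step~1 permits extraction of a subsequence $u_{t_k}$ converging strongly in $W^{s_0, n/s}(B(x_0, \rho/2), \R^M)$ to some $\tilde u$. The assumed weak convergence in $W^{s,n/s}(\Sigma)$ forces, along a further subsequence, a.e.\ convergence to $u_s$, whence $\tilde u = u_s$. As the limit is independent of the subsequence, Urysohn's subsequence principle yields strong convergence of the full sequence; the pointwise constraint $u_s(x) \in \n$ is preserved under a.e.\ convergence.

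The main subtlety is the choice of exponents: one must work with a \emph{strictly larger} exponent $s_0^\ast$ than the $s_0$ at which strong convergence is claimed, because \Cref{th:reghomo} at $s_0^\ast$ delivers only a uniform bound (hence at best weak compactness) at $s_0^\ast$; strong convergence is recovered at the smaller $s_0$ only through the compact Sobolev embedding. Everything else (verifying the minimality hypothesis and compatibilising the smallness thresholds) is routine book-keeping, so the argument is short once the two-exponent structure is set up.
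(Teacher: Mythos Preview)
Your argument is correct and follows the same route as the paper: apply \Cref{th:reghomo} to obtain a uniform $W^{s_0^\ast,n/s}$-bound on the half-ball for a higher exponent $s_0^\ast$, then use Rellich--Kondrachov to upgrade weak to strong convergence at the lower exponent $s_0$. The paper writes this more tersely, calling the higher exponent $s_1$ (the ``$s_0$'' from \Cref{th:reghomo}) and setting the corollary's $s_0 \coloneqq \tfrac{s+s_1}{2}$; one minor point is that \Cref{th:reghomo} as stated produces a \emph{specific} exponent rather than a range, so rather than ``picking $s_0^\ast < \bar{s}$'' you should simply take $s_0^\ast$ to be that fixed exponent and let the corollary's $s_0$ be anything strictly smaller.
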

\begin{proof}
Let $s_1$ be the ``$s_0$'' from \Cref{th:reghomo} and set $s_0 \coloneqq \frac{s+s_1}{2}$.

From \Cref{th:reghomo} we obtain
\[
 \sup_{t\in (s,s_1]} [u_t]_{W^{s_1,\frac{n}{s}}(B(x_0,\rho/2))}<\infty.
\]
Thus $u_t$ converges weakly to $u_s$ in $W^{s_1,\frac{n}{s}}(B(x_0,\rho/2))$. By Rellich--Kondrachov Theorem we obtain strong convergence in $W^{s_0,\frac{n}{s}}(B(x_0,\rho/2))$.
\end{proof}

The following theorem combines \Cref{co:strongconvergencesmallenergy} with a covering argument, and is a generalization of \cite[Proposition 4.3 \& Theorem 4.4]{Sucks1}.
\begin{theorem}\label{th:strongconvoutsideofpoints}
For any $s \in (0,1)$ there exists $s_0 > s$ such that the following holds.
 For $t \in (s,s_0]$ let $u_t\colon \Sigma\to \n$ be a sequence of minimizing $W^{t,\frac{n}{s}}$-harmonic maps in a fixed homotopy class of $C^0(\Sigma,\n)$. Then, there is a decreasing sequence $(t_j)_{j \in \N} \subset (s,s_0]$ such that $t_{j} \xrightarrow{j \to \infty} s$ and a finite number of points $A\coloneqq \{x_1,\ldots,x_K\}$, such that 
 \[
 \begin{split}
  u_{t_j} &\xrightarrow{j \to \infty} u_s \quad \text{ locally strongly in } W^{s_0,\frac ns}(\Sigma \setminus A).
 \end{split}
  \]
  Moreover, $u_s$ is a $E_{s,\frac{n}{s}}$-minimizer within its homotopy class in $\Sigma \setminus A$, {\it i.e.}, 
  \[
   E_{s,\frac ns}(u_s, \Sigma) \leq E_{s,\frac ns}(v, \Sigma) \quad \text{if $u \equiv v$ in a neighborhood of $A$ and $u \sim v$}.
  \]
 \end{theorem}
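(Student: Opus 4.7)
My strategy is a standard concentration-compactness argument adapted to the nonlocal, homotopy-constrained setting, relying on \Cref{co:strongconvergencesmallenergy} and the regularity estimate \Cref{th:reghomo}, with an extra gluing argument at the end to verify minimality.

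\textbf{Step 1 (Uniform bound and weak limit).} The plan is first to fix a smooth representative $v_0 \in C^\infty(\Sigma,\n)$ of the ambient homotopy class (which exists by \Cref{la:smoothapprox} and \Cref{pr:homotopiesallsame}). By minimality, $E_{t,\frac{n}{s}}(u_t) \leq E_{t,\frac{n}{s}}(v_0)$, and splitting the kernel on $\{|x-y|\leq 1\}$ and $\{|x-y|\geq 1\}$ one sees that $E_{t,\frac{n}{s}}(v_0)$ stays bounded uniformly in $t\in(s,s_0]$ by dominated convergence, while $E_{s,\frac{n}{s}}(u_t) \aleq E_{t,\frac{n}{s}}(u_t) + \diam(\n)^{\frac{n}{s}}$. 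Thus $(u_t)$ is bounded in $W^{s,\frac{n}{s}}(\Sigma,\R^M)$ and along an initial subsequence $t_j \to s^+$ converges weakly to some $u_s \in W^{s,\frac{n}{s}}(\Sigma,\n)$.

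\textbf{Step 2 (Finite bad set and strong convergence).} Let $\eps$ and $s_0$ be as in \Cref{co:strongconvergencesmallenergy}. I would define the concentration set
\[
 A \coloneqq \Bigl\{x \in \Sigma : \liminf_{r\to 0^+} \liminf_{j\to\infty} E_{s,\frac{n}{s}}(u_{t_j}, B(x,r)) \geq \eps\Bigr\}.
\]
For any $K$ points in $A$, pick $r$ small enough that the balls $B(x_i,r)$ are pairwise disjoint; successive extraction along $i=1,\dots,K$ produces a common subsequence along which each $E_{s,\frac{n}{s}}(u_{t_j},B(x_i,r))\geq\eps/2$, and summing disjointly gives $K\eps/2 \leq E_{s,\frac{n}{s}}(u_{t_j})\leq C$, so $\#A \leq 2C/\eps$. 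Enumerate $A=\{x_1,\dots,x_K\}$. For each $x \notin A$ there is $r=r(x) > 0$ and a subsequence with $E_{s,\frac{n}{s}}(u_{t_{j'}}, B(x,r)) < \eps$ eventually; \Cref{co:strongconvergencesmallenergy} then gives strong convergence in $W^{s_0,\frac{n}{s}}(B(x,r/2))$. A locally finite covering of $\Sigma\setminus A$ together with a diagonal extraction (relabelled again $t_j$) yields locally strong $W^{s_0,\frac{n}{s}}$-convergence on $\Sigma\setminus A$.

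\textbf{Step 3 (Minimality of $u_s$).} This is the main obstacle. Let $v \in W^{s,\frac{n}{s}}(\Sigma,\n)$ satisfy $v \equiv u_s$ on an open neighborhood $U$ of $A$ and $v \sim u_s$. I would first approximate $v$ by smooth maps $v_\sigma \in C^\infty(\Sigma,\n)$ in $W^{s,\frac{n}{s}}$ via \Cref{la:smoothapprox}, arranging (by mollifying only outside a smaller neighborhood of $A$) that $v_\sigma \equiv u_s$ on a nested neighborhood $U'\subset\subset U$ and $v_\sigma \sim v$ via \Cref{pr:homotopiesallsame}. Fix nested $A \subset U'' \subset\subset U'$ and a cutoff $\eta \in C_c^\infty(U',[0,1])$ with $\eta\equiv 1$ on $U''$. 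By \Cref{th:reghomo} the convergence $u_{t_j}\to u_s$ is uniform on $\overline{U\setminus U''}$, so $\|u_{t_j}-v_\sigma\|_{L^\infty(U\setminus U'')}\to 0$ as $j\to\infty$, and for large $j$ the map
\[
 w_{j,\sigma} \coloneqq \pi_{\n}\bigl((1-\eta) v_\sigma + \eta\, u_{t_j}\bigr)
\]
is well-defined by \Cref{la:tubular}, equals $u_{t_j}$ on $U''$ and $v_\sigma$ on $\Sigma\setminus U'$, is homotopic to $u_{t_j}$ (using $L^\infty$-closeness to $v_\sigma$ outside $U''$ and \Cref{la:samehomotopycont} combined with $v_\sigma \sim u_s \sim u_{t_j}$), and satisfies an analogue of the estimate \eqref{la:comp:west} of \Cref{la:competitor}, so that
\[
 E_{t_j,\frac{n}{s}}(w_{j,\sigma}) \leq E_{t_j,\frac{n}{s}}(v_\sigma,\Sigma\setminus U'') + o_{j\to\infty}(1).
\]
Since $v_\sigma$ is smooth, dominated convergence yields $E_{t_j,\frac{n}{s}}(v_\sigma)\to E_{s,\frac{n}{s}}(v_\sigma)$. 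Applying the minimality $E_{t_j,\frac{n}{s}}(u_{t_j})\leq E_{t_j,\frac{n}{s}}(w_{j,\sigma})$, weak lower semicontinuity of $E_{s,\frac{n}{s}}$ under the weak $W^{s,\frac{n}{s}}$-convergence $u_{t_j}\rightharpoonup u_s$, and passing $j\to\infty$ and then $\sigma\to 0$ gives $E_{s,\frac{n}{s}}(u_s)\leq E_{s,\frac{n}{s}}(v)$, completing the proof. The most delicate technical point is that $v$ itself need not belong to $W^{t,\frac{n}{s}}$ for $t>s$, which is precisely why the intermediate smooth approximation $v_\sigma$ is needed.
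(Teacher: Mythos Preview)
Your Steps~1 and~2 are essentially the paper's argument (the paper organizes the covering via fixed dyadic grids rather than your concentration set $A$, but the content is the same), and they are correct. The paper's written proof in fact stops after the convergence statement and does not spell out the ``Moreover'' clause, so your Step~3 attempts something the paper leaves implicit.

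Step~3, however, has a genuine gap at the homotopy step. You assert $w_{j,\sigma}\sim u_{t_j}$ via ``$v_\sigma\sim u_s\sim u_{t_j}$'', but $u_s\sim u_{t_j}$ is precisely what can fail here: when energy concentrates at $A$ the weak limit $u_s$ may land in a \emph{different} free homotopy class than the $u_{t_j}$ --- this is the whole point of \Cref{s:sucks2}. What you actually need is that the map obtained by gluing $u_{t_j}$ on $U''$ to $v$ on $\Sigma\setminus U'$ is freely homotopic to the one obtained by gluing $u_{t_j}$ on $U''$ to $u_s$ on $\Sigma\setminus U'$. Since $v$ and $u_s$ are only \emph{freely} homotopic and only agree on $U$, this amounts to a homotopy rel~$U''$, which does not follow when $\pi_1(\n)$ acts nontrivially on $\pi_n(\n)$: if the bubble carries class $b\in\pi_n(\n)$ and $[v]=\gamma\cdot[u_s]$ for some $\gamma\in\pi_1(\n)$ not fixing $b$, the two glued maps need not be freely homotopic.

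Separately, the displayed estimate $E_{t_j,\frac{n}{s}}(w_{j,\sigma}) \leq E_{t_j,\frac{n}{s}}(v_\sigma,\Sigma\setminus U'') + o_{j\to\infty}(1)$ is false as written: $w_{j,\sigma}=u_{t_j}$ on $U''$, and the $U''$-energy of $u_{t_j}$ is exactly the concentrating part, not $o(1)$. The correct move is to cancel the identical $U''\times U''$ contributions on both sides of $E_{t_j}(u_{t_j})\le E_{t_j}(w_{j,\sigma})$; but then the remaining cross terms $\int_{U''}\int_{\Sigma\setminus U''}$ still involve $u_{t_j}$ on $U''$ and do not pass to the limit by weak lower semicontinuity alone, so the final chain of inequalities needs more care than you indicate.
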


 \begin{proof}
 We can assume $E_{s,\frac ns}(u_t, \Sigma) <\Lambda$ for all $t\in [s,s_0]$.  
 
 Indeed,  since $\Sigma$ is compact and by minimality of $u_{t}$,
\[
\begin{split}
 \sup_{t\in[s,s_0]} E_{s,\frac{n}{s}}(u_t) &= \sup_{t\in [s,s_0]} \int_{\Sigma} \int_{\Sigma} \frac{|u_t(x) - u_t(y)|^\frac ns}{|x-y|^{2n}}\frac{|x-y|^{n\brac{\frac{t-s}{s}}}}{|x-y|^{n\brac{\frac{t-s}{s}}}} \dx \dy\\
 &\aleq \sup_{t\in[s,s_0]} E_{t,\frac{n}{s}}(u_t)\\
 &\aleq \sup_{t\in[s,s_0]} E_{t,\frac{n}{s}}(u_{s_0})\\
 &\aleq E_{s_0,\frac{n}{s}}(u_{s_0}) < \infty.
\end{split}
 \]

Thus, $E_{s,\frac{n}{s}}(u_t)$ is uniformly bounded.

Let $\alpha\in\N$, we define  
\[
\mathcal B_\alpha \coloneqq \{B(x_{i,\alpha},2^{-\alpha})\colon i\in I, x_{i,\alpha}\in\Sigma\} 
\]
a family of balls such that $\Sigma \subset \bigcup \mathcal B_\alpha$, each point $x\in \Sigma$ is covered at most $h$-times, and for which, for twice smaller radius we still have $\Sigma\subset \bigcup_{i\in I} B(x_{i,\alpha},2^{-\alpha-1})$. Then,
\[
\sum_{i\in I} \int_{B(x_{i,\alpha},2^{-\alpha})}\int_{\Sigma} \frac{|u_t(x) - u_t(y)|^\frac ns}{|x-y|^{2n}} \dx \dy < \Lambda h. 
\]
Let $\eps>0$ be taken from \Cref{co:strongconvergencesmallenergy}, then
for each $t\in[s,s_0]$ there exists at most $\frac{\Lambda h}{\eps}$ balls in $\mathcal B_\alpha$ on which 
\begin{equation}\label{eq:ballswithbigenergy}
 \int_{B(x_{i,\alpha}, 2^{-\alpha})} \int_{\Sigma} \frac{|u_t(x) - u_t(y)|^\frac ns}{|x-y|^{2n}} \dx \dy > \eps.
\end{equation}
Now, we claim that there exists a subsequence $\{t_{K,\alpha}\}\subset \{t\}$ for which 
\[
 u_{t_{K,\alpha}} \xrightarrow{t_{K,\alpha}\rightarrow s} u_s \quad \text{ strongly in } W^{s_0,\frac ns}(B(x_{i,\alpha},2^{-\alpha-1}),\n)
\]
except for $K$ balls from $\mathcal B_\alpha$, where $K< \frac{h\Lambda}{\eps} + 1$.

Indeed, suppose that we have already shown that we have a subsequence $\{t_{k,\alpha}\}\subset \{t\}$ for which
\[
 u_{t_{k,\alpha}} \xrightarrow{t_{k,\alpha}\rightarrow s} u_s \quad \text{ strongly in } W^{s_0,\frac ns}(B(x_{i,\alpha},2^{-\alpha-1}),\n)
\]
for $i=1,\ldots,k$ and that there are more than $\frac{h\Lambda}{\eps}$ balls remaining in $\mathcal B_\alpha \setminus \{B(x_{1,\alpha},2^{-\alpha}),\ldots, B(x_{k,\alpha},2^{-\alpha})\}$. Then, by \eqref{eq:ballswithbigenergy} there is at least one $j\in I\setminus\{1,\ldots,k\}$ for which on the ball $B(x_j,2^{-\alpha})\in\mathcal B_\alpha$ we have
\[
 \int_{B(x_{j,\alpha},2^{-\alpha})} \int_{\Sigma} \frac{|u_{t_{k,\alpha}}(x) - u_{t_{k,\alpha}}(y)|^\frac ns}{|x-y|^{2n}} \dx \dy < \eps.
\]
By \Cref{co:strongconvergencesmallenergy} we know that there is a subsequence $\{t_{k+i,\alpha}\}\subset \{t_{k,\alpha}\}$ such that on the smaller ball we have
\[
 u_{t_{k+1,\alpha}} \xrightarrow{t_{k+1,\alpha}\rightarrow s} u_s \quad \text{ strongly in } W^{s_0,\frac ns}(B(x_{j,\alpha},2^{-\alpha-1}),\n).
\]
We repeat this construction until there are $K < \frac{h\Lambda}{\eps}+1$ balls left.

Thus, we have shown that for some $\{y_{1,\alpha},\ldots, y_{K,\alpha}\}$ we have
\[
 u_{t_{K,\alpha}} \xrightarrow{t_{K,\alpha}\rightarrow s} u_s \quad \text{ strongly in } W^{s_0,\frac ns}\bigg(\Sigma \setminus \bigcup_{i\le K}(B(y_{i,\alpha},2^{-\alpha-1}))),\n\bigg).
\]
Moreover, we have $\{t_{K,\alpha}\}\subset \{t_{K,\alpha-1}\}\subset\{t\}$. Finally, we choose a diagonal subsequence $\tilde t$ of the sequences $\{t_{K,\alpha}\}$, then $u_{\tilde t} \rightarrow s$ in $W^{s_0,\frac ns}$ on 
\[
 \bigcup_{\alpha \in \N} \brac{\Sigma\setminus \bigcup_{i\le K}B(y_{i,\alpha},2^{-\alpha-1})} = \Sigma \setminus \bigcap_{\alpha\in \N} \bigcup_{i\le K}B(y_{i,\alpha},2^{-\alpha-1}) = \Sigma \setminus \{x_1,\ldots,x_K\}.
\]

\end{proof} 

\section{Removability of singularities}\label{s:removability}
In this section we show that in the case when limits of minimizing $W^{t,\frac ns}$-harmonic maps have isolated singularities, then those singularities can be removed.

\begin{theorem}\label{th:removabilityminimizing}
Let $\Sigma,\n$ be manifolds as above. Let $B=B(x_0,R) \subset \Sigma$ be a geodesic ball centered at a point $x_0 \in \Sigma$, then the following holds.

Assume $u\in W^{s,\frac ns}(\Sigma,\n)$ be a minimizing map in $B(x_0,R)$ in homotopy away from the point $x_0$. That is assume  for any $\eps>0$ and any $w\in W^{s,\frac ns}(\Sigma)$ satisfying
\begin{itemize}
 \item $u\equiv w$ on $B(x_0,\eps) \cup \brac{\Sigma \backslash B(x_0,R)}$ and
 \item $u\sim w$, 
\end{itemize}
we have
 \begin{equation}\label{eq:minimalityoutsideofapoint}
  E_{s,\frac ns} (u,\Sigma) \le E_{s,\frac ns}(w,\Sigma).
 \end{equation}
Then, $u$ is minimizing in all of $B(x_0,R)$, i.e., for any $v\in W^{s,\frac ns}(\Sigma)$ such that 
\begin{itemize}
 \item $u\equiv w$ on $\Sigma \backslash B(x_0,R)$ and
 \item $u\sim w$, 
\end{itemize}
we have
\[
 E_{s,\frac ns} (u,\Sigma) \le E_{s,\frac ns}(v,\Sigma).
\]
\end{theorem}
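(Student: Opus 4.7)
The plan is to derive the global minimality of $u$ in $B(x_0,R)$ from the ``punctured'' minimality hypothesis \eqref{eq:minimalityoutsideofapoint} by a perturbation argument. Fix any admissible competitor $v\in W^{s,\frac{n}{s}}(\Sigma,\n)$ with $u\equiv v$ on $\Sigma\setminus B(x_0,R)$ and $u\sim v$. For each sufficiently small $\eps>0$ I will construct a map $w_\eps\in W^{s,\frac{n}{s}}(\Sigma,\n)$ satisfying: (i) $w_\eps\equiv u$ on $B(x_0,\eps)\cup(\Sigma\setminus B(x_0,R))$; (ii) $w_\eps\sim u$; and (iii) $\lim_{\eps\to 0^+} E_{s,\frac{n}{s}}(w_\eps,\Sigma) = E_{s,\frac{n}{s}}(v,\Sigma)$. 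Once such a family is in hand, applying \eqref{eq:minimalityoutsideofapoint} to $w_\eps$ and passing to the limit gives $E_{s,\frac{n}{s}}(u,\Sigma)\le E_{s,\frac{n}{s}}(v,\Sigma)$, as required.

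To build $w_\eps$ I would run a variant of the Luckhaus-type construction from the proof of \Cref{la:competitor} (case $t=s$), this time on the thin annulus $B(x_0,2\eps)\setminus B(x_0,\eps)$. Applying Fubini's theorem simultaneously to $u$ and $v$, in the spirit of \eqref{eq:Fubiniononeintegral} and \eqref{eq:Fubiniononeintegral2}, I first select a ``good slice'' radius $\rho_\eps\in(\tfrac{5}{4}\eps,\tfrac{7}{4}\eps)$ for which the traces of \emph{both} $u$ and $v$ on $\partial B(x_0,\rho_\eps)$ enjoy quantitative Morrey--Sobolev control as in \eqref{eq:comp:goodslice2}. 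Then I set $w_\eps\equiv u$ on $B(x_0,\eps)$, $w_\eps\equiv v$ on $\Sigma\setminus B(x_0,\rho_\eps)$, and on the intermediate annulus I define $w_\eps$ as the nearest-point projection $\pi_\n$ applied to a smooth interpolation between the boundary traces $u|_{\partial B(x_0,\eps)}$ and $v|_{\partial B(x_0,\rho_\eps)}$, passing through the average of $u$ on $\partial B(x_0,\eps)$ in a sub-annulus as in \Cref{la:frluckhaus}. By absolute continuity of the Gagliardo integrals of $u$ and $v$, both energies are arbitrarily small on $B(x_0,2\eps)$ for $\eps$ small, so the interpolated map takes values in the tubular neighborhood of \Cref{la:tubular} and $\pi_\n$ is well defined.

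Property (ii) would then follow from \Cref{la:samehomotopy}: the support of $w_\eps-v$ lies in $B(x_0,\rho_\eps)$, and the Luckhaus-type estimate analogous to \eqref{la:comp:west} together with Poincar\'e inequality yields $\|w_\eps-v\|_{L^1(\Sigma)}+[w_\eps-v]_{W^{s,\frac{n}{s}}(\Sigma)}\to 0$, hence $w_\eps\sim v\sim u$ once $\eps$ is small enough. For (iii), decomposing $\Sigma\times\Sigma$ along $B(x_0,2\rho_\eps)$: on $(\Sigma\setminus B(x_0,2\rho_\eps))^2$ the integrals for $w_\eps$ and $v$ agree, while the remaining self-interaction over $B(x_0,2\rho_\eps)$ and the cross interaction with its complement can be bounded, via arguments parallel to \eqref{eq:competitor:ts:2}--\eqref{eq:competitor:ts:3}, by a constant multiple of $E_{s,\frac{n}{s}}(u,B(x_0,4\rho_\eps))+E_{s,\frac{n}{s}}(v,B(x_0,4\rho_\eps))$, both of which vanish as $\eps\to 0^+$ by absolute continuity; the same bound also forces the corresponding contributions to $E_{s,\frac{n}{s}}(v,\Sigma)$ to vanish, so both energies converge to the common limit $E_{s,\frac{n}{s}}(v,\Sigma\setminus\{x_0\}) = E_{s,\frac{n}{s}}(v,\Sigma)$.

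The main obstacle is the nonlocal cross term $\int_{B(x_0,\eps)}\int_{\Sigma\setminus B(x_0,\rho_\eps)} \frac{|u(x)-v(y)|^{n/s}}{|x-y|^{2n}}\dx\dy$, since $u$ and $v$ need not agree on $B(x_0,R)$ and a priori the integrand need not be small. This is handled by inserting the good-slice average value, splitting the integrand, and using both the quantitative trace control from the Fubini selection and the smallness of the Gagliardo energies of $u$ and $v$ on $B(x_0,2\eps)$. This is a structural refinement of the proof of \Cref{la:competitor} at $t=s$, where only one map had to be interpolated; the new feature is that the interpolation must bridge two distinct boundary data (coming from $u$ inside and $v$ outside) while retaining the homotopy class and without losing the sharp absolute-continuity gain that drives (iii).
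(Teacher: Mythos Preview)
There is a genuine gap in your construction. Your competitor $v$ is only required to agree with $u$ on $\Sigma\setminus B(x_0,R)$, so on $\partial B(x_0,\rho_\eps)$ the values of $u$ and $v$ may differ by as much as $\diam(\n)$. The good-slice/Morrey argument you invoke controls only the \emph{oscillations} of $u$ and of $v$ separately on $\partial B(x_0,\rho_\eps)$; it gives no smallness for $|u-v|$ there. Consequently the linear interpolation between (the averages of) $u$ and $v$ followed by $\pi_\n$ is in general not well defined: the segment joining two points of $\n$ that are $O(1)$ apart need not lie in the tubular neighborhood of \Cref{la:tubular}. And even if it did, the Luckhaus-type estimate \eqref{eq:frl:west} shows that the cost of bridging a jump of size $\|u-v\|_{L^\infty(\partial B(\rho_\eps))}$ on an annulus of fixed aspect ratio is, in the critical case $p=\frac{n}{s}$ (so $r^{n-sp}=1$), of order $\|u-v\|_{L^\infty(\partial B(\rho_\eps))}^{n/s}$. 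This does not tend to zero with $\eps$, so your claim (iii) that $E_{s,\frac{n}{s}}(w_\eps,\Sigma)\to E_{s,\frac{n}{s}}(v,\Sigma)$ fails for generic competitors $v$. The scale invariance of $E_{s,\frac{n}{s}}$ is precisely what prevents a purely annular Luckhaus patch from absorbing an $O(1)$ gap for free.

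The paper's proof supplies exactly the missing idea. After ``opening'' $u$ near $x_0$ to a constant $b_1\in\n$ and opening $v$ to a constant $b_2\in\n$ (via \Cref{le:21}), the two constants are joined by a Lipschitz path $\gamma:[0,1]\to\n$ composed with a cutoff $\zeta_\ell$ from \Cref{le:32}; the point of that lemma is that a single point has zero $W^{s,\frac{n}{s}}$-capacity, so one can choose $\zeta_\ell$ with $E_{s,\frac{n}{s}}(\zeta_\ell,\Sigma)\to 0$, hence $E_{s,\frac{n}{s}}(\gamma\circ\zeta_\ell,\Sigma)\le\lip(\gamma)^{n/s}E_{s,\frac{n}{s}}(\zeta_\ell,\Sigma)\to 0$. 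This is how the $O(1)$ geometric gap between $b_1$ and $b_2$ is bridged at vanishing energy cost; it cannot be replaced by a single-scale Luckhaus interpolation. If you want to repair your argument, you must insert this zero-capacity step (equivalently, use a logarithmic rather than radial cutoff over many scales) to connect the two constants inside $\n$.
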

In particular we obtain regularity theory for maps as in \Cref{th:removabilityminimizing}, see \Cref{th:corollaryremovability}.

To prove \Cref{th:removabilityminimizing} will construct a comparison map, the construction will be very similar to the one in the paper by Monteil--Van~Schaftingen \cite[Proof of Theorem 3.1]{MonteilVanSchaftingen}. We will be using the following lemmata from \cite{MonteilVanSchaftingen}. The first lemma, is called \emph{the opening of maps} in the sense of Brezis--Li \cite{Brezis-Li}, and the purpose of it is to connect a given map continuously to a constant within the Sobolev space. 

\begin{lemma}[{{\cite[Lemma~2.1]{MonteilVanSchaftingen}}}]\label{le:21}
 Let $0<s\le1$, $p\ge 1$, $\lambda>1$, and  $\eta \in (0,\lambda)$. Then, there is a constant $C>0$ such that for any $\rho >0$, any measurable $u\colon B(\lambda \rho) \to \n$, and every Lipschitz continuous map $\phi \colon B((1+\eta)\rho) \to B((\lambda-\eta)\rho)$, there exists a point $a\in B(\eta\rho)$ such that
 \[
  E_{s,p}\brac{u\circ \brac{\phi(\cdot - a) + a}, B(\rho)} \le C \lip(\phi)^{sp} E_{s, p}(u, B(\lambda \rho)).
 \]
 \end{lemma}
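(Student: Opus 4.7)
The plan is an averaging argument over the translation parameter $a \in B(\eta\rho)$. Write $\Psi_a(x) \coloneqq \phi(x-a) + a$; the hypotheses on $\phi$ guarantee that $\Psi_a$ is a well-defined Lipschitz map $B(\rho) \to B(\lambda\rho)$ with $\lip(\Psi_a) \le \lip(\phi)$ for every $a \in B(\eta\rho)$. The aim is to establish the averaged estimate
\[
\frac{1}{|B(\eta\rho)|} \int_{B(\eta\rho)} E_{s,p}(u \circ \Psi_a,\, B(\rho))\, \mathrm{d}a \;\le\; C\, \lip(\phi)^{sp}\, E_{s,p}(u,\, B(\lambda\rho)),
\]
after which the existence of at least one admissible $a$ satisfying the conclusion of the lemma follows from the mean-value principle.

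Expanding the left-hand side as a triple integral and applying Fubini to integrate first in $a$ for fixed $(x,y) \in B(\rho) \times B(\rho)$, the task reduces to bounding
\[
\mathcal{J}(x,y) \coloneqq \int_{B(\eta\rho)} |u(\Psi_a(x)) - u(\Psi_a(y))|^p\, \mathrm{d}a.
\]
The substitution $b = x - a$ rewrites this as
\[
\mathcal{J}(x,y) = \int_{x - B(\eta\rho)} |u(\phi(b) + x - b) - u(\phi(b + y - x) + x - b)|^p\, \mathrm{d}b,
\]
in which the two arguments of $u$ differ by the controlled vector $\phi(b + y - x) - \phi(b)$ of length at most $\lip(\phi)\,|x-y|$. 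Changing variables $b \mapsto (\xi,\zeta) \coloneqq (\Psi_a(x), \Psi_a(y))$, the pair $(\xi,\zeta)$ ranges over a subset of $B(\lambda\rho) \times B(\lambda\rho)$ subject to $|\xi - \zeta| \le \lip(\phi)\,|x-y|$, with a Jacobian controlled on average by $\lip(\phi)^n$. Reinserting the weight $|x-y|^{-(n+sp)}$ and rewriting the singularity in terms of $|\xi - \zeta|$ converts the $n$-th power of $\lip(\phi)$ from the Jacobian into $\lip(\phi)^{sp}$ plus the Gagliardo denominator $|\xi-\zeta|^{-(n+sp)}$, producing the desired averaged bound.

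The main obstacle is precisely the bookkeeping in the $b \mapsto (\xi,\zeta)$ substitution: the differential $D\phi(b) - I$ of the map $b \mapsto \phi(b) + x - b$ may be degenerate when $\phi$ is merely Lipschitz, and the map itself may fail to be injective. The averaging over $a$ is what tames this degeneracy --- although pointwise the transformation can fold or collapse, the total multiplicity of the parametrization is controlled by $\lip(\phi)^n$ after integration. A rigorous realization proceeds via the area formula for Lipschitz maps (equivalently, first for smooth $\phi$ and then by approximation), and is the technical core of the ``opening of maps'' construction in the sense of Brezis--Li, carried out in exactly this form in \cite{MonteilVanSchaftingen}; for our purposes we would therefore invoke their result directly rather than reconstruct the Jacobian analysis.
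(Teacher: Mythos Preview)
The paper does not supply a proof of this lemma at all --- it is quoted verbatim from \cite[Lemma~2.1]{MonteilVanSchaftingen} and invoked as a black box. Your sketch of the averaging argument over $a\in B(\eta\rho)$ followed by the mean-value principle is exactly the strategy used in that reference, so your proposal is consistent with the original source; since you yourself conclude by deferring the Jacobian bookkeeping to \cite{MonteilVanSchaftingen}, there is nothing further to compare.
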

 
 The next lemma allows to glue two maps along a ''buffering zone".
\begin{lemma}[{{\cite[Lemma~2.2]{MonteilVanSchaftingen}}}]\label{le:22}
Let $0<s\le 1$, $p\ge 1$. There exists a constant $C>0$ such that for every $\eta\in(0,1)$, $A\subset \Sigma$ open, every measurable $u\colon B(\lambda \rho) \to \n$, and every $\rho>0$ such that $B_\rho\setminus \overline{B(\eta \rho)} \subset A$ we have
\[
 E_{s,p}(u,A) \le \brac{1 + \frac{C}{(1-\eta)^{sp+1}}} E_{s,p}(u, A\cap B(\rho)) + \brac{1 + \frac{C\eta^n}{1-\eta}}E_{s,p}(u, A\setminus \overline{B(\eta \rho)}),
\]
where the constant $C = C(n,s,p)$ does not depend on the set $A$ nor on the radii $\rho,\, \eta$.
\end{lemma}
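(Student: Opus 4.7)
I would prove the inequality by dissecting the double integral $E_{s,p}(u,A)$ according to the natural three-piece partition of $A$ induced by the spheres of radii $\eta\rho$ and $\rho$. Write
\[
A_1 \coloneqq A \cap B(\eta\rho), \qquad A_2 \coloneqq A \cap \bigl(B(\rho)\setminus\overline{B(\eta\rho)}\bigr), \qquad A_3 \coloneqq A\setminus\overline{B(\rho)},
\]
so that $A = A_1 \sqcup A_2 \sqcup A_3$, and by hypothesis $A_2$ contains the full annulus $B(\rho)\setminus\overline{B(\eta\rho)}$, giving the volume lower bound $|A_2| \geq c_n\rho^n(1-\eta^n) \geq c_n\rho^n(1-\eta)$ via the telescoping $1-\eta^n = (1-\eta)(1+\eta+\cdots+\eta^{n-1})$. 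Set $I_{ij} \coloneqq \iint_{A_i\times A_j} |u(x)-u(y)|^p\,|x-y|^{-(n+sp)}\,dx\,dy$. Elementary bookkeeping (noting that $E_{s,p}(u, A\cap B(\rho))$ captures $I_{11}+2I_{12}+I_{22}$ and $E_{s,p}(u, A\setminus\overline{B(\eta\rho)})$ captures $I_{22}+2I_{23}+I_{33}$, while $E_{s,p}(u,A)$ additionally contains $2I_{13}$ and counts $I_{22}$ only once) yields the exact identity
\[
E_{s,p}(u,A) = E_{s,p}(u, A\cap B(\rho)) + E_{s,p}(u, A\setminus\overline{B(\eta\rho)}) - I_{22} + 2I_{13}.
\]
Since $I_{22}\geq 0$, the task reduces to estimating the ``long-range cross term'' $I_{13}$, recording interactions across the buffer of width at least $(1-\eta)\rho$.

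\textbf{Estimating $I_{13}$.} To bound $I_{13}$, I would apply the triangle inequality $|u(x)-u(y)|^p \leq 2^{p-1}\bigl(|u(x)-u(z)|^p + |u(z)-u(y)|^p\bigr)$ and average $z$ uniformly over $A_2$, splitting $I_{13} \leq \tfrac{2^{p-1}}{|A_2|}(J_1+J_2)$ with $J_1$ the ``$x$--$z$ piece'' and $J_2$ the ``$z$--$y$ piece''. For $J_1$, the separation gives $\int_{A_3}|x-y|^{-(n+sp)}\,dy \leq C\bigl((1-\eta)\rho\bigr)^{-sp}$ for every $x\in A_1$, and the trivial diameter bound $|x-z|\leq 2\rho$ reinstates the Gagliardo kernel $|x-z|^{-(n+sp)}$ at the cost of the factor $\rho^n/(1-\eta)^{sp}$. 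Dividing by $|A_2|\ageq \rho^n(1-\eta)$ eliminates the $\rho^n$ and produces the coefficient $C(1-\eta)^{-(sp+1)}$ in front of $\int_{A_1}\int_{A_2}|u(x)-u(z)|^p|x-z|^{-(n+sp)}\,dz\,dx \leq E_{s,p}(u,A\cap B(\rho))$, matching the first term of the claim.

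\textbf{The asymmetric term $J_2$.} The symmetric-looking term $J_2$ is where the proof is most delicate, because the naive estimate $\int_{A_1}|x-y|^{-(n+sp)}dx \leq |A_1|\bigl((1-\eta)\rho\bigr)^{-(n+sp)}$ costs an inadmissible power $(1-\eta)^{-(n+sp)}$. To reach the claimed $\eta^n/(1-\eta)$ coefficient, I would split $A_3$ into a \emph{far} region $A_3\setminus B(2\rho)$, where the triangle inequalities $|x-y|\approx|z-y|\approx|y|$ transfer the kernel pointwise and the volume ratio $|A_1|/|A_2| \leq C\eta^n/(1-\eta)$ gives exactly the claimed factor times $I_{23}\leq E_{s,p}(u,A\setminus\overline{B(\eta\rho)})$, and a \emph{near} region $A_3\cap B(2\rho)$ of width $O(\rho)$, where the kernels $|x-y|^{-(n+sp)}$ and $|z-y|^{-(n+sp)}$ are not pointwise comparable. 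For the near region I would refine the averaging, replacing $A_2$ by the sub-annulus $A_2^y \coloneqq A_2\cap B(y, 4\rho)$ of volume still comparable to $\rho^n(1-\eta)$, so that $|z-y|\leq 4\rho$ uniformly and the admissible bound $|u(z)-u(y)|^p \leq (4\rho)^{n+sp}\,|u(z)-u(y)|^p|z-y|^{-(n+sp)}$ closes the estimate, yielding again the coefficient $C\eta^n/(1-\eta)$ times $I_{23}$. The main obstacle will be the bookkeeping of powers of $(1-\eta)$ in this near-region argument — a naive application of the same averaging trick loses an extra factor $(1-\eta)^{-(n+sp)}$, and avoiding this loss is precisely why the proof needs the far/near dichotomy and the localization $A_2^y$ rather than a uniform averaging over all of $A_2$. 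Combining the two estimates yields $2I_{13} \leq C(1-\eta)^{-(sp+1)}E_{s,p}(u,A\cap B(\rho)) + C\eta^n(1-\eta)^{-1} E_{s,p}(u,A\setminus\overline{B(\eta\rho)})$, which, together with the identity above, finishes the proof.
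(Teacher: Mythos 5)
The paper does not actually prove this lemma: it imports it verbatim from Monteil--Van Schaftingen, so there is no in-house argument to compare against and your proposal has to stand on its own. The skeleton is the natural one and most of it is correct: the three-set partition, the exact identity $E_{s,p}(u,A)=E_{s,p}(u,A\cap B(\rho))+E_{s,p}(u,A\setminus\overline{B(\eta\rho)})-I_{22}+2I_{13}$, the reduction to the cross term $I_{13}$, the estimate of $J_1$ (which indeed yields $C(1-\eta)^{-(sp+1)}I_{12}$ after dividing by $|A_2|\gtrsim \rho^n(1-\eta)$), and the far-region part of $J_2$ (which yields $C\eta^n(1-\eta)^{-1}I_{23}$ via $|x-y|\aeq|z-y|\aeq|y|$) are all fine.

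The gap is in the near region of $J_2$, and it is exactly at the point you yourself flag as delicate. The proposed localization $A_2^y=A_2\cap B(y,4\rho)$ is vacuous: since $A_2\subset B(\rho)$ and $|y|\le 2\rho$, every $z\in A_2$ already satisfies $|z-y|\le 3\rho$, so $A_2^y=A_2$ and you are back to the uniform averaging you correctly identified as insufficient. Concretely, after the substitution $|u(z)-u(y)|^p\le(4\rho)^{n+sp}\,|u(z)-u(y)|^p|z-y|^{-(n+sp)}$ you must still bound $\int_{A_1}|x-y|^{-(n+sp)}\,dx$ uniformly in $y\in A_3\cap B(2\rho)$; the only available bounds are $|A_1|\bigl((1-\eta)\rho\bigr)^{-(n+sp)}\le C\eta^n\rho^{-sp}(1-\eta)^{-(n+sp)}$ (keeping $\eta^n$ but paying $(1-\eta)^{-(n+sp)}$) or $C\bigl((1-\eta)\rho\bigr)^{-sp}$ (losing $\eta^n$). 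Either way, dividing by $|A_2|\aeq\rho^n(1-\eta)$ gives at best $C\eta^n(1-\eta)^{-(n+sp+1)}I_{23}$ --- precisely the loss you said the localization would remove, which it does not. A genuine fix must exploit that when $y$ is close to $\partial B(\rho)$ the annulus carries a large mass of points $z$ with $|z-y|$ small; for instance, averaging $z$ against the weight $|z-y|^{-(n+sp)}$ on the annulus, one can check $\sup_{y}\bigl(\int_{A_1}|x-y|^{-(n+sp)}dx\bigr)/\bigl(\int_{A_2}|w-y|^{-(n+sp)}dw\bigr)\le C\eta^n/(1-\eta)$ --- but then the $J_1$ piece has to be re-estimated against this $y$-dependent probability measure, which is not automatic. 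As written, the near-region step does not close. (A mitigating remark: the weaker coefficient $1+C\eta^n(1-\eta)^{-(n+sp+1)}$ that your argument does deliver would in fact suffice for every application of this lemma in the present paper, where $\eta$ is either fixed or tends to $0$; but it is not the stated lemma.)
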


The next lemma says that a Sobolev map on a ball taking values in a manifold can be extended to a larger ball. This can e.g. be proven by an inversion, setting $v(x) := u(\rho^2 x/|x|^2)$ for $|x| > \rho$. 
\begin{lemma}[{{\cite[Lemma~2.4]{MonteilVanSchaftingen}}}]\label{le:24}
 Let $s\in (0,1]$, $p \ge 1$, $\lambda \ge 1$. There exists a constant $ C>0$ such that if  $\rho>0$, $u\colon B(\rho) \to \n$ is measurable, then there exists $v\colon B(\lambda \rho) \to \n$ such that $v=u$ on $B(\rho)$ and 
 \[
  \| v\|_{L^p(B(\lambda \rho))}^p \le C \|u\|^p_{L^p(B(\rho))}, \quad E_{s,p}(v, B(\lambda \rho)) \le C E_{s,p}(u, B(\rho)).
 \]
 \end{lemma}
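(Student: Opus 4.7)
\emph{Proof proposal.} The plan is to extend $u$ by reflecting it across the sphere $\partial B(\rho)$ via spherical inversion. Let $\Phi(x) = \rho^2 x/|x|^2$; this is an involution of $\R^n\setminus\{0\}$ which fixes $\partial B(\rho)$ pointwise and maps the annulus $\rho \le |x|\le \lambda\rho$ diffeomorphically onto $\rho/\lambda \le |x| \le \rho$. I will set
\[
 v(x) \coloneqq \begin{cases} u(x) & \text{if } |x| \le \rho, \\ u(\Phi(x)) & \text{if } \rho < |x| < \lambda\rho. \end{cases}
\]
Then $v$ is measurable, takes values in $\n$, and agrees with $u$ on $B(\rho)$, so only the two norm inequalities need to be verified (the case $\lambda = 1$ being trivial).

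The $L^p$ bound is routine: the change of variable $\tilde y = \Phi(y)$ on the outer annulus has Jacobian $|\det D\Phi(y)| = (\rho/|y|)^{2n} \le \lambda^{2n}$ on $|y|\ge\rho$, and this immediately gives $\|v\|_{L^p(B(\lambda\rho)\setminus B(\rho))}^p \le \lambda^{2n}\|u\|_{L^p(B(\rho))}^p$.

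For the Gagliardo energy I would split the double integral over $B(\lambda\rho)^2$ into inner--inner, outer--outer and the two symmetric cross pieces. The inner--inner term is $E_{s,p}(u,B(\rho))$ by definition, and the outer--outer term is handled by the double change of variable $x\to \Phi(x)$, $y\to \Phi(y)$ together with the conformal distortion identity $|\Phi(x)-\Phi(y)| = \rho^2|x-y|/(|x||y|)$ (which comes from $||y|^2x - |x|^2 y|^2 = |x|^2|y|^2|x-y|^2$); the Jacobian and the kernel $|x-y|^{-(n+sp)}$ combine into a weight $\rho^{2(n-sp)}/(|\tilde x||\tilde y|)^{n-sp}$ that is uniformly bounded on $\rho/\lambda\le|\tilde x|,|\tilde y|\le\rho$ by a constant $C(\lambda,n,s,p)$, yielding a bound of the form $CE_{s,p}(u,B(\rho))$.

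I expect the main technical point to be the cross term, where after the change of variable $y\to\Phi(y)$ one has to compare $|x-y|$ with $|x-\Phi(y)|$ at points that are not on the sphere. I will exploit the identity
\[
 |x - \Phi(y)|^2 = \frac{\rho^2}{|y|^2}\,|x-y|^2 + (|x|^2 - \rho^2)\brac{1 - \frac{\rho^2}{|y|^2}},
\]
obtained by directly expanding both sides. When $|x|\le \rho \le |y|$ both correction factors have the right sign and $\rho/|y|\le 1$, so $|x-\Phi(y)|\le |x-y|$. This allows me to replace $|x-y|^{-(n+sp)}$ by the larger $|x-\Phi(y)|^{-(n+sp)}$ and then perform the same Jacobian argument as in the $L^p$ estimate, bounding each cross piece by $\lambda^{2n}E_{s,p}(u,B(\rho))$. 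Summing the four pieces concludes the proof; the only non-routine ingredient is precisely the cross-term inequality $|x-\Phi(y)|\le |x-y|$, which is the geometric feature that distinguishes inversion from a generic diffeomorphism and is the reason one chooses $\Phi$ for this construction rather than, say, a bilipschitz reflection.
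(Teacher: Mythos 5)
Your proposal is correct and is exactly the route the paper has in mind: the paper does not prove this cited lemma but remarks that it "can e.g. be proven by an inversion, setting $v(x):=u(\rho^2x/|x|^2)$ for $|x|>\rho$," and your write-up fills in that sketch, with the key cross-term inequality $|x-\Phi(y)|\le|x-y|$ for $|x|\le\rho\le|y|$ verified by a correct identity. (Only a bookkeeping remark: in the $L^p$ step the factor $\lambda^{2n}$ comes from evaluating the Jacobian at the image point $\tilde y$ with $|\tilde y|\ge\rho/\lambda$, not from $|\det D\Phi(y)|\le\lambda^{2n}$ on $|y|\ge\rho$ where it is in fact $\le 1$; the constant you obtain is nonetheless right.)
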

 
 Finally, the last lemma is also well known and is often used to remove singularities in  critical Sobolev spaces (not necessarily of fractional order). The lemma basically says that a point in the critical Sobolev space has zero capacity. For the proof we refer, e.g., to \cite[Theorem 5.1.9]{Adams-Hedberg}, compare also with \cite[Lemma~3.2]{MonteilVanSchaftingen}. The proof is based on the existence of unbounded functions in the critical Sobolev space and truncation.
\begin{lemma}\label{le:32}
For any $s\in (0,1)$, $n \geq 1$ there exist $\{\zeta_{\ell}\}_{\ell \in \N} \subset C_c^\infty (\Sigma, [0,1])$ such that for all $\ell \in \N$, \[
\zeta_\ell \equiv 1 \quad \text{on $B(\rho_{\ell})$}, \quad \quad  \zeta_\ell \equiv 0 \quad \text{ outside } B(R_\ell)                                                                                                                                                                                                                                                                                        \]
for some $0<\rho_\ell < R_\ell\to 0$ as $\ell \to \infty$ and
\[
 \lim_{\ell \to \infty} E_{s,\frac ns}(\zeta_\ell, \Sigma) =0.
\]
\end{lemma}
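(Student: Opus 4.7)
The lemma is the classical fact that singletons have zero $(s,n/s)$-Bessel capacity in the critical Sobolev scale $W^{s,n/s}$, cf.\ \cite[Theorem~5.1.9]{Adams-Hedberg} and \cite[Lemma~3.2]{MonteilVanSchaftingen}. I outline a direct construction of the cutoffs $\zeta_\ell$.

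Work in a coordinate chart identifying a neighborhood of the basepoint with $B(1) \subset \R^n$ centered at $0$. The starting point is that $W^{s,\frac{n}{s}}$ is critical and fails to embed into $C^0$: for $n\geq 2$ the function $v_0(x) \coloneqq \log\log(2/|x|)$ belongs to $W^{1,n}(B(1/2))$ by a direct polar-coordinate computation, hence to $W^{s,\frac{n}{s}}(B(1/2))$ via the embedding $W^{1,n} \hookrightarrow W^{s,\frac{n}{s}}$, while $v_0(0) = +\infty$; an analogous unbounded function with finite $W^{s,1/s}$-Gagliardo seminorm exists for $n=1$. Multiplying $v_0$ by a smooth cut-off supported in $B(1/2)$ yields a non-negative $v \in W^{s,\frac{n}{s}}(\R^n)$, compactly supported, with $v(x)\to +\infty$ as $x\to 0$.

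For $k\in\N$, set $\tilde\zeta_k \coloneqq k^{-1}\min(v,k)$. Then $\tilde\zeta_k \in W^{s,\frac{n}{s}}(\R^n,[0,1])$, it equals $1$ on a ball $B(\rho_k')$ with $\rho_k'\to 0$, and it is supported in $B(1/2)$. Since $u\mapsto \min(u,k)$ is $1$-Lipschitz, truncation does not increase the Gagliardo seminorm, whence
\[
 E_{s,\frac{n}{s}}(\tilde\zeta_k, \R^n) = [\tilde\zeta_k]_{W^{s,\frac{n}{s}}}^{n/s} \leq k^{-n/s}\,[v]_{W^{s,\frac{n}{s}}}^{n/s} \xrightarrow{k\to\infty} 0.
\]
To shrink the support as well, I exploit the scale invariance of $[\cdot]_{W^{s,n/s}}$ (which is precisely the criticality $s\cdot \frac{n}{s}=n$): for $\lambda > 0$ the dilate $\tilde\zeta_k(\cdot/\lambda)$ has the same Gagliardo energy, is supported in $B(\lambda/2)$, and equals $1$ on $B(\lambda\rho_k')$. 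Picking $\lambda=\lambda_\ell \to 0$ and $k=k_\ell \to \infty$ along a diagonal sequence yields cutoffs whose support radius $R_\ell \coloneqq \lambda_\ell/2$ and cap radius $\rho_\ell \coloneqq \lambda_\ell\rho_{k_\ell}'$ both shrink to $0$, with vanishing energy.

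Finally, to upgrade to $C_c^\infty(\Sigma,[0,1])$, mollify each Lipschitz cutoff at a scale $\varepsilon_\ell \ll \lambda_\ell \rho_{k_\ell}'$, compose with a smooth cap-off of $\R$ onto $[0,1]$ fixing the endpoints to ensure values remain in $[0,1]$ and the identically-one and identically-zero regions are preserved, and invoke strong convergence of mollifications in $W^{s,\frac{n}{s}}$ to keep the energy bound. The main technical obstacle in producing the lemma from scratch is the verification of the unbounded ingredient $v_0 \in W^{s,1/s}$ in the one-dimensional case where the shortcut via $W^{1,1}\hookrightarrow W^{s,1/s}$ is not available; this is a direct, though somewhat tedious, inspection of the Gagliardo seminorm $\int\int |v(x)-v(y)|^{1/s}|x-y|^{-2}\,\dx \dy$.
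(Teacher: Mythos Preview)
Your proposal is correct and follows precisely the approach the paper indicates: the paper does not give a self-contained proof of this lemma but refers to \cite[Theorem~5.1.9]{Adams-Hedberg} and \cite[Lemma~3.2]{MonteilVanSchaftingen}, adding only the hint that ``the proof is based on the existence of unbounded functions in the critical Sobolev space and truncation.'' Your construction---take $v_0=\log\log(2/|x|)$, truncate, rescale via the conformal invariance of $[\,\cdot\,]_{W^{s,n/s}}$, and mollify---is exactly this strategy, and you even cite the same two sources.
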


We are ready to prove our Theorem. 
\begin{proof}[Proof of Theorem \ref{th:removabilityminimizing}]
We will construct a comparison map, we begin with a modification of $u$. We will simply write here $B(r)$ for $B(x_0,r)$.

\textsc{Step 1.} Let us take $B(\rho_\ell)$ from Lemma \ref{le:32} and extend $u\big\rvert_{B(\rho_\ell)}\colon B(\rho_\ell) \to \n$ with the help of Lemma \ref{le:24}. We know that there exists $u_1\colon B(3\rho_\ell) \to \n$ such that $u_1 = u$ on $B(\rho_\ell)$ and 
\begin{equation}\label{eq:u1estimate}
 E_{s,\frac ns}(u_1, B(3\rho_\ell)) \aleq  E_{s,\frac ns}(u, B(\rho_\ell)).
\end{equation}
\textsc{Step 2.} Next we again modify the map $u_1$, in such a way that we obtain a map that is constant outside the ball $B(4\rho_\ell)$. Take $\phi_1\colon B(6\rho_\ell) \to B(2\rho_\ell)$ Lipschitz continuous such that
\[
 \begin{split}
  \phi_1(x) &=x \quad \text{if } |x| \le 2\rho_\ell\\
  \phi_1(x) &=0 \quad \text{if } |x| \ge 3 \rho_\ell.
 \end{split}
\]
Then, by \Cref{le:21} there exists an $a_1\in B(\rho_\ell)$ such that
\begin{equation}\label{eq:u_1lipschitzcomposition}
 E_{s,\frac ns}(u_1 \circ (\phi_1(\cdot -a_1)+a_1),B(5\rho_\ell)) \le C\lip(\phi_1)^{n} E_{s,\frac ns}( u_1, B(3\rho_\ell)) \aleq E_{s,\frac ns}( u_1, B(3\rho_\ell))
\end{equation}
and we have

\begin{enumerate}
 \item if $|x| \le \rho_\ell$, then $|x-a_1|\le 2 \rho_\ell$,
 \item if $|x|\ge 4\rho_\ell$, then $|x-a| \ge 3 \rho_\ell$.
\end{enumerate}
Thus,
\[
 \phi_1(x-a_1) + a_1 = \left\{ \begin{array}{ll}
                                x & \text { if } |x| \le \rho_\ell,\\
                                a_1 & \text{ if } |x| \ge 4 \rho_\ell
                               \end{array}
\right.
\]
and
\[
 u_1 \circ(\phi_1(\cdot -a_1)+a_1)(x) = \left\{ \begin{array}{ll}
                                                 u_1(x) = u(x) & \text{ if } |x| \le \rho_\ell,\\
                                b_1 & \text{ if } |x| \ge 4 \rho_\ell,
                                                \end{array}
\right.
\]
where $b_1 \coloneqq u_1(a_1) \in \n$. We define 
\[
 u_2(x) \coloneqq \left\{ \begin{array}{ll}
                   u_1 \circ (\phi_1(x -a_1)+a_1 & \text{ if } |x|\le 4 \rho_\ell\\
                   b_1 & \text{ if } |x|\ge 4\rho_\ell.
                  \end{array}
\right.
\]
Combining Lemma \ref{le:22} (applied with $A = \Sigma$, $\rho = 5 \rho_\ell$, and $\eta = \frac 45$) with \eqref{eq:u_1lipschitzcomposition} and \eqref{eq:u1estimate} we get
\begin{equation}\label{eq:u2estimate}
\begin{split}
 E_{s,\frac ns}(u_2, \Sigma) 
 &\aleq  E_{s,\frac ns} (u_1 \circ (\phi_1(\cdot -a_1)+a_1), B(5\rho_\ell))\\
 &\aleq E_{s,\frac ns} (u, B(\rho_\ell)). 
\end{split}
 \end{equation}

\textsc{Step 3.} Now we modify the map $u_2$ in such a way that it connects on an annulus the constant $b_1\in \n$ with another constant $b_2\in\n$. The newly obtained map is again constant outside a bigger ball $B(R_\ell)$.

Since $\n$ is connected we know that there is a Lipschitz continuous map such that $\gamma \colon [0,1]\to \n$, $\gamma(0) = b_2$, where $b_2\in\n$ is point that will be chosen later, $\gamma(1) = b_1$, and the Lipschitz constant satisfies $\lip(\gamma)\le 2 d_{\n}(b_1,b_2)$. Then, 
\[
 \gamma \circ \zeta_\ell \colon \Sigma \to \n,
\]
where $\zeta_\ell$ is taken from Lemma \ref{le:32} (we just replaced $\rho_\ell$ by $6\rho_\ell$). For this function we have
\[
 \gamma \circ \zeta_\ell = b_1 \text{ on } B(6\rho_\ell), \quad \gamma \circ \zeta_\ell (x) = b_2 \text{ on } \Sigma\setminus B(R_\ell).
\]
and
\begin{equation}\label{eq:Lipschitzgamma}
 E_{s,\frac ns}(\gamma \circ \zeta_\ell,\Sigma) \le \lip(\gamma)^\frac{n}{s} E_{s,\frac ns}(\zeta_\ell,\Sigma) \le 2 d_{\n}(b_1,b_2)^\frac{n}{s} E_{s,\frac ns}(\zeta_\ell,\Sigma) \aleq  E_{s,\frac ns}(\zeta_\ell,\Sigma),
\end{equation}
where the last constant depends only on the manifold $\n$. 

We note that for sufficiently large $\ell$ we have $B(6\rho_\ell)\subset B(R_\ell)$. We define $u_3\colon \Sigma \to \n$
\[
 u_3(x) \coloneqq \left\{ \begin{array}{ll}
                     u_2(x) & \text{ if } x\in B(5\rho_\ell)\\
                     \gamma \circ \zeta_\ell(x) &\text{ if } x\in \Sigma\setminus B(5\rho_\ell).
                    \end{array}
 \right.
\]
Then, by Lemma \ref{le:22} (applied with $A = \Sigma$, $\rho = 6 \rho_\ell$, $\eta = \frac56$)
\[
 E_{s,\frac ns}(u_3, \Sigma) \aleq  E_{s,\frac ns}(u_2, B(6\rho_\ell)) + E_{s,\frac ns}(\gamma \circ \zeta_\ell, \Sigma).
\]
Which, combining with \eqref{eq:Lipschitzgamma} and \eqref{eq:u2estimate}, gives
\begin{equation}\label{eq:u3estimate}
 E_{s,\frac ns}(u_3, \Sigma) \aleq E_{s,\frac ns}(u, B(\rho_\ell)) +  E_{s,\frac ns}(\zeta_\ell, \Sigma). 
\end{equation}

\begin{center}
\begin{tikzpicture}[line cap=round,line join=round,>=triangle 45,x=1cm,y=1cm]
\fill[fill=p3] (-5,-5) rectangle (5,5);
\filldraw[fill=p4, draw=black] (0,0) circle (3cm);
\filldraw[fill=p5, draw=black] (0,0) circle (1.5cm);
\draw[pattern=dots] (0,0) circle (1.5cm);
\draw[pattern=grid] (0,0) circle (1.25cm);
\filldraw[fill=p2, draw=black] (0,0) circle (1cm);
\filldraw[fill=p1, draw=black] (0,0) circle (0.25cm);
\draw (0,0) circle (3cm);
\draw (0,0) circle (0.25cm);
\draw (0,0) circle (1.5cm);
\node[anchor=east] at (-2.9,0) {$R_\ell$};
\node[anchor=east] at (-1.4,0) {$6\rho_\ell$};
\node[left] at (-0.15,0) {$\rho_\ell$};
% \draw (0,1.5) edge[<-, bend left] node[scale=1] {} (4,2); 
% \node[right,scale=0.75] at (4,2) {''buffer zone`` for $u_3$};
% \draw (1.25,0) edge[<-, bend left] node[scale=1] {} (4,1); 
% \node[right,scale=0.75] at (4,1) {''buffer zone`` for $u_2$};
\node[below, scale=0.9] at (0,-4) {$\Sigma$};
\filldraw[fill=p1, draw=black] (6.25,-1) rectangle (6.5,-1.25);
\node[right,scale=1] at (6.5,-1.125) {$u$};
\filldraw[fill=p2, draw=black] (6.25,-1.5) rectangle (6.5,-1.75);
\node[right,scale=1] at (6.5,-1.625) {connection from $u$ to $b_1$};
\filldraw[fill=p5, draw=black] (6.25,-2) rectangle (6.5,-2.25);
\node[right,scale=1] at (6.5,-2.125) {$b_1$};
\filldraw[pattern=grid, draw=black] (6.25,-2.5) rectangle (6.5,-2.75);
\node[right,scale=1] at (6.5,-2.625) {''buffer zone`` for $u_3$};
\filldraw[pattern=dots, draw=black] (6.25,-3) rectangle (6.5,-3.25);
\node[right,scale=1] at (6.5,-3.125) {''buffer zone`` for $u_2$};
\filldraw[fill=p4, draw=black] (6.25,-3.5) rectangle (6.5,-3.75);
\node[right,scale=1] at (6.5,-3.625) {connection from $b_1$ to $b_2$};
\filldraw[fill=p3, draw=black] (6.25,-4) rectangle (6.5,-4.25);
\node[right,scale=1] at (6.5,-4.125) {$b_2$};
\node[below] at (0,-5) {The domain of the  map $u_3$}; 
\end{tikzpicture}
\end{center}

\textsc{Step 4. } 
Let $v\in W^{s,\frac ns}(\Sigma,\n)$ be any map such that $v\sim u$. We will modify $v$ in such a way that we will be able to compare the energy of the modified $v$ with $u$.

Let $\phi_2 \colon B(9 R_\ell) \to B(9 R_\ell)$ be a Lipschitz continuous function, such that $\phi_2(x) = x$ if $|x|\ge 5 R_\ell$, $\phi_2(x)=0$ if $|x|\le 3 R_\ell$. Then by Lemma \ref{le:21} with $\rho = 8 R_\ell$, $\lambda = \frac{10}{8}$, $\eta = \frac 18$, we obtain an existence of a point $a_2 \in B_{R_\ell}$ such that
\[
 E_{s, \frac ns} (v \circ (\phi_2(\cdot - a_2) +a_2), B_{8 R_\ell}) \le C E_{s,\frac ns} (v, B_{10 R_\ell}).
\]
We also have
\[
 \phi_2(x - a_2) +a_2 = \left\{ \begin{array}{ll}
                                 x & \text{ if } |x|\ge 6 R_\ell,\\
                                 a_2 & \text{ if } |x| \le 2 R_\ell.
                                \end{array}
 \right.
\]
Now we chose the point $b_2$ from Step 3 to be $b_2 \coloneqq v(a_2)$. Thus, 
\[
 v(\phi_2(x - a_2) +a_2) = \left\{ \begin{array}{ll}
                                 v(x) & \text{ if } |x|\ge 6 R_\ell,\\
                                 b_2 & \text{ if } |x| \le 2 R_\ell.
                                \end{array}
 \right.
\]
Finally, we define
\[
 \widetilde{v}_\ell(x) = \left \{ \begin{array}{ll}
                  v(x) & |x| \ge 8 R_\ell,\\
                  v \circ (\phi_2(\cdot - a_2) +a_2) & 2 R_\ell \le |x| \le 8 R_\ell, \\
                  u_3(x) & |x|\le 2 R_\ell.
                 \end{array}
 \right.
\]
\begin{center}
\begin{tikzpicture}[line cap=round,line join=round,>=triangle 45,x=1cm,y=1cm]
\draw[pattern=north west lines] (0,0) circle (4cm);
\filldraw[fill=white, draw=black] (0,0) circle (0.5cm);
\filldraw[fill=white, draw=white] (-0.55,-0.2) rectangle (-1.23,0.2);
\node[anchor=east] at (-0.4,0) {${6R_\ell}$};
\node[left] at (-3.9,0) {$\sqrt{R_{\ell}}$};
\draw (4,0) edge[<-] node[scale=1] {} (5,0); 
\node[right] at (5,0) {first ''buffer zone`` for $\widetilde v_\ell$};
\end{tikzpicture}
\end{center}
We apply Lemma \ref{le:22} with $A=\Sigma$, $\rho = \sqrt{R_\ell}$, and $\eta = 6\sqrt{R_\ell}$, for sufficiently large $\ell$ we know $B(6R_\ell)\subset B(\sqrt{R_\ell})$ to obtain
\begin{equation}\label{eq:vtildefirstlem22}
\begin{split}
 E_{s,\frac{n}{s}}(\widetilde v_\ell, \Sigma) &\le \brac{1+\frac{C_1}{(1-6\sqrt{R_\ell})^{n+1}}}E_{s,\frac{n}{s}}(\widetilde v_\ell, B(\sqrt{R_\ell}))\\
 &\quad + \brac{1+\frac{C_1(6\sqrt{R_\ell})^n}{1-6\sqrt{R_\ell}}}E_{s,\frac{n}{s}}(\widetilde v_\ell, \Sigma\setminus B(6R_\ell)).
 \end{split}
\end{equation}
We note that $\widetilde v_\ell = v$ for $x\in \Sigma\setminus B(6R_\ell)$, so $E_{s,\frac{n}{s}}(\widetilde v_\ell, \Sigma\setminus B(6R_\ell))= E_{s,\frac{n}{s}}(v, \Sigma\setminus B(6R_\ell))$.

Next, we apply twice again Lemma \ref{le:22} to deal with the term $E_{s,\frac{n}{s}}(\widetilde v_\ell, B(\sqrt{R_\ell}))$. For the first application we take $A = B(\sqrt{R_\ell})$, $\rho = 8 R_\ell$, $\eta = \frac 34$ and  for the second $A = B(8 R_\ell)$, $\rho = 2 R_\ell$, $\eta = \frac12$,  we obtain
\begin{equation}\label{eq:vtildesecondthirdlem22}
\begin{split}
 E_{s,\frac{n}{s}}(\widetilde v_\ell, B(\sqrt{R_\ell})) 
 &\aleq  E_{s, \frac ns} (\widetilde v_\ell, B(8 R_\ell)) + E_{s,\frac ns}(\widetilde v_\ell, B(\sqrt{R_\ell}) \setminus B(6 R_\ell)) \\
 &\aleq  E_{s,\frac ns} (\widetilde v_\ell, B(2 R_\ell)) +  E_{s,\frac ns}(\widetilde v_\ell, B(8 R_\ell) \setminus B(R_\ell)) + E_{s,\frac ns}(\widetilde v_\ell, B(\sqrt{R_\ell})\setminus B(6 R_\ell)).   
\end{split}
 \end{equation}

\begin{center}
\begin{tikzpicture}[line cap=round,line join=round,>=triangle 45,x=1cm,y=1cm]
\draw[pattern=north west lines, pattern color=blue] (0,0) circle (4cm);
\filldraw[fill=white, draw=black] (0,0) circle (3cm);
\draw[pattern=dots, pattern color=red] (0,0) circle (1cm);
\filldraw[fill=white, draw=black] (0,0) circle (0.5cm);
\draw (0,0) circle (0.1cm);
%\node[anchor=east] at (-4.9,0) {$10 R_\ell$};
\node[anchor=east] at (-3.9,0) {$8R_\ell$};
\filldraw[fill=white, draw=white] (-3.1,-0.2) rectangle (-3.75,0.2);
\node[anchor=east] at (-2.9,0) {${6R_\ell}$};
\draw (0.1,0) edge[<-] (6,-3);
\node[right] at (6,-3) {$B_{\rho_\ell}$};
\draw (0,1) edge[<-, bend left] node[scale=1] {} (6,2); 
%\node[right,scale=0.75] at (4,2) {''buffer zone`` for $u_3$};
\draw (4,0) edge[<-, bend left] node[scale=1] {} (6,2); 
\node[right] at (6,2) {''buffer zones`` for $\widetilde v_\ell$};
\node[below] at (0,-5) {Domain of the map $\widetilde v_\ell$};
%\draw (0,0) circle (5cm);
\node[anchor=east] at (-0.4,0) {$R_\ell$};
\node[anchor=east] at (-0.9,0) {$2R_\ell$};
\end{tikzpicture}
\end{center}

Now, we note that $\widetilde v_\ell = v$ on $B(\sqrt{R_\ell})\setminus B(6 R_\ell)$, $\widetilde v_\ell = v \circ (\phi_2(\cdot - a_2)+a_2)$ on $B(8 R_\ell)\setminus B(R_\ell)$, and $\widetilde v_\ell = u_3$ on $B(R_\ell)$. Thus, 
\[
\begin{split}
E_{s,\frac ns} (\widetilde v_\ell, B(2 R_\ell)) &= E_{s,\frac ns} (u_3, B(2 R_\ell)) \\
E_{s,\frac ns}(\widetilde v_\ell, B(8 R_\ell) \setminus B(R_\ell)) &= E_{s,\frac ns}(v \circ (\phi_2(\cdot - a_2)+a_2), B(8 R_\ell) \setminus B(R_\ell))\\
E_{s,\frac ns}(\widetilde v_\ell, B(\sqrt{R_\ell})\setminus B(6 R_\ell)) &= E_{s,\frac ns}(v, B(\sqrt{R_\ell})\setminus B(6 R_\ell)). 
\end{split}
\]
Recall, that from Step 3, inequality \eqref{eq:u3estimate}, we know that
\begin{equation}\label{eq:vtildeu3}
 E_{s,\frac ns} (u_3, B(2 R_\ell)) \le E_{s,\frac ns} (u_3, \Sigma) \aleq  E_{s,\frac ns}(u, B(\rho_\ell)) +  E_{s,\frac ns}(\zeta_\ell, \Sigma). 
\end{equation}
We also have
\begin{equation}\label{eq:vtildemixed}
 E_{s,\frac ns}(v \circ (\phi_2(\cdot - a_2)+a_2) \aleq E_{s,\frac{n}{s}}(v, B(10 R_\ell)).
\end{equation}
Combining \eqref{eq:vtildefirstlem22}, \eqref{eq:vtildesecondthirdlem22}, \eqref{eq:vtildeu3}, and \eqref{eq:vtildemixed}, we get
\begin{equation}\label{eq:vtildeestimate}
\begin{split}
 E_{s,\frac ns} (\widetilde v_\ell, \Sigma) 
 &\le \brac{1+\frac{C_1(6\sqrt{R_\ell})^n}{1-6\sqrt{R_\ell}}}E_{s,\frac{n}{s}}(v, \Sigma\setminus B(6R_\ell))\\
 &\quad + C_2\brac{ E_{s,\frac ns}(u, B_{\rho_\ell}) +  E_{s,\frac ns}(\zeta_\ell, \Sigma) + E_{s,\frac{n}{s}}(v, B(10 R_\ell)) + E_{s,\frac ns}(v, B_{\sqrt{R_\ell}}\setminus B(6 R_\ell))}  
\end{split}
 \end{equation}
for a constant $C_2$ independent of $v$, $u$, $\ell$. 

\textsc{Step 5.} The only thing left to prove is that the map $\widetilde v_\ell$ is a good comparison map. We immediately verify that $\widetilde v_\ell \equiv u$ on $B(\rho_\ell)$. Finally, to show that $\widetilde v_\ell \sim u$ we recall that $v\sim u$ and thus it is enough to show that $\widetilde v_\ell \sim v$. We have
\[
 (v-\widetilde v)(x) = \left\{ \begin{array}{ll}
                                0 & \text{ if } |x|\ge 8 R_\ell\\
                                (v-\widetilde v)(x) & \text{ if } |x| \le 8 R_\ell.
                               \end{array}
\right.
\]
Thus, by Lemma \ref{le:22} we get
\[
 E_{s,\frac ns}(v-\widetilde v_\ell, \Sigma) \aleq E_{s,\frac ns}(v-\widetilde v_\ell, B(8 R_\ell)) \aleq E_{s,\frac ns}(v, B(8 R_\ell)) + E_{s,\frac ns}(\widetilde v_\ell, B(8 R_\ell)).  
\]
By taking $\ell$ large enough we can ensure, by the absolute continuity of the integral that the latter one is smaller than $\eps$, where $\eps$ is taken from Lemma \ref{la:samehomotopy}. Similarly, since $v$ and $\widetilde v$ differ only on a small set we verify that $\|v-\widetilde v_\ell\|_{L^1(\Sigma)}\ll\eps$ for sufficiently large $\ell$. Thus, from Lemma \ref{la:samehomotopy} we deduce that $\widetilde v_\ell \sim v$.

Combining the minimality outside of a point of $u$ with  with \eqref{eq:vtildeestimate} we get
\begin{equation}\label{eq:ucomparedtovtilde}
\begin{split}
 E_{s,\frac ns}(u,\Sigma) &\le E_{s,\frac ns}(\widetilde v_\ell,\Sigma) \\
 &\le \brac{1+\frac{C_1(6\sqrt{R_\ell})^n}{1-6\sqrt{R_\ell}}}E_{s,\frac{n}{s}}(v, \Sigma\setminus B(6R_\ell))\\
 &\quad + C_2\brac{ E_{s,\frac ns}(u, B(\rho_\ell)) +  E_{s,\frac ns}(\zeta_\ell, \Sigma) + E_{s,\frac{n}{s}}(v, B(10 R_\ell)) + E_{s,\frac ns}(v, B(\sqrt{R_\ell})\setminus B(6 R_\ell))}.
\end{split}
 \end{equation}
We observe that as $\ell\to\infty$ we get $\brac{1+\frac{C(6\sqrt{R_\ell})^n}{1-6\sqrt{R_\ell}}} \to 1$ and by the absolute continuity of the integral, since $B(\rho_\ell),\, B(10R_\ell),\,B(\sqrt{R_\ell})\setminus B(6R_\ell)$ shrink to $\{0\}$, we get as $\ell\to\infty$
\[
 E_{s,\frac ns}(u, B(\rho_\ell)) + E_{s,\frac{n}{s}}(v, B(10 R_\ell)) + E_{s,\frac ns}(v, B(\sqrt{R_\ell})\setminus B(6 R_\ell)) \to 0
\]
Finally, by Lemma \ref{le:32} we have
\[
 E_{s,\frac ns}(\zeta_\ell, \Sigma) \to 0.
\] 
Thus, passing with $\ell \to \infty$ in \eqref{eq:ucomparedtovtilde} we get
\[
 E_{s,\frac ns}(u,\Sigma) \le E_{s,\frac ns} (v,\Sigma).
\]
Thus, we can conclude that $u$ is minimizing in all of $\Sigma$ among all maps in the same homotopy class.
\end{proof}

% As strong convergence outside of finitely many points implies that the limiting map is locally a minimizer outside of the singular set we obtain as a corollary, combining with the regularity theorems the following. 

As a corollary of \Cref{th:strongconvoutsideofpoints}, \Cref{th:removabilityminimizing} and then \Cref{th:reghomo} we obtain

\begin{theorem}\label{th:corollaryremovability}
There exists $s_0 > s$ such that the following holds.

Assume that $u_t \in W^{t,\frac{n}{s}}(\Sigma,\n)$ is a sequence of minimizers in a homotopy class $X$ that converges weakly to $u_s \in W^{s,\frac{n}{s}}(\Sigma,\n)$ in the $W^{s,\frac{n}{s}}$-topology.
% and strongly in $W^{s_0,\frac{n}{s}}_{loc}(\Sigma \setminus \{x_1,\ldots,x_K\})$
% for finitely many points $x_1,\ldots,x_K$. 
Then $u_s \in W^{s_0,\frac{n}{s}}(\Sigma,\n)$.
\end{theorem}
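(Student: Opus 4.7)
The plan is to combine the three main results of the preceding sections in sequence: first localize the loss of compactness to finitely many points via \Cref{th:strongconvoutsideofpoints}, then use \Cref{th:removabilityminimizing} to upgrade the punctured minimality to genuine minimality on $\Sigma$, and finally apply the regularity \Cref{th:reghomo}.

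First, pass to a subsequence (which we still denote by $u_t$) and apply \Cref{th:strongconvoutsideofpoints}. This produces some $\bar s_0 > s$, a decreasing sequence $t_j \to s^+$, and a finite set $A = \{x_1, \ldots, x_K\} \subset \Sigma$ such that $u_{t_j} \to u_s$ locally strongly in $W^{\bar s_0, n/s}(\Sigma \setminus A)$, and, crucially, $u_s$ is an $E_{s,n/s}$-minimizer within its homotopy class away from $A$. That is, for any $v \in W^{s,n/s}(\Sigma,\n)$ which coincides with $u_s$ in a neighborhood of $A$ and is homotopic to $u_s$ (in the sense of \Cref{def:homotopy}), one has $E_{s,n/s}(u_s,\Sigma) \le E_{s,n/s}(v,\Sigma)$.

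Second, I would apply \Cref{th:removabilityminimizing} at each point $x_k \in A$ separately. Pick geodesic balls $B(x_k, R_k) \subset \Sigma$ so small that they are pairwise disjoint; the minimality-away-from-$A$ property of $u_s$ implies in particular that $u_s$ is minimizing in $B(x_k, R_k)$ away from the single point $x_k$, which is exactly the hypothesis of \Cref{th:removabilityminimizing}. That theorem then removes the singularity at $x_k$ and yields that $u_s$ is a genuine $E_{s,n/s}$-minimizer within its homotopy class in $B(x_k, R_k)$. Doing this for each $k = 1, \ldots, K$ shows that $u_s$ is a minimizer in any ball small enough to contain at most one point of $A$; since outside of $A$ we already have local minimality, we conclude that $u_s$ is a local minimizer on all of $\Sigma$.

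Third, I would apply the regularity \Cref{th:reghomo} (with $t = s$) to $u_s$. Let $\eps$ and $s_0 > s$ be as in that theorem. By absolute continuity of the integral and the fact that $u_s \in W^{s,n/s}(\Sigma,\n)$, every point $x \in \Sigma$ possesses a geodesic ball $B(x, R_x)$ on which $[u_s]_{W^{s,n/s}(B(x,R_x))} < \eps$. On each such ball \Cref{th:reghomo} applies and yields $u_s \in W^{s_0,n/s}(B(x, R_x/2))$ together with the quantitative estimate \eqref{eq:threg:estimate}. Covering the compact manifold $\Sigma$ by finitely many such half-balls gives $u_s \in W^{s_0,n/s}(\Sigma,\n)$, which is the desired conclusion. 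The main obstacle in this strategy is the careful verification of the hypotheses of \Cref{th:removabilityminimizing} at each $x_k$ --- in particular that the punctured minimality we obtain from \Cref{th:strongconvoutsideofpoints} (a single global statement with $K$ removed points) implies the local punctured minimality at each individual $x_k$ (with only that one point removed); once this is in place, the remaining steps are clean applications of previously established results.
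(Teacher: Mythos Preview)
Your proposal is correct and follows exactly the approach the paper indicates: the paper's proof is merely the sentence ``As a corollary of \Cref{th:strongconvoutsideofpoints}, \Cref{th:removabilityminimizing} and then \Cref{th:reghomo} we obtain \ldots'', and you have spelled out precisely this three-step argument. The subtlety you flag --- that the conclusion of \Cref{th:strongconvoutsideofpoints} is minimality away from the full set $A$, whereas \Cref{th:removabilityminimizing} is formulated for a single puncture --- is handled correctly by your choice of disjoint balls $B(x_k,R_k)$ each containing only one point of $A$, so that any competitor fixed near $x_k$ and outside $B(x_k,R_k)$ is automatically fixed near all of $A$.
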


We finish this section with a remark. We can remove discrete points in the equation, i.e., once we know that a map satisfies the equation of $W^{s,\frac ns}$-harmonic maps in $\Sigma \setminus A$, where $A$ is a set consisting of finitely many points, we know, that the equation is satisfied in $\Sigma$. Unfortunately, the lack of regularity theory in general does not allow us to conclude that the map is regular everywhere. But, in in view of \cite{S15,MS18,S16} if we have $W^{s,\frac{n}{s}}$-harmonic maps in $\Sigma \setminus A$ which maps into a sphere or a compact Lie group, or in view of \cite{DaLio-Riviere-1Dmfd} if we have $W^{\frac12, 2}$-harmonic maps on a line, we have regularity in all of $\Sigma$.

\begin{proposition}\label{pr:removepde}
Let $A$ be a finite set in $\Sigma$,  and let $u \in W^{s,\frac{n}{s}}$ be a $W^{s,\frac{n}{s}}$-harmonic map outside of $A$, i.e.,
\begin{equation}\label{eq:fractionalharmonicmapequationoutsideofpoints}
 \int_{\Sigma}  \int_{\Sigma} \frac{|u(x)-u(y)|^{\frac{n}{s}-2}((u(x)-u(y))\, \brac{\Pi(u(x)) \varphi(x)-\Pi(u(y)) \varphi(y)}}{|x-y|^{2n}}\dy \dx = 0 \quad \forall \varphi \in C_c^\infty(\Sigma \setminus A),
\end{equation}
where $\Pi(u)$ is the orthogonal projection onto the tangent space of $T_u \n$ for $u\in\n$, see \Cref{la:tubular}.

Then $u$ is a $W^{s,\frac{n}{s}}$-harmonic map in all of $\Sigma$, that is
\[
 \int_{\Sigma}  \int_{\Sigma} \frac{|u(x)-u(y)|^{\frac{n}{s}-2}((u(x)-u(y))\, \brac{\Pi(u(x)) \varphi(x)-\Pi(u(y)) \varphi(y)}}{|x-y|^{2n}}\dx\dy = 0 \quad \forall \varphi \in C_c^\infty(\Sigma ) . 
\]
\end{proposition}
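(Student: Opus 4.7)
The plan is a standard capacity (cutoff) argument: approximate an arbitrary test function $\varphi \in C_c^\infty(\Sigma)$ by test functions supported in $\Sigma \setminus A$, exploiting that discrete points have vanishing $W^{s,\frac{n}{s}}$-capacity (which is precisely the content of \Cref{le:32}). Define the bilinear form
\[
T(\varphi) := \int_{\Sigma}\int_{\Sigma} \frac{|u(x)-u(y)|^{\frac{n}{s}-2}(u(x)-u(y))\cdot \brac{\Pi(u(x))\varphi(x)-\Pi(u(y))\varphi(y)}}{|x-y|^{2n}}\dx\dy.
\]
Since $\mathcal{N}$ is smooth and compact, $\Pi\in C^\infty(\mathcal{N}, \R^{M\times M})$ is Lipschitz, hence $\Pi\circ u \in W^{s,\frac{n}{s}}(\Sigma,\R^{M\times M})$. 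Hölder's inequality then gives $|T(\varphi)| \le C [u]_{W^{s,\frac{n}{s}}}^{\frac{n}{s}-1}[\Pi(u)\varphi]_{W^{s,\frac{n}{s}}}$, so $T$ is well-defined on any bounded $W^{s,\frac{n}{s}}$-test function.

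Let $\{\zeta_\ell\}\subset C_c^\infty(\Sigma,[0,1])$ be the cutoffs given by \Cref{le:32} (summed over the finitely many points of $A$), so $\zeta_\ell\equiv 1$ on a neighborhood of $A$, $\supp\zeta_\ell$ is contained in $|A|$ balls of radius $R_\ell\to 0$, and $E_{s,\frac{n}{s}}(\zeta_\ell,\Sigma)\to 0$. Then $(1-\zeta_\ell)\varphi \in C_c^\infty(\Sigma\setminus A)$ for every $\ell$, so the hypothesis \eqref{eq:fractionalharmonicmapequationoutsideofpoints} yields $T((1-\zeta_\ell)\varphi)=0$, and by linearity
\[
T(\varphi) = T(\zeta_\ell \varphi).
\]
Thus it suffices to show $T(\zeta_\ell\varphi)\to 0$, which by the Hölder bound above reduces to proving
\[
[\Pi(u)\,\zeta_\ell\,\varphi]_{W^{s,\frac{n}{s}}(\Sigma)} \xrightarrow{\ell\to\infty} 0.
\]

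To establish this, apply the ``fractional Leibniz'' splitting
\[
\Pi(u(x))\zeta_\ell(x)\varphi(x)-\Pi(u(y))\zeta_\ell(y)\varphi(y) = \bigl(\Pi(u(x))-\Pi(u(y))\bigr)\zeta_\ell(x)\varphi(x) + \Pi(u(y))\bigl(\zeta_\ell(x)\varphi(x)-\zeta_\ell(y)\varphi(y)\bigr),
\]
followed by a second splitting of the product $\zeta_\ell\varphi$. Using $\|\Pi\|_\infty+\|\varphi\|_\infty+\|\zeta_\ell\|_\infty\lesssim 1$, the Gagliardo seminorm on the left-hand side is bounded (up to constants) by the sum of three pieces:
\[
(i)\ \int_{\supp\zeta_\ell}\!\!\int_{\Sigma}\frac{|\Pi(u(x))-\Pi(u(y))|^{\frac{n}{s}}}{|x-y|^{2n}}\dy\dx, \quad (ii)\ [\zeta_\ell]_{W^{s,\frac{n}{s}}(\Sigma)}^{\frac{n}{s}}, \quad (iii)\ \int_{\supp\zeta_\ell}\!\!\int_{\Sigma} \frac{|\varphi(x)-\varphi(y)|^{\frac{n}{s}}}{|x-y|^{2n}}\dy\dx.
\]
Piece $(ii)$ vanishes by \Cref{le:32}. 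Pieces $(i)$ and $(iii)$ are integrals of fixed $L^1(\Sigma)$-functions over the shrinking set $\supp\zeta_\ell$ (of Lebesgue measure $\to 0$), so they vanish by absolute continuity of the integral. This completes the argument.

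I do not anticipate a serious obstacle: the whole statement is essentially the fact that a finite set has zero $W^{s,\frac{n}{s}}$-capacity in dimension $n$, combined with a fractional product rule; no subtle passage to the limit in the nonlinear term is needed because we never touch $u$ itself, only the smooth multiplier $\Pi(u)\varphi$.
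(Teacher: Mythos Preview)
Your proposal is correct and follows essentially the same route as the paper: both use the zero-capacity cutoffs $\zeta_\ell$ from \Cref{le:32}, write $T(\varphi)=T(\zeta_\ell\varphi)$, and show the right-hand side vanishes via a Leibniz splitting together with absolute continuity of the integral. The only cosmetic difference is that the paper splits directly inside the bilinear form (peeling off $\zeta_\ell$ from $\Pi(u)\phi$ to obtain two pieces, then applies H\"older to each), whereas you first apply H\"older globally to reduce to $[\Pi(u)\zeta_\ell\varphi]_{W^{s,n/s}}\to 0$ and then perform a three-factor Leibniz split; the ingredients and the limits taken are identical.
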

\begin{proof}
For simplicity assume that $A = \{x_0\}$. Let $\varphi \in C^\infty_c(\Sigma)$ and let $\zeta_\ell\in C_c^\infty(B_{R_\ell})$ be as in Lemma~\ref{le:32}, that is
\[
 \zeta_\ell \equiv 1 \text{ on } B_{\rho_\ell}(x_0) \quad \text{ and } \quad [\zeta_\ell]_{W^{s,\frac ns}(\Sigma)} \to 0 \text{ as } \ell\to\infty.
\]
for a sequence $0<\rho_\ell<R_\ell\to 0$ as $\ell\to\infty$.

Thus, $\phi_\ell = \phi(1-\zeta_\ell)\in C_c^\infty(\Sigma\setminus\{x_0\})$ is an admissible test function and from \eqref{eq:fractionalharmonicmapequationoutsideofpoints} we get
\[
\begin{split}
 &\int_{\Sigma}  \int_{\Sigma} \frac{|u(x)-u(y)|^{\frac{n}{s}-2}((u(x)-u(y))\, \brac{\Pi(u(x)) \phi(x)-\Pi(u(y)) \phi(y)}}{|x-y|^{2n}}\dx\dy\\
 &=
 \int_{\Sigma} \int_{\Sigma} \frac{|u(x)-u(y)|^{\frac{n}{s}-2}((u(x)-u(y))\, \brac{\Pi(u(x)) \phi(x) \zeta_\ell(x)-\Pi(u(y)) \phi(y)\zeta_\ell(y)}}{|x-y|^{2n}}\dx\dy.
\end{split}
 \]
The latter one can be estimated in the following way.
\[
\begin{split}
 &\int_{\Sigma} \int_{\Sigma} \frac{|u(x)-u(y)|^{\frac{n}{s}-2}((u(x)-u(y))\, \brac{\Pi(u(x)) \phi(x) \zeta_\ell(x)-\Pi(u(y)) \phi(y)\zeta_\ell(y)}}{|x-y|^{2n}}\dx\dy\\ 
 &\le  \int_{\Sigma} \int_{\Sigma} \frac{|u(x)-u(y)|^{\frac{n}{s}-2}((u(x)-u(y))\, \Pi(u(x)) \phi(x)\brac{ \zeta_\ell(x)-\zeta_\ell(y)}}{|x-y|^{2n}}\dx\dy\\
 &\quad + \int_{\Sigma} \int_{\Sigma} \frac{|u(x)-u(y)|^{\frac{n}{s}-2}((u(x)-u(y))\, {\brac{\Pi(u(x))\phi(x)-\Pi(u(y))\phi(y)}\zeta_\ell(y)}}{|x-y|^{2n}}\dx\dy.
\end{split}
 \]

As for the first term we have by H\"older's inequality
\[
\begin{split}
 & \int_{\Sigma} \int_{\Sigma} \frac{|u(x)-u(y)|^{\frac{n}{s}-2}((u(x)-u(y))\, \Pi(u(x)) \phi(x)\brac{ \zeta_\ell(x)-\zeta_\ell(y)}}{|x-y|^{2n}}\dx\dy\\
 &\le  \brac{\int_{\Sigma} \int_{\Sigma} \frac{|u(x)-u(y)|^{\frac{n}{s}}}{|x-y|^{2n}}\dx \dy}^{\frac{n-s}{n}}[\zeta_\ell]_{W^{s,\frac{n}{s}}(\Sigma)} \xrightarrow{\ell\to\infty} 0.
\end{split}
 \]
As for, the second term we have
\[
\begin{split}
 &\int_{\Sigma} \int_{\Sigma} \frac{|u(x)-u(y)|^{\frac{n}{s}-2}((u(x)-u(y))\, {\brac{\Pi(u(x))\phi(x)-\Pi(u(y))\phi(y)}\zeta_\ell(y)}}{|x-y|^{2n}}\dx\dy\\
&\le \int_{B_{R_\ell}} \int_{\Sigma} \frac{|u(x)-u(y)|^{\frac{n}{s}-1}\abs{\Pi(u(x))\phi(x)-\Pi(u(y))\phi(y)}}{|x-y|^{2n}}\dx\dy \xrightarrow{\ell\to\infty} 0,
\end{split}
 \]
 by the absolute continuity of the integral. 
 
Hence, $u$ is a $W^{s,\frac{n}{s}}$-harmonic map, as
\[
\begin{split}
 &\int_{\Sigma} \int_{\Sigma} \frac{|u(x)-u(y)|^{\frac{n}{s}-2}((u(x)-u(y))\, \brac{\Pi(u(x)) \phi(x) -\Pi(u(y)) \phi(y)}}{|x-y|^{2n}}\dx\dy = 0
\end{split}
 \]
 for any $\phi\in C^\infty_c(\Sigma)$.
\end{proof}

\section{Balanced energy estimate for the non-scaling invariant norms}\label{s:balancing}
In this section we show the main advantage of approximating $W^{s,\frac{n}{s}}$-minimizers by $W^{t,\frac{n}{s}}$-minimizers. It does not avoid energy concentration in a single point, but energy cannot concentrate only in one point and vanish everywhere else. In some sense, the energy needs to be balanced. 

We will use \Cref{th:fractionalestimatesmalldiskbyremaining} {\it stellvertretend} for \cite[Lemma~5.3]{Sucks1} in our argument.

\begin{theorem}\label{th:fractionalestimatesmalldiskbyremaining}
Let $0 < s < s_0 < 1$ and $\rho_0 \in (0,\sqrt{\frac{4}{5}})$. There exists a constant $C=C(s,s_0,\rho_0)$ such that the following holds.

For any $t \in (s,s_0]$ let $u_t \in W^{t,\frac{n}{s}}(\S^n,\n)$ be a minimizing map in its own homotopy group. Then for any $y_0\in\S^n$ 
\begin{equation}\label{eq:noscaleinvarianceestimate}
 \int_{D(y_0,\rho)} \int_{\S^n} \frac{|u_t(x) - u_t(y)|^{\frac{n}{s}}}{|x-y|^{n+\frac{tn}{s}}} \dx\dy \le C\, \rho^{-n (\frac{t}{s}-1)}  \int_{\S^n \setminus D(y_0,\rho)}\int_{\S^n} \frac{|u_t(x) - u_t(y)|^{{\frac{n}{s}}}}{|x-y|^{n+\frac{tn}{s}}} \dx\dy. 
\end{equation}
Here, $D(a,r)\coloneqq B(a,r)\cap \S^n$ is the intersection of a ball centered at $a\in\S^n$ of radius $r$ intersected with the sphere. 
\end{theorem}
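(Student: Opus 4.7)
The plan is to exploit that the $W^{s,\frac{n}{s}}$-energy is conformally invariant while the $W^{t,\frac{n}{s}}$-energy, for $t>s$, picks up a computable multiplicative weight under a M\"obius automorphism of $\S^n$, and then to feed a carefully chosen M\"obius reparametrization of $u_t$ into the minimality inequality. Concretely, if $\Phi\colon\S^n\to\S^n$ is any orientation-preserving M\"obius map then $\Phi$ is homotopic to $\id$, so $v_t:=u_t\circ\Phi^{-1}$ is in the same homotopy class as $u_t$, and minimality gives $E_{t,\frac{n}{s}}(u_t)\le E_{t,\frac{n}{s}}(v_t)$. A $\Phi$ suitably concentrated near $y_0$ will make this inequality decode into \eqref{eq:noscaleinvarianceestimate}.

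\textbf{Setup.} Using the stereographic projection $\pi=\pi_{-y_0}$ from the antipode of $y_0$, a direct computation gives $\pi(\partial D(y_0,\rho))=\{|z|=r\}$ with $r=r(\rho):=\rho/\sqrt{4-\rho^2}$; the assumption $\rho_0<\sqrt{4/5}$ is precisely $r(\rho_0)<\tfrac12$. Define $\Phi\colon\S^n\to\S^n$ by requiring $\pi\circ\Phi\circ\pi^{-1}$ to be the dilation $z\mapsto z/r$. Then $\Phi$ is an orientation-preserving M\"obius map fixing $y_0$ and $-y_0$, with conformal factor
\[
c_\Phi(y)=\tfrac{1}{r}\cdot\tfrac{1+|z|^2}{1+|z|^2/r^2},\qquad z=\pi(y),
\]
monotone decreasing in $|z|$, with $c_\Phi(y_0)=1/r$ and $c_\Phi(-y_0)=r$. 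In particular the minimum of $c_\Phi$ on $\overline{D(y_0,\rho)}$ and the maximum on $\overline{\S^n\setminus D(y_0,\rho)}$ coincide at $\partial D(y_0,\rho)$ and both equal $(1+r^2)/(2r)$.

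\textbf{Key computation.} The classical M\"obius identity for the chordal distance on $\S^n$, $|\Phi(x)-\Phi(y)|=\sqrt{c_\Phi(x)c_\Phi(y)}\,|x-y|$, combined with the Jacobian $d(\Phi(x))=c_\Phi(x)^n\,dx$, gives after substitution
\[
E_{t,\frac{n}{s}}(u_t\circ\Phi^{-1})=\iint_{\S^n\times\S^n} F(x,y)\,(c_\Phi(x)c_\Phi(y))^{-\alpha}\,dx\,dy,\qquad\alpha:=\tfrac{n}{2}\!\left(\tfrac{t}{s}-1\right),
\]
where $F(x,y):=|u_t(x)-u_t(y)|^{\frac{n}{s}}/|x-y|^{n+t\frac{n}{s}}$. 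Write $D:=D(y_0,\rho)$ and set
\[
A:=\iint_{D\times D}F,\qquad B:=\iint_{D\times D^c}F,\qquad C:=\iint_{D^c\times D^c}F.
\]
The monotonicity of $c_\Phi$ yields the pointwise lower bounds
\[
c_\Phi(x)c_\Phi(y)\ \ge\ \bigl(\tfrac{1+r^2}{2r}\bigr)^{\!2}\ \text{on }D\times D,\quad\ \ge\ \tfrac{1+r^2}{2}\ \ge\ \tfrac12\ \text{on }D\times D^c,\quad\ \ge\ r^2\ \text{on }D^c\times D^c.
\]
Feeding these bounds, which yield $(c_\Phi(x)c_\Phi(y))^{-\alpha}\le 4^\alpha r^{2\alpha}$, $2^\alpha$, $r^{-2\alpha}$ respectively, into the minimality inequality $A+2B+C\le\iint F(c_\Phi c_\Phi)^{-\alpha}$ gives
\[
A+2B+C\ \le\ 4^\alpha r^{2\alpha}A\ +\ 2^{\alpha+1}B\ +\ r^{-2\alpha}C.
\]
Since $r\le r(\rho_0)<\tfrac12$ and $\alpha\le\tfrac{n}{2}(s_0/s-1)$, the coefficient $4^\alpha r^{2\alpha}$ is bounded away from $1$ by a constant depending only on $s,s_0,\rho_0$. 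Rearranging gives $A\aleq B+r^{-2\alpha}C$, and adding $B$ to both sides while using $r^{-2\alpha}\ge 1$ and $r^{-2\alpha}\aeq\rho^{-n(t/s-1)}$ yields
\[
\iint_{D\times\S^n}F\ =\ A+B\ \aleq\ \rho^{-n(t/s-1)}(B+C)\ =\ \rho^{-n(t/s-1)}\iint_{D^c\times\S^n}F,
\]
as claimed.

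\textbf{Main obstacle.} The decisive point is the uniform lower bound $c_\Phi(x)c_\Phi(y)\ge 1/2$ on the mixed region $D\times D^c$: without it the cross term $2B$ on the right-hand side of the minimality inequality would also acquire a factor of $\rho^{-2\alpha}$, and the rearrangement would break down. This bound is in turn exactly the constraint encoded by the M\"obius identity—the expansion of $\Phi$ near $y_0$ and its contraction near $-y_0$ must satisfy a specific multiplicative balance—and the only remaining freedom is to tune the dilation factor so that this balance is achieved precisely at $\partial D(y_0,\rho)$; this is what dictates our choice $\Phi=\pi^{-1}\circ(z\mapsto z/r(\rho))\circ\pi$.
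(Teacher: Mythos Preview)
Your approach is essentially the paper's: both use the dilation $z\mapsto z/r$ in stereographic coordinates as a competitor and exploit the explicit conformal weight $(c_\Phi(x)c_\Phi(y))^{-\alpha}$ that appears in $E_{t,\frac{n}{s}}$ under a M\"obius map. Your packaging via the chordal identity $|\Phi(x)-\Phi(y)|=\sqrt{c_\Phi(x)c_\Phi(y)}\,|x-y|$ is cleaner than the paper's explicit spherical-coordinate computation, and your bound $c_\Phi(x)c_\Phi(y)\ge\tfrac12$ on the mixed region is sharper than the paper's AM--GM splitting; but the substance is the same.

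There is, however, a genuine gap in your rearrangement step. You claim that $4^\alpha r^{2\alpha}=(2r)^{2\alpha}$ is bounded away from $1$ by a constant depending only on $s,s_0,\rho_0$. This is false: the theorem requires the constant $C$ to be \emph{uniform in $t\in(s,s_0]$}, and as $t\to s^+$ one has $\alpha=\tfrac{n}{2}(\tfrac{t}{s}-1)\to 0^+$, whence $(2r)^{2\alpha}\to 1$. So $1-(2r)^{2\alpha}$ is \emph{not} bounded below, and you cannot simply divide by it.

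The fix---which is exactly what the paper does---is to observe that the right-hand side coefficients $2^{\alpha+1}-2$ and $r^{-2\alpha}-1$ vanish at the same rate as $\alpha\to 0^+$, so the \emph{ratios}
\[
\frac{2(2^\alpha-1)}{1-(2r)^{2\alpha}}\quad\text{and}\quad \frac{r^{-2\alpha}-1}{1-(2r)^{2\alpha}}=r^{-2\alpha}\cdot\frac{1-r^{2\alpha}}{1-(2r)^{2\alpha}}
\]
extend continuously to $\alpha=0$ (with limits $\tfrac{\ln 2}{\ln(1/(2r))}$ and $\tfrac{\ln(1/r)}{\ln(1/(2r))}$ respectively) and are therefore bounded on $(0,\alpha_0]\times(0,r_0]$. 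The paper carries out precisely this boundary analysis of the ratio $C_{\lambda,t}$. Once you replace your one-line ``bounded away from $1$'' claim with this ratio argument, your proof is complete and correct.
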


The arguments for \Cref{th:fractionalestimatesmalldiskbyremaining} carry over to the $W^{1,p}$-case, and for future reference we record

\begin{theorem}\label{th:fractionalestimatesmalldiskbyremainingW1p}
Let $p_0 \in (n,\infty)$ and $\rho_0 \in (0,\sqrt{\frac{4}{5}})$. There exists a constant $C=C(n,p_0,\rho_0)$ and some $\sigma > 0$ such that the following holds.

For any $p \in (n,p_0]$ let $u_p \in W^{1,p}(\S^n,\n)$ be a minimizing map in its own homotopy group. Then for any $y_0\in\S^n$ 
\[
 \int_{D(y_0,\rho)}  |\nabla u_p|^p \dx \le C\, \rho^{-\sigma(p-n)}  \int_{\S^n \setminus D(y_0,\rho)} |\nabla u_p|^p \dx. 
\]
Here, $D(a,r)\coloneqq B(a,r)\cap \S^n$ is the intersection of a ball centered at $a\in\S^n$ of radius $r$ intersected with the sphere. 
\end{theorem}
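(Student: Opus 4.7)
The approach mirrors \Cref{th:fractionalestimatesmalldiskbyremaining} in the simpler local setting. The key observation is that $\int_{\S^n}|\nabla\,\cdot\,|^p$ is conformally invariant exactly when $p=n$, so for $p>n$ there is a definite imbalance under M\"obius rescaling that can be exploited against the minimality of $u_p$.

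Fix $y_0\in\S^n$ and let $\pi\colon\S^n\setminus\{-y_0\}\to\R^n$ be stereographic projection from $-y_0$ (so $\pi(y_0)=0$). For $\mu>0$ define the M\"obius dilation $\Phi_\mu \coloneqq \pi^{-1}\circ(\mu\,\mathrm{id})\circ\pi$, extended by $\Phi_\mu(-y_0)=-y_0$. Each $\Phi_\mu$ is a smooth orientation-preserving diffeomorphism smoothly isotopic to $\mathrm{id}_{\S^n}$ through $t\mapsto\Phi_{1+t(\mu-1)}$, $t\in[0,1]$. Since $W^{1,p}(\S^n,\n)\hookrightarrow C^0(\S^n,\n)$ for $p>n$, the competitor $u_p\circ\Phi_\mu$ is homotopic to $u_p$, and by minimality
\[
\int_{\S^n}|\nabla u_p|^p \le \int_{\S^n}|\nabla(u_p\circ\Phi_\mu)|^p.
\]
A direct computation in stereographic coordinates gives $\Phi_\mu^* g_{\S^n}=\kappa_\mu^2 g_{\S^n}$ with $\kappa_\mu(p)=\mu(1+|\pi(p)|^2)/(1+\mu^2|\pi(p)|^2)$, and the chain rule plus change of variables then yields
\[
\int_{\S^n}|\nabla(u_p\circ\Phi_\mu)|^p = \int_{\S^n}|\nabla u_p(y)|^p\,\omega_\mu(y)^{p-n}\,dy, \qquad \omega_\mu(y) \coloneqq \kappa_\mu(\Phi_\mu^{-1}(y)) = \frac{\mu^2+|\pi(y)|^2}{\mu(1+|\pi(y)|^2)}.
\]
For $\mu\in(0,1)$, $\omega_\mu$ is strictly increasing in $|\pi(y)|$, with $\omega_\mu(y_0)=\mu$, $\omega_\mu=1$ on $\{|\pi(y)|=\sqrt\mu\}$, and $\omega_\mu\to 1/\mu$ at $-y_0$.

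The choice $\mu=1/2$ is tailored to the hypothesis $\rho_0<\sqrt{4/5}$. Using the standard identity $|\pi(y)|^2=|y-y_0|^2/(4-|y-y_0|^2)$, we have $|\pi(y)|\le 1/2$ for every $y\in D(y_0,\rho_0)$, so $\omega_{1/2}(y)\le 4/5$ there, while globally $\omega_{1/2}\in[1/2,2]$ on $\S^n$. Rearranging the minimality inequality as
\[
\int_{D(y_0,\rho)}|\nabla u_p|^p\,(1-\omega_{1/2}^{p-n}) \le \int_{\S^n\setminus D(y_0,\rho)}|\nabla u_p|^p\,(\omega_{1/2}^{p-n}-1)
\]
and inserting the pointwise bounds $1-\omega_{1/2}^{p-n}\ge 1-(4/5)^{p-n}>0$ on $D(y_0,\rho)$ and $\omega_{1/2}^{p-n}-1\le 2^{p-n}-1$ on its complement produces
\[
\int_{D(y_0,\rho)}|\nabla u_p|^p \le \frac{2^{p-n}-1}{1-(4/5)^{p-n}}\int_{\S^n\setminus D(y_0,\rho)}|\nabla u_p|^p.
\]
The prefactor is continuous in $p\in[n,p_0]$, extending by L'H\^opital's rule to $\log 2/\log(5/4)$ at $p=n$, hence bounded by some $C=C(n,p_0,\rho_0)$. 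Since $\rho\le\rho_0<1$ gives $\rho^{-\sigma(p-n)}\ge 1$ for any $\sigma>0$, the claimed estimate follows (one may take for instance $\sigma=1$).

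The only delicate point I anticipate is the uniformity of the prefactor $(2^{p-n}-1)/(1-(4/5)^{p-n})$ as $p\to n^+$, where numerator and denominator both vanish to first order in $p-n$; this is an elementary L'H\^opital computation but must be written out explicitly so that the constant is uniform in $p\in(n,p_0]$ and $\rho\in(0,\rho_0]$. Everything else---the explicit M\"obius-rescaling formulas, the smooth isotopy to the identity, and the change-of-variables producing the factor $\omega_\mu^{p-n}$---is routine.
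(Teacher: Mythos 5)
Your proof is correct, and its engine is the same as the paper's: you compare $u_p$ with a M\"obius-dilated competitor and exploit that $\int_{\S^n}|\nabla\cdot|^p$ fails to be conformally invariant for $p>n$, which is exactly the mechanism of Theorem~\ref{th:fractionalestimatesmalldiskbyremaining} (whose argument the paper merely asserts ``carries over'' to the $W^{1,p}$ case, via Proposition~\ref{pr:rescaling}). The one substantive difference is the choice of dilation parameter. The paper couples $\lambda$ to $\rho$ (choosing $r_\lambda=\rho$, so that the region where the conformal weight is favourable is \emph{exactly} $D(y_0,\rho)$), and this coupling is what produces the degenerating factor $\rho^{-n(\frac{t}{s}-1)}$ in its constant. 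You instead fix $\mu=\tfrac12$ --- precisely what the hypothesis $\rho_0<\sqrt{4/5}$ permits, since via $|\pi(y)|^2=|y-y_0|^2/(4-|y-y_0|^2)$ it guarantees $\omega_{1/2}\le 4/5$ on all of $D(y_0,\rho_0)$ --- restrict the favourable part of the integral to the smaller disc $D(y_0,\rho)$, and discard the harmless (nonpositive) contribution of the intermediate annulus on the right-hand side. This buys a constant independent of $\rho$, i.e.\ you prove the estimate with no $\rho^{-\sigma(p-n)}$ loss at all, which implies the stated theorem for every $\sigma>0$ since $\rho\le\rho_0<1$; the same trick would in fact also remove the $\rho$-dependence from the fractional version. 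The only point that must be written out, as you note, is the uniform boundedness of $(2^{p-n}-1)/(1-(4/5)^{p-n})$ as $p\to n^+$, an elementary limit equal to $\log 2/\log(5/4)$. The supporting computations (the conformal factor $\kappa_\mu$, the change of variables producing the weight $\omega_\mu^{p-n}$, and the admissibility of the competitor via the isotopy $t\mapsto\Phi_{1+t(\mu-1)}$) all check out.
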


\begin{remark}
We are not aware of results similar to \Cref{th:fractionalestimatesmalldiskbyremainingW1p} or \Cref{th:fractionalestimatesmalldiskbyremaining} in the literature. However, it seems that a somewhat similar effect is underlying the arguments in the recent work by Lamm--Malchiodi--Micallef \cite{LMM19}.
\end{remark}

In order to prove Theorem \ref{th:fractionalestimatesmalldiskbyremaining} we will use the minimizing property of the mapping $u_t$ and compare its energy with a ''rescaled" version of $u_t$. In order to do so we will first change the coordinates into the spherical coordinates, then we will use the stereographic projection of the sphere and map the $n$-sphere to the hyperplane. Finally on the hyperplane we define the rescaling, which in the polar coordinates $(r,\omega)$, for $r>0,\ \omega\in\S^{n-1}$, on $\R^n$ is given simply by $r\mapsto \lambda r $, with a parameter $\lambda>1$.

As a quick motivation for using as the comparison map the rescaling we note that in the simple case, when we consider a minimizing map $v\in W^{t,\frac ns}(\R^n,\n)$ we get immediately by comparing with the rescaled map $v_\lambda\coloneqq v(\lambda \cdot)$ the following
\[
\begin{split}
 \int_{\R^n} \int_{\R^n} \frac{|v(x) - v(y)|^\frac ns}{|x-y|^{n+\frac{tn}{s}}} \dx \dy &\le \int_{\R^n} \int_{\R^n} \frac{|v_\lambda(x) - v_\lambda(y)|^\frac ns}{|x-y|^{n+\frac{tn}{s}}} \dx \dy\\
 &= \lambda^{n\brac{\frac{t-s}{s}}} \int_{\R^n} \int_{\R^n} \frac{|v(x) - v(y)|^\frac ns}{|x-y|^{n+\frac{tn}{s}}} \dx \dy,
 \end{split}
 \]
which is possible only if $v\equiv const$. Here, we emphasize that the last equality is true, because the energy is not scaling invariant and thus, if we would replace $E_{t,\frac ns}$ by $E_{s,\frac ns}$ there would not be an extra $\lambda$ term in front of the integral. 

Similarly in the case, when the the domain of the minimizing map is $\S^n$, since the $E_{t,\frac{n}{s}}$-energy is not conformally invariant, we already get an ``extra" term using the stereographic projection. Then again, an additional term appears after rescaling. Those extra terms can be estimated, accordingly to one of the three cases: integration over two balls $D(\rho)\times D(\rho)$, integration over the complements of the balls $\S^n \setminus D(\rho) \times \S^n \setminus D(\rho)$, and the mixed term $D(\rho)\times \S^n\setminus D(\rho)$. Which after a careful comparison of the energies gives the desired conclusion. 

The rescaling is performed in the following proposition which might be of independent interest.

\begin{proposition}\label{pr:rescaling}
Let $v\colon \S^n \to \R^M$, $n \geq 1$ and $\lambda > 0$.

If $n=1$, then we let $\tau\colon \R \to \S^1$ to be the inverse stereographic projection, namely
\[
 \tau(r) \coloneqq \brac{\frac{2r}{r^2+1},\frac{r^2-1}{r^2+1}}
\]
and set $v_\lambda \coloneqq v(\tau(\lambda\tau^{-1}(x)))$.

If $n \geq 2$, then we write $v=v(r,\omega)$, $r > 0$, $\omega \in S^{n-1}$ in terms of the usual stereographic projection (see below) of the punctured sphere $\S^n\setminus \{N\}$, where $ N\coloneqq (1,0,\ldots,0)\in\R^n$ is the north pole. In this case we set $v_\lambda \coloneqq v(\lambda r,\omega)$.

In both cases, for $r_{\lambda} \coloneqq \sqrt{\frac{4}{\lambda^2+1}}$, we have 
\begin{equation}\label{eq:rescaledenergyestimated:goal}
 \begin{split}
   \int_{\S^n}\int_{\S^n} \frac{|v_\lambda(x) - v_\lambda(y)|^\frac{n}{s}}{|x-y|^{n+\frac{tn}{s}}} \dx\dy 
  &\le (2\lambda)^{n\brac{\frac{t}{s}-1}} \int_{D(S,r_\lambda)} \int_{\S^n}\frac{|v(x) - v(y)|^\frac{n}{s}}{|x-y|^{n+\frac{tn}{s}}}\dx\\dy\\   
 &\quad  + \brac{\frac{2}{\lambda}}^{n\brac{\frac{t}{s}-1}} \int_{\S^n\setminus D(S,r_\lambda)} \int_{\S^n}\frac{|v(x) - v(y)|^\frac{n}{s}}{|x-y|^{n+\frac{tn}{s}}}\dx\dy,
  \end{split}
\end{equation}
where $S=(-1,0,\ldots,0)\in\S^n$ is the south pole.
\end{proposition}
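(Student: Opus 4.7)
The plan is to pull everything back to $\R^n$ via the stereographic projection $\tau\colon \R^n \to \S^n \setminus \{N\}$, under which the conformal transformation $x \mapsto \tau(\lambda \tau^{-1}(x))$ becomes the simple dilation $r \mapsto \lambda r$. Writing $p(r) := 2/(1+|r|^2)$ and $\beta := n(s-t)/(2s) \le 0$, the standard stereographic identities $|\tau(r)-\tau(r')| = \sqrt{p(r)p(r')}\,|r-r'|$ and $dx = p(r)^n\,dr$ yield, for any measurable $w\colon \S^n \to \R^M$ with $\tilde w := w \circ \tau$,
\[
\int_{\S^n}\int_{\S^n}\frac{|w(x)-w(y)|^{\frac{n}{s}}}{|x-y|^{n+\frac{tn}{s}}}\,dx\,dy \;=\; \int_{\R^n}\int_{\R^n}\frac{|\tilde w(r)-\tilde w(r')|^{\frac{n}{s}}}{|r-r'|^{n+\frac{tn}{s}}}\,p(r)^\beta p(r')^\beta\,dr\,dr'.
\]
Applied to $w=v_\lambda$, for which $\tilde v_\lambda(r) = \tilde v(\lambda r)$ by definition, the substitution $R=\lambda r$, $R'=\lambda r'$ converts the left-hand side of \eqref{eq:rescaledenergyestimated:goal} into
\[
\lambda^{n(\frac{t}{s}-1)}\int_{\R^n}\int_{\R^n}\frac{|\tilde v(R)-\tilde v(R')|^{\frac{n}{s}}}{|R-R'|^{n+\frac{tn}{s}}}\,p(R/\lambda)^\beta p(R'/\lambda)^\beta\,dR\,dR'.
\]

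Next I would use the chord formula $|\tau(R)-S|^2 = 4|R|^2/(1+|R|^2)$ to verify that the stereographic preimage of the geodesic disk $D(S, r_\lambda)$ is exactly the Euclidean ball $B(0, 1/\lambda)$. Both terms on the right-hand side of \eqref{eq:rescaledenergyestimated:goal} thus become integrals of the common non-negative kernel $\frac{|\tilde v(R)-\tilde v(R')|^{n/s}}{|R-R'|^{n+tn/s}}\,p(R)^\beta p(R')^\beta$ against $\chi_{B(0,1/\lambda)}$ or $\chi_{B(0,1/\lambda)^c}$ in the first variable, so the whole claim reduces to an integrated inequality for the weight ratio
\[
A_\lambda(R) \;:=\; \left(\frac{p(R/\lambda)}{p(R)}\right)^\beta \;=\; \left(\frac{\lambda^2(1+|R|^2)}{\lambda^2+|R|^2}\right)^\beta.
\]

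The decisive elementary computation is that for $\lambda \ge 1$ the base is monotone increasing in $|R|^2$, equals $1$ at $R=0$, is bounded by $2$ on $B(0, 1/\lambda)$ (because $|R|^2 \le 1/\lambda^2 \le 1$ forces $\lambda^2(1+|R|^2) \le 2(\lambda^2+|R|^2)$), and tends to $\lambda^2$ as $|R|\to\infty$. Consequently $A_\lambda$ is valued in $[\lambda^{2\beta},1]$ on $\R^n$, is at least $2^\beta$ on $B(0,1/\lambda)$, and approaches $\lambda^{2\beta}$ at infinity. The case $\lambda < 1$ is handled symmetrically, with the monotonicity of the ratio reversed.

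The hard part will be the piece $B(0,1/\lambda)^c \times B(0,1/\lambda)^c$, on which the pointwise ratio $p(R/\lambda)/p(R)$ degenerates to $1$ near the boundary of the ball and only approaches $\lambda^2$ far away, so a single pointwise bound for $A_\lambda(R)A_\lambda(R')$ is not sharp enough. This is overcome by splitting $\R^n\times\R^n$ into $B\times B$, $B\times B^c$, $B^c\times B$ and $B^c\times B^c$ and using the symmetry of the kernel in $(R,R')$ to symmetrize the one-sided splittings on the right-hand side of \eqref{eq:rescaledenergyestimated:goal}: the cross pieces then absorb the larger of the two coefficients via whichever variable lies in $B(0,1/\lambda)$, while the remaining $B^c\times B^c$ piece is controlled by the smaller coefficient together with the uniform estimate $A_\lambda \le 1$ and the prefactor $\lambda^{n(t/s-1)}$.
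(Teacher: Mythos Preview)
Your overall strategy --- pull back by the stereographic projection, change variables $R=\lambda r$, and bound the resulting factorized kernel $K_\lambda(R,R')$ --- is exactly what the paper does. The gap is in the choice of the splitting radius.

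The stated value $r_\lambda=\sqrt{4/(\lambda^2+1)}$ is a typo in the paper; both proofs (for $n=1$ and $n\ge 2$) use $r_\lambda=\sqrt{4\lambda^2/(\lambda^2+1)}$, whose stereographic preimage is $B(0,\lambda)$, not $B(0,1/\lambda)$. With the correct splitting at $|R|=\lambda$ there is no ``hard part'': the per-variable factor
\[
g(\tilde r)\;=\;\frac{\tilde r^2+\lambda^2}{\lambda(\tilde r^2+1)}
\]
satisfies $g(\tilde r)\le 2\lambda$ for $\tilde r\le\lambda$ and $g(\tilde r)\le 2/\lambda$ for $\tilde r\ge\lambda$ (both inequalities are two-line checks valid for every $\lambda>0$), and since $K_\lambda(R,R')=g(|R|)^{n(t/s-1)/2}g(|R'|)^{n(t/s-1)/2}$ with $n(t/s-1)\ge 0$, the four-region split together with $2ab\le a^2+b^2$ on the cross pieces gives \eqref{eq:rescaledenergyestimated:goal} immediately.

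By contrast, your split at $|R|=1/\lambda$ cannot work. On $B(0,1/\lambda)^c\times B(0,1/\lambda)^c$ you invoke $A_\lambda\le 1$ and the prefactor $\lambda^{n(t/s-1)}$, which yields only $K_\lambda\le\lambda^{n(t/s-1)}$, whereas you need $K_\lambda\le(2/\lambda)^{n(t/s-1)}$; for $\lambda>\sqrt{2}$ this fails. In fact the inequality with $r_\lambda=\sqrt{4/(\lambda^2+1)}$ is false for large $\lambda$: for any smooth non-constant $v$ one has $E_{t,n/s}(v_\lambda)\to\infty$ as $\lambda\to\infty$ (since $K_\lambda\to\infty$ pointwise), while with this $r_\lambda$ the disk $D(S,r_\lambda)$ shrinks to a point and both terms on the right tend to $0$ when $t/s<2$. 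So the difficulty you isolate is not an artifact of the method --- it is the typo.
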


The proofs of \Cref{pr:rescaling} are only slightly different for $n=1$ and $n\geq 2$. However, since they are very technical we give both of them in full detail. 
\begin{proof}[Proof of \Cref{pr:rescaling} for $n=1$]
Recall that for $\lambda > 0$ we have set 
\[
 v_\lambda(x) \coloneqq v(\tau(\lambda\tau^{-1}(x))).
\]
Here $\tau\colon \R \to \S^1$ is the inverse stereographic projection, namely
\[
 \tau(r) \coloneqq \brac{\frac{2r}{r^2+1},\frac{r^2-1}{r^2+1}}.
\]
Observe that
\[
\begin{split}
  |\tau(r)-\tau(R)|^2 
%   & 2-2\frac{2r}{r^2+1}\frac{2R}{R^2+1}-2 
%  \frac{1-r^2}{r^2+1}\frac{1-R^2}{R^2+1}\\
%  =& \brac{(1+r^2)(1+R^2)-\, 2r2R- 
%  (1-r^2)(1-R^2)} 2\frac{1}{R^2+1} \frac{1}{r^2+1}\\
% %  
= \brac{R-r}^2 \frac{2}{R^2+1} \frac{2}{r^2+1},
 \end{split}
 \]
from which one obtains
\[
 |\tau'(r)| = \frac{2}{r^2+1}.
\]
Then, changing the variables, we compute
\[
\begin{split}
 [v_\lambda]_{W^{t,\frac 1s}(\S^1)}^\frac{1}{s}
 &=\int_{\S^1}\int_{\S^1} \frac{|v_\lambda(x)-v_\lambda(y)|^{\frac{1}{s}}}{|x-y|^{1+\frac{t}{s}}}\dx \,\dy\\
 &=\int_{\R}\int_{\R} \frac{|v(\tau(\lambda r))-v(\tau(\lambda R))|^{\frac{1}{s}}}{|\tau(r)-\tau(R)|^{1+\frac{t}{s}}}\, |\tau'(R)| |\tau'(r)|\dif R\, \dif r\\
 &=\int_{\R}\int_{\R} \frac{|v(\tau(\tilde{r}))-v(\tau(\tilde{R}))|^{\frac{1}{s}}}{|\tau(\lambda^{-1} \tilde{r})-\tau(\lambda^{-1} \tilde{R})|^{1+\frac{t}{s}}}\, |\tau'(\lambda^{-1} \tilde{R})| |\tau'(\lambda^{-1} \tilde r)| \lambda^{-2} \dif\tilde{R}\, \dif\tilde{r}\\
 &=\int_{\R}\int_{\R} \frac{|v(\tau(\tilde{r}))-v(\tau(\tilde{R}))|^{\frac{1}{s}}}{|\tau(\tilde{r})-\tau(\tilde{R})|^{1+\frac{t}{s}}} |\tau'(\tilde{R})| |\tau'(\tilde{r})| K_{\lambda}(\tilde{r},\tilde{R}) \dif \tilde R\, \dif \tilde r,
 \end{split}
\]
where 
\[
\begin{split}
K_{\lambda}(\tilde{r},\tilde{R}) 
&\coloneqq  \lambda^{-2} \brac{\frac{|\tau(\tilde{r})-\tau( \tilde{R})|}{|\tau(\lambda^{-1} \tilde{r})-\tau(\lambda^{-1} \tilde{R})|}}^{1+\frac{t}{s}}\, \frac{|\tau'(\lambda^{-1} \tilde{R})| |\tau'(\lambda^{-1} \tilde{r})|}{|\tau'(\tilde{R})| |\tau'(\tilde{r})|}\\
 &=\lambda^{\frac{t}{s}-1} \brac{\frac{
 ((\lambda^{-1} \tilde{R})^2+1) ((\lambda^{-1} \tilde{r})^2+1)
 }{
 \brac{\tilde{R}^2+1} \brac{\tilde{r}^2+1}
  }}^{\frac{\frac{t}{s}-1}{2}}\\
 &=\brac{\frac{ \tilde{r}^2+\lambda^{2} }{
 \lambda\brac{\tilde{r}^2+1}  }}^{\frac{\frac{t}{s}-1}{2}}
 \brac{\frac{ \tilde{R}^2+\lambda^{2} }{
 \lambda\brac{\tilde{R}^2+1}  }}^{\frac{\frac{t}{s}-1}{2}}.
   \end{split}
\]

Observe that for $|\tilde{r}| \le \lambda$ we have 
\[
 \frac{ \tilde{r}^2+\lambda^{2} }{
 \lambda\brac{\tilde{r}^2+1}  } \leq 2\lambda
\]
and for $|\tilde{r}| \ge \lambda$
\[
 \frac{ \tilde{r}^2+\lambda^{2} }{
 \lambda\brac{\tilde{r}^2+1}  } \leq \frac{2}{\lambda}.
\]
Thus,
\[
 K_{\lambda}(\tilde{r},\tilde{R}) \leq
 \begin{cases}
        (2\lambda)^{\frac{t}{s}-1}  \quad &|\tilde{r}|\leq\lambda,\ |\tilde{R}|\leq\lambda\\
        \frac{1}{2} (2\lambda)^{\frac{t}{s}-1} + \frac{1}{2}\brac{\frac{2}{\lambda}}^{\frac{t}{s}-1} \quad &|\tilde{r}|\geq\lambda,\ |\tilde{R}|\leq\lambda \text{ or } 
        |\tilde{r}|\leq\lambda,\ |\tilde{R}|\geq\lambda\\
        \brac{\frac{2}{\lambda}}^{\frac{t}{s}-1}  \quad &|\tilde{r}|\geq\lambda,\ |\tilde{R}|\geq\lambda.
\end{cases}
\]
In the case where $|\tilde{r}|\geq\lambda$ and $|\tilde{R}|\leq\lambda$ we have used the inequality $2ab \leq a^2 + b^2$.

\begin{center}
\begin{tikzpicture}[line cap=round,line join=round,>=triangle 45,x=1cm,y=1cm]
\draw (0,0) circle (2cm);
\draw[red, thick] (242:2cm) arc (242:298:2);
\filldraw[black] (0,-2) circle (1.5pt) node[below,shift={(0.5,0)}] {$S=(0,-1)$};
\draw[gray, thick] (-4,0) -- (4,0);
\node[anchor=south] at (3.8,0) {$\R$};
\draw[gray, thick] (0,2) -- (242:2cm);
\draw[gray, thick] (0,2) -- (298:2cm);
\draw (0,-2) -- (298:2cm);
\filldraw[black] (-0.5,0) circle (1.5pt) node[anchor=north] {$-\lambda$};
\filldraw[black] (0.5,0) circle (1.5pt) node[anchor=north] {$\lambda$};
\node[anchor=south] at (0.5,-1.95) {$r_\lambda$};
%\node[right, red, shift={(0,-0.2)}, scale=0.8] at (280:2cm) {$D(S,r_\lambda)$};
\filldraw[black] (298:2cm) circle (1.5pt) node[right] {$\tau(\lambda)$};
\filldraw[fill=white, draw=black] (0,2) circle (1.5pt) node[anchor=south] {$N$};
\end{tikzpicture}
\end{center}

Observe that $\tau((-\lambda,\lambda)) = D(S,r_{\lambda})$ for $r_{\lambda} \coloneqq \sqrt{\frac{4\lambda^2}{\lambda^2+1}}$ and $\tau^{-1}(\R \setminus (-\lambda,\lambda)) = D(S,r_{\lambda})$, where $S = (0,-1)$ is the south pole of $\S^1$.

We thus conclude 
\[
\begin{split}
 \int_{\S^1}\int_{\S^1} \frac{|v_\lambda(x)-v_\lambda(y)|^{\frac{1}{s}}}{|x-y|^{1+\frac{t}{s}}} \dx \dy
 &\leq (2\lambda)^{\frac{t}{s}-1} \int_{D(S,r_{\lambda})}\int_{\S^1} \frac{|v(x)-v(y)|^{\frac{1}{s}}}{|x-y|^{1+\frac{t}{s}}} \dx\, \dy\\
 &\quad +\brac{\frac{2}{\lambda}}^{\frac{t}{s}-1} \int_{\S^1 \setminus D(S,r_{\lambda})}\int_{\S^1} \frac{|v(x)-v(y)|^{\frac{1}{s}}}{|x-y|^{1+\frac{t}{s}}} \dx\, \dy.
 \end{split}
\]
That is, \eqref{eq:rescaledenergyestimated:goal} is established and the proof of \Cref{pr:rescaling} for $n=1$ is finished.
\end{proof}

\begin{proof}[Proof of \Cref{pr:rescaling} for $n \geq 2$]
 We begin with introducing spherical coordinates. Since we are dealing with a double integral we will need separate coordinates to represent a point $x=(x_1,x_2\ldots, x_{n+1}) \in\S^n$ and to represent a point $y=(y_1,y_2,\ldots y_{n+1})\in\S^{n}$.  For $x$ we will use the coordinates 
\[
\begin{split}
(\vp,\omega), \quad &\text{ where } \vp\in (0,\pi),\ \omega\in\S^{n-1} \quad \text{ or }\\
( \vp,\omega_2,\ldots \omega_n), \quad &\text{ where } \vp,\, \omega_2,\,\ldots,\, \omega_{n-1}\in (0,\pi),\ \omega_n\in(0,2\pi) 
\end{split}
\]
whereas for $y$ we will use 
\[
\begin{split}
(\psi,\theta), \quad &\text{ where } \psi\in(0,\pi),\ \theta\in\S^{n-1} \quad \text{ or }\\
(\psi, \theta_2,\ldots,\theta_n)\quad &\text{ where } \psi,\, \theta_2,\ldots,\theta_{n-1}\in(0,\pi),\ \theta_{n}\in(0,2\pi).  
\end{split}
\]
The spherical coordinates are given by
\[
\begin{array}{rlrl}
 x_1 =& \cos \vp,   &y_1 =& \cos \psi,\\
 x_2 =& \sin \vp\cos \omega_2, & y_2 =& \sin \psi\cos \theta_2,\\
 x_3 =& \sin \vp \sin\omega_2 \cos \omega_3, & y_3 =& \sin \psi \sin\theta_2 \cos \theta_3,\\
 \vdots& & \vdots& \\
 x_n =& \sin\vp \sin \omega_2 \ldots \sin \omega_{n-1}\cos\omega_n, & y_n =& \sin\psi \sin \theta_2 \ldots \sin \theta_{n-1}\cos\theta_n,\\
 x_{n+1} =& \sin\vp \sin \omega_2 \ldots \sin \omega_{n-1} \sin\omega_n, & y_{n+1} =& \sin\psi \sin \theta_2 \ldots \sin \theta_{n-1} \sin\theta_n. 
\end{array}
\]
We recall that the volume element is given by 
\[
\begin{split}
 \dx &= \sin^{n-1}(\vp) \sin^{n-2}(\omega_2) \ldots \sin(\omega_{n-1})\dif \vp\dif \omega_2\,\ldots\dif \omega_n = \sin^{n-1}(\vp)\dif \vp\dif {\omega}\\
 \dy &= \sin^{n-1}(\psi) \sin^{n-2}(\theta_2) \ldots \sin(\theta_{n-1})\dif \psi\,\dif \theta_2\,\ldots\dif \theta_n = \sin^{n-1}(\psi)\dif \psi\dif {\theta}.
\end{split}
 \]
Now let us compute the squared distance $|x-y|^2$ in spherical coordinates
 \[
 \begin{split}
  |x-y|^2 &= \sum_{i=1}^{n+1} (x_i-y_i)^2 \\
  &= (\cos \vp - \cos\psi)^2 + (\sin\vp\cos\omega_2 - \sin\psi\cos\theta_2)^2 + \ldots \\
  &\quad \ldots + (\sin\vp \sin \omega_2 \ldots \sin \omega_{n-1} \sin\omega_n - \sin\psi \sin \theta_2 \ldots \sin \theta_{n-1} \sin\theta_n)^2\\
  & = 2 -2(\cos\vp\cos\psi + \sin\vp\sin\psi f(\omega,\theta)), 
 \end{split}
 \]
where $f(\omega,\theta)$ does not depend on $\vp$ and $\psi$, and is the sum of the remaining elements. We recall that the stereographic projection of the punctured sphere $\S^n\setminus \{N\}$, where $ N\coloneqq (1,0,\ldots,0)\in\R^n$ is the north pole  onto $\R^n$ is given by
\[
 (\vp,\omega) = \brac{2\arctan \frac 1r,\omega}, \quad  \quad (\psi,\theta) = \brac{2\arctan \frac 1R ,\theta}, 
\]
where $(r,\omega)$ and $(R,\theta)$ are polar coordinates on $\R^n$ with $r,\, R >0$ and $\omega,\,\theta \in \S^{n-1}$. We also recall that
\begin{equation}\label{eq:sphericaltopolarmultidim}
\begin{split}
  \sin \vp = \frac{2r}{r^2+1}, \quad \frac{\partial \vp}{\partial r} = -\frac{2}{r^2+1}, \quad \cos\vp = \frac{r^2-1}{r^2+1},\\
  \sin \psi = \frac{2R}{R^2+1}, \quad \frac{\partial \psi}{\partial R} = -\frac{2}{R^2+1}, \quad \cos\psi = \frac{R^2-1}{R^2+1}.
  \end{split}
\end{equation}

Let $v\in W^{t,\frac{n}{s}}(\S^n)$, we will compute its energy 
\[
 \int_{\S^n}\int_{\S^n} \frac{|v(x) - v(y)|^\frac{n}{s}}{|x-y|^{n+\frac{tn}{s}}} \dx\dy =   \int_{\S^n}\int_{\S^n} \frac{|v(x) - v(y)|^\frac{n}{s}}{|x-y|^{2\brac{\frac{n}{2}\brac{1+\frac{t}{s}}}}} \dx\dy
\]
 in polar coordinates. By a change of variable and using \eqref{eq:sphericaltopolarmultidim} we get

\[
 \begin{split}
 &\int_{\S^n}\int_{\S^n} \frac{|v(x) - v(y)|^\frac{n}{s}}{|x-y|^{2\brac{\frac{n}{2}\brac{1+\frac{t}{s}}}}} \dx\dy \\
 & = \int_{\S^{n-1}}\int_0^\pi \int_{\S^{n-1}}\int_0^\pi  \frac{|v(\vp,\omega) - v(\psi,\theta)|^\frac{n}{s}}{\brac{2 -2(\cos\vp\cos\psi + \sin\vp\sin\psi f(\omega,\theta))}^{{\frac{n}{2}\brac{1+\frac{t}{s}}}}} \sin^{n-1}(\vp) \sin^{n-1}(\psi)\dif \vp\dif {\omega}\dif \psi\dif {\theta}\\
 & = \int_{\S^{n-1}}\int_0^\infty \int_{\S^{n-1}}\int_0^\infty \frac{|v(r,\omega) - v(R,\theta)|^\frac{n}{s}}{\brac{\brac{\frac{2}{r^2+1}\frac{2}{R^2+1}}\brac{r^2+R^2+2rR \,f(\omega,\theta)}}^{{\frac{n}{2}\brac{1+\frac{t}{s}}}}}\\
 & \phantom{= \int_{\S^{n-1}}\int_0^\infty \int_{\S^{n-1}}\int_0^\infty}\quad \brac{\frac{2r}{r^2+1}}^{n-1} \brac{\frac{2R}{R^2+1}}^{n-1}\frac{2}{r^2+1} \frac{2}{R^2+1} \dif r\dif {\omega}\dif R\dif {\theta}\\
 & = \int_{\S^{n-1}}\int_0^\infty \int_{\S^{n-1}}\int_0^\infty \frac{|v(r,\omega) - v(R,\theta)|^\frac{n}{s}}{\brac{\brac{\frac{2}{r^2+1}\frac{2}{R^2+1}}\brac{r^2+R^2+2rR \,f(\omega,\theta)}}^{{\frac{n}{2}\brac{1+\frac{t}{s}}}}}\\
 & \phantom{= \int_{\S^{n-1}}\int_0^\infty \int_{\S^{n-1}}\int_0^\infty}\quad\brac{\frac{2}{r^2+1}}^{n} \brac{\frac{2}{R^2+1}}^{n} r^{n-1}R^{n-1}\dif r\dif {\omega}\dif R\dif {\theta}\\
 & = \int_{\S^{n-1}}\int_0^\infty \int_{\S^{n-1}}\int_0^\infty \frac{|v(r,\omega) - v(R,\theta)|^\frac{n}{s}}{{\brac{r^2+R^2+2rR \,f(\omega,\theta)}}^{{\frac{n}{2}\brac{1+\frac{t}{s}}}}}\\
 & \phantom{= \int_{\S^{n-1}}\int_0^\infty \int_{\S^{n-1}}\int_0^\infty}\quad \brac{\frac{2}{r^2+1}}^{\frac{n}{2}\brac{1-\frac{t}{s}}} \brac{\frac{2}{R^2+1}}^{\frac{n}{2}\brac{1-\frac{t}{s}}} r^{n-1}R^{n-1}\dif r\dif {\omega}\dif R\dif {\theta}.
 \end{split}
\]

In the latter we denote the integrand by
\[
 |v(r,R,\omega,\theta)|_{t,\frac n s}\coloneqq\frac{|v(r,\omega) - v(R,\theta)|^\frac{n}{s}}{{\brac{r^2+R^2+2rR \,f(\omega,\theta)}}^{{\frac{n}{2}\brac{1+\frac{t}{s}}}}} \brac{\frac{2}{r^2+1}}^{\frac{n}{2}\brac{1-\frac{t}{s}}} \brac{\frac{2}{R^2+1}}^{\frac{n}{2}\brac{1-\frac{t}{s}}} r^{n-1}R^{n-1}.
\]

Thus with this notation
\begin{equation}
 \begin{split}
 \int_{\S^n}\int_{\S^n} \frac{|v(x) - v(y)|^\frac{n}{s}}{|x-y|^{2\brac{\frac{n}{2}\brac{1+\frac{t}{s}}}}} \dx\dy=\int_{\S^{n-1}}\int_0^\infty \int_{\S^{n-1}}\int_0^\infty |v(r,R,\omega,\theta)|_{t,\frac n s}\dif r\dif {\omega}\dif R\dif {\theta}.
 \end{split}
\end{equation}

We consider the rescaling $v_\lambda(r,\omega) = v(\lambda r,\omega)$ and compute

\begin{equation}\label{eq:rescaledfractionalenergyfirst}
 \begin{split}
 &\int_{\S^n}\int_{\S^n} \frac{|v_\lambda(x) - v_\lambda(y)|^\frac{n}{s}}{|x-y|^{2\brac{\frac{n}{2}\brac{1+\frac{t}{s}}}}} \dx\dy \\
 & = \int_{\S^{n-1}}\int_0^\infty \int_{\S^{n-1}}\int_0^\infty \frac{|v(\lambda r,\omega) - v(\lambda R,\theta)|^\frac{n}{s}}{{\brac{r^2+R^2+2rR \,f(\omega,\theta)}}^{{\frac{n}{2}\brac{1+\frac{t}{s}}}}} \\
 & \phantom{= \int_{\S^{n-1}}\int_0^\infty \int_{\S^{n-1}}\int_0^\infty}\quad \brac{\frac{2}{r^2+1}}^{\frac{n}{2}\brac{1-\frac{t}{s}}} \brac{\frac{2}{R^2+1}}^{\frac{n}{2}\brac{1-\frac{t}{s}}} r^{n-1}R^{n-1}\dif r\dif {\omega}\dif R\dif {\theta}\\
&= \int_{\S^{n-1}}\int_0^\infty \int_{\S^{n-1}}\int_0^\infty \frac{|v(\tilde r,\omega) - v(\tilde R,\theta)|^\frac{n}{s}}{\lambda^{-n\brac{1+\frac ts}}{\brac{\tilde r^2+\tilde R^2+2\tilde r \tilde R \,f(\omega,\theta)}}^{{\frac{n}{2}\brac{1+\frac{t}{s}}}}}\\
 & \phantom{= \int_{\S^{n-1}}\int_0^\infty \int_{\S^{n-1}}\int_0^\infty}\quad \brac{\frac{2}{r^2+1}}^{\frac{n}{2}\brac{1-\frac{t}{s}}} \brac{\frac{2}{R^2+1}}^{\frac{n}{2}\brac{1-\frac{t}{s}}} \brac{\frac{\tilde r}{\lambda}}^{n-1}\brac{\frac{\tilde R}{\lambda}}^{n-1}\frac{1}{\lambda^2}\dif \tilde r\dif {\omega}\dif \tilde R\dif {\theta}\\
&= \int_{\S^{n-1}}\int_0^\infty \int_{\S^{n-1}}\int_0^\infty \frac{|v(\tilde r,\omega) - v(\tilde R,\theta)|^\frac{n}{s}}{{\brac{\tilde r^2+\tilde R^2+2\tilde r \tilde R \,f(\omega,\theta)}}^{{\frac{n}{2}\brac{1+\frac{t}{s}}}}}\\
 & \phantom{= \int_{\S^{n-1}}\int_0^\infty \int_{\S^{n-1}}\int_0^\infty}\quad \brac{\frac{2}{r^2+1}}^{\frac{n}{2}\brac{1-\frac{t}{s}}} \brac{\frac{2}{R^2+1}}^{\frac{n}{2}\brac{1-\frac{t}{s}}} {{\tilde r}}^{n-1}{{\tilde R}}^{n-1}\lambda^{n\brac{\frac ts -1}}\dif \tilde r\dif {\omega}\dif \tilde R\dif {\theta}.
 \end{split}
\end{equation}

We compute

\begin{equation}\label{eq:rlambdaestimatesnspherefractional}
\begin{split}
 &\brac{\frac{2}{r^2+1}}^{\frac{n}{2}\brac{1-\frac{t}{s}}} \brac{\frac{2}{R^2+1}}^{\frac{n}{2}\brac{1-\frac{t}{s}}}\lambda^{n\brac{\frac ts -1}}\\
 &=\brac{\frac{2}{\tilde r^2+1}}^{\frac{n}{2}\brac{1-\frac{t}{s}}} \brac{\frac{2}{\tilde R^2+1}}^{\frac{n}{2}\brac{1-\frac{t}{s}}}\brac{\frac{2}{\tilde r^2+1}}^{-\frac{n}{2}\brac{1-\frac{t}{s}}} \brac{\frac{2}{\tilde R^2+1}}^{-\frac{n}{2}\brac{1-\frac{t}{s}}}\brac{\frac{2}{r^2+1}}^{\frac{n}{2}\brac{1-\frac{t}{s}}}\\
 &\quad \brac{\frac{2}{R^2+1}}^{\frac{n}{2}\brac{1-\frac{t}{s}}}\lambda^{n\brac{\frac ts -1}}\\
 &= \brac{\frac{2}{\tilde r^2+1}}^{\frac{n}{2}\brac{1-\frac{t}{s}}} \brac{\frac{2}{\tilde R^2+1}}^{\frac{n}{2}\brac{1-\frac{t}{s}}} \brac{\frac{\tilde r^2 + \lambda^2}{\lambda(\tilde r^2 +1)}}^{\frac{n}{2}\brac{\frac ts -1}} \brac{\frac{\tilde R^2 + \lambda^2}{\lambda(\tilde R^2 +1)}}^{\frac{n}{2}\brac{\frac ts -1}}.
\end{split}
 \end{equation}

Combining \eqref{eq:rescaledfractionalenergyfirst} with \eqref{eq:rlambdaestimatesnspherefractional} we get

\begin{equation}\label{eq:rescaledfractionalenergynsphere}
\begin{split}
 &\int_{\S^n}\int_{\S^n} \frac{|v_\lambda(x) - v_\lambda(y)|^\frac{n}{s}}{|x-y|^{2\brac{\frac{n}{2}\brac{1+\frac{t}{s}}}}} \dx\\dy\\
 &=\int_{\S^{n-1}}\int_0^\infty \int_{\S^{n-1}}\int_0^\infty \frac{|v(\tilde r,\omega) - v(\tilde R,\theta)|^\frac{n}{s}}{{\brac{\tilde r^2+\tilde R^2+2\tilde r \tilde R \,f(\omega,\theta)}}^{{\frac{n}{2}\brac{1+\frac{t}{s}}}}} \brac{\frac{2}{\tilde r^2+1}}^{\frac{n}{2}\brac{1-\frac{t}{s}}} \brac{\frac{2}{\tilde R^2+1}}^{\frac{n}{2}\brac{1-\frac{t}{s}}}\tilde r^{n-1} \tilde R^{n-1}\\
 & \phantom{\int_{\S^{n-1}}\int_0^\infty \int_{\S^{n-1}}\int_0^\infty}\brac{\brac{\frac{\tilde r^2 + \lambda^2}{\lambda(\tilde r^2 +1)}}^{\frac{n}{2}\brac{\frac ts -1}} \brac{\frac{\tilde R^2 + \lambda^2}{\lambda(\tilde R^2 +1)}}^{\frac{n}{2}\brac{\frac ts -1}}}\dif \tilde r\dif {\omega}\dif \tilde R\dif {\theta}\\
  & = \int_{\S^{n-1}}\int_0^\infty \int_{\S^{n-1}}\int_0^\infty |v(\tilde r,\tilde R,\omega, \theta)|_{t,\frac{n}{s}} \ K_\lambda(\tilde r,\tilde R) \dif \tilde r\dif {\omega}\dif \tilde R\dif {\theta},
 \end{split}
\end{equation}
where
\[
 K_\lambda(\tilde r,\tilde R) = \brac{\brac{\frac{\tilde r^2 + \lambda^2}{\lambda(\tilde r^2 +1)}}^{\frac{n}{2}\brac{\frac ts -1}} \brac{\frac{\tilde R^2 + \lambda^2}{\lambda(\tilde R^2 +1)}}^{\frac{n}{2}\brac{\frac ts -1}}}.
\]

As in the 1 dimensional case, we have:

If $\tilde r \le \lambda$, then
\[
 \frac{\tilde r^2 + \lambda^2}{\lambda(\tilde r^2 +1)} \le 2\lambda.
\]

If $\tilde r\ge \lambda$, then
\[
\frac{\tilde r^2 + \lambda^2}{\lambda(\tilde r^2 +1)}\le \frac{2}{\lambda}.
\]
Thus,
\[
 K_\lambda(\tilde r, \tilde R) \le \begin{cases}
                                    (2\lambda)^{n\brac{\frac ts-1}} & \tilde r \le \lambda,\ \tilde R \le \lambda\\
                                    \frac12 (2\lambda)^{n\brac{\frac ts-1}} + \frac 12 \brac{\frac 2\lambda}^{n\brac{\frac ts-1}} & \tilde r \ge \lambda,\ \tilde R \le \lambda \text{ or } \tilde r \le \lambda,\ \tilde R \ge \lambda \\
                                    \brac{\frac 2\lambda}^{n\brac{\frac ts-1}} & \tilde r \ge \lambda,\ \tilde R \ge \lambda.
                                   \end{cases}
\]

This leads to
\[
 \begin{split}
  \int_{\S^n}\int_{\S^n}& \frac{|v_\lambda(x) - v_\lambda(y)|^\frac{n}{s}}{|x-y|^{2\brac{\frac{n}{2}\brac{1+\frac{t}{s}}}}} \dx\\dy\\
  & = \int_{\S^{n-1}}\int_0^\infty \int_{\S^{n-1}}\int_0^\infty |v(\tilde r,\tilde R,\omega, \theta)|_{t,\frac{n}{s}}\ K_\lambda(\tilde r, \tilde R)\dif \tilde r\dif {\omega}\dif \tilde R\dif {\theta}\\
   & \le (2\lambda)^{n\brac{\frac{t}{s}-1}} \int_{\S^{n-1}}\int_0^\lambda \int_{\S^{n-1}}\int_0^\lambda |v(\tilde r,\tilde R,\omega, \theta)|_{t,\frac{n}{s}} \dif \tilde r\dif {\omega}\dif \tilde R\dif {\theta} \\
&\quad + \brac{(2\lambda)^{n\brac{\frac ts-1}} + \brac{\frac 2\lambda}^{n\brac{\frac ts-1}}} \int_{\S^{n-1}}\int_0^\lambda \int_{\S^{n-1}}\int_\lambda^\infty |v(\tilde r,\tilde R,\omega, \theta)|_{t,\frac{n}{s}} \dif \tilde r\dif {\omega}\dif \tilde R\dif {\theta}  \\
&\quad + \brac{\frac{2}{\lambda}}^{n\brac{\frac{t}{s}-1}} \int_{\S^{n-1}}\int_\lambda^\infty \int_{\S^{n-1}}\int_\lambda^\infty |v(\tilde r,\tilde R,\omega, \theta)|_{t,\frac{n}{s}} \dif \tilde r\dif {\omega}\dif \tilde R\dif {\theta}\\
&= (2\lambda)^{n\brac{\frac{t}{s}-1}} \int_{\S^{n-1}}\int_0^\lambda \int_{\S^{n-1}}\int_0^\infty |v(\tilde r,\tilde R,\omega, \theta)|_{t,\frac{n}{s}} \dif \tilde r\dif {\omega}\dif \tilde R\dif {\theta}\\
&\quad +\brac{\frac{2}{\lambda}}^{n\brac{\frac{t}{s}-1}} \int_{\S^{n-1}}\int_\lambda^\infty \int_{\S^{n-1}}\int_0^\infty |v(\tilde r,\tilde R,\omega, \theta)|_{t,\frac{n}{s}} \dif \tilde r\dif {\omega}\dif \tilde R\dif {\theta}.
  \end{split}
\]
For $r_\lambda = \sqrt{\frac{4\lambda^2}{\lambda^2+1}}$ this inequality can be rephrased as\footnote{The circle centered at the origin of radius $\lambda$ in polar coordinates corresponds to the circle of radius $r_\lambda = \sqrt{\frac{4\lambda^2}{\lambda^2+1}}$ with center at $S$ in Euclidean coordinates. Indeed, one can compute it from the law of cosines: $r_\lambda^2=2-2\cos(\pi-\varphi_\lambda)$, where $\varphi_\lambda = 2\arctan \frac 1 \lambda$.}

\begin{equation}\label{eq:rescaledenergyestimated}
 \begin{split}
   \int_{\S^n}\int_{\S^n} \frac{|v_\lambda(x) - v_\lambda(y)|^\frac{n}{s}}{|x-y|^{n+\frac{tn}{s}}} \dx\dy 
  &\le (2\lambda)^{n\brac{\frac{t}{s}-1}} \int_{D(S,r_\lambda)} \int_{\S^n}\frac{|v(x) - v(y)|^\frac{n}{s}}{|x-y|^{n+\frac{tn}{s}}}\dx\\dy\\
 &\quad  + \brac{\frac{2}{\lambda}}^{n\brac{\frac{t}{s}-1}} \int_{\S^n\setminus D(S,r_\lambda)} \int_{\S^n}\frac{|v(x) - v(y)|^\frac{n}{s}}{|x-y|^{n+\frac{tn}{s}}}\dx\dy,
  \end{split}
\end{equation}
where $S=(-1,0,\ldots,0)\in\S^n$ is the south pole.
\end{proof}

Having \Cref{pr:rescaling} we are ready to proceed with the main theorem of this section.

\begin{proof}[Proof of Theorem~\ref{th:fractionalestimatesmalldiskbyremaining}]
Without loss of generality we may assume that $y_0 = S$, where $S=(-1,0,\ldots,0)$ is the south pole. 
Let $u_t \in W^{t,\frac{n}{s}}(\S^n)$ be the minimizing map from the assumptions of this Theorem. For $\lambda > 0$ take $(u_t )_\lambda$ from \Cref{pr:rescaling}. Observe that $\lambda \mapsto (u_t)_{\lambda}$ is a homotopy.

Since $u_t$ is a minimizer, we can compare the energies of $u_t$ and $(u_t)_{\lambda}$.
\begin{equation}\label{eq:comparisonfractionalenergyrescaled}
    \int_{\S^n}\int_{\S^n} \frac{|u_t (x) - u_t (y)|^\frac{n}{s}}{|x-y|^{n+\frac{tn}{s}}} \dx\dy \le     \int_{\S^n}\int_{\S^n} \frac{|(u_t )_\lambda(x) - (u_t )_\lambda(y)|^\frac{n}{s}}{|x-y|^{n+\frac{tn}{s}}} \dx\dy .
\end{equation}
Combining \eqref{eq:comparisonfractionalenergyrescaled} and \eqref{eq:rescaledenergyestimated:goal} we obtain
\[
\begin{split}
0 \leq &((2\lambda)^{n\brac{\frac{t}{s}-1}}-1) \int_{D(S,r_\lambda)} \int_{\S^n}\frac{|u_t(x) - u_t(y)|^\frac{n}{s}}{|x-y|^{n+\frac{tn}{s}}}\dx\\dy\\   
 &\quad  + \brac{\brac{\frac{2}{\lambda}}^{n\brac{\frac{t}{s}-1}}-1} \int_{\S^n\setminus D(S,r_\lambda)} \int_{\S^n}\frac{|u_t(x) - u_t(y)|^\frac{n}{s}}{|x-y|^{n+\frac{tn}{s}}}\dx\dy 
 \end{split}
\]
For $\lambda<2$ the expression 
\[
 C_{\lambda,t} \coloneqq \frac{\brac{\frac{2}{\lambda}}^{n\brac{\frac ts -1}}-1}{1-(2\lambda)^{n\brac{\frac{t}{s}-1}}}
\]
is positive for any $t\in(s,s_0]$.
Thus, for any $\lambda < 2$,
\[
 \int_{D(S,r_\lambda)} \int_{\S^n} \frac{|u_t (x) - u_t (y)|^\frac{n}{s}}{|x-y|^{n+\frac{tn}{s}}} \dx\dy \le C_{\lambda,t} \int_{\S^n\setminus D(S,r_\lambda)}\int_{\S^n} \frac{|u_t (x) - u_t (y)|^\frac{n}{s}}{|x-y|^{n+\frac{tn}{s}}} \dx\dy.
\]
We need to study the asymptotics of $C_{\lambda,t}$. Let $\lambda \in [0,\lambda_0]$ for some $\lambda_0 < 2$.
We have 
\[
 C_{\lambda,t} = \lambda^{-n\brac{\frac{t}{s}-1}} \frac{2^{n\brac{\frac ts -1}}-\lambda^{n\brac{\frac ts -1}}}{1-(2\lambda)^{n\brac{\frac ts-1}}} \le \lambda^{-n\brac{\frac ts-1}} \bar{C}_{\lambda_0,s_0}, 
\]
where we set 
\[
 \bar{C}_{\lambda_0,s_0} \coloneqq \max_{t \in [s,s_0], \lambda \in [0,\lambda_0]} \frac{2^{n\brac{\frac ts -1}}-\lambda^{n\brac{\frac ts -1}}}{1-(2\lambda)^{n\brac{\frac ts-1}}}.
\]
We need to show that $\bar{C}_{\lambda_0,s_0} <\infty$.
Since $(t,\lambda) \mapsto \lambda^{n(\frac{t}{s}-1)}C_{\lambda,t}$ is continuous in $[s,s_0]\times [0,\lambda_0]$ we only need to estimate $\lambda^{n(\frac{t}{s}-1)}C_{\lambda,t} $ at the asymptotic boundary of $[s,s_0]\times [0,\lambda_0]$.

First we observe 
\[
 \sup_{\lambda \in [0,\lambda_0]}\lim_{t \to s^+}  \frac{2^{n\brac{\frac ts -1}}-\lambda^{n\brac{\frac ts -1}}}{1-(2\lambda)^{n\brac{\frac ts-1}}} =  \sup_{\lambda \in [0,\lambda_0]}\frac{\log\brac{\frac{2}{\lambda}}}{\log(2\lambda)}  \in[0,1].
\]
Also
\[
 \sup_{\lambda \in [0,\lambda_0]}\lim_{t \to s_0^-}  \frac{2^{n\brac{\frac ts -1}}-\lambda^{n\brac{\frac ts -1}}}{1-(2\lambda)^{n\brac{\frac ts-1}}} =  \sup_{\lambda \in [0,\lambda_0]}\frac{2^{n\brac{\frac{s_0}{s}-1}}-\lambda^{n\brac{\frac{s_0}{s}-1}}}{1-(2\lambda)^{n\brac{\frac{s_0}{s}-1}}} < \infty.
\]
Next
\[
 \sup_{t \in [s,s_0]}  \lim_{\lambda \to 0} \frac{2^{n\brac{\frac ts -1}}-\lambda^{n\brac{\frac ts -1}}}{1-(2\lambda)^{n\brac{\frac ts-1}}} = \sup_{t \in [s,s_0]}  2^{n\brac{\frac{t}{s}-1}} = 2^{n\brac{\frac{s_0}{s}-1}}
\]
and it is easy to see that
\[
 \sup_{t \in [s,s_0]}  \lim_{\lambda \to \lambda_0} \frac{2^{n\brac{\frac ts -1}}-\lambda^{n\brac{\frac ts -1}}}{1-(2\lambda)^{n\brac{\frac ts-1}}} < \infty.
\]
In conclusion, we have shown that for any $\lambda_0 < 2$, $s_0 \in (s,1)$ for any $\lambda \in [0,\lambda_0]$, $t \in (s,s_0]$
\[
 \int_{D(S,r_\lambda)} \int_{\S^n} \frac{|u_t (x) - u_t (y)|^\frac{n}{s}}{|x-y|^{n + \frac{tn}{s}}} \dx\dy \le C_{\lambda_0,s_0} \lambda^{-n(\frac{t}{s}-1)} \int_{\S^n\setminus D(S,r_\lambda)}\int_{\S^n} \frac{|u_t (x) - u_t (y)|^\frac{n}{s}}{|x-y|^{n+\frac{tn}{s}}} \dx\dy.
\]
So for any $\rho_0 < \sqrt{\frac{4}{5}}$ let $\lambda_0< 2$ be such that $\rho_0 = \frac{2\lambda_0}{\sqrt{\lambda_0^2+1}}$. For any $\rho \in (0,\rho_0)$ there exists $\lambda \in (0,\lambda_0)$ such that $\rho = \frac{2\lambda}{\sqrt{\lambda^2+1}}$. We then have $\lambda  = \rho \frac{\sqrt{\lambda^2+1}}{2}$ and $0\le \frac{\sqrt{\lambda^2+1}}{2}\le \sqrt{\frac{5}{4}}$. This gives
\[
 \int_{D(S,\rho)} \int_{\S^n} \frac{|u_t (x) - u_t (y)|^\frac{n}{s}}{|x-y|^{n+\frac{tn}{s}}} \dx\dy \le C_{\lambda_0,s_0} \rho^{-n(\frac{t}{s}-1)} \int_{\S^n\setminus D(S,\rho)}\int_{\S^n} \frac{|u_t (x) - u_t (y)|^\frac{n}{s}}{|x-y|^{n+\frac{tn}{s}}} \dx\dy.
\]
This finishes the proof of \Cref{th:fractionalestimatesmalldiskbyremaining}.
\end{proof}

\section{Existence of \texorpdfstring{$W^{s,\frac{n}{s}}(\Sigma,\n)$}{Wsns}-minimizers if \texorpdfstring{$\pi_{n}(\n)=\{0\}$}{pin}}\label{s:sucks1}
The following theorem is a generalization of \cite[Theorem~5.1]{Sucks1}.

\begin{theorem}\label{th:main1}
Let $n\ge 2$ and $s\in(0,1)$ or $n=1$ and $s\le \frac 12$. Let $\n$ be compact, $\pi_n(\n) = 0$, and let $\Sigma$ be as before. Then there exists a minimizing $W^{t,\frac{n}{s}}$-harmonic map in every homotopy class of $C^0(\Sigma,\n)$ for any $t \in [s,1)$.
\end{theorem}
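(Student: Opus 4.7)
The plan is to handle the easy case $t \in (s,1)$ by the direct method and the delicate case $t=s$ via an approximation procedure combining the compactness theorem of \Cref{th:strongconvoutsideofpoints} with the removability result of \Cref{th:removabilityminimizing}.

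\textbf{Stage 1 (case $t \in (s,1)$).} Since $t \cdot \frac{n}{s} > n$, the fractional Morrey embedding gives a compact embedding $W^{t,\frac{n}{s}}(\Sigma) \hookrightarrow C^{t-s}(\Sigma)$. Fix a homotopy class and take a minimizing sequence $\{u_k\}$ for $E_{t,\frac{n}{s}}$ inside this class. Since $\n$ is compact, $\{u_k\}$ is bounded in $L^\infty$; since $E_{t,\frac{n}{s}}(u_k)$ is bounded, $\{u_k\}$ is bounded in $W^{t,\frac{n}{s}}$. By reflexivity, a subsequence converges weakly in $W^{t,\frac{n}{s}}$ and, by the compact embedding, uniformly to some limit $u_t \in W^{t,\frac{n}{s}}(\Sigma,\n)$. \Cref{la:samehomotopycont} guarantees that $u_t$ lies in the same $C^0$-homotopy class, and weak lower semicontinuity of $E_{t,\frac{n}{s}}$ yields minimality.

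\textbf{Stage 2 (case $t=s$).} Fix a homotopy class $[u_0]$ and, using Stage 1, produce minimizers $u_t \in W^{t,\frac{n}{s}}(\Sigma,\n)$ in $[u_0]$ for every $t \in (s,s_0]$, where $s_0>s$ is the number furnished by \Cref{th:strongconvoutsideofpoints}. Applying that theorem, we extract a subsequence $t_j \to s^+$ and a finite set $A = \{x_1,\dots,x_K\} \subset \Sigma$ such that $u_{t_j} \to u_s$ locally strongly in $W^{s_0,\frac{n}{s}}(\Sigma \setminus A)$, with $u_s$ minimizing $E_{s,\frac{n}{s}}$ in its own homotopy class among maps agreeing with $u_s$ in a neighborhood of $A$. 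Invoking \Cref{th:removabilityminimizing} iteratively at each point of $A$ upgrades $u_s$ to a genuine $E_{s,\frac{n}{s}}$-minimizer on all of $\Sigma$ within its own homotopy class.

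\textbf{Identifying the homotopy class.} The remaining and crucial point is to show $[u_s] = [u_0]$; this is where the hypothesis $\pi_n(\n) = \{0\}$ is used. Choose $r>0$ so small that the balls $B(x_i,r)$ are pairwise disjoint. By \Cref{th:reghomo} we have $u_{t_j},\, u_s \in W^{s_0,\frac{n}{s}}$ on $\Sigma \setminus \bigcup_i B(x_i,r/2)$ uniformly in $j$, so the Morrey embedding upgrades the strong $W^{s_0,\frac{n}{s}}$-convergence away from $A$ to uniform convergence. Hence, for $j$ large, \Cref{la:samehomotopycont} yields a $C^0$-homotopy between $u_{t_j}$ and $u_s$ on $\Sigma \setminus \bigcup_i B(x_i,r/2)$. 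On each disk $B(x_i,r)$, after a small perturbation matching the boundary values of $u_{t_j}$ and $u_s$ on $\partial B(x_i,r)$ (again via the tubular neighborhood of $\n$), the two continuous disk maps with identical boundary values differ by an element of $\pi_n(\n)$; since $\pi_n(\n) = \{0\}$, they are homotopic rel boundary. Gluing these local homotopies to the global one outside the balls gives $u_s \sim u_{t_j}$ in $C^0(\Sigma,\n)$, hence $[u_s] = [u_0]$.

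The main obstacle is the last paragraph: tracking the homotopy class through the concentration procedure and ruling out the formation of nontrivial bubbles at the singular set $A$. The hypothesis $\pi_n(\n) = \{0\}$ is precisely what rules such bubbles out, since any potential bubble is represented by a map $\S^n \to \n$ whose class in $\pi_n(\n)$ is trivial.
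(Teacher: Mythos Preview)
Your proposal is correct and its overall architecture matches the paper's, but the final step---identifying the homotopy class of $u_s$---takes a genuinely different route.

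Both you and the paper handle $t>s$ by the direct method, and both invoke \Cref{th:strongconvoutsideofpoints} together with removability to produce a limit $u_s\in W^{s_0,n/s}(\Sigma,\n)$. The divergence is in how $\pi_n(\n)=\{0\}$ is used. The paper constructs a competitor $v_t$ (equal to $u_s$ on a small ball around each concentration point, equal to $u_t$ outside), uses $\pi_n(\n)=\{0\}$ to conclude $v_t\sim u_t$, and then exploits the \emph{minimality} of $u_t$ to force energy smallness of $u_t$ on each $B(x_i,2\rho)$. Via \Cref{co:strongconvergencesmallenergy} this upgrades to strong $W^{s_0,n/s}$-convergence on \emph{all} of $\Sigma$; both the homotopy class and the minimality of $u_s$ then follow at once. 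You instead first use removability to certify that $u_s$ is a genuine $E_{s,n/s}$-minimizer in its own class (hence continuous via \Cref{th:reghomo}), and then argue purely topologically that $[u_s]=[u_0]$ by gluing disk homotopies. This is a legitimate shortcut: you never need to exclude concentration. The price is that you do not obtain the stronger conclusion that $u_{t_j}\to u_s$ strongly on all of $\Sigma$, which the paper later leans on as one branch of a dichotomy in \Cref{la:5.4}; for the present theorem, however, your argument suffices.

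Two small points worth tightening. First, your ``iterative'' use of \Cref{th:removabilityminimizing} is really a repetition of its \emph{construction} at each point of $A$: the theorem as stated removes a single point within a fixed ball, and its literal conclusion does not iterate to a global statement without re-running the comparison-map construction at each $x_i$ (the paper is equally informal here, sweeping this into \Cref{th:corollaryremovability}). Second, your disk argument on $B(x_i,r)$ tacitly uses that $u_s$ is continuous \emph{inside} these balls, which you only get after establishing that $u_s$ is a minimizer everywhere and then invoking \Cref{th:reghomo} globally---not merely on $\Sigma\setminus\bigcup_i B(x_i,r/2)$ as you wrote.
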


The assumption $\pi_n(\n)=0$ cannot be dropped as shown in an example by Eells and Wood \cite{Eells-Wood1976}: 
\begin{theorem}\label{th:eellswoodexample}
 There exists no harmonic map of degree one from $\mathbb T^2$ to $\S^2$. 
\end{theorem}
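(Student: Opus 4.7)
The plan is to argue by contradiction via the classical Hopf differential: assuming a harmonic map $u\colon\mathbb{T}^2\to\S^2$ of degree one existed, I would derive a contradiction by studying its associated holomorphic quadratic differential. Embedding $\S^2\hookrightarrow\R^3$ isometrically and fixing a global conformal coordinate $z=x+iy$ on $\mathbb{T}^2$, I set $u_z=\tfrac12(u_x-iu_y)$ and $\varphi(z)\coloneqq\langle u_z,u_z\rangle_{\R^3}$. The harmonicity of $u$ (equivalently $\Delta u\perp T_u\S^2$) together with $|u|\equiv 1$ gives $\partial_{\bar z}\varphi=2\langle u_{z\bar z},u_z\rangle=0$, so $\Phi\coloneqq\varphi\,dz^2$ is a holomorphic quadratic differential on $\mathbb{T}^2$. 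Since the canonical bundle of the torus is trivial, $\Phi=c\,dz^2$ for some constant $c\in\mathbb{C}$, and I would split the argument into the cases $c=0$ and $c\neq 0$.

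In the case $c=0$, the vanishing of $\varphi$ forces $|u_x|=|u_y|$ and $u_x\perp u_y$ pointwise, so $u$ is weakly conformal. Identifying $\S^2$ with $\mathbb{CP}^1$, this means $u$ is holomorphic or antiholomorphic. For a non-constant holomorphic map of degree $d$ between closed Riemann surfaces, Riemann--Hurwitz reads $2g(\mathbb{T}^2)-2=d\bigl(2g(\mathbb{CP}^1)-2\bigr)+B$ with $B\ge 0$ the total branching order, i.e.\ $0=-2d+B$. For $d=1$ this demands $B=2$, yet a degree-one holomorphic map between compact Riemann surfaces is birational and hence an isomorphism, contradicting $g(\mathbb{T}^2)=1\neq 0=g(\mathbb{CP}^1)$. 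The antiholomorphic case is handled symmetrically.

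In the case $c\ne 0$, $\varphi$ is nowhere zero on $\mathbb{T}^2$. In conformal coordinates one has the pointwise identity $|\varphi|^2=|\partial u|^2\,|\bar\partial u|^2$, so both $|\partial u|$ and $|\bar\partial u|$ are bounded below by a positive constant on the compact surface $\mathbb{T}^2$, whence $du$ has full rank everywhere. Therefore $u\colon\mathbb{T}^2\to\S^2$ is a local diffeomorphism between closed connected surfaces and, by compactness, a covering map. A covering map of degree one is a homeomorphism, which contradicts $\chi(\mathbb{T}^2)=0\ne 2=\chi(\S^2)$.

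The main conceptual obstacle is justifying that $\Phi$ is genuinely holomorphic, which relies on the smoothness of harmonic maps from a closed surface into a sphere (H\'elein's regularity theorem) so that the above pointwise computation is valid. Once this is in place both cases yield immediate topological contradictions; it is worth noting that the result is sharp in degree, since for every $|d|\ge 2$ the Riemann--Hurwitz obstruction disappears and explicit $\pm$-holomorphic maps $\mathbb{T}^2\to\S^2$ of degree $d$ are constructed from elliptic functions, so the failure really is a degree-one phenomenon.
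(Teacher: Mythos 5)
First, a remark on context: the paper does not prove this statement at all --- it is quoted from Eells--Wood \cite{Eells-Wood1976} as a counterexample justifying the hypothesis $\pi_n(\n)=\{0\}$ --- so your proposal is being measured against the original argument. Your Case $c=0$ is essentially on the right track (granting the standard fact that a weakly conformal harmonic map into $\S^2$ is $\pm$holomorphic, which itself already uses more than the Hopf differential, namely that $\partial u$ and $\bar\partial u$ are holomorphic, resp.\ antiholomorphic, sections of line bundles, so their zero sets are either isolated or everything). The genuine gap is Case $c\neq 0$. In conformal coordinates $\det(du)=|\partial u|^2-|\bar\partial u|^2$, so knowing that both $|\partial u|$ and $|\bar\partial u|$ are nowhere zero does \emph{not} give that $du$ has full rank: the Jacobian vanishes wherever $|\partial u|=|\bar\partial u|$, which is perfectly compatible with $\varphi\neq 0$. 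Concretely, the Gauss maps of constant mean curvature (Wente) tori are harmonic maps $\mathbb{T}^2\to\S^2$ with nonzero \emph{constant} Hopf differential and are nowhere near being covering maps. So the covering-space contradiction never gets off the ground; this is precisely why Eells--Wood is not a two-line consequence of the Hopf differential.

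The missing ingredient is a degree count for the line bundles in which $\partial u$ and $\bar\partial u$ live, which is the heart of the Eells--Wood proof. For harmonic $u$ of degree $d$, $\partial u$ is a holomorphic section of $K_{\mathbb{T}^2}\otimes u^*T^{1,0}\S^2$, of degree $(2g-2)+d\,\chi(\S^2)=2d$, while (the conjugate of) $\bar\partial u$ is a holomorphic section of a bundle of degree $(2g-2)-d\,\chi(\S^2)=-2d$. For $d=1$ the latter is negative, forcing $\bar\partial u\equiv 0$, so $u$ is holomorphic; a degree-one holomorphic map $\mathbb{T}^2\to\mathbb{CP}^1$ is then excluded exactly as in your Case $c=0$. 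This also shows a posteriori that your Case 2 is vacuous in degree one: a nontrivial holomorphic section of a degree-$2$ bundle must vanish somewhere, so the constant $\varphi$ is forced to be $0$. Replacing your Case 2 by this line-bundle argument repairs the proof.
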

For a proof, that the infimum in  Theorem \ref{th:eellswoodexample} may not be attained in every homotopy class see also \cite[(9.2) Proposition]{Lemaire1978}.

\begin{proof}[Proof of Theorem \ref{th:main1}]
Fix a homotopy class $X  \subset C^0(\Sigma,\n)$. 

The statement for $t>s$ is clear.

Let $u_t$ be the minimizing harmonic maps within $X$ for $t \in (s,s_0)$. Here, $s_0 > s$ is taken from \Cref{th:reghomo}.

 We use Theorem \ref{th:strongconvoutsideofpoints} to infer that there is a map $u_s\in W^{s,\frac ns}(\Sigma,\n)$ for which on a subsequence (denoted the same)
\[
 u_t \xrightarrow{t\rightarrow s} u_s \quad \text{ strongly in } W^{s_0,\frac{n}{s}}(\Sigma\setminus \{x_1,\ldots,x_K\}).
\]
Moreover, by Theorem~\ref{th:corollaryremovability} isolated singularities can be removed and we deduce that $u_s\in W^{s_0,\frac ns}(\Sigma,\n)$. In order to conclude we will show that
\[
 u_t \xrightarrow{t\rightarrow s} u_s \quad \text{ strongly in } W^{s_0,\frac{n}{s}}(\Sigma).
\]
We denote $A=\{x_1,\ldots,x_K\}$. Consider $t$ close to $s$. Let $x_i\in A$ and take $\rho$ small enough, so that 
\[
B(x_i,2\Lambda \rho)\cap A = \{x_i\},
\]
where $\Lambda>1$ is the number taken from \Cref{la:smallnessonballs} so that the smallness of $E_{t,\frac ns}(v_t,B(x_i,2\Lambda \rho))<\frac \eps 4$ implies 
\[
 \int_{B(x_i,2\rho)}\int_{\Sigma} \frac{|v_t(x)-v_t(y)|^\frac ns}{|x-y|^{n+\frac ns}} \dif x \dif y <
 \frac \eps 2.
\]
with $2\Lambda \rho = \lambda(2\rho)^\frac 12$ and $\lambda=\lambda(\n,n,s,\eps)$.

We construct a comparison map $v_t$ such that
\[
 v_t = \begin{cases}
                       u_s & \text{ in } B(x_i,\rho)\\
                       u_t & \text{ outside of } B(x_i,2\rho).
                      \end{cases}
\]
In order to define $v_t$, we let $\eta_{B(x_i,\rho)}\in C_c^\infty(B(x_i,2\rho))$ be a standard cut-off function, such that $\eta_{B(x_i,\rho)}\equiv 1$ in $B(x_i,\rho)$.
We claim that for all $x\in\Sigma$ and $t$ sufficiently close to $s$ we have
\begin{equation}\label{eq:usutclosetomanifold}
 \dist((1-\eta_{B(x_i,\rho)}) u_t(x) + \eta_{B(x_i,\rho)} u_s(x),\n) \ll 1,
\end{equation}
This is true, because for $x$ outside of $B(x_i,2\rho)$ and for $x\in B(x_i,\rho)$ the distance is zero. On the remaining annulus $B(x_i,2\rho)\setminus B(x_i,\rho)$ we have $W^{s_0,\frac ns }$ and uniform convergence of $u_t$ to $u_s$ and thus taking $t$ sufficiently close to $s$ we have \eqref{eq:usutclosetomanifold}. Therefore, the map
\begin{equation}\label{eq:tildeutdefinition}
 v_t \coloneqq \begin{cases}
                        u_s(x) & \text{ for } x\in B(x_i,\rho)\\
                        \pi_\n\brac{(1-\eta_{B(x_i,\rho)}) u_t + \eta_{B(x_i,\rho)} u_s} & \text{ for } x\in B(x_i,2\rho)\setminus B(x_i,\rho)\\
                        u_t(x) & \text{ for } x\in \Sigma\setminus B(x_i,2\rho).
                       \end{cases}
\end{equation}
is well defined for $t$ sufficiently close to $s$. We observe that  $v_t\in W^{s_0,\frac{n}{s}}\cap C^0(\Sigma\cap B(x_i,2\Lambda\rho),\n)$. We also have
\begin{equation}\label{eq:utildeconvergenceonball}
 \lim_{t\to s^+}E_{t,\frac ns}(v_t, B(x_i, 2\Lambda\rho)) = E_{s,\frac ns}(u_s,B(x_i,2\Lambda\rho)).
\end{equation}
We observe that as $u_s\in W^{s_0,\frac ns}$ we have
\begin{equation}\label{eq:usestimateonball}
 E_{s,\frac ns}(u_s,B(x_i,2\Lambda\rho)) \le C \lambda^{n\frac{s_0-s}{s}}\rho^{n\frac{s_0-s}{2s}}E_{s_0,\frac ns}(u_s,B(x_i,2\Lambda\rho)) = \mathcal O(\rho^{n\frac{s_0-s}{2s}}) \text{ as } \rho\to0.
\end{equation}
Moreover, since $\pi_n(\n) = \{0\}$ we find that $u_t$ and $v_t$ must be homotopic. Indeed, since they coincide outside of $B(x_i, 2\rho)$ we can glue two copies of $B(x_i,2\rho)$ to an $\S^n$ with $u_t$ on the upper hemisphere $S^n_+$ and $v_t$ on the lower hemisphere $\S^n_-$ to construct a continuous map $u\colon \S^n \to \n$. Since $\pi_n(\n)$ is trivial, there exists a continuous extension $U: \B^{n+1} \to \n$, which readily leads to a homotopy of $u_t$ and $v_t$ on all of $\Sigma$.
\begin{center}
\begin{tikzpicture}
  \shade[ball color = gray!40, opacity = 0.4] (0,0) circle (2cm);
  \draw (0,0) circle (2cm);
  \draw (-2,0) arc (180:360:2 and 0.6);
  \draw[dashed] (2,0) arc (0:180:2 and 0.6);
  \draw (1.5,-1.5) edge[<-, bend right] (3,-2);
  \node[right] at (3,-2) {$v_t$};
  \node[right] at (2,-1.3) {$\S^n_- \simeq B(x_i,2\rho)\subset \Sigma$};
  \draw (1.5,1.5) edge[<-, bend left] (3,2);
  \node[right] at (3,2) {$u_t$}; 
  \node[right] at (2,1.3) {$\S^n_+ \simeq B(x_i,2\rho)\subset \Sigma$};
%  \fill[fill=black] (0,0) circle (1pt);
\end{tikzpicture}
\end{center}
As $u_t$ is a minimizer in its homotopy class, we can compare the energies
\begin{equation}\label{eq:ututildecomparison}
 E_{t,\frac ns}(u_t,\Sigma) \le E_{t,\frac ns}(v_t,\Sigma).
\end{equation}
Decomposing the integrals into integration over $\Sigma\setminus B(x_i,2\rho)\times \Sigma\setminus B(x_i,2\rho)$, $\Sigma\setminus B(x_i,2\rho)$, $B(x_i,2\rho)\times B(x_i,2\rho)$ we obtain
\begin{equation}\label{eq:utildedecomposition}
 \begin{split}
  &\int_{\Sigma\setminus B(x_i,2\rho)}\int_{\Sigma\setminus B(x_i,2\rho)}\frac{|u_t(x) - u_t(y)|^\frac ns}{|x-y|^{n+\frac{nt}{s}}} \dif x \dif y + \int_{\Sigma}\int_{B(x_i,2\rho)} \frac{|u_t(x) - u_t(y)|^\frac ns}{|x-y|^{n+\frac{nt}{s}}} \dif x \dif y\\
  &\le \int_{\Sigma}\int_{\Sigma}\frac{|u_t(x) - u_t(y)|^\frac ns}{|x-y|^{n+\frac{nt}{s}}} \dif x \dif y
 \end{split}
\end{equation}
and
\begin{equation}\label{eq:utdecomposition}
 \begin{split}
&\int_{\Sigma}\int_{\Sigma}\frac{|v_t(x) - v_t(y)|^\frac ns}{|x-y|^{n+\frac{nt}{s}}} \dif x \dif y\\
&\le \int_{\Sigma\setminus B(x_i,2\rho)}\int_{\Sigma\setminus B(x_i,2\rho)}\frac{|v_t(x) - v_t(y)|^\frac ns}{|x-y|^{n+\frac{nt}{s}}} \dif x \dif y + 2 \int_{\Sigma}\int_{B(x_i,2\rho)}\frac{|v_t(x) - v_t(y)|^\frac ns}{|x-y|^{n+\frac{nt}{s}}} \dif x \dif y.
 \end{split}
\end{equation}

Thus, combining \eqref{eq:ututildecomparison}, \eqref{eq:utildedecomposition}, \eqref{eq:utdecomposition}, and $u_t$ and $v_t$ coincide outside $B(x_i,2\rho)$ we get
\begin{equation}\label{eq:ututildecomparisononsmall}
\int_{\Sigma}\int_{B(x_i,2\rho)} \frac{|u_t(x) - u_t(y)|^\frac ns}{|x-y|^{n+\frac{nt}{s}}} \dif x \dif y \le 2 \int_{\Sigma}\int_{B(x_i,2\rho)}\frac{|v_t(x) - v_t(y)|^\frac ns}{|x-y|^{n+\frac{nt}{s}}} \dif x \dif y. 
\end{equation}
From \eqref{eq:utildeconvergenceonball} and \eqref{eq:usestimateonball} we know that for $t$ sufficiently close to $s$ we have
\[
 \int_{B(x_i,2\rho)}\int_{B(x_i,2\rho)}\frac{|v_t(x) - v_t(y)|^\frac ns}{|x-y|^{2n}} \dx \dy \le \mathcal O(\rho^{n\frac{s_0-s}{2s}}) \text{ as } \rho\to 0
\]
and thus choosing $\rho$ sufficiently small we get from \Cref{la:smallnessonballs}
\[
 \int_{\Sigma}\int_{B(x_i,2\rho)}\frac{|v_t(x) - v_t(y)|^\frac ns}{|x-y|^{2n}} \dif x \dif y \le \frac{\eps}{2}.
\]
The latter inequality combined with \eqref{eq:ututildecomparisononsmall} gives for small $\rho$ and $t$ close to $s$
\begin{equation}\label{eq:utballholesmall}
 \int_{\Sigma}\int_{B(x_i,2\rho)} \frac{|u_t(x) - u_t(y)|^\frac ns}{|x-y|^{2n}} \dif x \dif y \le \eps.
\end{equation}
Therefore, applying the regularity result \Cref{co:strongconvergencesmallenergy} we get on a smaller disk $u_t \rightarrow u_s$ strongly in $W^{s_0,\frac ns}(B(x_i,\rho))$.

That is, we have found that $u_t$ converges on all of $\Sigma$ to $u_s$ uniformly, and in $W^{s_0,\frac{n}{s}}$. This readily implies that $u_s$ is a $W^{s,\frac{n}{s}}$-minimizer in $X$.
\end{proof}

\section{Existence of \texorpdfstring{$W^{s,\frac{n}{s}}(\S^n,\n)$}{Wsns}-minimizers if \texorpdfstring{$\pi_{n}(\n) \neq \{0\}$}{pin}}\label{s:sucks2}

In this section we assume that $\Sigma = \S^n$, $\pi_n(\n)\neq \{0\}$, and look for minimizers in the free homotopy classes of $C^0(\S^n,\n)$, which we denote $\pi_0 C^0(\S^n,\n)$. We prove the following theorem. 

\begin{theorem}\label{th:main2}
Let $s\in(0,1)$, $n\ge 2$ or $s\le \frac 12$ and $n=1$. There exists a set of free homotopy classes $X\subset \pi_0 C^0(\S^n,\n)$ with the following properties:
\begin{enumerate}
\item Each $\Gamma_i\in X$ contains a minimizing $W^{s,\frac ns}$-harmonic map.
\item Elements $\Gamma_i \in X$ form a generating set for $\pi_n(\n)$ acted on by $\pi_1(\n)$.
\end{enumerate}
\end{theorem}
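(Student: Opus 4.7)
The plan is to adapt the Sacks--Uhlenbeck generating-class argument to the nonlocal setting, using the tools of Sections~\ref{s:regularity}--\ref{s:balancing} as the main analytic inputs. For each class $\Gamma \in \pi_0 C^0(\S^n,\n)$ I set $\mu(\Gamma) := \inf_{u \in \Gamma} E_{s,\frac{n}{s}}(u)$ and call $\Gamma$ \emph{prime} if it admits no decomposition $\Gamma = \Gamma_0 \cdot \Gamma_1 \cdots \Gamma_K$ in $\pi_n(\n)/\pi_1(\n)$ with $K \ge 1$, each $\Gamma_j$ nontrivial and distinct from $\Gamma$, and $\mu(\Gamma_0) + \sum_{i=1}^K \mu(\Gamma_i) \le \mu(\Gamma)$. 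I take $X$ to be the set of prime classes. That $X$ generates $\pi_n(\n)/\pi_1(\n)$ follows by strong induction on $\mu(\Gamma)$: by \Cref{la:smallenergytrivial} every nontrivial class satisfies $\mu(\Gamma) \ge \eps_0 > 0$, so in any nonprime decomposition each summand has $\mu(\Gamma_j) \le \mu(\Gamma) - \eps_0$, hence (inductively) lies in the subgroup generated by $X$.

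The bulk of the argument is to show each $\Gamma \in X$ contains a $W^{s,\frac{n}{s}}$-minimizer. For $t \searrow s$ I would choose $W^{t,\frac{n}{s}}$-minimizers $u_t \in \Gamma$ (existence being standard for $t>s$ via the direct method and the compact embedding $W^{t,\frac{n}{s}} \hookrightarrow W^{s,\frac{n}{s}}_{\loc}$, with the homotopy class preserved under the subsequential limits by \Cref{la:samehomotopy}). By \Cref{th:strongconvoutsideofpoints} combined with \Cref{th:corollaryremovability}, a subsequence converges to some $u_s \in W^{s_0,\frac{n}{s}}(\S^n,\n)$ strongly in $W^{s_0,\frac{n}{s}}_{\loc}(\S^n \setminus A)$ for a finite set $A = \{x_1,\ldots,x_K\}$, with $u_s$ a $W^{s,\frac{n}{s}}$-minimizer in its own class $\Gamma_0$.

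Around each $x_i$ I would extract a bubble by rescaling: with $\tau_{x_i}$ the inverse stereographic projection sending the origin to the antipode of $x_i$, choose $\lambda_{t,i} \to 0$ so that the rescaled maps $v^i_t := u_t \circ \tau_{x_i}(\lambda_{t,i}\,\cdot\,)$ carry a prescribed fraction of the energy concentrating at $x_i$ on a unit ball. These rescaled maps remain $W^{t,\frac{n}{s}}$-minimizers in their own classes, and iterating \Cref{th:strongconvoutsideofpoints} and \Cref{th:corollaryremovability} yields a bubble $\omega_i \in W^{s,\frac{n}{s}}(\S^n,\n)$ representing some class $\Gamma_i \in \pi_n(\n)/\pi_1(\n)$. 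The uniform $W^{s_0,\frac{n}{s}}$-regularity of \Cref{th:reghomo} forces $u_t$ to be almost constant on appropriately chosen transition annuli $B(x_i, R_{t,i}) \setminus B(x_i, \lambda_{t,i} R_{t,i})$, so by \Cref{la:samehomotopycont} we can cut and paste to obtain the homotopy decomposition $\Gamma = \Gamma_0 \cdot \prod_i \Gamma_i$ together with the energy accounting
\[
 \mu(\Gamma) = \lim_{t \to s^+} E_{t,\frac{n}{s}}(u_t) \;\ge\; E_{s,\frac{n}{s}}(u_s) + \sum_{i=1}^K E_{s,\frac{n}{s}}(\omega_i) \;\ge\; \mu(\Gamma_0) + \sum_{i=1}^K \mu(\Gamma_i).
\]
At this point \Cref{th:fractionalestimatesmalldiskbyremaining} rules out the degenerate situation in which all the energy of $u_t$ concentrates at a single point: applied to each bubble it forces both the bubble and its complement to carry a positive fraction of the total energy, so every nontrivial summand satisfies $\mu(\Gamma_j) < \mu(\Gamma)$ and $\Gamma_j \neq \Gamma$. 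Since $\Gamma$ is prime, the only admissible outcome is the trivial decomposition $K = 0$, $\Gamma_0 = \Gamma$, producing the desired minimizer $u_s \in \Gamma$.

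The hard part will be the rigorous bubble extraction and the homotopy decomposition: without Pohozaev-type monotonicity identities we cannot directly prove ``no energy loss on transition annuli''. My substitute is to use the uniform $W^{s_0,\frac{n}{s}}$-estimate of \Cref{th:reghomo} to propagate Morrey-type decay across scales, combined with \Cref{th:fractionalestimatesmalldiskbyremaining} to guarantee that any nontrivial concentration necessarily splits into at least two pieces of strictly smaller $\mu$-value, so the bubble-tree recursion terminates in finitely many steps by the energy gap of \Cref{la:smallenergytrivial}.
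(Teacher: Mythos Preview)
Your high-level strategy matches the paper's (decompose a class that fails to have a minimizer, induct using the energy gap of \Cref{la:smallenergytrivial}), but the analytic route you propose has two concrete gaps that the paper deliberately avoids.

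First, the claim that the rescaled maps $v^i_t = u_t \circ \tau_{x_i}(\lambda_{t,i}\,\cdot)$ ``remain $W^{t,\frac{n}{s}}$-minimizers'' is false: for $t>s$ the energy $E_{t,\frac{n}{s}}$ is \emph{not} conformally invariant (this is precisely the point of \Cref{s:balancing}), so M\"obius precomposition does not preserve the minimizing property. Hence you cannot feed the rescaled sequences into \Cref{th:strongconvoutsideofpoints} or \Cref{th:reghomo}, and your bubble extraction stalls at the first step. Second, your definition of ``prime'' uses a \emph{sharp} inequality $\sum_j \mu(\Gamma_j)\le \mu(\Gamma)$; verifying this for the actual decomposition is exactly the ``no energy on necks'' statement you admit you cannot prove. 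Your proposed substitute (Morrey-type decay across scales plus \Cref{th:fractionalestimatesmalldiskbyremaining}) is not a proof: the balanced-energy estimate only bounds the small-ball energy by the complement energy of the \emph{original} minimizer $u_t$, not of any rescaled bubble.

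The paper sidesteps both issues by never extracting bubbles. Instead (\Cref{la:5.4}) it works at a \emph{single} concentration point $x_1$ and builds two explicit cut-and-paste competitors: $v_t$ replaces $u_t$ by the already-regular limit $u_s$ on $B(x_1,2\rho)$, and $w_t$ replaces $u_t$ by $u_s\circ\tau$ on the complement. A direct estimate gives the \emph{approximate} splitting $E_{t,\frac{n}{s}}(v_t)+E_{t,\frac{n}{s}}(w_t)=E_{t,\frac{n}{s}}(u_t)+o_\rho(1)$, which suffices because the induction only needs $\#\Gamma_1+\#\Gamma_2\le \#\Gamma_0+\delta$ with both pieces satisfying $\#\Gamma_i<\#\Gamma_0-\tfrac{\theta}{2}$. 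The nontriviality of $\Gamma_2=[w_t]$ follows from the blowup assumption, while the nontriviality of $\Gamma_1=[v_t]$ is where \Cref{th:fractionalestimatesmalldiskbyremaining} is actually used: if $\Gamma_1$ were trivial one would get $E_{t,\frac{n}{s}}(u_t,\S^n\setminus B(x_1,2\rho))=o(1)$, and then the balanced-energy estimate forces $E_{t,\frac{n}{s}}(u_t,B(x_1,2\rho))=o(1)$ too, contradicting concentration. No rescaling, no neck analysis, no sharp energy identity.
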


We state three corollaries of \Cref{th:main2}, or rather \Cref{la:5.4} --- \Cref{th:main2} main's ingredient.

\begin{corollary}\label{co:existencenontrivial}
Let $\pi_{n}(\n) \neq \{0\}$. Then there exists a nontrivial $W^{s,\frac{n}{s}}(\S^n,\n)$-harmonic map.
\end{corollary}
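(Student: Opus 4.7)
The plan is to deduce \Cref{co:existencenontrivial} as a direct consequence of \Cref{th:main2}. The essential observation is that the generating property in part (2) of \Cref{th:main2} forces the existence of a nontrivial class in the set $X$ whenever $\pi_n(\n)$ itself is nontrivial, and then part (1) of \Cref{th:main2} supplies a minimizer in that class.

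First, I would argue by contradiction: suppose every free homotopy class in $X$ is trivial, i.e., contains the constant maps. Free homotopy classes of maps $\S^n \to \n$ correspond to orbits of $\pi_1(\n)$ acting on $\pi_n(\n)$, and the trivial free homotopy class corresponds to the orbit $\{0\} \subset \pi_n(\n)$. Since the $\pi_1(\n)$-action fixes $0$, the subgroup of $\pi_n(\n)$ generated by the $\pi_1(\n)$-orbits of elements of $X$ would then be $\{0\}$, contradicting the assumption $\pi_n(\n) \neq \{0\}$ together with the generating property of $X$ in \Cref{th:main2}(2). Hence there must exist some nontrivial free homotopy class $\Gamma \in X$.

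By \Cref{th:main2}(1) there exists a minimizing $W^{s,\frac{n}{s}}$-harmonic map $u \in \Gamma$. Since $\Gamma$ is not the trivial class, $u$ is not homotopic to a constant, and in particular $u$ is not itself constant. Moreover, \Cref{la:smallenergytrivial} guarantees a uniform positive lower bound $[u]_{W^{s,\frac{n}{s}}(\S^n)} \geq \eps(\n,\S^n) > 0$, since any map with small seminorm is homotopic to a constant. This produces the desired nontrivial $W^{s,\frac{n}{s}}(\S^n,\n)$-harmonic map.

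There is no substantive obstacle in this step — all the hard analytic work (regularity, removability, the balanced-energy estimate preventing one-point concentration) has already gone into establishing \Cref{th:main2}. The only thing to verify carefully is the correspondence between free homotopy classes in $\pi_0 C^0(\S^n,\n)$ and orbits of $\pi_1(\n)$ on $\pi_n(\n)$, which is classical and is precisely the framework under which the statement of \Cref{th:main2}(2) is phrased.
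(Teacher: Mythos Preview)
Your proposal is correct and follows essentially the same contradiction argument as the paper: if every class in $X$ were trivial, the set generated would be $\{0\}$, contradicting $\pi_n(\n)\neq\{0\}$ together with \Cref{th:main2}(2); hence some nontrivial $\Gamma\in X$ carries a minimizer by \Cref{th:main2}(1). Your additional remarks on the orbit correspondence and the appeal to \Cref{la:smallenergytrivial} are fine but not needed for the argument.
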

\begin{proof}
 From \Cref{th:main2} we deduce that if all homotopy classes $\Gamma \in \pi_0 C^0(\S^n,\n)$ which have a $W^{s,\frac ns}$-minimizing harmonic map would be trivial then we would obtain that the set generated by them would be trivial, thus $\pi_n(\n) =\{0\}$, a contradiction. Thus, there must be a nontrivial homotopy class in which there is a minimizer. 
\end{proof}

In particular, we have 
\begin{corollary}\label{co:existencenontrivialforsphere}
There exists a number $k\in \Z$, $k\neq 0$ such that
\[
 \inf \left\{ E_{s,\frac ns}(u,\S^n)\colon\ u\in C^0 \cap W^{s,\frac{n}{s}}(\S^n,\S^n),\ \deg u =k\right\}
\]
is attained.
\end{corollary}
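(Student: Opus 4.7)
The plan is to deduce Corollary \ref{co:existencenontrivialforsphere} as a direct specialization of Theorem \ref{th:main2} (equivalently, Corollary \ref{co:existencenontrivial}) to the target manifold $\mathcal{N} = \mathbb{S}^n$. The key observation is that for this particular target, the set of free homotopy classes of continuous maps $\mathbb{S}^n \to \mathbb{S}^n$ is exactly $\mathbb{Z}$, indexed by the Brouwer degree.

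First I would recall that $\pi_n(\mathbb{S}^n) = \mathbb{Z}$, generated by the identity map. When $n \geq 2$, $\pi_1(\mathbb{S}^n) = \{0\}$ acts trivially on $\pi_n(\mathbb{S}^n)$; when $n = 1$, $\pi_1(\mathbb{S}^1) = \mathbb{Z}$ is abelian and its action on itself by conjugation is also trivial. Consequently, the set of free homotopy classes $\pi_0 C^0(\mathbb{S}^n, \mathbb{S}^n)$ is canonically identified with $\mathbb{Z}$ via the degree, and two continuous maps are homotopic if and only if they have the same degree.

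Next I would apply Theorem \ref{th:main2} with $\mathcal{N} = \mathbb{S}^n$ to obtain a set $X \subset \pi_0 C^0(\mathbb{S}^n, \mathbb{S}^n) \cong \mathbb{Z}$ of free homotopy classes such that (i) every class in $X$ contains a minimizing $W^{s,\frac{n}{s}}$-harmonic map, and (ii) $X$ generates $\pi_n(\mathbb{S}^n) = \mathbb{Z}$ under the trivial $\pi_1$-action. Since the generating condition under a trivial action is just the statement that $X$ generates $\mathbb{Z}$ as an abelian group, $X$ cannot consist only of the trivial class $\{0\}$. Therefore there exists an integer $k \neq 0$ belonging to $X$.

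Finally, by property (i), the homotopy class corresponding to degree $k$ contains a map $u_\star \in W^{s,\frac{n}{s}} \cap C^0(\mathbb{S}^n,\mathbb{S}^n)$ with
\[
 E_{s,\frac{n}{s}}(u_\star,\mathbb{S}^n) = \inf\left\{ E_{s,\frac{n}{s}}(u,\mathbb{S}^n) \colon u \in W^{s,\frac{n}{s}} \cap C^0(\mathbb{S}^n,\mathbb{S}^n),\ u \sim u_\star \right\}.
\]
(The continuity of $u_\star$ comes from the regularity theory, \Cref{th:corollaryremovability} together with \Cref{th:reghomo}, which ensures that minimizers lie in a H\"older class.) Identifying the homotopy class $[u_\star]$ with the set of continuous $W^{s,\frac{n}{s}}$-maps of degree $k$ yields the claim. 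There is no genuine obstacle here beyond checking that the action of $\pi_1(\mathbb{S}^n)$ is trivial so that homotopy classes are genuinely parametrized by $\mathbb{Z}$; the bulk of the work has already been done in establishing Theorem \ref{th:main2}.
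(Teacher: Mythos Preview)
Your argument is correct and matches the paper's approach: the corollary is stated there as an immediate specialization (``In particular, we have'') of \Cref{co:existencenontrivial}/\Cref{th:main2} to $\mathcal{N}=\mathbb{S}^n$, using that free homotopy classes of maps $\mathbb{S}^n\to\mathbb{S}^n$ are indexed by degree. Your explicit verification that the $\pi_1$-action is trivial in both cases $n\ge 2$ and $n=1$ is exactly the missing detail the paper leaves implicit.
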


\begin{corollary}\label{co:existencenontrivialsmallenergy}
Let $s \in (0,1)$, $n\geq 1$ and $\n$ as above. There exists an $\eps = \eps(s,n,\n)$ such that the following holds:

Let 
\[
 \delta \coloneqq \inf\left \{E_{s,\frac{n}{s}}(u): \quad u \in C^\infty(\S^n,\n), \quad \text{$u$ is not homotopic to a constant}\right \}.
\]
Then $\delta > \eps$ and moreover if $\Gamma \in \pi_0C^0(\S^n,\n)$ satisfies
\[
 \inf_{u \in \Gamma \cap W^{s,\frac{n}{s}}(\S^n,\n)} E_{s,\frac ns}(u,\S^n) \leq \delta +\eps,                                                                                                                                                                                                                    
\]
then $\Gamma$ contains an $E^{s,\frac{n}{s}}$-harmonic map. 
\end{corollary}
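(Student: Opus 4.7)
The corollary splits into an energy gap and an existence statement. The gap $\delta > \eps$ is immediate from \Cref{la:smallenergytrivial}: any map $u$ with $E_{s,n/s}(u) < \eps_1^{n/s}$ (where $\eps_1 = \eps_1(\n,\S^n)$ is that lemma's threshold) is homotopically trivial, so $\delta \geq \eps_1^{n/s}$; I will in particular require $\eps < \min(\eps_1^{n/s}, \delta)$.

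For the existence claim, assume $\Gamma$ is non-trivial (otherwise any constant is a minimizer) and $\inf_\Gamma E_{s,n/s} \in [\delta,\delta + \eps]$. For each $t > s$ close to $s$, the compact embedding $W^{t,n/s} \hookrightarrow C^0$ and the direct method produce a $W^{t,n/s}$-minimizer $u_t \in \Gamma$, and comparison with smooth representatives of $\Gamma$ shows $\limsup_{t \to s^+} E_{s,n/s}(u_t) \leq \inf_\Gamma E_{s,n/s} \leq \delta + \eps$. \Cref{th:strongconvoutsideofpoints} and \Cref{th:corollaryremovability} then yield, along a subsequence $t_j \to s^+$, strong convergence $u_{t_j} \to u_s$ in $W^{s_0,n/s}_{\loc}(\S^n \setminus A)$ for some finite set $A$, with $u_s \in W^{s_0,n/s}(\S^n,\n)$ globally regular. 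If $A = \emptyset$, compactness of $\S^n$ promotes this to global strong $W^{s,n/s}$-convergence, so \Cref{la:samehomotopy} places $u_s$ in $\Gamma$ and lower semicontinuity gives $E_{s,n/s}(u_s) = \inf_\Gamma$, producing the desired minimizer. The entire difficulty is therefore to rule out $A \neq \emptyset$.

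Passing the balanced energy inequality \Cref{th:fractionalestimatesmalldiskbyremaining} to the limit $t \to s^+$ (where its prefactor $\rho^{-n(t/s-1)}$ tends to $1$) yields for the weak-star limiting energy measure $\mu$ the bound $\mu(D(y_0,\rho)) \leq \tfrac{C}{1+C}\mu(\S^n)$ for every $y_0 \in \S^n$ and every $\rho < \rho_0$; together with the Sacks--Uhlenbeck-style argument underpinning \Cref{s:sucks2}, this excludes $|A|=1$, so $|A|\geq 2$. At each $x_i \in A$, rescaling $u_{t_j}$ about $x_i$ at the scale where the energy on a small ball first reaches the regularity threshold, then invoking scale invariance of $E_{s,n/s}$, the stereographic identification of $\R^n$ with $\S^n\setminus\{\mathrm{pt}\}$, and \Cref{pr:removepde}, extracts a non-constant $W^{s,n/s}$-harmonic bubble $\omega_i : \S^n \to \n$. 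The minimality of $u_t$ within $\Gamma$ forbids $\omega_i$ from being homotopically trivial: a standard cutoff/pinching construction (in the spirit of \Cref{s:sucks1}) would otherwise produce a competitor in $\Gamma$ with strictly smaller energy than $u_t$. Hence $E_{s,n/s}(\omega_i) \geq \delta$ for every $i$, and the energy identity across the bubble-neck decomposition forces
\[
2\delta \leq |A|\,\delta \leq \sum_i E_{s,n/s}(\omega_i) \leq \liminf_{t \to s^+} E_{s,n/s}(u_t) \leq \delta + \eps,
\]
contradicting $\eps < \delta$. Therefore $A = \emptyset$ and $u_s$ is the sought minimizer.

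The principal technical obstacle is the nonlocal bubble-neck decomposition: extracting each bubble as a genuinely non-constant $W^{s,n/s}$-harmonic sphere map, excluding homotopically-trivial bubbles by a cutoff-competitor argument that exploits minimality of $u_t$, and ensuring the bubble plus smooth-limit energies saturate $\lim E_{s,n/s}(u_{t_j})$ with no loss in the neck regions. All supporting ingredients -- strong convergence off the singular set, the regularity theory, balanced energy control, scale invariance, and removability -- are already in hand; the quantitative neck-energy estimate in the fractional framework is the step requiring the most delicate work.
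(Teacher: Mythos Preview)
Your approach is substantially more involved than the paper's, and it leans on ingredients that the paper neither proves nor claims. The paper derives this corollary directly from \Cref{la:5.4}: take $\eps \coloneqq \theta/2$ with $\theta$ from that lemma (so in particular $\delta \geq \theta > \eps$ by \Cref{la:smallenergytrivial}). If $\Gamma$ is nontrivial with $\#\Gamma \leq \delta + \eps$, apply \Cref{la:5.4} with $\Gamma_0 = \Gamma$. Alternative~(1) gives the minimizer. Alternative~(2) produces a nontrivial $\Gamma_1$ with $\#\Gamma_1 < \#\Gamma - \theta/2 \leq \delta + \eps - \theta/2 = \delta$, contradicting the definition of $\delta$. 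That is the whole argument.

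By contrast, your route has two genuine gaps. First, the inference ``$|A|\geq 2$'' from the balanced energy estimate is incorrect as stated: passing \Cref{th:fractionalestimatesmalldiskbyremaining} to the limit only shows that not \emph{all} the limiting energy can sit in a single small disk; it does not exclude $|A|=1$ with $u_s$ carrying nontrivial energy away from $x_1$. In the paper the balanced estimate is used for a different purpose (Step~4 of the proof of \Cref{la:5.4}): to show that the competitor $v_t$ lies in a nontrivial class, not to count concentration points. Second, and more seriously, you invoke a fractional bubble--neck decomposition with energy identity and homotopically nontrivial rescaling limits. The paper does not establish any of this; indeed, one of its main points is that the Pohozaev-type arguments needed for neck analysis are unavailable when $n/s \neq 2$, and the construction in \Cref{la:5.4} is designed precisely to circumvent bubble extraction. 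Your own final paragraph concedes that the neck-energy estimate is ``the step requiring the most delicate work'' --- but it is not work done anywhere in the paper, so the proposal as written does not constitute a proof from the available ingredients.
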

Observe there is no a priori reason that a minimizing nontrivial homotopy class $\Gamma_0$ exists, i.e., $\Gamma_0$ such that
\[
\inf_{u \in \Gamma_0 \cap W^{s,\frac{n}{s}}(\S^n,\n)} E_{s,\frac ns}(u,\S^n)                                                                                                                                                                                                                   
= \inf\left \{E_{s,\frac{n}{s}}(u)\colon u \in C^\infty(\S^n,\n), \ \text{$u$ is not homotopic to a constant}\right \}.                                                                                                                                                 
\]
See \cite[Proposition 2.4 \& Theorem 1.2]{VanSchaftingen-gapfree}.

Before we begin the proof of \Cref{th:main2} let us recall a few facts about free homotopies and free homotopy decomposition in terms of homotopy groups. For definitions we refer the reader to the book
\cite{Hatcher} and for an explanation for an analyst we refer to \cite[III \textsection 17]{BT82} or \cite[Section 2.1]{VanSchaftingen-gapfree}. Here, we will adopt the notation of Sacks--Uhlenbeck \cite[Section 5]{Sucks1}.

Each $\gamma \in \pi_n(\n)$ determines a free homotopy class of maps from $\S^n$ into $\n$. As free homotopy does not depend on the choice of the base point two elements $\gamma, \gamma' \in \pi_n(\S^n)$ determine the same free homotopy class if and only if they belong to the same orbit 
\[
 \pi_1(\n)\gamma = \pi_1(\n)\gamma'
\]
under the usual action of $\pi_1(\n)$ on $\pi_n(\n)$. We denote by $\Gamma \in \pi_0 C^0(\S^n,\n)$ the free homotopy class that corresponds to $\pi_1(\n)\gamma$. For $\Gamma \in \pi_0 C^0(\S^n,\n)$, we will denote by $\gamma \in \pi_n(\n)$ any element for which $\pi_1(\n)\gamma$ corresponds to $\Gamma$, we will write $\gamma\in\Gamma$. 

For any $\alpha \in \pi_1(\n)$ and $\gamma_1,\, \gamma_2 \in \pi_n(\n)$ we have 
\[
 \alpha(\gamma_1 + \gamma_2) = \alpha \gamma_1 + \alpha \gamma_2.
\]
Moreover, for a given $\Gamma_i = \pi_1(\n)\gamma_1$, for $i=1,2,3$,  
\begin{equation}\label{eq:freehomotopysum}
\gamma_1 + \gamma_2 = \gamma_3 \quad \Rightarrow \quad \pi_1(\n) \gamma_3 \subset \pi_1(\n) \gamma_1 + \pi_1(\n) \gamma_2,
\end{equation}
because for any $\alpha\in \pi_1(\n)$ we have $\alpha \gamma_1 + \alpha\gamma_2 = \alpha \gamma_3$.

We also note that if $\pi_1(\n)$ was trivial then we could drop the action of $\pi_1(\n)$ on $\pi_n(\n)$. 

For a free homotopy class $\Gamma \in \pi_0C^0(\S^n,\n)$ we write 
\[
 \#\Gamma \coloneqq \inf_{u \in \Gamma \cap W^{s,\frac{n}{s}}(\S^n,\n)} E_{s,\frac ns}(u,\S^n).
\]
The following characterization will be needed in the proof.
\begin{lemma}\label{la:minX}
\[
 \#\Gamma = \lim_{t \to s^+} \inf_{u \in \Gamma \cap W^{t,\frac{n}{s}}(\S^n,\n)} E_{t,\frac ns}(u,\S^n).
\]
\end{lemma}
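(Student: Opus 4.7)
My plan is to establish the lemma by proving the two matching inequalities $\#\Gamma\leq\liminf_{t\to s^+}I_t$ and $\limsup_{t\to s^+}I_t\leq \#\Gamma$, where $I_t\coloneqq\inf_{u\in\Gamma\cap W^{t,\frac ns}(\S^n,\n)}E_{t,\frac ns}(u,\S^n)$. That both infima refer to the same homotopy class $\Gamma$ regardless of whether one asks for $W^{s,\frac ns}$ or $W^{t,\frac ns}$ regularity follows from \Cref{pr:homotopiesallsame}: a $W^{t,\frac ns}$-map with $t>s$ automatically lies in $W^{s,\frac ns}$ on the compact manifold $\S^n$, and its smooth approximants (in either norm) determine the same $C^0$-homotopy type.

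For the lower bound, I would exploit the fact that $|x-y|\leq D$ on $\S^n$, where $D$ is the diameter, and factor
\[
 \frac{1}{|x-y|^{2n}}=\frac{|x-y|^{(t-s)\frac ns}}{|x-y|^{n+t\frac ns}}\leq\frac{D^{(t-s)\frac ns}}{|x-y|^{n+t\frac ns}}.
\]
This gives the pointwise comparison $E_{s,\frac ns}(u,\S^n)\leq D^{(t-s)\frac ns}\,E_{t,\frac ns}(u,\S^n)$ for every $u$. Since any competitor for $I_t$ is a competitor for $\#\Gamma$, this yields $\#\Gamma\leq D^{(t-s)\frac ns}\,I_t$, and letting $t\to s^+$ with $D^{(t-s)\frac ns}\to 1$ closes this direction.

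For the upper bound, my plan is to choose $u_\eps\in\Gamma\cap W^{s,\frac ns}$ with $E_{s,\frac ns}(u_\eps,\S^n)\leq\#\Gamma+\eps$, then invoke \Cref{pr:homotopiesallsame}(1)--(2) to produce a smooth representative $\tilde u_\eps\in\Gamma\cap C^\infty(\S^n,\n)$ that is $W^{s,\frac ns}$-close to $u_\eps$, so that $E_{s,\frac ns}(\tilde u_\eps,\S^n)\leq\#\Gamma+2\eps$. Since $\tilde u_\eps$ is smooth it belongs to every $W^{t,\frac ns}$ with $t<1$, and its Lipschitz constant $L$ yields
\[
 \frac{|\tilde u_\eps(x)-\tilde u_\eps(y)|^{\frac ns}}{|x-y|^{n+t\frac ns}}\leq\frac{L^{\frac ns}}{|x-y|^{n+(t-1)\frac ns}},
\]
whose right-hand side, for $t$ in any compact subinterval of $(s,1)$, is bounded by a $t$-independent integrable majorant on $\S^n\times\S^n$ (the exponent $n+(t-1)\frac ns$ stays strictly below $n$). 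Dominated convergence then gives $E_{t,\frac ns}(\tilde u_\eps,\S^n)\to E_{s,\frac ns}(\tilde u_\eps,\S^n)$ as $t\to s^+$, so $I_t\leq E_{t,\frac ns}(\tilde u_\eps,\S^n)$ implies $\limsup_{t\to s^+}I_t\leq\#\Gamma+2\eps$, and sending $\eps\to 0$ concludes. The only delicate point is ensuring that the smooth approximation stays in the same free homotopy class as $u_\eps$, which is precisely the content of \Cref{pr:homotopiesallsame}(2); everything else is a routine dominated-convergence argument combined with the elementary diameter bound.
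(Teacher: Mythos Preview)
Your proof is correct and follows essentially the same approach as the paper: both directions use the diameter bound $E_{s,\frac ns}\leq D^{(t-s)\frac ns}E_{t,\frac ns}$ for the lower estimate, and smooth approximation within $\Gamma$ together with dominated convergence for the upper estimate. Your version is in fact slightly cleaner in the lower bound, since you work with arbitrary competitors for $I_t$ rather than invoking the existence of an $E_{t,\frac ns}$-minimizer.
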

\begin{proof}
Let $u_t \in \Gamma$ be a minimizer in $\Gamma$ for $E_{t,\frac ns}(\cdot,\S^n)$. Then
\[
\#\Gamma  \leq E_{s,\frac ns}(u_t,\S^n) \leq \diam(\S^n)^{(t-s)\frac{n}{s}} E_{t,\frac{n}{s}}(u_t,\S^n),
\]
which readily leads to
\[
\#\Gamma \leq \liminf_{t \to s^+} \inf_{u \in \Gamma \cap W^{t,\frac{n}{s}}(\S^n,\n)} E_{t,\frac ns}(u,\S^n).
\]
On the other hand, by smooth approximation, Lemma \ref{la:smoothapprox}, we can approximate $u$ smoothly in its homotopy group, and thus combining it with the definition we get
\[
 \#\Gamma = \inf_{v \in \Gamma\cap W^{s,\frac{n}{s}}\cap C^\infty(\S^n,\n)} E_{s,\frac ns}(v,\S^n).
\]
For such a smooth $v\in \Gamma\cap W^{s,\frac{n}{s}}\cap C^\infty(\S^n,\n)$,
 \[
   \inf_{u \in \Gamma \cap W^{t,\frac{n}{s}}(\S^n,\n)} E_{t,\frac ns}(u,\S^n)= E_{t,\frac ns}(u_t,\S^n) \leq  E_{t,\frac ns}(v,\S^n).
 \]
 Since $\lim_{t \to s^+}E_{t,\frac ns}(v,\S^n)=E_{s,\frac ns}(v,\S^n)$, we conclude that for any smooth $v\in \Gamma\cap W^{s,\frac{n}{s}}\cap C^\infty(\S^n,\n)$,
\[
 \limsup_{t \to s^+} \inf_{u \in \Gamma \cap W^{t,\frac{n}{s}}(\S^n,\n)} E_{t,\frac ns}(u,\S^n) \leq E_{s,\frac ns}(v,\S^n).
\]
Taking the infimum over all $v\in\Gamma \cap W^{s,\frac ns}\cap C^\infty(\S^n,\n)$,
\[
 \limsup_{t \to s^+} \inf_{u \in \Gamma \cap W^{t,\frac{n}{s}}(\S^n,\n)} E_{t,\frac ns}(u,\S^n) \leq \#\Gamma.
\]
\end{proof}

Before we proceed to the proof of Theorem \ref{th:main2} we would like to note, as mentioned in the introduction, that in the case of harmonic maps, the Theorem can not be improved in general. Futaki constructed in \cite{Futaki80} a manifold with the following property.
\begin{theorem}
 There is a manifold $\n$ with the following property: there exists a homotopy component of $C^\infty(\S^2,\n)$ in which there is no minimizer of the Dirichlet energy.
\end{theorem}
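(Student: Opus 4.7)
This statement is a recollection of Futaki's counterexample \cite{Futaki80}, so the plan is to outline the strategy of that construction rather than to reprove it. The goal is to exhibit a closed Riemannian manifold $\n$ and a free homotopy class $\Gamma \in \pi_0 C^\infty(\S^2,\n)$ such that
\[
\#\Gamma \;\coloneqq\; \inf \left\{ \int_{\S^2} |\nabla u|^2 \;\colon\; u \in \Gamma \cap W^{1,2}(\S^2,\n)\right\}
\]
is not attained. First I would invoke the Sacks--Uhlenbeck bubbling theorem: any Dirichlet-minimizing sequence $\{u_k\} \subset \Gamma$ subconverges, after bubbling at finitely many points, to a weak limit $u_\infty$ together with bubbles $\omega_1,\ldots,\omega_m\colon \S^2 \to \n$ satisfying $[u_\infty] + \sum_i [\omega_i] = \gamma$ in $\pi_2(\n)$ (modulo the $\pi_1(\n)$-action) and the energy identity
\[
\#\Gamma \;=\; \int_{\S^2}|\nabla u_\infty|^2 \;+\; \sum_{i=1}^m \int_{\S^2}|\nabla \omega_i|^2 .
\]
Non-attainment is therefore equivalent to engineering a setting in which this equality can hold only with $m \geq 1$ and $[u_\infty] \neq \gamma$.

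The second step is the choice of $\n$. A natural route, and the one underlying Futaki's example, is to take $\n$ to be a concrete K\"ahler manifold so that a Bogomolny-type topological lower bound for the Dirichlet energy is available, with equality precisely on $\pm$-holomorphic maps. One then selects $\gamma \in \pi_2(\n)$ admitting a decomposition $\gamma = \gamma_1 + \gamma_2$ for which the topological bound is additive,
\[
\#\Gamma \;=\; \#\Gamma_1 + \#\Gamma_2 ,
\]
with each $\#\Gamma_j$ separately attained by a $\pm$-holomorphic representative. Then any minimizing sequence in $\Gamma$ is forced by the energy identity above to split into exactly two bubbles of classes $\Gamma_1$ and $\Gamma_2$, with $u_\infty$ constant.

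The main obstacle, and the crux of Futaki's argument, is to exclude the existence of a smooth (\emph{a fortiori} harmonic) representative of $\gamma$ with energy equal to $\#\Gamma$. The strategy is a rigidity step: a harmonic map on $\S^2$ saturating the Bogomolny inequality on a connected domain is forced to be globally $\pm$-holomorphic; but $\n$ is chosen so that the moduli space of $\pm$-holomorphic rational curves representing $\gamma$ is empty (for instance by an index obstruction, or by the incompatible orientations needed for the two summands $\gamma_1,\gamma_2$ to be simultaneously realized on one connected $\S^2$). Combined with $\#\Gamma = \#\Gamma_1 + \#\Gamma_2 > 0$, this yields that $\#\Gamma$ is approached only through bubbling and can never be achieved by a smooth map in $\Gamma$. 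Verifying this last step requires Futaki's explicit geometry and is the only place where the construction is genuinely delicate; everything else is an instance of the general Sacks--Uhlenbeck dichotomy.
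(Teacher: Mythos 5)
The paper does not prove this statement: it is quoted as Futaki's theorem, with the citation \cite{Futaki80} standing in for the proof. Your proposal likewise defers to Futaki, and your sketch of the mechanism (energy quantization along minimizing sequences, the K\"ahler energy--area inequality with equality exactly for $\pm$-holomorphic maps, a class $\gamma=\gamma_1+\gamma_2$ whose infimum equals the sum of the attained infima of the summands, and the absence of a $\pm$-holomorphic sphere representing $\gamma$ itself) is a faithful account of how that counterexample works, so there is nothing to compare beyond the citation.
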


\Cref{th:main2} follows \Cref{la:5.4} below as in \cite[Proof of Theorem 5.5]{Sucks1}. 
\begin{lemma}\label{la:5.4}
Let $s,\, n, \,\n$ be as in \Cref{th:main2}. There exists a $\theta = \theta(s,n,\n)$ such that the following holds.

Let $\Gamma_0 \in \pi_0C^0(\S^n,\n)$. Then at least one of the following cases holds:
\begin{enumerate}
\item There exists a minimizer of $E_{s,\frac ns}(\cdot,\S^n)$ in $\Gamma_0$.
\item For every $\delta>0$, there exist nontrivial free homotopy classes $\Gamma_1=\pi_1(\n)\gamma_1$ and $\Gamma_2=\pi_1(\n)\gamma_2$, such that 
\[
 \Gamma_0=\pi_1(\n)\gamma_0\subset \pi_1(\n)\gamma_1 + \pi_1(\n)\gamma_2
\]
such that
\begin{equation}\label{eq:sumofgamma12boundedby0plusdelta}
 \#\Gamma_1 + \#\Gamma_2 \leq \#\Gamma_0 + \delta,
\end{equation}
\[
 \theta < \#\Gamma_1 < \#\Gamma_0 -\frac \theta 2,
\]
\[
 \theta < \#\Gamma_2 < \#\Gamma_0 -\frac \theta 2. 
\]
\end{enumerate}
\end{lemma}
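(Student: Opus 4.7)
My plan follows the scheme of Sacks--Uhlenbeck's \cite[Lemma 5.4]{Sucks1}, with the balanced energy estimate \Cref{th:fractionalestimatesmalldiskbyremaining} playing the role of their \cite[Lemma 5.3]{Sucks1}. First, I take minimizers $u_t \in W^{t,\frac{n}{s}}(\S^n,\n)$ of $E_{t,\frac{n}{s}}$ within $\Gamma_0$ for $t > s$, whose existence is standard by the direct method since $W^{t,\frac{n}{s}} \hookrightarrow C^0(\S^n,\n)$ is compact for $t > s$ and free homotopy is preserved under $C^0$-convergence. By \Cref{la:minX}, $E_{t,\frac{n}{s}}(u_t) \to \#\Gamma_0$. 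Applying \Cref{th:strongconvoutsideofpoints}, I pass to a subsequence $t_j \to s^+$ and a finite singular set $A = \{x_1, \ldots, x_K\}$ with $u_{t_j} \to u_s$ strongly in $W^{s_0,\frac{n}{s}}_{\loc}(\S^n\setminus A)$; by \Cref{th:corollaryremovability}, the limit $u_s$ is a genuine $W^{s_0,\frac{n}{s}}(\S^n, \n)$-map. If $K = 0$, convergence is global in $W^{s,\frac{n}{s}}$, so $u_s \in \Gamma_0$ with $E_{s,\frac{n}{s}}(u_s) \leq \liminf E_{t,\frac{n}{s}}(u_{t_j}) = \#\Gamma_0$, yielding case (1).

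Assume $K \geq 1$. At each $x_i$ I perform bubble extraction: choose a concentration scale $\lambda_{t,i} \to 0$ (the smallest radius on which a fixed fraction of the residual energy sits) and center $x_{t,i} \to x_i$; compose with the inverse stereographic projection $\tau$ centered at $x_i$ followed by dilation by $\lambda_{t,i}^{-1}$ on $\R^n$. Up to a further subsequence, the rescaled maps converge locally to a nontrivial $W^{s,\frac{n}{s}}$-harmonic bubble $\omega_i \colon \S^n \to \n$. A universal energy gap $E_{s,\frac{n}{s}}(\omega_i) \geq \theta$, with $\theta = \theta(s,n,\n) > 0$, follows from \Cref{th:reghomo}: below the threshold $\varepsilon$ there, the rescaled map would be H\"older-regular and hence homotopically trivial by \Cref{la:smallenergytrivial}, contradicting the choice of concentration scale.

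The two identities I need are the energy additivity
\[
\#\Gamma_0 = E_{s,\frac{n}{s}}(u_s) + \sum_{i=1}^K E_{s,\frac{n}{s}}(\omega_i)
\]
and the homotopy identity $\gamma_0 = [u_s] + \sum_i [\omega_i]$ in $\pi_n(\n)$ modulo the $\pi_1(\n)$-action. For the homotopy identity I proceed by cut-and-paste: on annular regions $\{\lambda_{t,i} \ll |x-x_i| \ll 1\}$ where $u_t$ is close to a constant (which one can find via a good-radius/Fubini argument on the Gagliardo kernel, in the spirit of \Cref{la:competitor}), I replace $u_t$ by that constant without changing the free homotopy class, so that $u_t$ becomes a continuous connected sum of a rescaled bubble at each $x_i$ and the limit $u_s$ on the complement. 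The energy identity is then an application of a Brezis--Lieb-type decomposition at each bubble scale, combined with matching upper bounds from the explicit comparison map and the strong convergence on $\S^n \setminus A$.

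Finally, I apply \Cref{th:fractionalestimatesmalldiskbyremaining} to each disk $D(x_i, \rho)$ and pass to the limit along $(t,\rho) \to (s^+, 0)$ at a rate keeping $\rho^{-n(t/s-1)}$ bounded: this forces $E_{s,\frac{n}{s}}(\omega_i) \leq \frac{C}{1+C} \#\Gamma_0$, and by symmetry $E_{s,\frac{n}{s}}(u_s) \leq \frac{C}{1+C}\#\Gamma_0$ as well. In particular the scenario $K=1$ with trivial $u_s$ is excluded, since it would give $E_{s,\frac{n}{s}}(\omega_1) = \#\Gamma_0$ in contradiction to the bound above. Hence at least two of the pieces in the decomposition are nontrivial. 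I group them into exactly two summands, for instance $\gamma_1 = [\omega_1]$ and $\gamma_2 = [u_s] + \sum_{i \geq 2} [\omega_i]$, and use \eqref{eq:freehomotopysum} to conclude $\Gamma_0 \subset \pi_1(\n)\gamma_1 + \pi_1(\n)\gamma_2$; after shrinking $\theta$, the bounds $\theta < \#\Gamma_j < \#\Gamma_0 - \theta/2$ follow from the energy gap and the balanced bound above. The main obstacle I foresee is proving the energy and homotopy identities rigorously in the nonlocal setting without a Pohozaev formula: the bubble analysis has to be executed by hand, carefully tracking the non-scale-invariant error terms from \Cref{pr:rescaling} and ensuring the annular ``neck'' energies vanish in the limit.
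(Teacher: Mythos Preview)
Your outline has the right shape, but it rests on two claims that are precisely the hard part in the nonlocal setting and are \emph{not} available in this paper: a full bubble extraction and an energy identity $\#\Gamma_0 = E_{s,\frac{n}{s}}(u_s) + \sum_i E_{s,\frac{n}{s}}(\omega_i)$. In the classical harmonic-map case these come from Pohozaev/monotonicity-type arguments controlling the neck energy; here, as the introduction stresses, no such tools exist for $E_{s,\frac{n}{s}}$ when $\frac{n}{s}\neq 2$, and the paper is written specifically to avoid them. Your final paragraph acknowledges this obstacle but does not indicate how to overcome it --- a Brezis--Lieb argument alone does not rule out neck energy, and without the identity your grouping $\gamma_1=[\omega_1]$, $\gamma_2=[u_s]+\sum_{i\ge 2}[\omega_i]$ does not yield \eqref{eq:sumofgamma12boundedby0plusdelta}. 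Your use of \Cref{th:fractionalestimatesmalldiskbyremaining} is also different from what the paper does and is not obviously correct: passing $(t,\rho)\to(s^+,0)$ with $\rho^{-n(t/s-1)}$ bounded does not by itself bound $E_{s,\frac{n}{s}}(\omega_i)$ away from $\#\Gamma_0$, since the right-hand side $\int_{\S^n\setminus D}\int_{\S^n}$ contains the nonlocal cross term $\int_{\S^n\setminus D}\int_D$, which need not vanish along that diagonal.

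The paper's argument is more elementary and sidesteps bubbles entirely. It works at a \emph{single} concentration point $x_1$, keeps $\rho>0$ fixed but small, and builds two explicit comparison maps: $v_t$ equals $u_t$ outside $B(x_1,2\rho)$ and (a projection of) $u_s$ inside; $w_t$ equals $u_t$ inside $B(x_1,2\rho)$ and $u_s\circ\tau$ outside, where $\tau$ is a diffeomorphism $\S^n\setminus B(x_1,2\rho)\to B(x_1,2\rho)$. Because $u_s\in W^{s_0,\frac{n}{s}}$ by \Cref{th:corollaryremovability}, all the ``glue'' and all the pieces coming from $u_s$ contribute only $O(\rho^{n(s_0-s)/(s_0+s)})$ via \Cref{la:smallnessonballs} and \Cref{la:smallnessoncompliments}; this gives directly
\[
\lim_{t\to s^+}\bigl|E_{t,\frac{n}{s}}(v_t)+E_{t,\frac{n}{s}}(w_t)-E_{t,\frac{n}{s}}(u_t)\bigr|=O(\rho^{n(s_0-s)/(s_0+s)}),
\]
hence $\#\Gamma_1+\#\Gamma_2\le\#\Gamma_0+\delta$ with $\Gamma_1=[v_t]$, $\Gamma_2=[w_t]$ --- no energy identity or neck analysis required. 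Nontriviality of $\Gamma_2$ follows because otherwise $u_t\sim v_t$ and minimality would kill the concentration; nontriviality of $\Gamma_1$ is where \Cref{th:fractionalestimatesmalldiskbyremaining} enters, but by contradiction: if $\Gamma_1$ were trivial then $u_t\sim w_t$, minimality gives $\int_{\S^n\setminus B(x_1,2\rho)}\int_{\S^n}\frac{|u_t(x)-u_t(y)|^{n/s}}{|x-y|^{n+tn/s}}=O(\rho^{n(s_0-s)/s})$, and then the balanced estimate forces the concentration to vanish as well. The upper bounds $\#\Gamma_i<\#\Gamma_0-\theta/2$ then follow purely from $\#\Gamma_1+\#\Gamma_2\le\#\Gamma_0+\delta$ and $\#\Gamma_i>\theta$ (via \Cref{la:smallenergytrivial}), not from the balanced estimate.
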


\begin{proof}[Proof of \Cref{la:5.4}]
Let $\{u_t\}$ be a sequence of $W^{t,\frac{n}{s}}$-maps, which minimize $E_{t,\frac ns}(\cdot,\S^n)$ in $\Gamma_0\cap W^{t,\frac ns}(\S^n,\n)$. Similar to the proof of \Cref{th:main1}, using \Cref{th:strongconvoutsideofpoints} we find that the sequence $\{u_t\}$ is uniformly bounded and using \Cref{th:strongconvoutsideofpoints} we get that on a subsequence $u_t$ converges to $u_s$ strongly in $W^{s_0,\frac{n}{s}}(\S^n \setminus A,\n)$, weakly in $W^{s,\frac ns}(\S^n,\n)$, and locally uniformly in $\S^n \setminus A$, where $A=\{x_1,\ldots,x_K\}$ is a set consisting of finite number of points. Moreover, by \Cref{th:corollaryremovability} we obtain that for an $s_0>s$ we have $u_s\in W^{s_0,\frac ns}(\Sigma,\n)$. Then, we have two possibilities.

\underline{\textsc{Case 1: There are no blowup points.}} For every point $x_i\in A$ and $t$ sufficiently close to $s$, there is a $\rho$ such that
\[
 E_{s,\frac{n}{s}}(u_t,B(x_i,\rho)) \le \eps,
\]
where $\eps>0$ in taken from \Cref{co:strongconvergencesmallenergy}. Then, \Cref{co:strongconvergencesmallenergy} implies that $u_t \xrightarrow{t\rightarrow s} u_s$ in $W^{s_0,\frac ns}(B(x_i,\frac\rho 2),\n)$ and we obtain $u_t \xrightarrow{t\rightarrow s} u_s$ strongly in $W^{s_0,\frac ns}(\S^n,\n)$. This implies that $u_s \in \Gamma_0$ and $u_s$ minimizes the energy $E_{s,\frac ns}(\cdot,\S^n)$.

\underline{\textsc{Case 2: There is a blowup point.}} We assume that there is a point $x_1 \in A$ such that 
\begin{equation}\label{eq:blowuppoint}
 \lim_{\alpha\to\infty}\limsup_{t\to s^+} \int_{B(x_1,2^{-\alpha})}\int_{\S^n} \frac{|u_t(x) - u_t(y)|^\frac ns}{|x-y|^{2n}} \dif x \dif y \ge \eps.
\end{equation}
Similarly as in the proof of \Cref{th:main1} we take a small enough radius $\rho \in (0,1)$, so that 
\begin{equation}\label{eq:smallradiuslambda}
B(x_1,\lambda \rho^\beta)\cap A = \{x_1\}, 
\end{equation}
where $\lambda = \lambda(\n,n,s)$ and $\beta>0$ will be chosen later (in the application of the smallness condition \Cref{la:smallnessonballs} and \Cref{la:smallnessoncompliments}).

We repeat the construction in \eqref{eq:tildeutdefinition}. Let $\eta_{B(x_1,\rho)}\in C_c^\infty(B(x_1,2\rho))$, $0\le \eta_{B(x_1,\rho)}\le 1$, and $\eta_{B(x_1,\rho)}\equiv 1$ in $B(x_1,\rho)$. For $x\in B(x_1,2\rho)$ we define 
\[
\tilde u_{s;t}\coloneqq \pi_\n((1-\eta_{B(x_1,\rho)})u_t + \eta_{B(x_1,\rho)}u_s). 
\]
From \eqref{eq:usutclosetomanifold} we know that the projection is well defined for $t$ sufficiently close to $s$. Let
\[
 v_t \coloneqq \begin{cases}
              u_t \quad &\mbox{ in } \S^n\setminus B(x_1,2\rho)\\
              \tilde u_{s;t}  &\mbox{ in } B(x_1,2\rho)
             \end{cases}
\]
and
\[
w_t \coloneqq \begin{cases}
              \tilde u_{s;t} \circ \tau  &\mbox{in }\S^n \setminus B(x_1,2\rho)\\
              u_t \quad &\mbox{in }B(x_1,2\rho),
             \end{cases}
\]
where $\tau: \S^n \setminus B(x_1,2\rho)\to B(x_1,2\rho)$ is a diffeomorphism, such that $|\nabla \tau| \simeq \frac{1}{\rho}$.

\begin{center}
\begin{tikzpicture}
%BEST SPENT EVENING EVER
% mimimi
\foreach \x/\xtext in {0,1}
\foreach \y/\xtext in {0,1}
  \shade[shift={(7*\x,0)},shift={(0,-6*\y)},ball color = gray!40, opacity = 0.4] (0,0) circle (2cm);

  \foreach \x/\xtext in {0,1}
  \foreach \y/\xtext in {0,1}
  \draw[shift={(7*\x,0)}, shift={(0,-6*\y)}] (-2,0) arc (180:360:2 and 0.6);
  
   \foreach \x/\xtext in {0,1}
   \draw[shift={(7*\x,0)},dashed] (2,0) arc (0:180:2 and 0.6);

   \draw[shift={(0,-6)},dashed] (2,0) arc (0:180:2 and 0.6);
   
\draw[shift={(7,-6)},dashed] (2,0) arc (0:45:2 and 0.6);
\draw[shift={(7,-6)},dashed] (-2,0) arc (180:135:2 and 0.6);
\draw[shift={(7,-6)},dashed,opacity=0.2] (2,0) arc (0:180:2 and 0.6);

\draw[transparent,name path=A1] (-1.29,1.52) arc [start angle=-200, end angle = 20, x radius = 13.75mm, y radius = 3.15mm];

\draw[red!40,shift={(7,0)},name path=A2] (120:2cm) arc (180:360:1 and 0.2);

\draw[gray!50,shift={(7,0)}] (-1.29,1.52) arc [start angle=-200, end angle = 20, x radius = 13.75mm, y radius = 3.15mm];

\draw[red!40,shift={(0,-6)},name path=A31] (120:2cm) arc (180:360:1 and 0.2);

\draw[transparent,shift={(0,-6)},name path=A3] (-1.29,1.52) arc [start angle=-200, end angle = 20, x radius = 13.75mm, y radius = 3.15mm];

\draw[transparent,shift={(7,-6)},name path=A4] (-1.29,1.52) arc [start angle=-200, end angle = 20, x radius = 13.75mm, y radius = 3.15mm];

\draw[gray!60,name path=B1] (50:2cm) arc (50:130:2cm);

\draw[gray!60,shift={(7,0)},name path=B2] (60:2cm) arc (60:120:2cm);

\draw[gray!60,shift={(0,-6)},name path=B3] (50:2cm) arc (50:130:2cm);

\draw[gray!60,shift={(0,-6)},name path=B31] (60:2cm) arc (60:120:2cm);

\draw[gray!60,shift={(7,-6)},name path=B4] (50:2cm) arc (50:130:2cm);

\draw[gray!60,shift={(7,-6)}] (20:2cm) arc (20:160:2cm);
\begin{scope}
        \draw[transparent,fill=green!30,opacity=0.3,](130:2cm) 
            arc (130:410:2)
            arc [start angle=20, end angle = -200,
    x radius = 13.75mm, y radius = 3.15mm];
    \end{scope}

    \begin{scope}
        \draw[shift={(7,0)},fill=yellow!40,opacity=0.3,](120:2cm) 
            arc (120:420:2)
            arc (360:180:1 and 0.2);
    \end{scope}
    
    \begin{scope}
        \draw[shift={(0,-6)},fill=green!30,opacity=0.3,](130:2cm) 
            arc (130:410:2)
            arc [start angle=20, end angle = -200,
    x radius = 13.75mm, y radius = 3.15mm];
    \end{scope}

    \begin{scope}
        \draw[shift={(7,-6)},fill=red!30,opacity=0.3](160:2cm) 
            arc (160:380:2)
            arc (360:180:1.883 and 0.4);
    \end{scope}

    \tikzfillbetween[of=B2 and A2, on layer=ft]{red!40, opacity=0.3};
    
    \tikzfillbetween[of=B1 and A1, on layer=ft]{blue!40, opacity=0.3};

    \tikzfillbetween[of=B31 and A31, on layer=ft]{red!40, opacity=0.3};
    
    \tikzfillbetween[of=B4 and A4, on layer=ft]{blue!40, opacity=0.3};

    \node[below, scale=0.9] at (0,-2.5) {Domain of the map $u_t$};

    \node[shift={(7,0)},below, scale=0.9] at (0,-2.5) {Domain of the map $u_s$};
    
    \node[shift={(0,-6)},below, scale=0.9] at (0,-2.5) {Domain of the map $v_t$};
    
    \node[shift={(7,-6)},below, scale=0.9] at (0,-2.5) {Domain of the map $w_t$};

      \draw[shift={(-0.1,0)}] (125:2cm) edge[<-, bend right] (-2.1,1.9);
    \node[left, scale=0.7] at (-2.1,1.9) {$u_t \Big \rvert_{B(x_1,2\rho)}$}; 

    \draw[shift={(-0.1,0)}] (235:2cm) edge[<-, bend left] (-2.1,-1.9);
    \node[left, scale=0.7] at (-2.1,-1.9) {$u_t \Big\rvert_{\S^n\setminus B(x_1,2\rho)}$};
    
     \draw[shift={(6.9,0.05)}] (90:2cm) edge[<-, bend right] (-2.1,1.9);
     \node[shift={(6.9,0)},left, scale=0.7] at (-2.1,1.9) {$u_s \Big\rvert_{B(x_1,\rho)}$};
    
    \draw[shift={(6.9,0)}] (235:2cm) edge[<-, bend left] (-2.1,-1.9);
    \node[shift={(6.9,0)},left, scale=0.7] at (-2.1,-1.9) {$u_s \Big\rvert_{\S^n\setminus B(x_1,\rho)}$};
    
    \draw[shift={(6.9,-6)}] (125:2cm) edge[<-, bend right] (-2.1,1.9);
     \node[shift={(6.9,-6)},left, scale=0.7] at (-2.1,1.9) {$u_t \Big\rvert_{B(x_1,2\rho)}$};
    
    \draw[shift={(6.9,-6)}] (235:2cm) edge[<-, bend left] (-2.1,-1.9);
    \node[shift={(6.9,-6)},left, scale=0.7] at (-2.1,-1.9) {$u_s \Big\rvert_{ B(x_1,\rho)}\circ\tau$};
    
    \draw[shift={(6.9,-6)}] (155:2cm) edge[<-, bend right] (-3,0);
    \node[shift={(6.9,-6)},below, scale=0.7] at (-3,0) {glue};
    
        \draw[shift={(-0.1,-5.95)}] (90:2cm) edge[<-, bend right] (-2.1,1.9);
     \node[shift={(-0.1,-6)},left, scale=0.7] at (-2.1,1.9) {$u_s \Big\rvert_{B(x_1,\rho)}$};
    
    \draw[shift={(-0.1,-6)}] (235:2cm) edge[<-, bend left] (-2.1,-1.9);
    \node[shift={(-0.1,-6)},left, scale=0.7] at (-2.1,-1.9) {$u_t \Big\rvert_{\S^n\setminus B(x_1,2\rho)}$};
    
    \draw[shift={(-0.1,-6)}] (130:2cm) edge[<-, bend right] (-3,0);
    \node[shift={(-0.1,-6)},below, scale=0.7] at (-3,0) {glue};
    \end{tikzpicture}
\end{center}

Let $\Gamma_1,\, \Gamma_2$ be the free homotopy classes determined respectively by $v_t$ and $w_t$. Then, we have as in \eqref{eq:freehomotopysum}
\[
 \pi_1(\n)\gamma_0 \subset \pi_1(\n)\gamma_1 + \pi_1(\n)\gamma_2.
\]
\underline{\textsc{Step 1.}} We will prove that
\[
 \lim_{t \to s^+} \abs{ E_{t,\frac ns}(v_t, \S^n)+ E_{t,\frac ns}(w_t, \S^n) - E_{t,\frac ns}(u_t, \S^n)}  =  \mathcal O(\rho^{n\frac{s_0-s}{s_0+s}}) \text{ as $\rho \to 0$}.
\]

Indeed, to see this we first decompose $\S^n\times \S^n$ into the compliment of the balls $\S^n \setminus B(x_1,2\rho) \times \S^n\setminus B(x_1,2\rho)$, the product of the balls $B(x_1,2\rho)\times B(x_1,2\rho)$, and the two mixed terms $B(x_1,2\rho) \times \S^n\setminus B(x_1,2\rho)$, $\S^n\setminus B(x_1,2\rho)\times B(x_1,2\rho)$. We recall that
\[
\begin{split}
 v_t &= u_t \quad \text{ on } \S^n\setminus B(x_1,2\rho)\\
 w_t &= u_t \quad \text{ on } B(x_1,2\rho).
\end{split}
 \]
Applying those observations we get
\[
\begin{split}
 &E_{t,\frac ns}(v_t,\S^n) +  E_{t,\frac ns}(w_t,\S^n) -  E_{t,\frac ns}(u_t,\S^n)\\
 &= E_{t,\frac ns}(v_t,B(x_1,2\rho)) + 2 \int_{\S^n\setminus B(x_1,2\rho)}\int_{B(x_1,2\rho)} \frac{|v_t(x) - v_t(y)|^{\frac ns}}{|x-y|^{n+\frac{nt}{s}}} \dif x \dif y + E_{t,\frac ns}(w_t,\S^n\setminus B(x_1,2\rho))\\ 
 &\quad + 2 \int_{\S^n\setminus B(x_1,2\rho)}\int_{B(x_1,2\rho)} \frac{|w_t(x) - w_t(y)|^{\frac ns}}{|x-y|^{n+\frac{nt}{s}}} \dif x \dif y - 2 \int_{\S^n\setminus B(x_1,2\rho)}\int_{B(x_1,2\rho)} \frac{|u_t(x) - u_t(y)|^{\frac ns}}{|x-y|^{n+\frac{nt}{s}}} \dif x \dif y.
 \end{split}
\]

Thus,
\begin{equation}\label{eq:vtwtminusutfirstestimate}
\begin{split}
 \big|E_{t,\frac ns}(v_t,\S^n) &+  E_{t,\frac ns}(w_t,\S^n) -  E_{t,\frac ns}(u_t,\S^n)\big|\\
 &\le 2\int_{\S^n} \int_{B(x_1,2\rho)} \frac{|v_t(x) - v_t(y)|^{\frac ns}}{|x-y|^{n+\frac{nt}{s}}} \dif x \dif y + 2\int_{\S^n} \int_{\S^n\setminus B(x_1,2\rho)} \frac{|w_t(x) - w_t(y)|^{\frac ns}}{|x-y|^{n+\frac{nt}{s}}} \dif x \dif y\\
 &\quad + 2 \int_{\S^n\setminus B(x_1,2\rho)}\int_{B(x_1,2\rho)} \frac{|u_t(x) - u_t(y)|^{\frac ns}}{|x-y|^{n+\frac{nt}{s}}} \dif x \dif y.
 \end{split}
\end{equation}

We begin with the estimate of the last term in \eqref{eq:vtwtminusutfirstestimate}. To do so, we observe that we can decompose
\begin{equation}\label{eq:utdecompostionrestterm}
\begin{split}
\int_{\S^n\setminus B(x_1,2\rho)}&\int_{B(x_1,2\rho)} \frac{|u_t(x) - u_t(y)|^{\frac ns}}{|x-y|^{n+\frac{nt}{s}}} \dif x \dif y\\
&\aleq E_{t,\frac ns}(u_t,B(x_1,3\rho)\setminus B(x_1,\rho)) + \int_{\S^n\setminus B(x_1,3\rho)} \int_{B(x_1,2\rho)} \frac{|u_t(x) - u_t(y)|^{\frac ns}}{|x-y|^{n+\frac{nt}{s}}} \dif x \dif y\\
&\quad+ \int_{B(x_1,3\rho)\setminus B(x_1,2\rho)} \int_{B(x_1,\rho)} \frac{|u_t(x) - u_t(y)|^{\frac ns}}{|x-y|^{n+\frac{nt}{s}}} \dif x \dif y \eqqcolon I_1 + I_2+I_3.
 \end{split}
\end{equation}

\underline{The estimate of $I_2$}: We will first estimate the term $I_2$. We start with noting that for $x\in B(x_1,2\rho)$ we have $u_t(x) = w_t(x)$ and for $y\in\S^n\setminus B(x_1,3\rho)$ we have $u_t(y) = v_t(y)$, thus
\begin{equation}\label{eq:utsplitintowtvt}
 |u_t(x) - u_t(y)|^\frac ns = |w_t(x) - v_t(y)|^\frac ns \aleq |w_t(x) - w_t(z)|^\frac{n}{s} + |w_t(z) - v_t(\tilde z)|^\frac{n}{s} |+ |v_t(\tilde z) - v_t(y)|^\frac{n}{s}. 
\end{equation}
Applying this to $I_2$ and integrating the inequality over $\barint_{B(x_1,3\rho)\setminus B(x_1,2\rho)}$ with respect to $\dif z$ and over $\barint_{B(x_1,2\rho)\setminus B(x_1,\frac{3\rho}{2})}$ with respect to $\dif \tilde z$, gives us

% \[
% \begin{split}
%  2 B(x_1,2\rho) \times \S^n\setminus B(x_1,2\rho) &= 2B(x_1,\rho)\times \S^n\setminus B(x_1,2\rho)\, \cup \,2(B(x_1,2\rho)\setminus B(x_1,\rho)) \times S^n\setminus B(x_1,3\rho)\\
%   &\quad \cup 2(B(x_1,2\rho)\setminus B(x_1,\rho)) \times (B(x_1,3\rho)\setminus B(x_1,2\rho))
%   \end{split}
% \]
\[
 \begin{split}
  I_2 &\aleq \barint_{B(x_1,3\rho)\setminus B(x_1,2\rho)} \int_{\S^n\setminus B(x_1,3\rho)}\int_{B(x_1,2\rho)} \frac{|w_t(x) - w_t(z)|^\frac ns}{|x-y|^{n+\frac{nt}{s}}} \dif x \dif y \dif z\\
  &\quad +  \barint_{B(x_1, 2\rho)\setminus B(x_1,\frac{3\rho}{2})}  \int_{\S^n\setminus B(x_1,3\rho)} \int_{B(x_1,2\rho)} \frac{|v_t(\tilde z) - v_t(y)|^\frac ns}{|x-y|^{n+\frac{nt}{s}}} \dif x \dif y \dif \tilde z\\
  &\quad + \barint_{B(x_1,2\rho)\setminus B(x_1,\frac{3\rho}{2})}\barint_{B(x_1,3\rho)\setminus B(x_1,2\rho)} \int_{\S^n\setminus B(x_1,3\rho)} \int_{B(x_1,2\rho)} \frac{|w_t(z) - v_t(\tilde z)|^\frac ns}{|x-y|^{n+\frac{nt}{s}}} \dif x \dif y \dif z \dif \tilde z\\
  & \eqqcolon I_{2,1} + I_{2,2} + I_{2,3}.
 \end{split}
\]
Now we estimate $I_{2,1}$ and note that $|x-y|\ge \rho$, thus integrating over the $y$ variable
\begin{equation}\label{eq:I21estimate}
 \begin{split}
  I_{2,1}&=\barint_{B(x_1,3\rho)\setminus B(x_1,2\rho)} \int_{\S^n\setminus B(x_1,3\rho)} \int_{B(x_1,2\rho)} \frac{|w_t(x) - w_t(z)|^\frac ns}{|x-y|^{n+\frac{nt}{s}}} \dif x \dif y \dif z\\
  & \aleq \rho^{-\frac{nt}{s}} \barint_{B(x_1,3\rho)\setminus B(x_1,2\rho)} \int_{B(x_1,2\rho)} |w_t(x) - w_t(z)|^\frac{n}{s} \frac{|x-z|^{n+\frac{nt}{s}}}{|x-z|^{n+\frac{nt}{s}}} \dif x \dif z\\
  &\aleq \int_{B(x_1,3\rho)\setminus B(x_1,2\rho)} \int_{B(x_1,2\rho)} \frac{|w_t(x) - w_t(z)|^\frac{n}{s}}{|x-z|^{n+\frac{nt}{s}}} \dif x \dif z,
 \end{split}
\end{equation}
 in the last inequality we used $|x-z|\aleq \rho$. 
 
As for the term $I_{2,2}$ we observe that $|x-y| \ge 2\rho - |y|$
\[
 \begin{split}
  I_{2,2}&= \barint_{B(x_1, 2\rho)\setminus B(x_1,\frac{3\rho}{2} )} \int_{\S^n\setminus B(x_1,3\rho)} \int_{B(x_1,2\rho)}  \frac{|v_t(\tilde z) - v_t(y)|^\frac ns}{|x-y|^{n+\frac{nt}{s}}} \dif x \dif y \dif \tilde z\\
  &\aleq \rho^{n} \barint_{B(x_1, 2\rho)\setminus B(x_1,\frac{3\rho}{2} )} \int_{\S^n\setminus B(x_1,3\rho)} \frac{|v_t(\tilde z) - v_t(y)|^\frac ns}{|2\rho-|y||^{n+\frac{nt}{s}}}  \dif y \dif \tilde z.
 \end{split}
\]
For $y\in \S^n\setminus B(x_1,3\rho)$ and $\tilde z\in B(x_1,2\rho)\setminus B(x_1,\frac{3\rho}{2})$ we have
\[
 |y-\tilde z|\le \dist(y,B(x_1,2\rho)) + \dist(\tilde z,B(x_1,2\rho)) \le 2 \dist(y,B(x_1,2\rho)) \le 2 |2\rho - |y||.
\]
Thus,
\begin{equation}\label{eq:I22estimate}
 I_{2,2} \aleq \int_{B(x_1,2\rho)\setminus B(x_1,\frac{3\rho}{2})} \int_{\S^n\setminus B(x_1,3\rho)} \frac{|v_t(\tilde z) - v_t(y)|^{\frac ns}}{|\tilde z - y|^{n+\frac{nt}{s}}} \dif y \dif \tilde z.
\end{equation}
Next, we estimate $I_{2,3}$. We begin with the observation that $|x-y|\ge \rho$, from which we deduce
\begin{equation}\label{eq:I23estimate}
\begin{split}
 I_{2,3}&=\barint_{B(x_1,2\rho)\setminus B(x_1,\frac{3\rho}{2})}\barint_{B(x_1,3\rho)\setminus B(x_1,2\rho)} \int_{\S^n\setminus B(x_1,3\rho)} \int_{B(x_1,2\rho)} \frac{|w_t(z) - v_t(\tilde z)|^\frac ns}{|x-y|^{n+\frac{nt}{s}}} \dif x \dif y \dif z \dif \tilde z\\
 &\aleq \rho^{2n-n-\frac{nt}{s}} \barint_{B(x_1,2\rho)\setminus B(x_1,\frac{3\rho}{2})}\barint_{B(x_1,3\rho)\setminus B(x_1,2\rho)} |w_t(z) - v_t(\tilde z)|^\frac ns \dif z \dif \tilde z\\
 &\aleq \rho^{-n-\frac{nt}{s}}\int_{B(x_1,2\rho)\setminus B(x_1,\frac{3\rho}{2})} \int_{B(x_1,3\rho)\setminus B(x_1,2\rho)} |\tilde u_{s;t}\circ \tau (z) - \tilde u_{s;t}(\tilde z)|^\frac ns \dif z \dif \tilde z\\
 &\aleq 
 \rho^{-\frac{nt}{s}} \int_{B(x_1,2\rho)\setminus B(x_1,\frac{3\rho}{2})} \int_{B(x_1,2\rho)} |\tilde u_{s;t}(\bar z) - \tilde u_{s;t}(\tilde z)|^{\frac ns} \frac{|\bar z- \tilde z|^{n+\frac{nt}{s}}}{|\bar z- \tilde z|^{n+\frac{nt}{s}}}\dif \bar z \dif \tilde z\\
 &\aleq \rho^n \int_{B(x_1,2\rho)\setminus B(x_1,\frac{3\rho}{2})} \int_{B(x_1,2\rho)} \frac{|\tilde u_{s;t}(\bar z) - \tilde u_{s;t}(\tilde z)|^{\frac ns}}{|\bar z- \tilde z|^{n+\frac{nt}{s}}}\dif \bar z \dif \tilde z,
\end{split}
 \end{equation}
we have used the estimate $|\nabla \tau|\simeq \frac{1}{\rho}$ and $|\bar z - \tilde z| \aleq \rho$.

\underline{The estimate of $I_3$}: Similarly, we can estimate the term $I_3$ by noting that for $x\in B(x_1,\rho)$ we also have $u_t(x) = w_t(x)$ and for $y\in B(x_1,3\rho)\setminus B(x_1,2\rho)$ we have $u_t(y) = v_t(y)$. We use again the inequality \eqref{eq:utsplitintowtvt} and integrate over $\barint_{B(x_1,3\rho)\setminus B(x_1,2\rho)}$ with respect to $\dif z$ and over $\barint_{B(x_1,2\rho)\setminus B(x_1,\frac{3\rho}{2})}$ with respect to $\dif \tilde z$ to get
\begin{equation}\label{eq:I3termestimate}
\begin{split}
 I_3 &\aleq \int_{B(x_1,3\rho)\setminus B(x_1,2\rho)}\int_{B(x_1,\rho)} \frac{|w_t(x) - w_t(z)|^{\frac ns}}{|x-z|^{n+\frac{nt}{s}}} \dif x \dif z\\
 &\quad + \int_{B(x_1,2\rho)\setminus B(x_1,\frac{3\rho}{2})}\int_{B(x_1,3\rho)\setminus B(x_1,2\rho)} \frac{|v_t(\tilde z) - v_t(y)|^{\frac ns}}{|\tilde z-y|^{n+\frac{nt}{s}}} \dif y \dif \tilde z\\
 &\quad+ \rho^n \int_{B(x_1,2\rho)\setminus B(x_1,\frac{3\rho}{2})} \int_{B(x_1,2\rho)} \frac{|\tilde u_{s;t}(\bar z) - \tilde u_{s;t}(\tilde z)|^\frac{n}{s}}{|\bar z - \tilde z|^{n+\frac{nt}{s}}} \dif \bar z \dif \tilde z . 
\end{split}
 \end{equation}

 \underline{The estimate of $I_1$}: As for the term $I_1$ we note that on $B(x_1,3\rho)\setminus B(x_1, \rho)$ we have strong convergence of $u_t$ and thus as in \eqref{eq:usestimateonball}
\begin{equation}\label{eq:II4estimate}
\begin{split}
 \lim_{t\to s^+} E_{t,\frac ns} (u_t,B(x_1,3\rho)\setminus B(x_1, \rho))
 &= E_{s,\frac ns}(u_s,B(x_1,3\rho)\setminus B(x_1, \rho)) \\
 &\le C \rho^{n\frac{s_0 - s}{s}} E_{s_0,\frac ns}(u_s,B(x_1,2\rho))
 =\mathcal O(\rho^{n\frac{s_0 - s}{s}}) \text{ as } \rho\to 0.
\end{split}
 \end{equation}
Finally, combining \eqref{eq:vtwtminusutfirstestimate} with \eqref{eq:utdecompostionrestterm}, \eqref{eq:I21estimate}, \eqref{eq:I22estimate}, and \eqref{eq:I23estimate} we obtain
\begin{equation}\label{eq:vtwtminusutalmostfinalestimate}
\begin{split}
 &\big|E_{t,\frac ns}(v_t,\S^n) +  E_{t,\frac ns}(w_t,\S^n) -  E_{t,\frac ns}(u_t,\S^n)\big|\\
 &\aleq \int_{\S^n} \int_{B(x_1,2\rho)} \frac{|v_t(x) - v_t(y)|^{\frac ns}}{|x-y|^{n+\frac{nt}{s}}} \dif x \dif y + \int_{\S^n} \int_{\S^n\setminus B(x_1,2\rho)} \frac{|w_t(x) - w_t(y)|^{\frac ns}}{|x-y|^{n+\frac{nt}{s}}} \dif x \dif y\\
 &\quad  + \rho^n \int_{B(x_1,2\rho)\setminus B(x_1,\frac{3\rho}{2})} \int_{B(x_1,2\rho)} \frac{|\tilde u_{s;t}(x) - \tilde u_{s;t}(y)|^\frac{n}{s}}{|x - y|^{n+\frac{nt}{s}}} \dif x \dif y + E_{t,\frac ns}(u_t,B(x_1,3\rho)\setminus B(x_1,\rho))\\
 &\eqqcolon II_1 + II_2 + II_3 + II_4.
\end{split}
\end{equation}
The last term $II_4$ is just $I_1$ and was estimated in \eqref{eq:II4estimate}.

\underline{The estimate of $II_1$}: In order to estimate the term $\int_{\S^n} \int_{B(x_1,2\rho)} \frac{|v_t(x) - v_t(y)|^{\frac ns}}{|x-y|^{n+\frac{nt}{s}}} \dif x \dif y$ we will use \Cref{la:smallnessonballs}. Let us assume that $t\le 2s$ and let $\alpha$, $\beta$, and $\lambda$ be from \Cref{la:smallnessonballs}. We have assumed in \eqref{eq:smallradiuslambda} that $B(x_1,\lambda \rho^\beta)\cap A = \{x_1\}$, thus $v_t$ converges to $u_s$ strongly in $W^{s,\frac ns}$ on this ball and we have
\[
\begin{split}
 \lim_{t\to s^+} E_{t,\frac ns}(v_t,B(x_1,\lambda\rho^\beta)) 
 &= E_{s,\frac ns}(u_s,B(x_1,\lambda\rho^\beta))\\
 &\approx \brac{\lambda \rho^\beta}^{n\frac{s_0-s}{s}}E_{s_0,\frac ns}(u_s,B(x_1,\lambda \rho^\beta))\\
 &=\mathcal O(\rho^\alpha) \text{ as } \rho\to0,
 \end{split}
 \]
 where $\alpha = \beta n \frac{s_0-s}{s}$, recall from \Cref{la:smallnessonballs} that we also have $\beta = \frac12 \brac{1-\frac \alpha n}$, thus we take
 \[
  \alpha = n\frac{s_0-s}{s_0+s} \quad \text{and} \quad \beta = \frac12 \brac{\frac{s_0+s-n(s_0-s)}{s_0+s}},
 \]
here we can assume without loss of generality that $s_0 < s\frac{n+1}{n-1}$ and thus $\beta>0$. Therefore, we obtain
\[
 \lim_{t\to s^+} E_{t,\frac ns}(v_t,B(x_1,\lambda\rho^\beta)) = \mathcal O(\rho^{n\frac{s_0-s}{s_0+s}}) \text{ as } \rho\to 0.
\]
This implies, by \Cref{la:smallnessonballs},
\begin{equation}\label{eq:II1estimate}
 \int_{\S^n} \int_{B(x_1,2\rho)} \frac{|v_t(x) - v_t(y)|^{\frac ns}}{|x-y|^{n+\frac{nt}{s}}} \dif x \dif y = \mathcal O(\rho^{n\frac{s_0-s}{s_0+s}}) \text{ as } \rho\to0.
\end{equation}

\underline{The estimate of $II_2$}: Similarly, in order to estimate the second term on the right-hand side of \eqref{eq:vtwtminusutalmostfinalestimate} we will use \Cref{la:smallnessoncompliments} for $t\le 2s$ with $\sigma = n\frac{s_0-s}{s}$ and $\theta = 2+\frac{s_0-s}{s}>1$. We note that $\rho$ can be taken sufficiently small to ensure that $B(x_1,\tilde \lambda^{-1}\rho^\theta)\subset B(x_1,2\rho)$ (here $\tilde \lambda = \tilde \lambda(\n,n,s)$ is taken from \Cref{la:smallnessoncompliments}). We recall that $B(x_1,2\rho)\subset B(x_1,\lambda\rho^\beta)$ and by \eqref{eq:smallradiuslambda} we know that $B(x_1,2\rho)\setminus B(x_1,\tilde \lambda^{-1}\rho^\theta)\cap A = \emptyset$, thus $u_t$ converges strongly to $u_s$ in $B(x_1,2\rho)\setminus B(x_1,\tilde \lambda^{-1}\rho^\theta)$. We have
\begin{equation}\label{eq:limitutandtau}
\begin{split}
\lim_{t\to s^+} &E_{t,\frac ns} (w_t, \S^n\setminus B(x_1,\tilde \lambda^{-1}\rho^\theta))\\
&= \lim_{t\to s^+} E_{t,\frac ns}(\tilde u_{s;t}\circ \tau ,\S^n\setminus B(x_1,2\rho)) + \lim_{t\to s^+}E_{t,\frac ns}(u_t,B(x_1,2\rho)\setminus B(x_1,\tilde \lambda^{-1}\rho^\theta))\\
&\aleq \lim_{t\to s^+} \int_{B(x_1,2\rho} \int_{B(x_1,2\rho)} \frac{|\tilde u_{s;t}(\tilde x) - \tilde u_{s;t}(\tilde y)|^{\frac ns}}{|\tau^{-1}(\tilde x) - \tau^{-1}(\tilde y)|^{n+\frac{nt}{s}}} |\nabla \tau|^{2n}\dif \tilde x \dif \tilde y\\
&\quad + E_{s,\frac ns}(u_s,B(x_1,2\rho)\setminus B(x_1,\tilde \lambda^{-1}\rho^\theta))\\
 &\aleq \lim_{t\to s^+} \int_{B(x_1,2\rho)} \int_{B(x_1,2\rho)} \frac{|\tilde u_{s;t}(\tilde x) - \tilde u_{s;t}(\tilde y)|^{\frac ns}}{|\tilde x - \tilde y|^{n+\frac{nt}{s}}} |\nabla \tau|^{n\brac{\frac{t-s}{s}}}\dif \tilde x \dif \tilde y\\
 &\quad + E_{s,\frac ns}(u_s,B(x_1,2\rho)\setminus B(x_1,\tilde \lambda^{-1}\rho^\theta))\\
 &\approx E_{s,\frac ns} (u_s,B(x_1,2 \rho)) = \mathcal O(\rho^{n\frac{s_0-s}{s}}) \text{ as } \rho\to 0.
 \end{split}
\end{equation}

By \Cref{la:smallnessoncompliments} this implies the smallness of
\begin{equation}\label{eq:II2estimate}
 \lim_{t\to s^+} \int_{\S^n}\int_{\S^n\setminus B(x_1,2\rho)} \frac{|w_t(x) - w_t(y)|^{\frac ns}}{|x-y|^{n+\frac{nt}{s}}} \dif x \dif y = \mathcal O(\rho^{n\frac{s_0-s}{s}}) \text{ as } \rho\to 0.
\end{equation}

\underline{The estimate of $II_3$}: We immediately obtain
\begin{equation}\label{eq:II3estimate}
 \lim_{t\to s^+} \rho^n \int_{B(x_1,2\rho)\setminus B(x_1,\frac{3\rho}{2})}\int_{B(x_1,2\rho)} \frac{|\tilde u_{s;t}(x) - \tilde u_{s;t}(y)|^\frac ns}{|x-y|^{n+\frac{nt}{s}}} \dif x \dif y = \mathcal O(\rho^{n\brac{\frac{s_0-s}{s}+1}}) = \mathcal O(\rho^{n\frac{s_0-s}{s}}) \quad \text{ as } \rho\to 0.
\end{equation}

Finally, we note that $\mathcal O(\rho^{n\frac{s_0-s}{s}}) = \mathcal O(\rho^{n\frac{s_0-s}{s_0+s}})$ as $\rho\to 0$. Thus, passing with $t$ to the limit in \eqref{eq:vtwtminusutalmostfinalestimate} and using \eqref{eq:II1estimate}, \eqref{eq:II2estimate}, \eqref{eq:II3estimate}, and \eqref{eq:II4estimate} we obtain
\[
 \lim_{t\to s^+} \big|E_{t,\frac ns}(v_t,\S^n) +  E_{t,\frac ns}(w_t,\S^n) -  E_{t,\frac ns}(u_t,\S^n)\big| = \mathcal O(\rho^{n\frac{s_0-s}{s_0+s}}) \text { as } \rho\to 0.
\]

\underline{\textsc{Step 2.}} Here, we verify the inequality \eqref{eq:sumofgamma12boundedby0plusdelta}.

From Step 1 we obtain
\[
 \liminf_{t\to s^+}\brac{ E_{t,\frac ns}(v_t, \S^n) + E_{t,\frac ns}(w_t, \S^n)} = \liminf_{t\to s^+} E_{t,\frac ns}(u_t,\S^n) + o(1) \text{ as } \rho\to 0.
\]

In particular, we have by \Cref{la:minX},
\[\begin{split}
 \#\Gamma_1 + \#\Gamma_2 \leq& \liminf_{t \to s^+} E_{t,\frac ns}(v_t,\S^n) + \liminf_{t \to s^+}E_{t,\frac ns}(w_t,\S^n)\\
 \leq &
 \liminf_{t \to s^+} E_{t,\frac ns}(u_t,\S^n) + o(1) = \#\Gamma_0 + o(1) \text{ as } \rho\to0.
 \end{split}
\]
Choosing $\rho \ll \delta$ we obtain
\begin{equation}\label{eq:X1X2Xpdelta}
 \#\Gamma_1 + \#\Gamma_2 \leq \#\Gamma_0 + o(1) \le \#\Gamma_0 + \delta.
\end{equation}

\underline{\textsc{Step 3.} $\Gamma_2$ is nontrivial.} Indeed, if $\Gamma_2$ was trivial then $w_t$ would be homotopic to a constant, and by definition of $w_t$ this would imply that there is a homotopy between $u_t$ and $\tilde u_{s;t}$ in $B(x_1,2\rho)$. But $u_t$ and $\tilde u_{s;t}$ coincide outside on $\partial B(x_1,2\rho)$, so we would obtain that $u_t\sim v_t$. Since $u_t$ is a minimizer in its homotopy class we would get
\[
 E_{t,\frac{n}{t}}(u_t,\S^n) \le E_{t,\frac{n}{t}}(v_t,\S^n). 
\]
Similarly as in \eqref{eq:utballholesmall}, in the proof of \Cref{th:main1}, for small enough $\rho$, this would lead to the estimate
\[
\lim_{t\to s^+}\int_{\S^n}\int_{B(x_1,2\rho)} \frac{|u_t(x) - u_t(y)|^\frac ns}{|x-y|^{2n}} \dif x \dif y \le \eps,
\]
which is a a contradiction to \eqref{eq:blowuppoint}.

\underline{\textsc{Step 4.} $\Gamma_1$ is nontrivial}: Assume that $\Gamma_1$ is trivial, then $v_t$ is homotopic to a constant. That gives us a homotopy between then $u_t$ on $\S^n\setminus B(x_1,2\rho)$ and $\tilde u_{s;t}$ on $B(x_1,2\rho)$. Thus, we obtain that $u_t$ is homotopically equivalent to $\tilde u_{s;t} \circ \tau$ in $\S^n \setminus B(x_1,2\rho)$. Thus, $u_t$ is homotopic to $w_t$ and from the minimality of $u_t$ we get
\begin{equation}\label{eq:utlessthanwt}
 E_{t,\frac ns}(u_t,\S^n) \le E_{t,\frac ns}(w_t ,\S^n).
\end{equation}
Noting again, that 
\[
 \S^n\times \S^n = \brac{B(x_1,2\rho) \times B(x_1,2\rho)} \cup \brac{\S^n\setminus B(x_1,2\rho) \times B(x_1,2\rho)} \cup \brac{\S^n \times \S^n\setminus B(x_1,2\rho)}. 
\]
From \eqref{eq:utlessthanwt} and $u_t = w_t$ on $B(x_1,2\rho)$, we have
\[
\begin{split}
&\int_{\S^n\setminus B(x_1,2\rho)} \int_{B(x_1,2\rho)} \frac{|u_t(x) - u_t(y)|^\frac ns}{|x-y|^{n+\frac{nt}{s}}} \dif x \dif y + \int_{\S^n} \int_{\S^n\setminus B(x_1,2\rho)} \frac{|u_t(x) - u_t(y)|^\frac ns}{|x-y|^{n+\frac{nt}{s}}} \dif x \dif y\\
&\le   \int_{\S^n\setminus B(x_1,2\rho)} \int_{B(x_1,2\rho)} \frac{|w_t(x) - w_t(y)|^\frac ns}{|x-y|^{n+\frac{nt}{s}}} \dif x \dif y+ \int_{\S^n} \int_{\S^n\setminus B(x_1,2\rho)} \frac{|w_t(x) - w_t(y)|^\frac ns}{|x-y|^{n+\frac{nt}{s}}} \dif x \dif y,
\end{split}
\]
we also have
\[
\begin{split}
&\int_{\S^n} \int_{\S^n\setminus B(x_1,2\rho)} \frac{|u_t(x) - u_t(y)|^\frac ns}{|x-y|^{n+\frac{nt}{s}}} \dif x \dif y\\
&\le  \int_{\S^n\setminus B(x_1,2\rho)} \int_{B(x_1,2\rho)} \frac{|u_t(x) - u_t(y)|^\frac ns}{|x-y|^{n+\frac{nt}{s}}} \dif x \dif y + \int_{\S^n} \int_{\S^n\setminus B(x_1,2\rho)} \frac{|u_t(x) - u_t(y)|^\frac ns}{|x-y|^{n+\frac{nt}{s}}} \dif x \dif y
\end{split}
\]
and by the symmetry of the integral
\[
 \begin{split}
  &\int_{\S^n\setminus B(x_1,2\rho)} \int_{B(x_1,2\rho)} \frac{|w_t(x) - w_t(y)|^\frac ns}{|x-y|^{n+\frac{nt}{s}}} \dif x \dif y+ \int_{\S^n} \int_{\S^n\setminus B(x_1,2\rho)} \frac{|w_t(x) - w_t(y)|^\frac ns}{|x-y|^{n+\frac{nt}{s}}} \dif x \dif y\\
  & \le 2 \int_{\S^n} \int_{\S^n\setminus B(x_1,2\rho)} \frac{|w_t(x) - w_t(y)|^\frac ns}{|x-y|^{n+\frac{nt}{s}}} \dif x \dif y.
 \end{split}
\]
Thus,
\[
\int_{\S^n\setminus B(x_1,2\rho)} \int_{\S^n} \frac{|u_t(x) - u_t(y)|^{\frac ns}}{|x-y|^{n+\frac{nt}{s}}} \dif x \dif y 
\leq 2\int_{\S^n\setminus B(x_1,2\rho)} \int_{\S^n} \frac{|w_t(x) - w_t(y)|^{\frac ns}}{|x-y|^{n+\frac{nt}{s}}} \dif x \dif y. 
\]
In order to estimate the latter one, we will again use \Cref{la:smallnessoncompliments}. We recall from \textsc{Step 1}, \eqref{eq:limitutandtau} that for a $\lambda = \lambda(\n,n,s)>1$ we have 
\begin{equation}\label{eq:energycomparisonwithwt}
 \lim_{t\to s^+} E_{t,\frac ns} (w_t,\S^n\setminus B(x_1,\lambda^{-1}\rho^{2+\frac{s_0-s}{s}})) = \mathcal O(\rho^{n\frac{s_0-s}{s}}) \text { as } \rho\to 0.
 \end{equation}
Now from \Cref{la:smallnessoncompliments} the latter implies  
\[
 \lim_{t\to s^+} \int_{\S^n\setminus B(x_1,2\rho)} \int_{\S^n} \frac{|w_t(x) - w_t(y)|^{\frac ns}}{|x-y|^{n+\frac{nt}{s}}} \dif x \dif y = \mathcal O(\rho^{n\frac{s_0-s}{s}}) \text{ as } \rho\to0.
\]
Thus, passing with $t$ to $s$ in \eqref{eq:energycomparisonwithwt} we obtain
\[
\lim_{t\to s^+} \int_{\S^n\setminus B(x_1,2\rho)} \int_{\S^n} \frac{|u_t(x) - u_t(y)|^{\frac ns}}{|x-y|^{n+\frac{nt}{s}}} \dif x \dif y = \mathcal O(\rho^{n\frac{s_0-s}{s}}) \text{ as } \rho\to0.
\] 
That is, if $\Gamma_1$ was trivial, then for all $t$ sufficiently close to $s$ we would have
\begin{equation}\label{eq:22}
 \int_{\S^n\setminus B(x_1,2\rho)} \int_{\S^n} \frac{|u_t(x) - u_t(y)|^{\frac ns}}{|x-y|^{n+\frac{nt}{s}}} \dif x \dif y\le C \rho^{n\frac{s_0-s}{s}.}
\end{equation}
Then combining this with \Cref{th:fractionalestimatesmalldiskbyremaining} we obtain for all $t$ sufficiently close to $s$
\[
\begin{split}
\int_{B(x_1,2\rho)} \int_{\S^n} \frac{|u_t(x) - u_t(y)|^{\frac ns}}{|x-y|^{n+\frac{nt}{s}}} \dif x \dif y &\le C \rho^{-n\brac{\frac ts -1}}\int_{\S^n\setminus B(x_1,2\rho)} \int_{\S^n} \frac{|u_t(x) - u_t(y)|^{\frac ns}}{|x-y|^{n+\frac{nt}{s}}} \dif x \dif y\\
&\aleq \rho^{n\frac{s_0-t}{s}} \ll \eps.
\end{split}
\]
This contradicts \eqref{eq:blowuppoint}, so $\Gamma_1$ has to be also nontrivial.

\underline{\textsc{Step 5.} Estimate of $\#\Gamma_1$ and $\#\Gamma_2$.} Now since both $\Gamma_1$ and $\Gamma_2$ are nontrivial, we must have $\#\Gamma_1$, $\#\Gamma_2 > \theta$ for some $\theta > 0$,  since by \Cref{la:smallenergytrivial} we know that very small energy implies trivial homotopy class.

Moreover, choosing $\delta < \frac{\theta}{2}$ we also get from \eqref{eq:X1X2Xpdelta} that \[\#\Gamma_1 \leq \#\Gamma_0 + \delta - \#\Gamma_2 < \#\Gamma_0+\delta -\theta \le \#\Gamma_0 - \frac \theta 2\]
and similarly $\#\Gamma_2 < \#\Gamma_0-\frac{\theta}{2}$.
\end{proof}

The proof of \Cref{th:main2} follows now exactly as in \cite[Theorem 5.5]{Sucks1}, but for reader's convenience we repeat it here.

\begin{proof}[Proof of \Cref{th:main2}]
 Let $\theta>0$ be the number from \Cref{la:smallenergytrivial} such that $E_{s,\frac ns}(u,\S^n)<\theta$ implies trivial homotopy class. Without loss of generality, we may assume that $\theta$ is also the number from \Cref{la:5.4}. Let $P$ be the subgroup generated by the elements $\Gamma_i\in X$. Assume on the contrary that $P$ does not generate the whole $\pi_n(\n)$ acted on by $\pi_1(\n)$. Then, we would be able to find a class $\tilde \Gamma \not \in P$, such that for any $\Gamma'$ with $\#\Gamma'< \# \tilde \Gamma - \frac \theta2$ we have $\Gamma'\in P$.
  
Since there are no minimizing $W^{s,\frac ns}$-harmonic maps in $\tilde\Gamma$, applying \Cref{la:5.4} to $\tilde\Gamma$ we obtain that there exists two other nontrivial homotopy classes $\tilde \Gamma_1$ and $\tilde \Gamma_2$ such that
\[
 \pi_1(\n)\tilde \gamma \subset \pi_1(\n) \tilde \gamma_1 + \pi_1(\n) \tilde \gamma_2, \quad \#\tilde\Gamma_1 + \#\tilde \Gamma_2 < \#\tilde \Gamma + \frac \theta2, \quad \text{ and } \quad \#\tilde\Gamma_1, \, \#\tilde \Gamma_2 >\theta.
\]
This implies that $\#\tilde \Gamma_1,\, \#\tilde \Gamma_2< \#\tilde\Gamma - \frac \theta2$, so both sets $\pi_1(\n)\tilde \gamma_1,\, \pi_1(\n)\tilde \gamma_2 \in P$. Thus, we also have
\[
 \pi_1(\n)\tilde \gamma \subset \pi_1(\n) \tilde \gamma_1 + \pi_1(\n) \tilde \gamma_2 \subset P.
\]

\end{proof}

\appendix
\section{Observations on the smallness condition}
Let us remark that smallness conditions (that will be needed throughout the paper)
\[
 \int_{B(r)} \int_{B(r)} \frac{|u(x)-u(y)|^{p}}{|x-y|^{n+sp}}\dif x\dif y < \eps
\]
and
\[
 \int_{B(r)} \int_{\Sigma} \frac{|u(x)-u(y)|^{p}}{|x-y|^{n+sp}}\dif x\dif y < \eps
\]
are essentially equivalent. This is due to the following lemma. 
\begin{lemma}\label{la:smallnessonballs}
 Let $u\in W^{s,\frac nt}(\Sigma, \n)$, where $s\in(0,1)$, $t\ge s$ then there exists a $\lambda=\lambda(\n,n,s,\eps)>1$ such that
 \[
  \text{if }\int_{B(\lambda r^\frac st)} \int_{B(\lambda r^\frac st)} \frac{|u(x)-u(y)|^{\frac ns}}{|x-y|^{n+\frac{nt}{s}}}\dif x\dif y < \frac{\eps}{2},
  \text{ then }
  \int_{B(r)} \int_{\Sigma} \frac{|u(x)-u(y)|^{\frac ns}}{|x-y|^{n+\frac{nt}{s}}}\dif x\dif y < \eps.
 \]
where $0<r\le 1$. In particular if $t\le 2s$, then it suffices to take on the left-hand side of the inequality the integration over the ball $B(\lambda r^\frac 12)$.
 
 Moreover, there exists a $\lambda =\lambda(\n,n,s)$ such that if $\alpha>0$ and $0<\beta \coloneqq \frac st\brac{1- \frac \alpha n} <1$
 \[
  \int_{B(\lambda r^\beta)} \int_{B(\lambda r^\beta)} \frac{|u(x)-u(y)|^{\frac ns}}{|x-y|^{n+\frac{nt}{s}}}\dif x\dif y = \mathcal O (r^\alpha) \text{ as } r \to 0\]
then
\[
  \int_{B(r)} \int_{\Sigma} \frac{|u(x)-u(y)|^{\frac ns}}{|x-y|^{n+\frac{nt}{s}}}\dif x\dif y = \mathcal O (r^\alpha) \text{ as } r\to 0.
 \]
In particular, when $t\le 2s$, then it suffices to take $\beta = \frac12 \brac{1-\frac \alpha n}$.
\end{lemma}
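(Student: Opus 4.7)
The plan is to split the inner integral $\int_\Sigma$ as $\int_{B(x_0,\lambda r^{s/t})}+\int_{\Sigma\setminus B(x_0,\lambda r^{s/t})}$, where $x_0$ is the center of the given ball $B(r)=B(x_0,r)$, and to absorb the first piece into the hypothesis while controlling the second by a direct kernel-tail bound. Since $r\le 1$ and $\lambda\ge 2$, the exponent $s/t\le 1$ yields $\lambda r^{s/t}\ge 2r$, so $B(r)\subseteq B(x_0,\lambda r^{s/t})$ and the ``near'' contribution
\[
\int_{B(r)}\int_{B(x_0,\lambda r^{s/t})}\frac{|u(x)-u(y)|^{n/s}}{|x-y|^{n+nt/s}}\dx\dy
\]
is majorized by the hypothesis and is therefore bounded by $\eps/2$.

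For the tail piece I would exploit compactness of $\n$ to replace $|u(x)-u(y)|$ by $\diam(\n)$, together with the elementary geometric fact that for $x\in B(x_0,r)$ and $y\notin B(x_0,\lambda r^{s/t})$ one has $|x-y|\ge |y-x_0|-r\ge \tfrac{1}{2}|y-x_0|$, thanks to $\lambda r^{s/t}\ge 2r$. A spherical-shell integration (or, equivalently, Fubini with the radial kernel) then gives
\[
\int_{B(r)}\int_{\Sigma\setminus B(x_0,\lambda r^{s/t})}\frac{\diam(\n)^{n/s}}{|x-y|^{n+nt/s}}\dx\dy\aleq \diam(\n)^{n/s}\,r^n\,(\lambda r^{s/t})^{-nt/s}=\diam(\n)^{n/s}\,\lambda^{-nt/s}.
\]
Because $t\ge s$ and $\lambda\ge 1$ we have $\lambda^{-nt/s}\le \lambda^{-n}$ uniformly in $t$, so choosing $\lambda=\lambda(\n,n,s,\eps)$ large enough renders this contribution $<\eps/2$, finishing the first half. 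Locally we work in geodesic coordinates, and globally the bound is trivially finite because $\diam(\Sigma)<\infty$; the curved geometry of $\Sigma$ therefore plays no essential role.

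For the quantitative $\mathcal O(r^\alpha)$ statement I would run the identical decomposition with radius $\lambda r^\beta$. The tail estimate becomes
\[
\diam(\n)^{n/s}\,r^n\,(\lambda r^\beta)^{-nt/s}=\diam(\n)^{n/s}\,\lambda^{-nt/s}\,r^{\,n-\beta nt/s},
\]
and the exponent matches the target $\alpha$ exactly when $\beta=\tfrac{s}{t}\bigl(1-\tfrac{\alpha}{n}\bigr)$, which both explains and validates the formula prescribed in the statement. Combined with the $\mathcal O(r^\alpha)$ hypothesis on the near piece this yields the desired $\mathcal O(r^\alpha)$ bound. Finally, the two ``in particular'' remarks for $t\le 2s$ are immediate by monotonicity: $s/t\ge 1/2$ together with $r\le 1$ gives $r^{s/t}\le r^{1/2}$ and $r^{(s/t)(1-\alpha/n)}\le r^{(1-\alpha/n)/2}$, so the enlarged balls $B(\lambda r^{1/2})$ and $B(\lambda r^{(1-\alpha/n)/2})$ contain the corresponding $t$-dependent balls, and the assumed smallness on the larger ball trivially implies it on the smaller one.

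No substantial obstacle is foreseen; the only bookkeeping point is to verify that the implicit constants and the choice of $\lambda$ remain independent of $t$ in the admissible range $t\in[s,s_0]$, which is manifest from the computation since the kernel bound $\lambda^{-nt/s}\le\lambda^{-n}$ is monotone in $t$.
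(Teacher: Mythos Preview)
Your proposal is correct and follows essentially the same route as the paper: the paper also splits the inner integral at radius $\Lambda r$ with $\Lambda=\lambda r^{s/t-1}$ (so $\Lambda r=\lambda r^{s/t}$, exactly your cutoff), bounds the far piece via $\|u\|_{L^\infty}$ and the kernel tail to obtain a term $\aleq \lambda^{-nt/s}\le \lambda^{-n}$, and absorbs the near piece using $B(r)\subset B(\lambda r^{s/t})$. The $\mathcal O(r^\alpha)$ part and the $t\le 2s$ remarks are handled identically.
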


\begin{proof}
 We begin with the decomposition
 \begin{equation}\label{eq:decompo1}
 \begin{split}
  \int_{B(r)}\int_{\Sigma} \frac{|u(x)-u(y)|^{\frac ns}}{|x-y|^{n+\frac{nt}{s}}}\dif x\dif y &= \int_{B(r)} \int_{B(\Lambda r)} \frac{|u(x)-u(y)|^{\frac ns}}{|x-y|^{n+\frac{nt}{s}}}\dif x\dif y\\
  &\quad + \int_{B(r)}\int_{\Sigma \setminus B(\Lambda r)} \frac{|u(x)-u(y)|^{\frac ns}}{|x-y|^{n+\frac{nt}{s}}}\dif x\dif y.
 \end{split}
 \end{equation}
We begin with the estimate of the second term. We have
\begin{equation}\label{eq:smallnessconditionwithbiglambda}
 \begin{split}
  \int_{B(r)}\int_{\Sigma \setminus B(\Lambda r)} \frac{|u(x)-u(y)|^{\frac ns}}{|x-y|^{n+\frac{nt}{s}}}\dif x\dif y &\aleq \|u\|_{L^{\infty}(\Sigma )}^{\frac ns} r^{n}\int_{|z|\ge r(\Lambda-1)} |z|^{-n-\frac{nt}{s}} \dif z\\
  &\aleq \|u\|_{L^{\infty}(\Sigma )}^{\frac ns} r^{n} \brac{r(\Lambda-1)}^{-\frac{nt}{s}}\\
  &=\|u\|_{L^{\infty}(\Sigma )}^{\frac ns} r^{n-\frac{nt}{s}}\Lambda^{-\frac{nt}{s}}\brac{1-\Lambda^{-1}}^{-\frac{nt}{s}}.
 \end{split}
\end{equation}
Thus, taking $\Lambda = \lambda r^{\frac{s}{t}-1}$ in \eqref{eq:smallnessconditionwithbiglambda} we get
\[
\begin{split}
 \int_{B(r)}\int_{\Sigma \setminus B(\Lambda r)} \frac{|u(x)-u(y)|^{\frac ns}}{|x-y|^{n+\frac{nt}{s}}}\dif x\dif y 
 &\aleq \|u\|_{L^\infty(\Sigma )}^{\frac ns}\lambda^{-\frac{nt}{s}}\brac{1-\frac{r^{1-\frac{s}{t}}}{\lambda}}^{-\frac{nt}{s}}\\
 &\aleq \|u\|_{L^\infty(\Sigma )}^{\frac ns} \lambda^{-n}\xrightarrow{\lambda\to\infty} 0,
\end{split}
 \]
where the estimate does not depend on $t$. 

As for the first term of \eqref{eq:decompo1}, with this choice of $\Lambda$ we observe that $B(\Lambda r) = B(\lambda r^{\frac st})$ and since $r<1$ and $s\le t$ we also have
\[
 B(r) \subset B(\lambda r^{\frac st})
\]
and thus
\[
 \int_{B(r)}\int_{B(\Lambda r)} \frac{|u(x) - u(y)|^\frac ns}{|x-y|^{n+\frac{nt}{s}}} \dif x \dif y \le \int_{B(\lambda r^\frac st)}\int_{B(\lambda r^\frac st)} \frac{|u(x) - u(y)|^\frac ns}{|x-y|^{n+\frac{nt}{s}}} \dif x \dif y <\frac \eps 2.
\]
This finishes the proof of the first part of the Lemma.

Similarly to get the second part we set $\Lambda = \lambda r^{\frac st -1 - \frac{s}{nt} \alpha}$ in \eqref{eq:smallnessconditionwithbiglambda} and obtain
\[
\begin{split}
  \int_{B(r)}\int_{\Sigma \setminus B(\Lambda r)} \frac{|u(x)-u(y)|^{\frac ns}}{|x-y|^{n+\frac{nt}{s}}}\dif x\dif y 
 &\aleq \|u\|_{L^\infty(\Sigma )}^{\frac ns}\lambda^{-\frac{nt}{s}}r^\alpha \brac{1-\frac{r^{1-\frac st \brac{1-\frac \alpha n}}}{\lambda}}^{-\frac{nt}{s}}\\
 &\aleq \|u\|_{L^\infty(\Sigma )}^{\frac ns} \lambda^{-n}r^\alpha.
\end{split}
 \]
 
 Now, it suffices to estimate the first term of \eqref{eq:decompo1}. With this choice of $\Lambda$ we have $\Lambda r = \lambda r^{\frac st\brac{1-\frac \alpha n}}$ and we observe that for $\alpha>0$ we have $\beta \coloneqq \frac st \brac{1-\frac \alpha n} <1$, thus since $0<r\le 1$ we have
 \[
  B(r)\subset B(\lambda r^\beta),
 \]
which gives by assumptions
\[
\int_{B(r)}\int_{B(\Lambda r)} \frac{|u(x) - u(y)|^\frac ns}{|x-y|^{n+\frac{nt}{s}}} \dif x \dif y \le \int_{B(\lambda r^\beta)}\int_{B(\lambda r^\beta)} \frac{|u(x) - u(y)|^\frac ns}{|x-y|^{n+\frac{nt}{s}}} \dif x \dif y = \mathcal O (r^\alpha) \text{ as } r\to 0.
\]

\end{proof}

Similarly we also have the following smallness condition.
\begin{lemma}\label{la:smallnessoncompliments}
 Let $u\in W^{s,\frac nt}(\Sigma, \n)$, $s\in(0,1)$, and $t\ge s$, then there exists a $\lambda = \lambda(\n,n,s)>1$ such that if for a $\sigma>0$ and $\theta \coloneqq \frac{t}{s}+\frac{\sigma}{n}>1$ we have
 \[
  \int_{\Sigma\setminus B(\lambda^{-1}r^\theta)} \int_{\Sigma\setminus B(\Lambda^{-1} r^\theta)} \frac{|u(x)-u(y)|^{\frac ns}}{|x-y|^{n+\frac{nt}{s}}}\dif x\dif y = \mathcal O(r^\sigma) \text{ as } r\to 0
 \]
then
\[
 \int_{\Sigma\setminus B(r)} \int_{\Sigma} \frac{|u(x)-u(y)|^{\frac ns}}{|x-y|^{n+\frac{nt}{s}}}\dif x\dif y = \mathcal O(r^\sigma) \text{ as } r\to 0.
\]
In particular if $t\le 2s$, it suffices to take $\theta = 2+\frac \sigma n$.
\end{lemma}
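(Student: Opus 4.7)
The plan is to mirror the structure of the proof of \Cref{la:smallnessonballs}, but with the roles of ``near'' and ``far'' inverted. Specifically, I would decompose the inner integral over $\Sigma$ into two pieces,
\[
 \int_{\Sigma} = \int_{B(\lambda^{-1} r^{\theta})} + \int_{\Sigma \setminus B(\lambda^{-1} r^{\theta})},
\]
and estimate each piece separately. Since $\theta > 1$ and $\lambda \geq 1$, for all sufficiently small $r$ we have $\lambda^{-1} r^{\theta} < r$, so $\Sigma \setminus B(r) \subset \Sigma \setminus B(\lambda^{-1} r^{\theta})$, and the ``far'' piece is immediately controlled by the hypothesis:
\[
 \int_{\Sigma \setminus B(r)} \int_{\Sigma \setminus B(\lambda^{-1} r^{\theta})} \frac{|u(x)-u(y)|^{\frac{n}{s}}}{|x-y|^{n+\frac{nt}{s}}} \dif x \dif y \leq \int_{\Sigma \setminus B(\lambda^{-1} r^{\theta})} \int_{\Sigma \setminus B(\lambda^{-1} r^{\theta})} \frac{|u(x)-u(y)|^{\frac{n}{s}}}{|x-y|^{n+\frac{nt}{s}}} \dif x \dif y = \mathcal{O}(r^{\sigma}).
\]

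For the ``near'' piece, observe that whenever $x \in \Sigma \setminus B(r)$ and $y \in B(\lambda^{-1} r^{\theta})$, the distance $|x-y|$ is bounded below by $r - \lambda^{-1} r^{\theta} \geq r/2$ once $r^{\theta-1} \leq \lambda/2$. Since $u$ takes values in the compact manifold $\n$, we have $\|u\|_{L^{\infty}} \leq C(\n)$, and a direct computation, in the spirit of \eqref{eq:smallnessconditionwithbiglambda}, yields
\[
 \int_{\Sigma \setminus B(r)} \int_{B(\lambda^{-1} r^{\theta})} \frac{|u(x)-u(y)|^{\frac{n}{s}}}{|x-y|^{n+\frac{nt}{s}}} \dif x \dif y \aleq \|u\|_{L^{\infty}}^{\frac{n}{s}} (\lambda^{-1} r^{\theta})^{n}\, \int_{|z| \geq r/2} |z|^{-n-\frac{nt}{s}} \dif z \aleq \lambda^{-n}\, r^{n\theta - \frac{nt}{s}}.
\]
The choice $\theta = \frac{t}{s} + \frac{\sigma}{n}$ makes the exponent $n\theta - \frac{nt}{s} = \sigma$, so this piece is also $\mathcal{O}(r^{\sigma})$, uniformly in any $\lambda \geq 1$.

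The only genuinely ``delicate'' point is the special case $t \leq 2s$: here one wants a choice of $\theta$ independent of $t$, namely $\theta = 2 + \frac{\sigma}{n}$. Since then $\theta \geq \frac{t}{s} + \frac{\sigma}{n}$ and $r<1$, the ball $B(\lambda^{-1} r^{\theta})$ is smaller than before, so the ``near'' estimate only improves, producing $r^{n\theta - \frac{nt}{s}} = r^{2n - \frac{nt}{s} + \sigma}$ with $2n - \frac{nt}{s} \geq 0$; the ``far'' estimate is unchanged since the hypothesis is precisely stated for this $\theta$. Combining both bounds finishes the proof. I do not expect any serious obstacle: the argument is a straightforward distance-based splitting, completely analogous to \Cref{la:smallnessonballs}, with the main observation being that the cutoff radius $\lambda^{-1} r^{\theta}$ is tuned so that $n\theta - \frac{nt}{s}$ matches the target exponent $\sigma$.
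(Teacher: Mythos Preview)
Your proposal is correct and follows essentially the same approach as the paper: both split the inner integral at radius $\lambda^{-1}r^{\theta}$, control the far piece by the hypothesis via the inclusion $\Sigma\setminus B(r)\subset\Sigma\setminus B(\lambda^{-1}r^{\theta})$, and bound the near piece using $\|u\|_{L^\infty}\le C(\n)$ together with the lower bound $|x-y|\gtrsim r$ and the volume $|B(\lambda^{-1}r^{\theta})|\approx(\lambda^{-1}r^{\theta})^{n}$, the choice of $\theta$ being precisely what makes the exponent equal to $\sigma$. The only cosmetic difference is that the paper first introduces an auxiliary parameter $\Lambda$ and then solves $\Lambda^{-1}r=\lambda^{-1}r^{\theta}$, whereas you work directly with $\lambda^{-1}r^{\theta}$.
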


\begin{proof}
We begin with the decomposition
 \begin{equation}\label{eq:decompo2}
 \begin{split}
  \int_{\Sigma \setminus B(r)}\int_{\Sigma } \frac{|u(x)-u(y)|^{\frac ns}}{|x-y|^{n+\frac{nt}{s}}}\dif x\dif y &= \int_{\Sigma \setminus B(r)} \int_{B(\Lambda^{-1}r)} \frac{|u(x)-u(y)|^{\frac ns}}{|x-y|^{n+\frac{nt}{s}}}\dif x\dif y\\
  &\quad + \int_{\Sigma \setminus B(r)}\int_{\Sigma \setminus B(\Lambda^{-1}r)} \frac{|u(x)-u(y)|^{\frac ns}}{|x-y|^{n+\frac{nt}{s}}}\dif x\dif y.
 \end{split}
 \end{equation}
We begin with the estimate of the first term. We have
\[
\begin{split}
 \int_{\Sigma \setminus B(r)}\int_{B(\Lambda^{-1}r)} \frac{|u(x)-u(y)|^{\frac ns}}{|x-y|^{n+\frac{nt}{s}}}\dif x\dif y
 &\aleq \|u\|_{L^\infty(\Sigma )}^\frac ns (\Lambda^{-1}r)^{n}\int_{|z|\ge r(1-\Lambda^{-1})} |z|^{-n-\frac{nt}{s}} \dif z\\
 &\aleq \|u\|_{L^\infty(\Sigma )}^\frac ns (\Lambda^{-1}r)^n \brac{r(1-\Lambda^{-1})}^{-\frac{nt}{s}}\\
 &=\|u\|_{L^\infty(\Sigma )}^\frac ns r^{n-\frac{nt}{s}}\Lambda^{-n}\brac{1-\Lambda^{-1}}^{-\frac{nt}{s}}.
\end{split}
 \]
Thus taking $\Lambda=\lambda r^{1-\frac{t}{s}-\frac{\sigma}{n}}$, we have
\[
\begin{split}
 \int_{\Sigma \setminus B(r)}\int_{B(\Lambda^{-1}r)} \frac{|u(x)-u(y)|^{\frac ns}}{|x-y|^{n+\frac{nt}{s}}}\dif x\dif y 
 &\aleq \|u\|_{L^\infty(\Sigma )}^\frac ns \lambda^{-n} r^\sigma\brac{1-\frac{r^{\frac ts + \frac \sigma n-1}}{\lambda}}^{-\frac{nt}{s}}\\
 &\aleq \|u\|_{L^\infty(\Sigma )}^\frac ns \lambda^{-n}r^\sigma.
\end{split}
 \]
where the estimate does not depend on $t$.

As for the first term of \eqref{eq:decompo2}, for this choice of $\Lambda$, we have $\Lambda^{-1} r = \lambda^{-1} r^{\frac ts +\frac \sigma n}$ and since $t\ge s$ we have $\theta \coloneqq \frac ts + \frac \sigma n >1$, thus for sufficiently small $r$
\[
 B(\Lambda^{-1}r) \subset B(r), \quad \text{ thus } \Sigma \setminus B(r) \subset \Sigma \setminus B(\Lambda^{-1}r).
\]
This implies
\[
\begin{split}
 \int_{\Sigma \setminus B(r)}\int_{\Sigma \setminus B(\Lambda^{-1}r)} \frac{|u(x)-u(y)|^{\frac ns}}{|x-y|^{n+\frac{nt}{s}}}\dif x\dif y &\le \int_{\Sigma \setminus B(\Lambda^{-1}r)}\int_{\Sigma \setminus B(\Lambda^{-1}r)} \frac{|u(x)-u(y)|^{\frac ns}}{|x-y|^{n+\frac{nt}{s}}}\dif x\dif y\\
 &= \mathcal O(r^\sigma), \text{ as } r\to 0.
\end{split}
 \]

\end{proof}

\section{A Sobolev-type estimate for Gagliardo-type spaces}
Here we record the Sobolev estimates we are using throughout the paper. All of them essentially follow the theory of Triebel--Lizorkin and Besov spaces, cf. \cite{RunstSickel,T83,GModern} and their Sobolev embedding theory.
\begin{theorem}\label{th:sobolev}
Assume that $s \in (0,1)$, $t \in (s,1)$, and $p, p^\ast \in (1,\infty)$ with 
\[
s - \frac{n}{p^\ast} = t - \frac{n}{p}.
\]
Then 
\begin{enumerate}
 \item 
If $f \in \dot{W}^{t,p}(\R^n)$
\begin{equation}\label{eq:sob1}
 [f]_{W^{s,p^\ast}(\R^n)} \aleq [f]_{W^{t,p}(\R^n)} 
\end{equation}
and
\begin{equation}\label{eq:sob2}
 \brac{\int_{\R^n}\brac{\int_{\R^n} \frac{|f(x)-f(y)|^p}{|x-y|^{n+sp}}\, \dy}^{\frac{p^\ast}{p}} \dx}^{\frac{1}{p^\ast}} \aleq [f]_{W^{t,p}(\R^n)}. 
\end{equation}
Moreover
\begin{equation}\label{eq:sob2b}
 \|\laps{s} f\|_{L^{p^\ast}(\R^n)} \aleq [f]_{W^{t,p}(\R^n)}. 
\end{equation}
\item If $f \in W^{t,p}(B)$ for some ball $B \subset \R^n$, we have (with a constant independent 
of the specific ball)
\begin{equation}\label{eq:sob3}
 [f]_{W^{s,p^\ast}(B)} \aleq [f]_{W^{t,p}(B)}.
\end{equation}
and
\begin{equation}\label{eq:sob4}
 \brac{\int_{B}\brac{\int_{B} \frac{|f(x)-f(y)|^p}{|x-y|^{n+sp}}\, \dy}^{\frac{p^\ast}{p}}}^{\frac{1}{p^\ast}} \aleq [f]_{W^{t,p}(B)}. 
\end{equation}
\end{enumerate}
\end{theorem}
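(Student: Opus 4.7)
The plan is to derive everything from the identification of Slobodeckij spaces $W^{\sigma,q}$, $\sigma \in (0,1)$, $q\in(1,\infty)$, with Triebel--Lizorkin spaces $F^{\sigma}_{q,q}$, together with the standard characterization
\[
 \|f\|_{F^{\sigma}_{q,r}(\R^n)} \aeq \|f\|_{L^q(\R^n)} + \left\| \Bigl(\int_{\R^n}\frac{|f(\cdot)-f(\cdot+h)|^{r}}{|h|^{n+\sigma r}}\, \dif h\Bigr)^{1/r} \right\|_{L^q(\R^n)},
\]
valid for $\sigma \in (0,1)$, $q \in (1,\infty)$, $r \in [1,\infty)$ (cf.\ \cite{RunstSickel,T83}). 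The role of the Gagliardo double integral in \eqref{eq:sob2} is that, after the $L^{p^\ast}$-norm in $x$ is taken, it is precisely (equivalent to) the $F^{s}_{p^\ast,p}$-seminorm of $f$.

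First I would prove the strongest estimate, \eqref{eq:sob2}. Under the scaling relation $t-s = \frac{n}{p}-\frac{n}{p^\ast}$ and $1 < p \leq p^\ast < \infty$, the Jawerth--Franke sharp Sobolev embedding yields
\[
 F^{t}_{p,p}(\R^n) \hookrightarrow F^{s}_{p^\ast,p}(\R^n),
\]
see \cite[Chapter~2]{T83}. Combined with the Triebel--Lizorkin characterization quoted above, this gives \eqref{eq:sob2}. Estimate \eqref{eq:sob1} is then a consequence of \eqref{eq:sob2} via the monotonicity of Triebel--Lizorkin spaces in the $r$-index: since $p\leq p^\ast$,
\[
 F^{s}_{p^\ast,p}(\R^n) \hookrightarrow F^{s}_{p^\ast,p^\ast}(\R^n) = W^{s,p^\ast}(\R^n),
\]
which may also be seen at the level of the double integral by Minkowski's integral inequality. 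For \eqref{eq:sob2b}, I would factor $\laps{s} = I^{t-s} \laps{t}$ with $I^{t-s}$ the Riesz potential of order $t-s > 0$, and apply the Hardy--Littlewood--Sobolev inequality: since $\frac{1}{p}-\frac{1}{p^\ast} = \frac{t-s}{n}$,
\[
 \|\laps{s}f\|_{L^{p^\ast}(\R^n)} = \|I^{t-s}\laps{t} f\|_{L^{p^\ast}(\R^n)} \aleq \|\laps{t} f\|_{L^p(\R^n)} \aeq [f]_{W^{t,p}(\R^n)}.
\]

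For the local statements \eqref{eq:sob3} and \eqref{eq:sob4}, the plan is a standard extension-and-restrict argument. Using a universal bounded linear extension operator $E\colon W^{t,p}(B(0,1))\to W^{t,p}(\R^n)$ (which exists for $t\in(0,1)$, $p\in(1,\infty)$) and scaling to a general ball $B = B(x_0,R)$, one obtains an extension $E_B \colon W^{t,p}(B)\to W^{t,p}(\R^n)$ with
\[
 [E_B f]_{W^{t,p}(\R^n)} \aleq [f]_{W^{t,p}(B)},
\]
where the implicit constant is independent of the center and radius of $B$ (this is the only place scale-invariance has to be checked, and it follows directly from the change of variables $x\mapsto x_0 + Rx$ together with the matching of the exponents $t,p$ and $s,p^\ast$ under the scaling $t-\frac{n}{p}=s-\frac{n}{p^\ast}$). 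Then \eqref{eq:sob3} and \eqref{eq:sob4} follow by applying \eqref{eq:sob1}, resp.\ \eqref{eq:sob2}, to $E_Bf$ and then restricting the domain of integration back to $B$.

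The main obstacle is of an essentially bibliographic nature: verifying that Jawerth's sharp embedding is applicable with the precise indices $(s,p^\ast,p)$ we need (including the fine tuning $r=p$ on the right-hand side), and that the scaling constants in the extension operator are indeed ball-independent. Neither is conceptually difficult, but both require some care to state in the form used elsewhere in the paper; once they are in place, all five inequalities \eqref{eq:sob1}--\eqref{eq:sob4} follow by the short arguments above.
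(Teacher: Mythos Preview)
Your argument for \eqref{eq:sob1} and \eqref{eq:sob2} is correct and in fact slightly more streamlined than the paper's: you identify the left-hand side of \eqref{eq:sob2} directly as (an equivalent of) the $F^{s}_{p^\ast,p}$-seminorm and invoke the Jawerth embedding $F^{t}_{p,p}\hookrightarrow F^{s}_{p^\ast,p}$, whereas the paper bounds it first by the Besov seminorm $\dot B^{s}_{p^\ast,p}$ via Minkowski's integral inequality and then uses $\dot B^{t}_{p,p}\hookrightarrow\dot B^{s}_{p^\ast,p}$. Both work.

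There is, however, a genuine gap in your proof of \eqref{eq:sob2b}. The last step asserts $\|\laps{t} f\|_{L^p(\R^n)}\aeq [f]_{W^{t,p}(\R^n)}$, but this is false for $p\neq 2$: the Gagliardo seminorm $[\,\cdot\,]_{W^{t,p}}$ is (equivalent to) the $\dot F^{t}_{p,p}$-norm, while $\|\laps{t}\cdot\|_{L^p}$ is the Bessel potential (i.e.\ $\dot F^{t}_{p,2}$) norm. For $p>2$ one only has $\dot F^{t}_{p,2}\hookrightarrow\dot F^{t}_{p,p}$, so the inequality $\|\laps{t}f\|_{L^p}\aleq [f]_{W^{t,p}}$ you need goes the wrong way --- and $p=n/s>2$ is precisely the regime used in the paper. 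The paper avoids this by using the full strength of Jawerth's embedding (the third index on the target is arbitrary): $\|\laps{s}f\|_{L^{p^\ast}}\aeq\|f\|_{\dot F^{s}_{p^\ast,2}}\aleq\|f\|_{\dot F^{t}_{p,p}}=[f]_{W^{t,p}}$. Your HLS route can be repaired the same way, but not by the equivalence you wrote.

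A smaller point on \eqref{eq:sob3}--\eqref{eq:sob4}: a generic bounded extension $E$ only gives $[E_B f]_{W^{t,p}(\R^n)}\aleq R^{-t}\|f\|_{L^p(B)}+[f]_{W^{t,p}(B)}$ after scaling, not the seminorm-to-seminorm bound you claim. The paper handles this by first subtracting $(f)_B$ and using Poincar\'e; you should do the same (or use an extension that preserves constants).
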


\begin{proof}
The statements are consequences of the theory of Besov spaces $\dot{B}^{s}_{p,q}$ and Triebel--Lizorkin spaces $\dot{F}^s_{p,q}$.

The first estimate \eqref{eq:sob1} follows from Sobolev embedding for Triebel--Lizorkin spaces $\dot{F}^{t}_{p,p}(\R^n) \hookrightarrow \dot{F}^{s}_{p^\ast,p^\ast}(\R^n)$, see \cite{J77} or \cite[Theorem 2.71]{T83}. We then have by the characterization of $W^{s,p}$ in terms of Triebel--Lizorkin spaces $F^{s}_{p,p}$, see \cite[\textsection 2.6, Proposition 3, p.95]{RunstSickel} and \cite[\textsection 2.1.2, Proposition, p.14]{RunstSickel}, 
\[
 [f]_{W^{s,p^\ast}(\R^n)} \aeq \|f\|_{\dot{F}^{s}_{p^\ast, p^\ast}(\R^n)} \aleq \|f\|_{\dot{F}^{t}_{p, p}(\R^n)} \aeq [f]_{W^{t,p}(\R^n)}.
\]
For the second estimate \eqref{eq:sob2} we first recall the following well-known integral inequality (which follows from Riesz duality and Fubini's theorem) for any $r \geq1$,
\[
 \brac{\int_{\R^n} \brac{\int_{\R^n} |G(x,h)| dh}^r \dx}^{\frac{1}{r}} \leq \int_{\R^n}\brac{\int_{\R^n} |G(x,h)|^r \dx}^{\frac{1}{r}} dh.
\]
Applying this to $G(x,h) \coloneqq \frac{|f(x)-f(x+h)|^p}{|h|^{n+sp}}$ and $r = \frac{p^\ast}{p} > 1$ we have
\[
\begin{split}
 &\brac{\int_{\R^n}\brac{\int_{\R^n} \frac{|f(x)-f(y)|^p}{|x-y|^{n+sp}}\, \dy}^{\frac{p^\ast}{p}} \dx}^{\frac{1}{p^\ast}} \\
 &=\brac{\int_{\R^n}\brac{\int_{\R^n} \frac{|f(x)-f(x+h)|^p}{|h|^{n+sp}}\, \dif h}^{\frac{p^\ast}{p}} \dx }^{\frac{1}{p^\ast}} \\
 &\leq\brac{\int_{\R^n} \frac{1}{|h|^{n+sp}} \brac{\int_{\R^n} |f(x)-f(x+h)|^{p^\ast} \dx}^{\frac{p}{p^\ast} }dh}^{\frac{1}{p}}\\
 &\aeq\|f\|_{\dot{B}^{s}_{p^\ast,p}(\R^n)}.
 \end{split}
\]
In the last step we  used the integral identification of the Besov space $\dot{B}^s_{p^\ast,p}$, see again \cite[\textsection 2.6, Proposition 3, p.95]{RunstSickel} and \cite[\textsection 2.1.2, Proposition, p.14]{RunstSickel}.

By Sobolev embedding for Besov spaces,  \cite{J77}, we have $\dot{B}^s_{p,p}(\R^n) \hookrightarrow \dot{B}^{s}_{p^\ast,p}(\R^n)$ and moreover $\dot{B}^s_{p,p}(\R^n) = \dot{F}^s_{p,p}(\R^n)$, see \cite[\textsection 2.1, Remark 6., p.10]{RunstSickel}. Again by the characterization of $W^{s,p}$ in terms of Triebel--Lizorkin spaces $F^{s}_{p,p}$, see \cite[\textsection 2.6, Proposition 3, p.95]{RunstSickel} and \cite[\textsection 2.1.2, Proposition, p.14]{RunstSickel}, we arrive at 
\[
\begin{split}
 \brac{\int_{\R^n}\brac{\int_{\R^n} \frac{|f(x)-f(y)|^p}{|x-y|^{n+sp}}\, \dy}^{\frac{p^\ast}{p}} \dx}^{\frac{1}{p^\ast}} 
 \aleq\|f\|_{\dot{F}^{t}_{p,p}(\R^n)} \aeq [f]_{W^{t,p}(\R^n)}.
 \end{split}
\]
This establishes \eqref{eq:sob2}.

As for \eqref{eq:sob2b}, by \cite[\textsection 2.6, Proposition 3, p.95]{RunstSickel} and \cite[\textsection 2.1.2, Proposition, p.14]{RunstSickel}, we have 
\[
 \|\laps{s} f\|_{L^{p^\ast}(\R^n)} \aeq \|f\|_{\dot{F}^{s}_{p^\ast,2}(\R^n)}.
\]
Sobolev embedding for Triebel--Lizorkin spaces also implies $\dot{F}^{t}_{p,2}(\R^n) \hookrightarrow \dot{F}^{s}_{p^\ast,p^\ast}(\R^n)$ since $t > s$, see \cite{J77}, so we have
\[
 \|\laps{s} f\|_{L^{p^\ast}(\R^n)} \aeq \|f\|_{\dot{F}^{s}_{p^\ast,2}(\R^n)} \aleq \|f\|_{\dot{F}^{t}_{p,p}(\R^n)} \aeq [f]_{W^{t,p}(\R^n)}.
\]

As for  \eqref{eq:sob3} and  \eqref{eq:sob4}, by rescaling we may assume without loss of generality that $B = B(0,1)$.  

Let $f \in W^{t,p}(B(0,1))$. We may assume that $(f)_{B(0,1)} = 0$ otherwise we consider $f-(f)_{B(0,1)}$ instead. $B(0,1)$ is an extension domain, so there exists $\tilde{f} \in W^{t,p}(\R^n)$, and 
\[
 [\tilde{f}]_{W^{t,p}(\R^n)} \aleq \|f\|_{L^p(B(0,1))} + [f]_{W^{t,p}(B(0,1))} \aleq [f]_{W^{t,p}(B(0,1))}.
\]
In the last step we used Poincar\'e lemma. Applying \eqref{eq:sob1} to $\tilde{f}$ we obtain \eqref{eq:sob3}, and applying \eqref{eq:sob2} to $\tilde{f}$ we obtain \eqref{eq:sob4}.
\end{proof}

\section{A Luckhaus-type lemma}
The Luckhaus' Lemma, \cite[Lemma~1]{L88} or \cite[Section 2.6, Lemma~1]{S96}, provides a way to glue together two maps in different regions with a precise estimate on the Sobolev norms. This is an important tool in the theory of harmonic maps --- in particular in the supercritical space. 
In \cite{BRSV20} there is a $1$-dimension fractional version of this Lemma. We extend this here to any dimension, which might be a useful result in its own right. Observe that the estimate is somewhat suboptimal for $W^{s,p}$-spaces with $sp < n-1$ (where Luckhaus' original Lemma develops its full force). We also make no effort to obtain an optimal estimate with respect to $\delta$ as $\delta \to 0$, since this is not what we need. So one might argue that the following is only reminiscent of the Luckhaus' Lemma.

\begin{lemma}\label{la:frluckhaus}
Let $n\geq 1$ $s \in (0,1)$, $p \in (1,\infty)$, $r > 0$, $u,v : \R^n \to \R^M$ such that $u,v \Big |_{\partial B(r)}$ are continuous and\footnote{For $n=1$ the term $\int_{\partial B(r)} \int_{\partial B(r)} \frac{|u(\theta)-u(\omega)|^{p}}{|\theta-\omega|^{n-1+sp}} \dif \theta \dif \omega$ is not needed.}
\[
 \int_{\partial B(r)} \int_{\R^n} \frac{|u(\theta)-u(y)|^{p}}{|\theta-y|^{n+sp}} \dy \dif \theta + \int_{\partial B(r)} \int_{\partial B(r)} \frac{|u(\theta)-u(\omega)|^{p}}{|\theta-\omega|^{n-1+sp}}  \dif \theta \dif \omega < \infty
 \]
 as well as
 \[
\int_{\partial B(r)} \int_{\R^n} \frac{|v(\theta)-v(y)|^{p}}{|\theta-y|^{n+sp}} \dy \dif \theta  <\infty.
 \]
For any $\delta \in (0,\frac{1}{4})$ we set 
\[
w(x) \coloneqq
\begin{cases}
u(x) \quad &|x| \ge r\\
(1-\eta(|x|)) u(\theta) + \eta(|x|) v(\theta) \quad &\theta = r\frac{x}{|x|},\ |x| \in ((1-\delta)r,r)\\
v(x/(1-\delta)) \quad & |x|\le (1-\delta)r,
 \end{cases}
\]
where $\eta: \R_+ \to [0,1]$ is smooth with $\eta(t) = 0$ for $t \geq (1-\frac{\delta}{2})r$ and $\eta \equiv 1$ on $[0,(1-\frac{3}{4} \delta)r]$, $|\eta'| \leq \frac{100}{\delta r}$.

Then
\begin{itemize}
\item For any $R \geq r$, where $K \coloneqq u(B(R)) \cup v(B(R))$,
 \begin{equation}\label{eq:frl:dist}
  \sup_{x \in B(R)} \dist(w(x),K) \leq \sup_{\theta \in \partial B(r)} |u(\theta)-v(\theta)|.
 \end{equation}
 \item We have for $\sigma \coloneqq \max\{p-1,sp\}$
 \begin{equation}\label{eq:frl:west}
 \begin{split}
  [w]_{W^{s,p}(B(2r))}^p &\leq [u]_{W^{s,p}(B(2r)\setminus B(r))}^p+[v]_{W^{s,p}(B(r))}^p\\
  &\quad +\delta^{-sp} r \brac{\int_{\partial B(r)} \int_{B(2r)\setminus B(r)} \frac{|u(\theta) - u(y)|^{p}}{|\theta-y|^{n+sp}} \dy \dif\theta +\int_{\partial B(r)} \int_{B(r)} \frac{|v(\theta) - v(y)|^{p}}{|\theta-y|^{n+sp}}  \dy \dif\theta}\\
  &\quad +\delta r {\int_{\partial B(r)} \int_{\partial B(r)} \frac{|u(\theta) - u(\omega)|^{p}}{|\theta-\omega|^{n-1+sp}} \dif\theta \dif \omega} + \delta^{-\sigma} r^{n-sp}\, \|v-u\|_{L^\infty(\partial B(r))}^p.
\end{split}
 \end{equation}
\end{itemize}
\end{lemma}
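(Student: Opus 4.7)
The distance estimate \eqref{eq:frl:dist} is immediate: outside $B(r)$ we have $w=u\in K$, inside $B((1-\delta)r)$ we have $w(x)=v(x/(1-\delta))\in K$, and on the transition annulus $A\coloneqq B(r)\setminus B((1-\delta)r)$ the value $w(x)=(1-\eta)u(\theta)+\eta v(\theta)$ lies on the segment between $u(\theta)$ and $v(\theta)$ with $\theta=r\,x/|x|$, so its distance to $K$ is at most $|u(\theta)-v(\theta)|\leq \|u-v\|_{L^\infty(\partial B(r))}$.

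For \eqref{eq:frl:west} the plan is to decompose $B(2r)\times B(2r)$ into the natural pieces dictated by the three regions in the definition of $w$:
\begin{equation*}
B(2r)=\underbrace{B((1-\delta)r)}_{=:D_1}\cup\underbrace{A}_{=:D_2}\cup\underbrace{B(2r)\setminus B(r)}_{=:D_3}.
\end{equation*}
On $D_1\times D_1$ the change of variable $\tilde x=x/(1-\delta)$ immediately yields the bound $(1-\delta)^{n-sp}[v]_{W^{s,p}(B(r))}^p\leq [v]_{W^{s,p}(B(r))}^p$. On $D_3\times D_3$ we get exactly $[u]_{W^{s,p}(B(2r)\setminus B(r))}^p$. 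The non-trivial contributions come from pieces involving the transition annulus $D_2$ and from the cross-pieces $D_1\times D_3$, $D_3\times D_1$.

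The heart of the argument lies in $D_2\times D_2$. Writing $x=(\rho_x,\theta_x)$, $y=(\rho_y,\theta_y)$ in polar coordinates (with $\theta\in\partial B(r)$) and using
\begin{equation*}
w(x)-w(y)=\bigl(u(\theta_x)-u(\theta_y)\bigr)+\eta(\rho_x)\bigl((v-u)(\theta_x)-(v-u)(\theta_y)\bigr)+\bigl(\eta(\rho_x)-\eta(\rho_y)\bigr)(v-u)(\theta_y),
\end{equation*}
we bound $|w(x)-w(y)|^p$ by a sum of three powers. The first two are controlled by a fiber Fubini-type integration in the radial variable, producing a factor $r\,\delta$ times the boundary seminorms of $u$ and $v-u$ on $\partial B(r)$, together with a uniform bound on the $\partial B(r)$-seminorm via the integrability hypothesis; after combining with the $L^\infty$-difference of $u$ and $v$ on $\partial B(r)$ this yields the last displayed term in \eqref{eq:frl:west} with the constant $\delta^{-\sigma}r^{n-sp}$. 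The third term uses $|\eta(\rho_x)-\eta(\rho_y)|\leq (\delta r)^{-1}|\rho_x-\rho_y|$ and delivers a factor $\delta^{-p}r^{-p}\cdot|\rho_x-\rho_y|^p$ which, integrated against $|x-y|^{-n-sp}$ over $\rho_x,\rho_y\in((1-\delta)r,r)$, produces $\delta^{-sp}\|v-u\|_{L^\infty(\partial B(r))}^p r^{n-sp}$.

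The mixed contribution $D_2\times D_3$ (and symmetrically $D_2\times D_1$) is handled similarly: writing $w(x)-w(y)=(u(\theta_x)-u(y))+\eta(\rho_x)(v-u)(\theta_x)$ on $D_2\times D_3$, the first piece, after integrating the radial coordinate of $x$ across the thin annulus of width $\delta r$ and using that $|x-y|\gtrsim \delta r$ is at worst comparable to $|\theta_x-y|$, yields the mixed boundary-bulk term
\begin{equation*}
\delta^{-sp}r\int_{\partial B(r)}\int_{B(2r)\setminus B(r)}\frac{|u(\theta)-u(y)|^p}{|\theta-y|^{n+sp}}\,dy\,d\theta;
\end{equation*}
the second piece is again absorbed into the $\delta^{-\sigma}r^{n-sp}\|v-u\|_{L^\infty(\partial B(r))}^p$ term. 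The analogous computation on $D_2\times D_1$ (after the change of variables $\tilde y=y/(1-\delta)$) produces the corresponding mixed term with $v$ on $B(r)$. The far term $D_1\times D_3$ is controlled using $|x-y|\geq \delta r$ and the triangle inequality, splitting through intermediate points on $\partial B(r)$, and is dominated by already-accounted-for quantities.

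The main obstacle is the bookkeeping in $D_2\times D_2$: because the transition happens on an annulus whose radial width $\delta r$ is much smaller than its tangential size $r$, one must carefully track which factors of $\delta$ come from integrating in the radial direction (where the kernel $|x-y|^{-n-sp}$ is most singular) versus the tangential direction (where one reduces to a seminorm on $\partial B(r)$). The exponents $\sigma=\max\{p-1,sp\}$ in the last term of \eqref{eq:frl:west} arise from distinguishing whether the interpolation or the Lipschitz behavior of $\eta$ dominates, according to whether $sp<p-1$ or not.
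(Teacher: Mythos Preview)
Your decomposition into $D_1,D_2,D_3$ matches the paper, but there is a genuine gap in the treatment of the annulus term $D_2\times D_2$. Your second piece $\eta(\rho_x)\bigl((v-u)(\theta_x)-(v-u)(\theta_y)\bigr)$ cannot be absorbed into $\delta^{-\sigma}r^{n-sp}\|v-u\|_{L^\infty(\partial B(r))}^p$ as you claim. Radial integration (the ``fiber Fubini'' you invoke) yields $\delta r$ times the $W^{s,p}(\partial B(r))$-seminorm of $v-u$, and this seminorm is \emph{not} controlled by $\|v-u\|_{L^\infty(\partial B(r))}^p$: the kernel $|\theta-\omega|^{-(n-1)-sp}$ is non-integrable on $\partial B(r)\times\partial B(r)$. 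Splitting $v-u$ into $v$ and $u$ does not help either, since the boundary seminorm of $v$ appears nowhere in \eqref{eq:frl:west} (only that of $u$ does). The paper avoids this by using the algebraically equivalent decomposition $(1-\eta)(u(\theta_x)-u(\theta_y))+\eta(v(\theta_x)-v(\theta_y))+(\eta(\rho_x)-\eta(\rho_y))(v-u)(\theta_y)$ and treating the pure-$v$ angular term with a midpoint trick: inserting $z_{x,y}=\tfrac{x+y}{2}\in B(r)$ and using $|x-z_{x,y}|\approx|x-y|$, one converts the annulus double integral directly into the boundary--bulk quantity $\delta r\int_{\partial B(r)}\int_{B(r)}\frac{|v(\theta)-v(z)|^p}{|\theta-z|^{n+sp}}\,dz\,d\theta$, which \emph{is} present on the right-hand side of \eqref{eq:frl:west}. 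This is the step you are missing.

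A smaller correction on $D_2\times D_3$: the claim $|x-y|\gtrsim\delta r$ is false there, since both points can be arbitrarily close to $\partial B(r)$. The correct estimate is $|x-y|\geq\tfrac12|\theta_x-y|$, obtained from $|\theta_x-x|=r-|x|\leq\dist(x,\partial B(r))\leq|x-y|$. This gives the boundary--bulk term for $u$ with prefactor $\delta r$ (not $\delta^{-sp}r$); the larger prefactor $\delta^{-sp}r$ in \eqref{eq:frl:west} actually comes from the far piece $D_1\times D_3$, where $|x-y|\geq\delta r$ \emph{does} hold and one averages through a free point $\theta\in\partial B(r)$.
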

\begin{proof}
Estimate \eqref{eq:frl:dist} is almost obvious, indeed for $|x|>r$ or $|x| < (1-\delta)r$ we have  $w(x)=u(x)$ or $w(x) = v(x/(1-\delta))$ so $\dist(w(x),K) = 0$ unless $|x| \in ((1-\delta)r,r)$.

If $|x| \in ((1-\delta)r,r)$ then 
\[
 \dist(w(x),K) \leq |w(x)-u(rx/|x|)| \leq |u(rx/|x|)-v(rx/|x|)| \leq \sup_{\theta \in \partial B(r)} |u(\theta)-v(\theta)|.
\]
This establishes \eqref{eq:frl:dist}.

We now provide the estimate \eqref{eq:frl:west}. We assume from now on $n\geq 2$, and refer to the (very similar) case $n=1$ to \cite{BRSV20}.

We have 
\[
[w]_{W^{s,p}(B(2r))}^p \leq [u]_{W^{s,p}(B(2r)\setminus B(r))}^p+ I + II + 2III + 2IV +2V,
\]
where
\[
\begin{split}
 I &\coloneqq [v(\cdot/(1-\delta))]_{W^{s,p}(B((1-\delta)r))}^p\\
 II &\coloneqq \int_{B(r) \setminus B((1-\delta)r)} \int_{B(r) \setminus B((1-\delta)r)} \frac{|w(x)-w(y)|^{p}}{|x-y|^{n+sp}} \dx \dy\\
 III &\coloneqq\int_{B(2r)\setminus B(r)} \int_{B((1-\delta)r)} \frac{|w(x)-w(y)|^{p}}{|x-y|^{n+sp}} \dx \dy\\
IV&\coloneqq\int_{B(2r)\setminus B(r)} \int_{B(r) \setminus B((1-\delta)r)} \frac{|w(x)-w(y)|^{p}}{|x-y|^{n+sp}} \dx \dy\\
 V &\coloneqq \int_{B((1-\delta)r)} \int_{B(r) \setminus B((1-\delta)r)}\frac{|w(x)-w(y)|^{p}}{|x-y|^{n+sp}} \dx \dy.
 \end{split}
\]
First we observe 
\[
 I = (1-\delta)^{n-sp} [v]_{W^{s,p}(B(r))}^p \aleq [v]_{W^{s,p}(B(r))}^p.
\]
Next for $II$ we observe that for $x,\,y\in B(r)\setminus B((1-\delta)r)$ we have
\[
\begin{split}
 w(x)-w(y) 
 &= (1-\eta(|x|)) u(rx/|x|) + \eta(|x|) v(rx/|x|) - \brac{(1-\eta(|y|)) u(ry/|y|) + \eta(|y|) v(ry/|y|) }\\
 &=(1-\eta(|x|)) (u(rx/|x|)-u(ry/|y|)) + \eta(|x|) (v(rx/|x|)-v(ry/|y|))\\
 &\quad+\brac{\eta(|x|)- \eta(|y|)} \brac{v(ry/|y|)-u(ry/|y|)}.
 \end{split}
\]
Thus,
\begin{equation}\label{eq:frL:IIest}
 \begin{split}
II 
&\aleq  \int_{B(r) \setminus B((1-\delta)r)} \int_{B(r) \setminus B((1-\delta)r)} \frac{|u(rx/|x|)-u(ry/|y|)|^{p}}{|x-y|^{n+sp}} \dx \dy\\
&\quad +\int_{B(r) \setminus B((1-\delta)r)} \int_{B(r) \setminus B((1-\delta)r)} \frac{|v(rx/|x|)-v(ry/|y|)|^{p}}{|x-y|^{n+sp}} \dx \dy\\
& \quad +\|v-u\|_{L^\infty(\partial B(r))}^p \int_{B(r) \setminus B((1-\delta)r)} \int_{B(r) \setminus B((1-\delta)r)}\frac{|\eta(|x|)-\eta(|y|)|^{p}}{|x-y|^{n+sp}} \dx \dy. 
 \end{split}
\end{equation}
Firstly, we deal with the last term of \eqref{eq:frL:IIest} and observe 
\begin{equation}\label{eq:frL:245}
\begin{split}
 &\int_{B(r) \setminus B((1-\delta)r)} \int_{B(r) \setminus B((1-\delta)r)}\frac{|\eta(|x|)-\eta(|y|)|^{p}}{|x-y|^{n+sp}} \dx \dy\\
 &\aleq (\delta r)^{-p} \int_{B(r) \setminus B((1-\delta)r)} \int_{B(r) \setminus B((1-\delta)r)}\frac{1}{|x-y|^{n+(s-1)p}} \dx \dy\\
 &\aleq(\delta r)^{-p} r^{-(s-1)p} |B(r) \setminus B((1-\delta)r)|\\
 &\aleq \delta^{1-p} r^{n-sp}.
 \end{split}
\end{equation}
Next, we estimate the first term of \eqref{eq:frL:IIest}. We have with the help of \Cref{la:superdifficult}
\begin{equation}\label{eq:mimimimimimi}
\begin{split}
 &\int_{B(r) \setminus B((1-\delta)r)} \int_{B(r) \setminus B((1-\delta)r)} \frac{|u(rx/|x|)-u(ry/|y|)|^{p}}{|x-y|^{n+sp}} \dx \dy\\
 & = \int_{\S^{n-1}}\int_{\S^{n-1}} |u(r \theta) - u(r\omega)|^p \int_{(1-\delta)r}^r\int_{(1-\delta)r}^r \frac{1}{|\rho_1 \theta - \rho_2 \omega|^{n+sp}} \rho_1^{n-1} \rho_2^{n-1}\dif \rho_1 \dif \rho_2 \dif \omega \dif \theta\\
 &\aleq 
\delta r \int_{\S^{n-1}}\int_{\S^{n-1}} \frac{|u(r\theta)-u(r\omega)|^p}{|r\theta-r\omega|^{{n-1+sp}}}\, r^{n-1}\dif \omega\, r^{n-1}\dif \theta\\
& \approx \delta r \int_{\partial B(r)} \int_{\partial B(r)}\frac{|u(\theta)-u(\omega)|^p}{|\theta-\omega|^{{n-1+sp}}} \dif \omega \dif \theta.
 \end{split}
\end{equation}

Moreover, for the second term of \eqref{eq:frL:IIest} we observe that if $x,y \in B(r)\setminus B((1-\delta)r)$, then for $z_{x,y} \coloneqq \frac{x+y}{2}$ we have $|z_{x,y}-x| \aeq |x-y| \aeq |z_{x,y}-y|$, so
\begin{equation}\label{eq:mimi-mimi}
\begin{split}
 &\int_{B(r) \setminus B((1-\delta)r)} \int_{B(r) \setminus B((1-\delta)r)} \frac{|v(rx/|x|)-v(ry/|y|)|^{p}}{|x-y|^{n+sp}} \dx \dy\\
 &\aleq 2\int_{B(r) \setminus B((1-\delta)r)} \int_{B(r) \setminus B((1-\delta)r)} \frac{|v(rx/|x|)-v(z_{x,y})|^{p}}{|x-y|^{n+sp}} \dx \dy\\
  &\aeq \int_{B(r) \setminus B((1-\delta)r)} \int_{B(r) \setminus B((1-\delta)r)} \frac{|v(rx/|x|)-v(z_{x,y})|^{p}}{|x-z_{x,y}|^{n+sp}} \dx \dy\\
  &\aleq \int_{B(r) \setminus B((1-\delta)r)} \int_{B(r)} \frac{|v(rx/|x|)-v(z)|^{p}}{|x-z|^{n+sp}} \dx \dz\\
 &\aleq \delta r \int_{\partial B(r)} \int_{B(r)} \frac{|v(\theta)-v(z)|^{p}}{|\theta-z|^{n+sp}} \dif\theta \dz.
 \end{split}
\end{equation}
In the second to last step we used the transformation $y \mapsto z_{x,y}$.

Plugging \eqref{eq:frL:245}, \eqref{eq:mimimimimimi}, and \eqref{eq:mimi-mimi} into \eqref{eq:frL:IIest} we have shown
\[
 \begin{split}
II &\aleq   \delta r \int_{\partial B(r)}\int_{\partial B(r)} \frac{|u(\theta)-u(\omega)|^p}{|\theta-\omega|^{{n-1+sp}}} \dif \omega \dif \theta\\
&\quad +\delta r \int_{\partial B(r)} \int_{B(r)} \frac{|v(\theta)-v(z)|^{p}}{|\theta-z|^{n+sp}} \dif\theta \dz +\delta^{1-p} r^{n-sp}\|v-u\|_{L^\infty(\partial B(r))}^p . 
 \end{split}
\]
Next we estimate $III$. For any $\theta \in \partial B(r)$ we have
\[
\begin{split}
  III &=\int_{B(2r)\setminus B(r)} \int_{B((1-\delta)r)} \frac{\abs{v(x/(1-\delta))
  -u(y)}^{p}}{|x-y|^{n+sp}} \dx \dy\\
  &\aleq\int_{B(2r)\setminus B(r)} \int_{B((1-\delta)r)} \frac{\abs{v(x/(1-\delta))
  -v(\theta)}^{p}}{|x-y|^{n+sp}} \dx \dy\\
  &\quad +\int_{B(2r)\setminus B(r)} \int_{B((1-\delta)r)} \frac{\abs{u(\theta)-u(y)}^{p}}{|x-y|^{n+sp}} \dx \dy +\int_{B(2r)\setminus B(r)} \int_{B((1-\delta)r)} \frac{\abs{u(\theta)-v(\theta)}^{p}}{|x-y|^{n+sp}} \dx \dy\\
  &\aleq (\delta r)^{-sp} \int_{B(r)} \abs{v(x)
  -v(\theta)}^{p} \dx\\
  &\quad +(\delta r)^{-sp}  \int_{B(2r)\setminus B(r)} \abs{u(\theta)-u(y)}^{p} \dy +r^n (\delta r)^{-sp} \abs{u(\theta)-v(\theta)}^{p}.
   \end{split}
\]
Multiplying this inequality by $|\partial B(r)|^{-1} \aeq r^{1-n}$ and integrating in $\theta$ on $\partial B(r)$ we find
\[
\begin{split}
  III  &\aleq\delta^{-sp} r \brac{\int_{\partial B(r)} \int_{B(r)} \frac{\abs{v(x)
  -v(\theta)}^{p}}{|x-\theta|^{n+sp}} \dx \dif \theta +\int_{\partial B(r)} \int_{B(2r)/B(r)} \frac{\abs{u(y)
  -u(\theta)}^{p}}{|y-\theta|^{n+sp}} \dy \dif \theta}\\
  &\quad +\delta^{-sp} r^{n-sp} \|u-v\|_{L^\infty(\partial B(r))}^p.
   \end{split}
\]

Now we estimate $IV$
\[
\begin{split}
  IV &=\int_{B(2r)\setminus B(r)} \int_{B(r) \setminus B((1-\delta)r)} \frac{\abs{
  \brac{(1-\eta(|x|)) u(rx/|x|) + \eta(|x|) v(rx/|x|)}
  -u(y)}^{p}}{|x-y|^{n+sp}} \dx \dy\\
  &\aleq \int_{B(2r)\setminus B(r)} \int_{B(r) \setminus B((1-\delta)r)} \frac{\abs{u(rx/|x|)-u(y)}^{p}}{|x-y|^{n+sp}} \dx \dy\\
  &\quad +\int_{B(2r)\setminus B(r)} \int_{B(r) \setminus B((1-\delta)r)} \frac{|\eta(|x|)|^p\abs{
   u(rx/|x|) -v(rx/|x|)}^{p}}{|x-y|^{n+sp}} \dx \dy.
  \end{split}
\]
Observe that for $y \in B(2r)\setminus B(r)$ and $x \in B(r)\setminus B((1-\delta)r)$ we have $|x-y| \ageq \abs{rx/|x|-y}$.
\[
\begin{split}
 &\int_{B(2r)\setminus B(r)} \int_{B(r) \setminus B((1-\delta)r)} \frac{\abs{u(rx/|x|)-u(y)}^{p}}{|x-y|^{n+sp}} \dx \dy\\
 &\aleq \int_{(1-\delta)r}^r \int_{B(2r)\setminus B(r)}  \int_{\partial B(r)} \frac{\abs{u(\theta)-u(y)}^{p}}{|\theta-y|^{n+sp}}\brac{\frac{\rho}{r}}^{n-1} \dif\theta \dy \dif\rho\\ 
 &\aeq \delta r \int_{B(2r)\setminus B(r)}  \int_{\partial B(r)} \frac{\abs{u(\theta)-u(y)}^{p}}{|\theta-y|^{n+sp}} \dif\theta \dy.
 \end{split}
\]
Also, we know that for $y\in B(2r)\setminus B(r)$ we have $\eta(|y|)=0$, and thus we estimate
\[
\begin{split}
 &\int_{B(2r)\setminus B(r)} \int_{B(r) \setminus B((1-\delta)r)} \frac{|\eta(|x|)|^p\abs{
   u(rx/|x|) -v(rx/|x|)}^{p}}{|x-y|^{n+sp}} \dx \dy\\
   &=\int_{B(2r)\setminus B(r)} \int_{B(r) \setminus B((1-\delta)r)} \frac{|\eta(|x|)-\eta(|y|)|^p\abs{
   u(rx/|x|) -v(rx/|x|)}^{p}}{|x-y|^{n+sp}} \dx \dy\\
   &\aleq \|u-v\|_{L^\infty(\partial B(r))}^p \int_{B(2r)\setminus B(r)} \int_{B(r) \setminus B((1-\delta)r)} \frac{|\eta(|x|)-\eta(|y|)|^p}{|x-y|^{n+sp}} \dx \dy\\
   &\aleq \delta^{1-sp} r^{n-p} \|u-v\|_{L^\infty(\partial B(r))}^p .
\end{split}
\]
In the last step we argued similar to \eqref{eq:frL:245}.

So we have shown
\[
 \begin{split}
IV &\aleq  \delta r \int_{B(2r)\setminus B(r)}  \int_{\partial B(r)} \frac{\abs{u(\theta)-u(y)}^{p}}{|\theta-y|^{n+sp}} \dif\theta \dy +\delta^{1-p} r^{n-sp} \|u-v\|_{L^\infty(\partial B(r))}^p .
 \end{split}
\]

With essentially the same argument we get the estimate of $V$
\[
 \begin{split}
V &\aleq   \delta r \int_{B(r)}  \int_{\partial B(r)} \frac{\abs{v(\theta)-v(y)}^{p}}{|\theta-y|^{n+sp}} \dif\theta \dy +\delta^{1-p} r^{n-sp} \|u-v\|_{L^\infty(\partial B(r))}^p.
 \end{split}
\]
Combining the estimates on $I$, $II$, $III$, $IV$, and $V$ we obtain inequality \eqref{eq:frl:west}. This concludes the proof of \Cref{la:frluckhaus}.
\end{proof}

Above we used the following 
\begin{lemma}\label{la:superdifficult}
For any $\alpha > 1$ there exists a constant $C(\alpha)$ such that for any $R > 0$, $\lambda \in (0,1)$ and any $\theta,\omega \in \S^{n-1}$,
 \[
 \int_{\lambda R}^{R} \int_{\lambda R}^{R} |r\theta-\rho \omega|^{-\alpha} \dif r \dif\rho \leq C(\alpha) (1-\lambda)\lambda^{1-\alpha}\, R\, |R\theta-R\omega|^{1-\alpha}.
\]
\end{lemma}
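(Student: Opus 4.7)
The plan is to reduce to the case $R=1$ by scaling. Substituting $r = Rr'$, $\rho = R\rho'$, both sides are easily seen to scale as $R^{2-\alpha}$ (on the left from the measure and the integrand, on the right from the factor $R \cdot |R\theta-R\omega|^{1-\alpha} = R^{2-\alpha}|\theta-\omega|^{1-\alpha}$). So after setting $a \coloneqq |\theta-\omega|\in(0,2]$ it remains to show
\[
 I(\lambda,a) \coloneqq \int_\lambda^1\int_\lambda^1 |r\theta-\rho\omega|^{-\alpha}\,dr\,d\rho \leq C(\alpha)(1-\lambda)\lambda^{1-\alpha}a^{1-\alpha}.
\]
The basic observation is that the law of cosines, $\theta\cdot\omega = 1-\tfrac12 a^2$, gives the identity
\[
 |r\theta-\rho\omega|^2 = (r-\rho)^2 + r\rho\, a^2.
\]
Using symmetry in $(r,\rho)$ and substituting $s = \rho-r$ on the region $\{r\le\rho\}$, we rewrite
\[
 I(\lambda,a) = 2\int_\lambda^1\int_0^{\rho-\lambda}\bigl(s^2+(\rho-s)\rho\, a^2\bigr)^{-\alpha/2}\,ds\,d\rho.
\]

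Next I would split the inner domain into $s\in[0,\rho/2]$ and $s\in[\rho/2,\rho-\lambda]$. On the first piece, $\rho-s\ge\rho/2$, so the integrand is majorized by $(s^2+\rho^2a^2/2)^{-\alpha/2}$. The change of variable $s = \rho a \tau/\sqrt{2}$, together with the fact that $\int_0^\infty(1+\tau^2)^{-\alpha/2}\,d\tau < \infty$ for $\alpha>1$, yields
\[
 \int_0^{\rho/2}(s^2+\rho^2 a^2/2)^{-\alpha/2}\,ds \leq C(\alpha)(\rho a)^{1-\alpha}.
\]
On the second piece the integrand is at most $s^{-\alpha}$, whose integral is bounded by $C(\alpha)\rho^{1-\alpha}$. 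Finally, since $\alpha>1$, the map $\rho\mapsto\rho^{1-\alpha}$ is non-increasing on $[\lambda,1]$, so trivially
\[
 \int_\lambda^1\rho^{1-\alpha}\,d\rho \leq \lambda^{1-\alpha}(1-\lambda).
\]
Combining these estimates, the first piece contributes $C(\alpha)a^{1-\alpha}\lambda^{1-\alpha}(1-\lambda)$ and the second $C(\alpha)\lambda^{1-\alpha}(1-\lambda)$. Since $a\leq 2$ one has $a^{1-\alpha}\geq 2^{1-\alpha}$, so the second bound is absorbed into the first, giving the desired inequality.

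I do not expect a genuine obstacle. The only subtle point is choosing the right decomposition: the splitting threshold $s\aeq\rho/2$ is what decouples the angular contribution (which produces the $a^{1-\alpha}$ factor via the Cauchy-type integral in $\tau$) from the radial one, while the elementary monotonicity inequality $\int_\lambda^1 \rho^{1-\alpha}\,d\rho\le\lambda^{1-\alpha}(1-\lambda)$ takes care of the $\rho$-integration uniformly in $\alpha>1$. All constants depend only on $\alpha$, as required.
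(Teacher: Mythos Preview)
Your proof is correct. Both arguments reduce to $R=1$ and use the identity $|r\theta-\rho\omega|^2=(r-\rho)^2+r\rho\,a^2$, but you decompose differently from the paper. The paper bounds $r\rho\ge\lambda^2$ uniformly and then splits the square $[\lambda,1]^2$ according to whether $|r-\rho|$ is smaller or larger than $\lambda a$; this immediately produces the factor $\lambda^{1-\alpha}$ in each piece. You instead use the sharper bound $r\rho=(\rho-s)\rho\ge\rho^2/2$ on the region $s\le\rho/2$, which yields a $\rho$-dependent estimate $(\rho a)^{1-\alpha}$ and then requires the additional monotonicity step $\int_\lambda^1\rho^{1-\alpha}\,d\rho\le\lambda^{1-\alpha}(1-\lambda)$ to recover the stated bound. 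Both routes are elementary and of comparable length; the paper's splitting threshold is tailored to the final answer, while yours is more structural (separating the regime where the angular term dominates from the purely radial one) at the cost of one extra integration step.
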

\begin{proof}
Observe that 
\[
  \int_{\lambda R}^{R} \int_{\lambda R}^{R} |r\theta-\rho \sigma|^{-\alpha}\, \dif r \dif \rho
  =R^{2-\alpha} \int_{\lambda }^{1} \int_{\lambda }^{1} |r\theta-\rho \sigma|^{-\alpha}\, \dif r \dif\rho
\]
so it suffices to prove the claim for $R = 1$, which we will assume from now on.

Furthermore, we observe
\[
\begin{split}
 |r\theta-\rho \omega|^2 &= r^2 + \rho^2 - 2 r\rho \langle \theta,\omega \rangle\\
&= r^2 + \rho^2 - 2 r\rho + 2r\rho (1-\langle \theta,\omega \rangle)\\
&= (r-\rho)^2 + r\rho |\theta-\omega|^2.
 \end{split}
\]
Now observe that for $r,\rho \geq \lambda$
\[
 |r\theta-\rho \omega| \ageq  \max \{|r-\rho|,\lambda |\theta-\omega|\}.
\]
and thus for any $\alpha > 0$
\[
 |r\theta-\rho \omega|^{-\alpha} \aleq  \min \{|r-\rho|^{-\alpha},\lambda^{-\alpha} |\theta-\omega|^{-1}\}.
\]
Then we split
\[
\begin{split}
 &\int_{\lambda}^1\int_{\lambda}^1|r\theta-\rho \omega|^{-\alpha} \dif r \dif \rho\\
 &=\int_{\lambda}^1\int_{\lambda}^1 \chi_{\{|r-\rho| < \lambda |\theta-\omega|\}} |r\theta-\rho \omega|^{-\alpha} \dif r \dif \rho+\int_{\lambda}^1\int_{\lambda}^1 \chi_{\{|r-\rho| > \lambda |\theta-\omega|\}} |r\theta-\rho \omega|^{-\alpha} \dif r \dif \rho\\
&\aleq\lambda^{-\alpha}\int_{\lambda}^1\int_{\lambda}^1 \chi_{\{|r-\rho| < \lambda |\theta-\omega|\}} |\theta-\omega|^{-\alpha} \dif r \dif \rho+\int_{\lambda}^1\int_{\lambda}^1 \chi_{\{|r-\rho| > \lambda |\theta-\omega|\}} (r-\rho)^{-\alpha} \dif r \dif \rho.
\end{split}
\]
Observe that 
\[
\begin{split}
 &\lambda^{-\alpha}\int_{\lambda}^1\int_{\lambda}^1 \chi_{|r-\rho| < \lambda |\theta-\omega|} |\theta-\omega|^{-\alpha} \dif r \dif \rho \\
 &\leq  \lambda^{-\alpha}|\theta-\omega|^{-\alpha} \int_{\lambda}^1 \mathcal{L}^1(\{\rho: |r-\rho| < \lambda |\theta-\omega|\})\\
 &= 2(1-\lambda) \lambda^{1-\alpha} |\theta-\omega|^{1-\alpha}.
 \end{split}
\]
Morover, since $\alpha > 1$,
\[
\begin{split}
  &\int_{\lambda}^1\int_{\lambda}^1 \chi_{|r-\rho| > \lambda |\theta-\omega|} (r-\rho)^{-\alpha} \dif r \dif \rho\\
  &\leq \int_{\lambda}^1\int_{|r-\rho|>\lambda |\theta-\omega|} (r-\rho)^{-\alpha} \dif r \dif \rho\\
  &=\int_{\lambda}^1 \frac{1}{1-\alpha} \lambda^{1-\alpha} |\theta-\omega|^{1-\alpha} \dif \rho\\
  &=\frac{(1-\lambda)\lambda^{1-\alpha}}{(1-\alpha)}\, |\theta-\omega|^{1-\alpha}.
  \end{split}
\]
We now conclude.
\end{proof}

\begin{remark}
While the formulation of \Cref{la:frluckhaus} suffices for our purposes, let us remark that in the assumptions and in the inequality \eqref{eq:frl:west} the term
\[
  r \int_{\partial B(r)} \int_{\partial B(r)} \frac{|u(\theta)-u(\sigma)|^{p}}{|\theta-\sigma|^{{n-1+sp}}}  \dif \theta \dif \sigma
  \]
 can be replaced by  
 \[
  r \int_{\partial B(r)} \int_{\R^n \setminus B(r)} \frac{|u(\theta)-u(y)|^{p}}{|\theta-y|^{n+sp}}  \dif y \dif \theta.
  \]
  
Indeed, the only modification that has to be made is in the estimate of \eqref{eq:mimimimimimi}. This can be done in the following way:
\[
\begin{split}
 &\int_{B(r) \setminus B((1-\delta)r)} \int_{B(r) \setminus B((1-\delta)r)} \frac{|u(rx/|x|)-u(ry/|y|)|^{p}}{|x-y|^{n+sp}} \dx \dy\\
 &\aleq\int_{B(r) \setminus B((1-\delta)r)} \int_{B(r) \setminus B((1-\delta)r)} \chi_{|x-y| < \frac{1}{100}r}\frac{|u(rx/|x|)-u(ry/|y|)|^{p}}{|x-y|^{n+sp}} \dx \dy\\
 &\quad +r^{-n-sp} \int_{B(r) \setminus B((1-\delta)r)}\int_{B(r) \setminus B((1-\delta)r)} |u(rx/|x|) - (u)_{A(r)}|^{p}  \dx \dy.
\end{split}
\]
For the second term we have
\[
 \begin{split}
  &r^{-n-sp} \int_{B(r) \setminus B((1-\delta)r)}\int_{B(r) \setminus B((1-\delta)r)} |u(rx/|x|) - (u)_{A(r)}|^{p} \dx \dy\\
  &\aleq r^{n-sp}|B(r) \setminus B((1-\delta)r)|\int_{B(r) \setminus B((1-\delta)r)} |u(rx/|x|) - (u)_{A(r)}|^{p} \dx\\
  &\approx \delta r^{-sp} \int_{(1-\delta r)}^r \int_{\partial B(r)} |u(\theta) - (u)_{A(r)}|^p \dif \theta\\
  &\aleq \delta^2 r^{1-sp}  r^{-n}\int_{\partial B(r)}\int_{B(2r)\setminus B(r)} \frac{|u(\theta) - u(z)|^p}{|\theta -z|^{n+sp}}|\theta -z|^{n+sp} \dif \theta \dz \\
  &\aleq \delta^2 r \int_{\partial B(r)}\int_{B(2r)\setminus B(r)} \frac{|u(\theta) - u(z)|^p}{|\theta -z|^{n-1+sp}} \dif \theta \dz.
 \end{split}
\]
For the first term we observe 
\[
\begin{split}
 &\int_{B(r) \setminus B((1-\delta)r)} \int_{B(r) \setminus B((1-\delta)r)} \chi_{\{|x-y| < \frac{1}{100}r\}}\frac{|u(rx/|x|)-u(ry/|y|)|^{p}}{|x-y|^{n+sp}} \dx \dy\\
 &\aleq \int_{B(r)\setminus B((1-\delta)r)} \int_{B(r)\setminus B((1-\delta)r)} \chi_{\{|x-y| < \frac{1}{100}r\}}\frac{|u(rx/|x|)-u(z_{x,y})|^{p}}{|x-y|^{n+sp}} \dx \dy\\
&\aleq  \int_{B(r)\setminus B((1-\delta)r)} \int_{B(2r)\setminus B(r)} \frac{|u(rx/|x|)-u(z)|^{p}}{|rx/|x|-z|^{n+sp}} \dx \dz\\
&\aleq \delta r \int_{\partial B(r)} \int_{B(2r)\setminus B(r)} \frac{|u(\theta)-u(z)|^{p}}{|\theta-z|^{n+sp}} \dz \dif \theta,
 \end{split}
\]
where we have chosen an intermediate point $z_{x,y}$ with the following properties: $z_{x,y}\in B(2r)\setminus B(r)$, $|r x/|x| - z_{x,y}|\approx |ry/|y| - z_{x,y}| \approx |x-y|$, $z_{x,x} = rx/|x|$, and $z_{y,y} = ry/|y|$. This can be done by using a diffeomorphism $\tau\colon \mathcal C\cap \{|x-y|<\frac{1}{100}r\} \to \mathcal K$ that transforms a cone $\mathcal C$ centered at the origin that contains the ball $\{|x-y|<\frac{1}{100}r\}$ intersected with the annulus $B(2r)\setminus B(r)$, into a convex set $\mathcal K$. Then we can take as the intermediate point $z_{x,y}$ the preimage $\tau^{-1}$ of the convex combination of the image (under the diffeomorphism $\tau$) of the points $rx/|x|$ and $r y/|y|$. This is quite technical, so for convenience of the authors, we leave the details to the reader.
\end{remark}

\bibliographystyle{abbrv}%
\bibliography{bib}%

\end{document}